\DeclareFontFamily{OT1}{pzc}{}
\DeclareFontShape{OT1}{pzc}{m}{it}{<-> s * [1.200] pzcmi7t}{}
\DeclareMathAlphabet{\mathpzc}{OT1}{pzc}{m}{it}
	\newcommand{\mb}{\mathbb}
	\newcommand{\mf}{\mathfrak}
	\newcommand{\mc}{\mathcal}
	\newcommand{\ms}{\mathscr}
	\newcommand{\ov}{\overline}
	\newcommand{\ud}{\underline}
	\newcommand{\wt}{\widetilde}
	\newcommand{\mz}{\mathpzc}
	\numberwithin{equation}{section}
\begin{document}

\title{Analysis of gauged Witten equation}

\author[Tian]{Gang Tian}
\address{
Department of Mathematics \\
Princeton University \\
Fine Hall, Washington Road \\
Princeton, NJ 08544 USA
}
\email{tian@math.princeton.edu}

\author[Xu]{Guangbo Xu}
\address{
Department of Mathematics \\
University of California, Irvine\\
Irvine, CA 92697 USA 
}
\email{guangbox@math.uci.edu}

\date{\today}
	
	\newtheorem{thm}{Theorem}[section]
	\newtheorem{lemma}[thm]{Lemma}
	\newtheorem{cor}[thm]{Corollary}
	\newtheorem{prop}[thm]{Proposition}
		\newtheorem{conj}[thm]{Conjecture}
	
	\theoremstyle{definition}
	\newtheorem{defn}[thm]{Definition}
	
	\theoremstyle{remark}
	\newtheorem{rem}[thm]{Remark}
	\newtheorem{hyp}[thm]{Hypothesis}
	\newtheorem{example}[thm]{Example}
	
\setcounter{tocdepth}{1}

\maketitle

\begin{abstract}
The gauged Witten equation was essentially introduced by Witten in his formulation of gauged linear $\sigma$-model (GLSM) in \cite{Witten_LGCY}. GLSM is a physics theory which explains the so-called Landau-Ginzburg/Calabi-Yau correspondence. This is the first paper in a series towards a mathematical construction of GLSM. In this paper we study some analytical properties of the gauged Witten equation for a Lagrange multiplier type superpotential. It contains the asymptotic property of finite energy solutions, the linear Fredholm property, the uniform $C^0$-bound, and the compactness of the moduli space of solutions over a fixed smooth $r$-spin curve with uniform energy bound.

{\it Keywords}: gauged linear $\sigma$-model, gauged Witten equation, moduli space, compactness

{\it Mathematics Subject Classification 2010}: Primary 58J05, Secondary 53D45
\end{abstract}

\tableofcontents

\section{Introduction}

In this paper we study a system of elliptic partial differential equations over a Riemann surface, called the gauged Witten equation, which originated from physicists' study of superstring theory. This is the first piece of work in a series which aims at a rigorous construction of Witten's gauged linear $\sigma$-model (\cite{Witten_LGCY}), and which, from a mathematical point of view, generalizes both the theory of symplectic vortex equation (see \cite{Cieliebak_Gaio_Salamon_2000}, \cite{Mundet_thesis}, \cite{Mundet_2003}) and the theory of Witten equation (see \cite{FJR1, FJR3, FJR2}). It is also a new member of the collection of mathematical works related to quantum field theory, which has greatly influenced mathematics in the past few decades. Therefore we would like to explain our motivation from a historical perspective, and many related works will be recalled.

Two celebrated members of this collection are Gromov-Witten theory and gauge theory. Gromov-Witten theory, inspired by Gromov's work on $J$-holomorphic curves \cite{Gromov_1985} and Witten's interpretation \cite{Witten_sigma_model}, has been constructed rigorously by mathematicians (\cite{Ruan_Tian}, \cite{Ruan_96},  \cite{Li_Tian_2}, \cite{Li_Tian}, \cite{Fukaya_Ono} etc.). The field-theoretic correlation function, called the Gromov-Witten invariant, has become a fundamental tool in symplectic topology as well as in algebraic geometry. On the other hand, for last few decades, a lot of exciting results about gauge theory have been proven, notably, Atiyah-Bott's famous work \cite{Atiyah_Bott}, Uhlenbeck's compactness theorem \cite{Uhlenbeck_82}, Taubes' construction of self-dual connections (\cite{Taubes_SDYM}) and Donaldson theory on differentiable 4-manifolds (\cite{Donaldson_Kronheimer}). 

The coupling of gauge theory and $\sigma$-model is fundamental in physics, which also has been adapted by mathematicians. Many interesting examples came from ``dimensional reduction'' of four-dimensional gauge theory to dimension 2, where conformal invariance brings richer structures. For example, one considers a rank $n$ Hermitian vector bundle $E$ over a Riemann surface $\Sigma$, and consider the (linear) vortex equation on a pair $(A, u)$:
\begin{align}\label{equation11}
\left\{ \begin{array}{ccc}
D_A^{0,1} u & = & 0,\\
{\bm i} * F_A + \left( u\otimes u^* - \tau {\rm Id}_E\right) & = & 0.
\end{array} \right.
\end{align}
Here $A$ (gauge field) is a unitary connection on $E$, $u$ (matter field) is a smooth section of $E$, and $\tau$ is a constant parameter (see \cite{Bradlow_stable_pairs}). The vortex equation carries a new feature: the moduli space undergoes a birational-like transformation when $\tau$ varies (see for example \cite{Bradlow_birational}). In the language of algebraic geometry, this is called the variation of GIT quotient. Interesting results have been proved by utilizing this feature (cf. \cite{Thaddeus_pairs}), which share similar spirit of the Landau-Ginzburg/Calabi-Yau correspondence we will soon review.

Observing that the term $(u\otimes u^* - \tau {\rm Id}_E)$ is of the form of a moment map of the $U(n)$-action on ${\mb C}^n$, the vortex equation can be generalized to a symplectic manifold $X$ with a Hamiltonian $G$-action. This was firstly studied by Mundet in his thesis (cf. \cite{Mundet_thesis}, \cite{Mundet_2003}) and Cieliebak-Gaio-Salamon (\cite{Cieliebak_Gaio_Salamon_2000}). The equation is now called the {\bf symplectic vortex equation}. Using the moduli space of solutions to the symplectic vortex equation, certain invariants of Hamiltonian $G$-manifolds, called the gauged (or Hamiltonian) Gromov-Witten invariants can be defined (see \cite{Mundet_2003}, \cite{Cieliebak_Gaio_Mundet_Salamon_2002}, \cite{Mundet_Tian_Draft} etc.). On the other hand, such invariants are closely related to the Gromov-Witten invariants of the symplectic quotient of $X$: in the ``adiabatic limit'' the symplectic vortex equation reduces to $J$-holomorphic curves in the symplectic quotient (see \cite{Gaio_Salamon_2005}). Therefore, the gauged Gromov-Witten invariants also relate the Gromov-Witten invariants of different symplectic/GIT quotients (cf. \cite{Woodward_1, Woodward_2, Woodward_3}, \cite{Gonzalez_Woodward_wall_crossing} for the algebraic case).

Another important ingredient in field theory is the potential energy. Via localization, such field theories are closely related to the geometry of the ``singularity'' of the potential function. If the potential is a holomorphic function on a complex manifold, then such a theory is usually referred to as a Landau-Ginzburg theory. LG theories are naturally related to the study of singularities in topology and algebraic geometry.

In \cite{Witten_spin}, Witten proposed an elliptic equation associated to a quasi-homogeneous polynomial $W$ in $n$ complex variables (now called {\bf Witten equation}), which was motivated from physicists' study of matrix models of two dimensional quantum gravity. His equation takes a simple form as a ``complex gradient flow equation'':
\begin{align}\label{equation12}
{\partial u_i \over \partial \ov{z}} + \ov{ \partial_i W} (u_1, \ldots, u_n) = 0.
\end{align}
In particular for polynomials which define the simple singularities (which have the famous A-D-E classification), Witten conjectured that certain generating functions satisfy the generalized KdV hierarchies. This generalized his earlier conjecture about $A_1$-singularity and KdV hierarchy (\cite{Witten_conjecture}), which was proved by Kontsevich \cite{Kontsevich_92} (and later by Mirzakhani \cite{Mirzakhani}). For higher type $A$ singularities, generalized Witten's conjecture were proved by various people (Jarvis-Kimura-Vaintrob \cite{JKV}, Lee \cite{YPLee}, and Faber-Shadrin-Zvonkine \cite{FSZ}) using algebro-geometric method; while for type $D$ and type $E$ singularities, algebraic method seemed to be insufficient. In a series of papers (\cite{FJR1, FJR3, FJR2}), Fan-Jarvis-Ruan used analytic method to study the moduli space of Witten equation (\ref{equation12}) for general nondegenerate quasi-homogeneous polynomials, and proved generalized Witten's conjecture for $D_n$-singularities for even $n \geq 6$ and all type $E$-singularities. This much more systematic approach is referred to as the Landau-Ginzbug A-model theory, which can be viewed as a quantum theory about singularities.

Around 1990s, physicists discovered a correspondence between the ``Landau-Ginzburg model'' and the nonlinear $\sigma$-model of Calabi-Yau hypersurfaces (\cite{GVW}, \cite{Martinec}, \cite{VafaW}). It becomes a famous mathematical conjecture, often referred to as the Landau-Ginzburg/Calabi-Yau correspondence (LG/CY for short). The conjecture can be vaguely stated as follows.
\begin{conj}
The Landau-Ginzburg theory of a quasi-homogeneous superpotential $W$ of Calabi-Yau type is isomorphic to the nonlinear $\sigma$-model of the weighted projective hypersurface defined by $W$ in a certain sense.
\end{conj}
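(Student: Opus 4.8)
The statement is the Landau-Ginzburg/Calabi-Yau correspondence, which is a program rather than a single theorem, so what follows is a proposal for how the whole series of papers (of which the present analytic results are the first installment) is meant to establish it. The plan is to realize both sides of the correspondence as two ``phases'' of a single master object, the gauged linear $\sigma$-model attached to $W$, and then to prove that the invariants extracted from the two phases agree. Concretely, given a quasi-homogeneous polynomial $W$ of Calabi-Yau type in $n$ variables with weights $(w_1,\dots,w_n)$ and degree $d=\sum w_i$, one forms GIT data $\bigl({\mb C}^n\times{\mb C},\ G,\ \widehat W\bigr)$ where the extra ${\mb C}$-factor carries a Lagrange-multiplier field $p$, the superpotential is $\widehat W=pW$, and $G\cong{\mb C}^*$ acts with suitable weights; the choice of a stability/moment-map parameter $\tau$ selects either a geometric phase, whose quotient is the total space of ${\mc O}(-d)$ over the weighted hypersurface $X_W\subset{\mb P}(w_1,\dots,w_n)$, or a Landau-Ginzburg phase, whose quotient is $[{\mb C}^n/{\mb Z}_d]$ equipped with the superpotential $W$. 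The gauged Witten equation studied in this paper is the equation of motion of this model, and the moduli spaces of finite-energy solutions over $r$-spin curves — whose asymptotics, Fredholm theory, $C^0$ bound and compactness over a fixed smooth $r$-spin curve are established here — are the geometric input for a virtual cycle and hence for a cohomological field theory $\Omega_W^\tau$ depending a priori on $\tau$.

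The key steps, in order, are: (i) complete the analytic package begun here and, in subsequent papers, construct the virtual fundamental class on the compactified moduli space, producing a family of CohFTs $\Omega_W^\tau$; (ii) show that for $\tau$ in the geometric chamber $\Omega_W^\tau$ recovers the Gromov-Witten CohFT of $X_W$, via an adiabatic-limit argument identifying solutions of the gauged Witten equation with holomorphic curves in the quotient, in the spirit of \cite{Gaio_Salamon_2005}; (iii) show that for $\tau$ in the LG chamber $\Omega_W^\tau$ recovers the Fan-Jarvis-Ruan CohFT of $(W,{\mb Z}_d)$ (see \cite{FJR1, FJR3, FJR2}), by degenerating the gauge field so that the gauged Witten equation reduces to the Witten equation (\ref{equation12}); (iv) prove a wall-crossing / deformation-invariance statement, that $\Omega_W^\tau$ is independent of $\tau$ up to a controlled transformation (an analytic continuation of the $I$-function, equivalently a Givental-type symplectomorphism of Lagrangian cones); and (v) assemble (ii)--(iv) into the desired isomorphism, matching state spaces via the Chen-Ruan/orbifold identification $H^*_{CR}(X_W)\cong H_W$ and matching all-genus correlators.

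The main obstacle is step (iv): crossing the wall in $\tau$ is exactly where the moduli space degenerates, and one must control simultaneously the bubbling of the gauge field (Uhlenbeck-type phenomena) and the appearance of affine, scale-invariant solutions concentrating near the unstable locus. Unlike the symplectic-vortex setting, the Lagrange-multiplier superpotential $\widehat W=pW$ makes the target noncompact and the moment map controls only part of the fields, so the uniform energy and $C^0$ estimates proved here for a fixed smooth $r$-spin curve must be upgraded to hold uniformly in $\tau$ and uniformly over a compactified family of curves, and then coupled to a gluing theory near the wall; this is the hardest analytic input. A secondary, more algebraic difficulty is that, even granting the virtual cycles and the wall-crossing, identifying the resulting transformation with the expected LG/CY change of variables requires explicit computation of the relevant $I$-functions together with a Birkhoff-factorization argument, which at higher genus is at present understood only in special cases.
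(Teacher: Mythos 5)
The statement you were handed is labeled a \emph{conjecture} in the paper and is not proved there; it is the motivating open problem for the entire series of which this paper is the first installment, and the introduction only sketches the intended strategy in broad strokes (GLSM as a ``master theory,'' two phases selected by varying the Fayet--Iliopoulos parameter $\tau$, adiabatic limits linking the two phases to Fan--Jarvis--Ruan theory and to Gromov--Witten theory of the hypersurface). Your reconstruction of that program is accurate and essentially identical to the paper's stated plan in steps (i)--(iii) and (v): the Lagrange-multiplier superpotential $pQ$, the two GIT chambers, the gauged Witten equation as equation of motion, the need for a virtual cycle on the compactified moduli space, and the identification of the two adiabatic limits with the FJR and Gromov--Witten CohFTs. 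The one place you go materially beyond the paper is step (iv), where you import a specific wall-crossing mechanism (Givental cone, $I$-function, Birkhoff factorization) from the parallel algebro-geometric LG/CY literature. The paper itself says nothing about how the two phases are to be compared beyond ``certain adiabatic limits,'' and when it names anticipated hard analytic steps it points to compactification over families of degenerating $r$-spin curves (where holonomies on forming nodes produce extra limiting pieces, as in Mundet--Tian) rather than to $\tau$-wall-crossing. There is nothing in the paper to contradict your proposal, but the wall-crossing details are your own elaboration, drawn from outside this paper, and the relative emphasis on which step is hardest is slightly different from what the authors flag.
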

This conjecture is certainly one of the most important problems in studying mathematical aspects of 2-dimensional quantum field theories. It has many deep implications, e.g., simpler method of calculating Gromov-Witten invariants of Calabi-Yau manifolds and applications to mirror symmetry, etc..

Witten (\cite{Witten_LGCY}) observed that this correspondence can be explained as a phase transition via the variation of the Fayet-Iliopoulos D-term (something similar to the $\tau$ in (\ref{equation11})) in certain ``master theory''. This master theory, usually referred to as the {\bf gauged linear $\sigma$-model} (GLSM), 
flows in low energy to the LG and CY models respectively in different phases. Let us illustrate Witten's idea through the following important example.

More precisely, if $Q$ is a quintic polynomial in variables $x = (x_1, \ldots, x_5)$, then Witten proposed to study (\ref{equation12}) for $W(x, p) = pQ(x)$. Moreover, on the $(x, p)$-space there is an $S^1$-action with weight $(1, 1, 1, 1, 1, -5)$ under which $W$ is invariant. Then the equation carries a gauge invariance with respect to this action. Moreover, this action is Hamiltonian with moment map proportional to
\begin{align}\label{equation13}
\mu(x, p) = - 5 |p|^2 + \sum_{i=1}^5 |x_i|^2 + \tau.
\end{align}
For $\tau>0$, the ``classical vacuum'' is $\left( {\rm Crit} W \cap \mu^{-1}(0)\right) / S^1$, which is the same as the singularity defined by $Q$; for $\tau<0$, the classical vacuum $\left( {\rm Crit} W \cap \mu^{-1}(0) \right) /S^1$ is the quintic hypersurface in ${\mb P}^4$ defined by $Q$. The variation of $\tau$ parametrizes the phase transition therefore the two theories are related.

GLSM has been extensively used by physicists in their research, for example, in the study of mirror symmetry (cf. \cite{Hori_Vafa}). Mathematicians have been also thinking about its mathematical formulation and applications: How to construct them mathematically? How can it be applied to studying mirror symmetry?
For instance,  in \cite{Chang_Li} and \cite{Chang_Li_Li}, J. Li {\it et al.} studied the Gromov-Witten theory of a quintic hypersurface and the narrow case of Landau-Ginzburg theory by using cosection localization, which they believe to lead to an algebraic approach to GLSM and LG/CY correspondence. Fan-Jarvis-Ruan also have a project towards GLSM.

The purpose of our series of papers is to establish a mathematical theory of GLSM, at least, in some simple cases. Our approach is symplecto-geometric and uses geometric analysis. We will start our series by solving some serious technical problems, among which the most crucial one is the compactness of its moduli space. There are well-known difficulties we need to overcome in solving these problems. Our framework also includes the gauged Gromov-Witten theory as a special case where the superpotential is zero. We hope, via certain adiabatic limits, our construction can relates the work of Fan-Jarvis-Ruan on one side and the nonlinear $\sigma$-model on the other side, so it can give a good mathematical understanding of the LG/CY correspondence.

\subsection{Main results of this paper}

Now we briefly describe our main set-up and result of this first paper in our series. Let $(X, \omega, J)$ be a noncompact K\"ahler manifold (with ``bounded geometry'' at infinity), $Q: X \to {\mb C}$ be a nonzero holomorphic function which is homogeneous with respect to a ${\mb C}^*$-action on $X$. Consider $\wt{X} = X \times {\mb C}$ and the function $W: \wt{X} \to {\mb C}$ given by $W (x, p) = p Q(x)$. $W$ is invariant under another ${\mb C}^*$-action on $\wt{X}$. Let $G = S^1 \times S^1$, and there is a moment map $\mu: \wt{X} \to ({\rm Lie} G)^*$ for the $G$-action.

Let $\Sigma$ be a compact Riemann surface with punctures. The gauged Witten equation is roughly a union of the Witten equation and the vortex equation, which reads
\begin{align}\label{equation14}
\left\{  \begin{array}{ccc} \ov\partial_A u + \nabla W(u) & = & 0; \\
* F_A + \mu(u) & = & 0.
\end{array} \right.
\end{align}
The variables of this system are $A$ and $u$, where $A$ is a connection on a $G$-bundle $P \to \Sigma$ and $u$ is a section of the associated bundle $P \times_G \wt{X}$. In fact, such a system can be defined for a quite general class of superpotentials $W$ on a general K\"ahler manifold, which is not necessarily a Lagrange multiplier type one. But in this paper we only consider a special class, because of difficulties in proving compactness for general superpotentials.

The gauged Witten equation generalizes both the symplectic vortex equation (\ref{equation11}) and the Witten equation (\ref{equation12}). It is also the classical equation of motion with respect to the following energy functional. For each pair $(A, u)$, using the superpotential $W$, its energy is defined as
\begin{align}\label{equation15}
E(A, u) = {1\over 2} \Big( \big\|d_A u \big|_{L^2(\Sigma)}^2 + \big\| F_A \big\|_{L^2(\Sigma)}^2 + \big\| \mu(u) \big\|_{L^2(\Sigma)}^2 \Big) + \big\|\nabla W(u) \big\|_{L^2(\Sigma)}^2.
\end{align}

(\ref{equation14}) is not Fredholm in general because $W$ may have degenerate critical points. On a cylindrical end $[0, +\infty)\times S^1$ of the punctured surface with cylindrical coordinates $(s, t)$, the unperturbed equation is essentially the following Floer-type equation
\begin{align}\label{equation16}
{\partial u \over \partial s} + J  {\partial u \over \partial t} +  \nabla W (u) = 0.
\end{align}
To have a Fredholm operator we have to modify $W$ on cylindrical ends. In Section \ref{section2} we discuss the perturbation of the equation on the cylindrical ends at the ``broad'' punctures of $\Sigma$, so that after perturbation, $W$ becomes a holomorphic Morse function. After perturbation, (\ref{equation14}) gives a nonlinear Fredholm operator. In Section \ref{section3} we prove
\begin{thm}
Every bounded solution (see Definition \ref{defn41}) to the perturbed version of (\ref{equation14}) converges to a critical point of the perturbed $W$ at each cylindrical end, and the energy density decays exponentially. Modulo gauge transformation, the linearization of the left-hand-side of the perturbed version of (\ref{equation14}) is a linear Fredholm operator between certain Sobolev spaces (see Theorem \ref{thmx52}). Its Fredholm index is given by (\ref{equationx58}).
\end{thm}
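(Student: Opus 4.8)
The plan is to establish the three assertions in order, the asymptotic analysis feeding the Fredholm theory, and the Fredholm theory feeding the index computation.

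\textbf{Exponential decay at the ends.} Fix a cylindrical end $[0,+\infty)\times S^1$ with coordinates $(s,t)$ and a bounded solution $(A,u)$. First I would use a gauge transformation to put $A$ into temporal gauge on the end, so that $A = \psi\,dt$ for a loop $\psi$ in $\mathrm{Lie}\,G$, and the perturbed gauged Witten equation becomes a perturbed Floer/vortex-type equation for the pair $(u,\psi)$, the Witten term $\nabla W$ being essentially (\ref{equation16}). Since the total energy (\ref{equation15}) is finite, the local energy $\epsilon(s) := E\big((A,u);[s-1,s+1]\times S^1\big)$ is a tail of a convergent integral, hence $\epsilon(s)\to 0$; combined with the a priori bound coming from Definition \ref{defn41} and an interior/mean-value estimate, this gives $\sup_{\{s\}\times S^1} e \to 0$, so the loops $u(s,\cdot)$ are $C^0$-close to the common zero set of $\mu$ and $\nabla W$. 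Because the perturbation of Section \ref{section2} makes $W$ holomorphic Morse, that common zero set is discrete modulo $G$, so $u(s,\cdot)$ converges to a single critical point $x_\infty$. The non-degeneracy of $x_\infty$ --- i.e.\ the spectral gap of the relevant Hessian, now including the gauge directions --- then yields, via a Hofer-type isoperimetric/differential inequality for the action of the pair $(u,\psi)$, the exponential bound $E\big((A,u);[s,+\infty)\times S^1\big)\le C e^{-\delta s}$; elliptic bootstrapping promotes this to exponential decay of the energy density and of all derivatives, and integrating the equation gives exponential convergence of $(A,u)$ to $x_\infty$ in every $C^k$.

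\textbf{The Fredholm property.} Let $\mathcal D_{(A,u)}$ be the linearization of the left-hand side of the perturbed (\ref{equation14}). Augmenting it with the infinitesimal gauge-fixing operator (the formal adjoint of the $\mathrm{Lie}\,\mathcal G$-action) produces a genuinely elliptic first-order operator $D$ on the punctured surface. By the decay just established and the non-degeneracy of the asymptotic critical points, on each end $D$ is asymptotic to an operator $\partial_s + L_i$ with $L_i$ self-adjoint on the circle and $\ker L_i = 0$. With asymptotically invertible ends one runs the standard argument (Lockhart--McOwen / Atiyah--Patodi--Singer theory in its Floer-theoretic form): the global elliptic estimate $\|\xi\|_{W^{1,p}}\le C\big(\|D\xi\|_{L^p}+\|\xi\|_{L^p(K)}\big)$ over a compact $K\subset\Sigma$ gives finite-dimensional kernel and closed range, and the same applied to the formal adjoint gives finite-dimensional cokernel. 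This is Theorem \ref{thmx52}. The non-compactness of $X$ enters only through the need to control the coefficients of $D$ (the Hessian of $W$, $d\mu$, the curvature) and their decay, which follows from the bounded-geometry hypothesis on $X$ together with the first step.

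\textbf{The index.} I would compute $\mathrm{ind}\,D$ by additivity under gluing: cap off each puncture, replacing each end by a compact piece carrying the trivial asymptotic operator. The index then splits as a Riemann--Roch/Atiyah--Singer term for a Cauchy--Riemann type operator over the resulting closed surface --- twisted by the pulled-back $T\wt X$ and by the adjoint bundle, the $p$-field contributing the line bundle determined by the $r$-spin structure --- plus, at each puncture, the spectral-flow (Conley--Zehnder type) index of the path $L_i$. Collecting these pieces yields (\ref{equationx58}).

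\textbf{Main obstacle.} The heart of the matter is the first step in the genuinely \emph{gauged} setting: one cannot simply invoke scalar Floer estimates, since the connection $A$ is a bona fide variable, so the argument must simultaneously produce a gauge in which $A$ itself decays, propagate the smallness of the local energy across the nonlinearity $\nabla W$ (which is only Morse after perturbation and lives in a non-compact target), and close the isoperimetric/differential-inequality argument for the coupled pair $(u,\psi)$ rather than for $u$ alone. The bounded-geometry assumption on $X$ and the precise perturbation constructed in Section \ref{section2} are exactly what make this work.
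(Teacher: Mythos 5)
Your outline captures the right shape of the argument, and the broad-puncture and Banach-space mechanics are essentially those of the paper (temporal gauge, small-energy-implies-small-density via mean-value estimate, second-order differential inequality for $v(s)=\|\eta(s,\cdot)\|_{L^2(S^1)}^2$, Lockhart–McOwen type Fredholm theory on weighted spaces, and index by splitting into line bundles and applying orbifold Riemann–Roch). However, there is a genuine gap at the \emph{narrow} punctures, which your proposal does not distinguish from the broad ones, and where both of the mechanisms you invoke fail as stated.

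For the convergence and decay step, you argue that after the perturbation $W$ becomes ``holomorphic Morse,'' so the asymptotic critical set is discrete and the limit $x_\infty$ is a non-degenerate critical point whose spectral gap drives the exponential decay. This is only true at broad punctures, where the perturbation $F_\upgamma$ is switched on. At a narrow puncture the perturbation is \emph{zero}: the monodromy $\upgamma_j$ is non-trivial and kills the broad perturbation, the fixed locus $\wt{X}_{\upgamma_j}=\{\star\}\times\mathbb C$ is a positive-dimensional manifold of critical points of $W$, and the tangential part of the Hessian of the (un)perturbed superpotential vanishes identically. There is no Morse spectral gap to close the isoperimetric argument with. The paper handles this by splitting the section into normal and tangential components relative to $\wt{X}_{\upgamma_j}$: the normal component decays at a rate $\tau_0$ coming from the distance of ${\rm Spec}(d\mathcal X_{\lambda_j}^N)$ to ${\bm i}\mathbb Z$ (a fractional-monodromy gap, Proposition \ref{prop48}), while the tangential component is controlled via a center-of-mass argument and the coercivity of $J\partial_t$ on loops of zero average (Proposition \ref{prop410}), which produces exponential convergence to \emph{some} point in $\wt{X}_{\upgamma_j}$ even though there is no Morse structure. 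A proof that invokes non-degeneracy of $x_\infty$ does not apply here.

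The same issue propagates into your Fredholm step. You assert that on each end $D$ is asymptotic to $\partial_s+L_i$ with $\ker L_i=0$. At a narrow puncture the tangential part of the asymptotic operator is a plain complex Cauchy–Riemann operator (the paper's type ${\rm II}_2$) whose kernel on the circle is the constants $\mathbb C$, not zero. The paper copes in two inter-locking ways that your proposal omits: it works on weighted spaces $W^{1,p}_\tau$ with a small positive $\tau$, which pushes the spectral line off $0$ and restores Fredholmness but at the cost of shifting the index; and it enlarges the tangent space of the Banach manifold by a finite-dimensional factor $\mathbb C^{k_n}$ (one copy per narrow puncture) to re-introduce the constant directions that the positive weight kills, so that the deformation complex computes the geometrically correct moduli. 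The final index formula (\ref{equationx58}) then has a contribution $-2k_n$ from precisely these directions (cancelled inside $b(\vec{\mathcal C})+2n(\vec{\mathcal C})$ bookkeeping), which a Conley–Zehnder/spectral-flow count with ``$\ker L_i=0$'' would not produce. To repair the proposal you would need to separate the two puncture types, replace the non-degenerate-Hessian decay mechanism by the fractional-monodromy plus center-of-mass mechanism at narrow ends, and add both the weight $\tau$ and the $\mathbb C^{k_n}$-augmentation before claiming Fredholmness and computing the index.
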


There are certain difficulties in formulating this problem properly: First, to write down the Witten equation (\ref{equation12}) for a superpotential $W$ on a general Riemann surface which has no global holomorphic coordinate, one needs an extra structure (the $W$-structure) on the Riemann surface. For example, if $W$ is a generic homogeneous polynomial of degree $r$, then a natural choice of such a structure is an $r$-spin structure, i.e., an $r$-th root of the canonical bundle of the Riemann surface. (see \cite{FJR2} for a comprehensive study of $W$-structures and their moduli.) Based on Fan-Jarvis-Ruan's work, we realized that the purpose of having a $W$-structure is to lift the superpotential to the fibre bundle. For GLSM, $W$ is invariant under the action copy of ${\mb C}^*$. Therefore we have to make the $W$-structure consistent with another line bundle so that $W$ can be lifted and we can write (\ref{equation12}) globally on a Riemann surface.

Another difficulty is how to set up a proper perturbation scheme of the gauged Witten equation (\ref{equation14}). In Fan-Jarvis-Ruan's framework, $W$ is a nondegenerate quasi-homogeneous polynomial and the perturbation in \cite{FJR3} was done by adding a small generic holomorphic function $\epsilon f$ to $W$ so that $W+ \epsilon f$ becomes a holomorphic Morse function. Using a cut-off function one can extend the perturbation to the whole Riemann surface. On the other hand, the beautiful Picard-Lefschetz theory about isolated hypersurface singularities guarantees that generic perturbations can give topological information about the singularity. For general $W$ with non-isolated critical points, there is no Picard-Lefschetz theory and we don't know if generic perturbations can unwrap interesting topology. This is one reason why we restrict to the case of superpotentials of Lagrange multiplier type (i.e., $W = pQ$). In this case we perturb $pQ(x)$ to $p(Q(x) -a) + F(x)$, so that the topology of the regular hypersurface $Q^{-1}(a)$ will be relevant, and objects like vanishing cycles appear again. 

\subsection{Difficulties in proving compactness}

The most important technical result of the current paper is the compactness of solutions to the perturbed gauged Witten equation. The theorem reads (see Theorem \ref{thm65})
\begin{thm}
Let $\vec{\mc C}$ be a rigidified $r$-spin curve (see Definition \ref{defn210}). Then any sequence $(A^{(i)}, u^{(i)})$ of stable solutions to the perturbed gauged Witten equation on $\vec{\mc C}$ with $\sup_i E (A^{(i)}, u^{(i)} ) < \infty$, modulo gauge transformation, has a convergent subsequence with respect to the natural Gromov-type topology. 
\end{thm}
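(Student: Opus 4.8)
\emph{Sketch of the proof.} Because the rigidified $r$-spin curve $\vec{\mc C}$ is fixed and smooth, the domain does not degenerate, so the only source of non-compactness is the concentration of energy at finitely many points of $\Sigma$ and the behaviour along the cylindrical ends. The plan is to combine five ingredients: (i) the uniform $C^0$-bound, which confines the sections $u^{(i)}$ to a fixed compact region of the total space (so that the noncompactness of the fibre $\wt X = X \times {\mb C}$ plays no role); (ii) Uhlenbeck-type gauge fixing together with elliptic bootstrapping on the region of bounded energy density; (iii) a rescaling (``soft rescaling'') analysis at the concentration points, producing bubble trees; (iv) a neck estimate of isoperimetric type guaranteeing that no energy is lost and that the limit configuration is connected; and (v) the asymptotic convergence and exponential decay established in Section \ref{section3}, used to control the ends uniformly in $i$.

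First I would fix a gauge for each $A^{(i)}$. After passing to a subsequence, Uhlenbeck's theorem and a patching argument (using the energy bound and the $C^0$-bound) produce a finite set $Z = \{z_1, \dots, z_\ell\} \subset \Sigma$ --- those $z$ with $\liminf_{\epsilon \to 0}\limsup_i E(A^{(i)}, u^{(i)}; B_\epsilon(z)) \geq \hbar$ for the energy quantum $\hbar > 0$ --- together with local Coulomb gauges on $\Sigma \setminus Z$ in which $(A^{(i)}, u^{(i)}) \to (A^\infty, u^\infty)$ in $C^\infty_{\mathrm{loc}}(\Sigma \setminus Z)$, where $(A^\infty, u^\infty)$ solves (\ref{equation14}) on $\Sigma \setminus Z$ with finite energy. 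Finiteness of $Z$ follows from $\sup_i E(A^{(i)}, u^{(i)}) < \infty$ and the existence of $\hbar$; the latter is the $\epsilon$-regularity statement for the nonlinear system (\ref{equation14}) (small energy on a ball forces a bound on the energy density), proved via a mean-value inequality in the usual way. A removable-singularity theorem --- across each $z_j$, and across the punctures of $\vec{\mc C}$ using the decay from Section \ref{section3} --- then extends $(A^\infty, u^\infty)$ to a genuine solution on $\vec{\mc C}$.

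Next I would carry out the bubbling analysis at each $z_j \in Z$. Choosing rescaling centers and radii by the standard maximal-density procedure and trivializing the bundle near $z_j$, the rescaled solutions converge modulo gauge to a non-constant finite-energy object on $\mb C$ (or ${\mb P}^1$): either a holomorphic sphere in a fibre $\wt X = X \times {\mb C}$ --- hence, since ${\mb C}$ admits no non-constant holomorphic sphere, a holomorphic sphere in $X$ --- or an affine vortex, i.e.\ a finite-energy solution of the symplectic vortex equation on ${\mb C}$. The term $\nabla W(u)$ is of lower order under every such rescaling and disappears in the limit, so the bubbles are exactly those of the underlying gauged Gromov-Witten theory (the $W \equiv 0$ case). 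Iterating the rescaling produces a finite bubble tree at each $z_j$, finiteness again coming from $\hbar$.

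The step I expect to be the main obstacle is the \emph{no neck / no energy loss} statement: on the long annuli interpolating between the principal component and the bubbles one must show that the energy either is captured by a bubble or tends to zero, and that the relevant evaluation/limit maps match so that the whole configuration is connected. This requires an isoperimetric inequality for the gauged Witten equation tuned to the (a priori noncompact) target, together with uniform control propagating the $C^0$-bound through the rescalings so that the rescaled solutions cannot escape to infinity in the fibre direction. Finally, along the cylindrical ends of $\vec{\mc C}$ the asymptotic theorem of Section \ref{section3} gives, for each $i$, convergence to a critical point of the perturbed superpotential with exponential decay; since there are only finitely many such critical points, a further subsequence makes the limiting critical point at each end independent of $i$, and the exponential decay supplies the uniform-in-$i$ estimates on long cylinders needed to conclude convergence at the ends with no energy escaping to the puncture. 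Assembling the principal component $(A^\infty, u^\infty)$ with the bubble trees over $Z$ yields a stable solution, and the above convergences together with the no-energy-loss statement are exactly convergence in the Gromov-type topology; no unstable ghost components can occur since every bubble carries energy at least $\hbar$.
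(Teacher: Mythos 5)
Your proposal treats the uniform $C^0$-bound as a given input (item (i)), but establishing that bound is in fact the central and hardest part of the argument, and the strategy the paper uses for it is quite different from standard $\epsilon$-regularity. Because $\wt{X}$ is noncompact and the gradient $\nabla W$ is \emph{not} a proper map, one cannot bound $u$ by $\nabla W(u)$ as in Fan--Jarvis--Ruan, and the perturbation term spoils the naive maximum-principle argument of the unperturbed symplectic vortex case. The paper's route (Section~\ref{section7}) is an energy-quantization statement keyed not to $|d_A u|$ blowing up but to the section \emph{escaping to infinity in the target}: Theorem~\ref{thm71} and Corollary~\ref{cor72} show that if $|\mu(u_i)|\to\infty$ near some point of $\Sigma^*$, then a definite quantum of energy must concentrate there, and this requires a delicate case analysis according to whether the blow-up happens near $\wt{X}_B=\{Q=0\}$, near $\wt{X}_S=\{\star\}\times{\mb C}$, or away from ${\rm Crit}\,W$, using the Lagrange-multiplier structure of $W=pQ$ in an essential way. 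Only after that is the $C^0$-bound deduced (Theorem~\ref{thm83}), via a local maximum principle for the $G$-invariant proper function ${\mc F}$ of Lemma~\ref{lemma24} combined with the differential inequality of Proposition~\ref{prop82}. Your sketch nowhere confronts this issue and would not close without it.

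Two further discrepancies with the paper's actual limiting objects: (a) you expend effort on interior bubble trees of holomorphic spheres and affine vortices, but once the $C^0$-bound is in place, hypothesis ({\bf X1}) (symplectic asphericity) rules out all sphere bubbling (Proposition~\ref{prop84} gives a uniform $C^0$-bound on $d_A u$), so there is no bubbling at interior points, no bubble trees, and the ``no-neck'' difficulty you highlight as the main obstacle does not arise; (b) you omit the phenomenon that the paper actually has to handle at the broad cylindrical ends, namely Morse--Floer-type breaking into nontrivial solitons (finite-energy solutions of (\ref{equation62}) on the full cylinder $\Theta$), which form the $\{{\bm u}_j\}$-part of a stable solution (Definition~\ref{defn64}). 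Merely passing to a subsequence to fix the limiting critical point at each end misses that the energy escaping down the end can be captured by a chain of such solitons; the correct argument uses Theorem~\ref{thm43} to detect where the energy density stays above the threshold $\epsilon_2$ and extracts a maximal $\epsilon_2$-bubbling list of translated solutions converging to nontrivial solitons whose evaluations match up.
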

Its proof occupies the last three sections (Section \ref{section6}--\ref{section8}) of the paper. Moreover, in order to use the compactness theorem, we need to prove that the energy of solutions with fixed homology class is uniformly bounded (Theorem \ref{thm44}). This requires a delicate control on the contribution of the perturbation term, for which we have to include a non-local parameter in the perturbation term (Definition \ref{defn215}) and impose a few more properties (see Hypothesis \ref{hyp28}). 

The main issue in proving compactness is to establish a uniform $C^0$-bound on solutions. Since the target space is noncompact, this is not automatic and usually one has to assume conditions about the geometry of the target space at infinity. For example, in Gromov-Witten theory one can assume the existence of a plurisubharmonic function on the manifold; in the case of symplectic vortex equation, there is also an analogous, $G$-equivariant version of this convexity assumption (see \cite[Section 2.5]{Cieliebak_Gaio_Mundet_Salamon_2002}). The uniform bound then follows from a strong maximal principle argument.

In our situation, if the equation is unperturbed, the solutions are holomorphic and they are special solutions to the symplectic vortex equation. So one can prove the $C^0$-bound in the same way as in \cite{Cieliebak_Gaio_Mundet_Salamon_2002}. The difficulty lies in the perturbed case, where the perturbation term disturbs the control. Even worse, in our case, the gradient $\nabla W$ is not a proper map, so $\nabla W(u)$ cannot control $u$ (such a control \cite[Theorem 5.8]{FJR1} is a crucial technical ingredient in the compactness theorem of Fan-Jarvis-Ruan).

We take a different route. We prove that for a sequence of solutions $(A_i, u_i)$ with uniform energy bound, if $u_i$ blows up near some point on the Riemann surface, then there must be an energy concentration (Corollary \ref{cor72}). Such a quantization property implies that the sequence are uniformly bounded up to blowing up at finitely many points. Then we argue that the blowing up contradicts with a local maximal principle. 

The establishment of this energy quantization property is lengthy due to the complicated behavior of the superpotential $W$ at infinity. The critical point set ${\rm Crit} W$ is a stratified space, and near infinity of the target space $\wt{X}$, ${\rm Crit} W$ has components of different nature. If the blow-up of solutions happens away from ${\rm Crit} W$, then the energy quantization is easy to achieve; if the blow-up happens near ${\rm Crit} W$, then in general, we can prove the energy concentration only when it is near a component of ${\rm Crit} W$ of Bott type. However, since in our main example $W$ is the Lagrange multiplier of a homogeneous polynomial, whose critical point set has necessarily a degenerate component, considering only Bott type critical loci is not enough. For the degenerate component, we have to use the special structure of the Lagrange multiplier; this is another (and a more important) technical reason why we have to restrict to such type of superpotentials. On the other hand, this part of argument is purely local and it may shed some light on more general cases.

Once $C^0$-bound is established, the remaining part of the proof of the compactness problem is straightforward. In this paper we assume that the target space is aspherical so that we can rule out sphere bubbles. On the other hand, on the cylindrical ends the solutions may undergo a Morse-Floer type degeneration, similar to the situation of \cite[Section 4]{FJR3}. In this situation we have to consider ``solitons'', which are solutions to (\ref{equation16}) on the cylinder ${\mb R} \times S^1$ with $W$ properly perturbed. A stable solution to the perturbed gauged Witten equation is the concatenation of a usual solution with (broken) solitons attached to the cylindrical ends. The construction of a stable solution in a subsequence limit follows from standard arguments.

In this paper we only consider the compactification of the moduli space for a fixed complex structure on the Riemann surface $\Sigma$. The compactification with degenerating complex structures will be much more complicated because the variations of holonomies on the forming nodes can give extra pieces of the limiting stable objects like the situation of \cite{Mundet_Tian_2009}, and it awaits further consideration.

\subsection{A formal definition of the GLSM correlation functions}

Our main goal of this series of papers is to define the correlation functions of the gauged linear $\sigma$-model. For this purpose we have to work out the transversality problem of the moduli space and prove that the correlation functions are independent of many choices we made in defining them. The details of constructing  the virtual cycle and proving its properties will be given in a forthcoming paper \cite{Tian_Xu_3}. Assuming the existence of virtual cycle, we announced the definition of the correlation function in \cite{Tian_Xu_2}, which we sketch here.

The correlation function can be defined for general Lagrange multiplier type superpotentials with appropriate assumptions on the pair $(X, Q)$ (see \cite{Tian_Xu_2}). For simplicity we sketch it for the case of (the Lagrange multiplier of) a quintic polynomial in 5 variables. Let $Q: {\mb C}^5 \to {\mb C}$ be a nondegenerate quintic polynomial and $W = pQ: {\mb C}^6 \to {\mb C}$ be the superpotential of GLSM. The {\bf state space} is the direct sum of the narrow sectors and the broad sector. For $\upgamma^{(k)} = \exp \left( {2k\pi {\bm i} \over 5} \right) \in {\mb Z}_5$ for $k = 1, 2, 3, 4$, the $\upgamma^{(k)}$-sector (which is narrow) of the state space ${\ms H}_k$ is a one-dimensional rational vector space, generated by one vector $\alpha_k$ of degree $2k -2$. For for $\upgamma^{(0)} = 1$, the broad sector ${\ms H}_0$ has pure degree $5$, and is isomorphic to the cohomology group
\begin{align*}
{\ms H}_0 = H^3 \big( \ov{X}_Q; {\mb Q} \big).
\end{align*}
Here $\ov{X}_Q \subset {\mb P}^4$ is the quintic hypersurface defined by $Q$. For each $a \in {\mb C}^*$, ${\ms H}_0$ can be identified canonically with the ${\mb Z}_5$-invariant part of the cohomology $H^4 \big( Q^{-1}(a); {\mb Q} \big)$. A perfect pairing can be defined on ${\ms H}_0$ so it is also identified with ${\ms H}_4 \big( Q^{-1}(a); {\mb Q} \big)^{{\mb Z}_5}$, i.e., the invariant part of the space of vanishing cycles. 

We denote by ${\ms H}_{\rm GLSM}$ the direct sum of broad and narrow sectors. The correlation function is the collection of multi-linear maps
\begin{align}\label{equation17}
\left\langle \ \cdots\ \right\rangle_{g, n}^d: \big( {\ms H}_{\rm GLSM} \big)^{\otimes n}  \to {\mb Q},\ g, n, d \in {\mb Z}, g \geq 0,\ 2g-2+n >0.
\end{align}
To define the correlation function, we need to do certain virtual integration on the moduli space of solutions to the perturbed gauged Witten equation. Here for simplicity, we omit the discussion about gravitational descendents.

In this simplified situation, the topological data we need to fix is the degree of the additional $S^1$-bundle $P_1$. This corresponds to the degrees of holomorphic curves in the quintic 3-fold $\ov{X}_Q$. For each rigidified $5$-spin curve, the perturbation data at broad punctures is given by the choice of $a \in {\mb C}^*$ and a linear function $F(x_1, \ldots, x_5)$. Then denote by ${\mc W}_{g, n}^d$ the moduli space of solutions to the perturbed gauged Witten equation over a genus $g$, $n$-marked rigidified $r$-spin curve, of degree $d$. The moduli space can be subdivided as the disjoint union of moduli spaces
\begin{align*}
{\mc W}_{g, n}^d \big( \vec{\upgamma}, \vec{\upkappa} \big).
\end{align*}
Here $\upgamma = ( \upgamma_1, \ldots, \upgamma_n ) \in ( {\mb Z}_5 )^n$ describes the monodromies of the $r$-spin structure at the $n$ punctures; $\vec{\upkappa} = (\upkappa_{i_1}, \ldots, \upkappa_{i_b} )$ describes the asymptotics at the broad punctures where each $\upkappa_{i_\alpha}$ is a critical point of the Lagrange multiplier $\wt{W} = p(Q -a) + F$, or equivalently of the function $F_a:= F|_{Q^{-1}(a)}$. We assume that each ${\mc W}_{g, n}^d \left( \vec{\upgamma}, \vec{\upkappa} \right)$ has a good compactification, over which we have a well-defined virtual cycle. Then, we can define the virtual counting
\begin{align*}
\# {\mc W}_{g, n}^d \left( \vec{\upgamma}, \vec{\upkappa} \right) \in {\mb Q}
\end{align*}
which is zero if the virtual dimension of ${\mc W}_{g, n}^d \left( \vec{\upgamma}, \vec{\upkappa} \right)$ is not zero. The correlation function will just be a linear combination of the virtual numbers.

For each critical point $\upkappa$ of $F_a$, its unstable submanifold with respect to the flow of the real part of $F_a$ is a 4-dimensional cycle in $Q^{-1}(a)$ relative to infinity, denoted by $[\upkappa] \in H_4 ( Q^{-1}(a), \infty)$. We define the correlation
\begin{align}\label{equation18}
\big\langle \theta_1, \ldots, \theta_n \big\rangle_{g, n}^d := \sum_{\vec{\upgamma}} \sum_{ \vec{\upkappa}} \# {\mc W}_{g, n}^d \big( \vec{\upgamma}, \vec{\upkappa} \big) \big( \theta_{i_1}^* \cap [\upkappa_{i_1}] \big) \cdots \big( \theta_{i_b}^* \cap [\upkappa_{i_b}] \big).
\end{align}
Here we assume that each $\theta_i\in {\ms H}_{\rm GLSM}$ is homogeneous, i.e., coming from a single sector and if $\theta_i$ is a narrow state, then $\theta_i$ is the generator of the corresponding narrow sector. The first summation in (\ref{equation18}) runs over all possible combination of monodromies $\vec{\upgamma} =(\upgamma_1, \ldots, \upgamma_n )$ of an $r$-spin structure over a genus $g$, $n$-marked stable curve, such that if $\theta_i \in {\ms H}_k$, then $\upgamma_i = \upgamma^{(k)}$; the second summation runs over all combinations of critical points $\vec{\upkappa} = ( \upkappa_{i_1}, \ldots, \upkappa_{i_b})$ of $\wt{W}$; $\theta_{i_1}^*, \ldots, \theta_{i_b}^* \in H_4 \big( Q^{-1}(a); {\mb Q} \big)^{{\mb Z}_5})$ are the duals of the broad states $\theta_{i_1}, \ldots, \theta_{i_b}$. The correlator (\ref{equation17}) is then defined by extending (\ref{equation18}) linearly.

\subsection{Organization of the paper}

In Section \ref{section2}, we give the basic set-up of the gauged Witten equation, including the basic assumptions, and how to perturb the equation. In Section \ref{section3}, we consider the asymptotic behavior of bounded solutions to the perturbed gauged Witten equation. In Section \ref{section4} we study the linear Fredholm property of the perturbed Witten equation and compute the index of the linearized operator.

In Section \ref{section6}--\ref{section8}, we consider the compactification of the moduli space when the complex structure of the Riemann surface $\Sigma$ is fixed. In Section \ref{section6} we first define the stable objects which are possible geometric limits of a sequence of solutions, and then state the compactness theorem. In Section \ref{section7} we establish the energy quantization about blowing-up of solutions. In Section \ref{section8} we establish the uniform $C^0$-bound and prove the compactness theorem.	
 
In Appendix \ref{appendixa} we provide some basic analytical results which are used in this paper. In Appendix \ref{appenxib} we include some basic facts about equivariant topology.

\subsection{Acknowledgements}

We would like to thank Simons Center for Geometry and Physics for hospitality during our visit in 2013. We thank Kentaro Hori, David Morrison, Edward Witten for useful conversations on GLSM. The second author would like to thank Chris Woodward for helpful discussions. The revision of this paper were partially made during the second author's visit to Institute for Advanced Study and he would like to thank Helmut Hofer for hospitality.

\section{The gauged Witten equation and perturbations}\label{section2}

\subsection{The target space}\label{subsection21}

Let $(X, \omega, J)$ be a K\"ahler manifold and $Q: X \to {\mb C}$ is a holomorphic function, with a single critical point $\star \in X$. We assume that there exists a Hamiltonian $S^1$-action with moment map $\mu_0 : X \to {\bm i}{\mb R}$. Here we identify ${\bm i} {\mb R} \simeq {\rm Lie} S^1$ with its dual space by the standard metric on ${\mb R}$. Then for the generator ${\bm i}$ of ${\rm Lie} S^1$, we denote its infinitesimal action by ${\mc X}_0 \in \Gamma (TX)$. 

We suppose that the $S^1$-action extends to a holomorphic ${\mb C}^*$-action. We also assume that $Q$ is homogeneous of degree $r$, $r>1$ with respect to this ${\mb C}^*$-action. This means for $x \in X$ and $\xi \in {\mb C}^*$, 
\begin{align*}
Q(\xi x) = \xi^r Q(x).
\end{align*}
Let $X_Q:= Q^{-1}(0)$, which is smooth away from $\star$. For any $\upgamma \in {\mb Z}_r$, let $X_\upgamma \subset X$ be the fixed point set of $\upgamma$ and $\wt{X}_\upgamma= X_{\upgamma}\times {\mb C}$.

The GLSM target space is the product $\wt{X} = X \times {\mb C}$, whose coordinates are denoted by $(x, p)$. The factor ${\mb C}$ has the standard K\"ahler structure so that it induces a product K\"ahler structure, which, for simplicity, is still denoted by $(\omega, J)$ on $\wt{X}$. We lift the ${\mb C}^*$-action on $X$ trivially to $\wt{X}$. The {\bf superpotential} is the holomorphic function
\begin{align*}
W: \wt{X} \to {\mb C},\ W(x, p) = p Q(x).
\end{align*}
$W$ is also of degree $r$ with respect to the ${\mb C}^*$-action because $W(\xi (x,p)) = W(\xi x, p) = \xi^r W(x, p)$. 

On the other hand, let $G_1 = S^1$ and we consider the $G_1^{\mb C} = {\mb C}^*$-action on $\wt{X}$, given by
\begin{align*}
\zeta (x, p) = ( \zeta x, \zeta^{-r} p).
\end{align*}
$W$ is then $G_1^{\mb C}$-invariant. We use $G_0$ (resp. $G_0^{\mb C}$) to denote the copy of $S^1$ (resp. ${\mb C}^*$) which acts on $X$ and denote $G = G_0 \times G_1$ (resp. $G^{\mb C} = G_0^{\mb C} \times G_1^{\mb C}$). Then the $G$-action on $\wt{X}$ is Hamiltonian, with a moment map
\begin{align*}
\mu (x, p) = \Big( \mu_0 (x), \mu_0 (x) + {{\bm i}r \over 2}|p|^2 - \tau \Big).
\end{align*}
Here $\tau \in {\bm i} {\mb R}$ is a constant, which we fix from now on. We denote 
\begin{align*}
\mu_1(x, p) = \mu_0(x) + {{\bm i} r\over 2}|p|^2 - \tau
\end{align*}
which is a moment map for the $G_1$-action. Let ${\mf g}_i$ be the Lie algebra of $G_i$ for $i = 0, 1$ and ${\mf g} = {\mf g}_0 \oplus {\mf g}_1$. For any $\xi = (\xi_0, \xi_1) \in {\mf g}$, we denote by ${\mc X}_\xi = {\mc X}_{\xi_0} + {\mc X}_{\xi_1}\in \Gamma (T\wt{X})$ the infinitesimal action of $\xi$.

We make the following assumptions on the structures, which are all satisfied by the typical example of nondegenerate homogeneous polynomials on ${\mb C}^n$ of degree at least 2.
\begin{hyp}
\hspace{2em}
\begin{enumerate}
\item[({\bf X1})] $(X, \omega)$ is symplectically aspherical.

\item[({\bf X2})] The Riemannian curvature of $X$ is uniformly bounded; the complex structure $J$ is uniformly continuous on $X$ with respect to the K\"ahler metric in the sense of Definition \ref{defna1}.

\item[({\bf X3})] The moment map $\mu_0$ is proper and there exists $c >0$ such that for any $x \in X$,
\begin{align*}
{1\over c} {\bm i} \mu_0(x) -  c \leq \big| {\mc X}_0(x) \big|^2 \leq c {\bm i} \mu_0(x) + c.
\end{align*}

\item[({\bf X4})] As a real quadratic form on $TX$, we have
\begin{align*}0 \leq \nabla^2 \big( {\bm i}\mu_0 \big) \leq r. 
\end{align*}
\end{enumerate}
\end{hyp}\label{hyp21}

\begin{rem}
({\bf X1}) is imposed in order to simplify the proof of compactness; it can be removed. ({\bf X2}) is a bounded geometry at infinity assumption of $X$, which is used to prove the uniform $C^0$-bound of solutions (see Section \ref{section7}-\ref{section8}). The precise upper bound of ({\bf X4}) seems to be too strong but it is satisfied by all quasi-homogeneous polynomials on ${\mb C}^n$ of positive degrees. The condition ({\bf X3}) implies certain convexity about the geometry of $\wt{X}$ near infinity. \end{rem}

\begin{lemma}\label{lemma23}
For each $h \in {\bm i}{\mf g}$, we denote 
\begin{align}\label{equation21}
\big( |h|_{\wt{X}} \big)^2 = \big\| (e^h)^* \omega \|_{L^\infty(\wt{X})}.
\end{align}
There is a constant $c>0$ such that for any $h \in {\bm i} {\mf g}$,
\begin{align*}
\big( |h|_{\wt{X}} \big)^2 \leq c|h|.
\end{align*}
\end{lemma}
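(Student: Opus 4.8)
The plan is to reduce the estimate to a one-parameter distortion bound for the K\"ahler form along the real one-parameter subgroups of $G^{\mb C}$, and then feed in Hypothesis ({\bf X4}). First I would make the $G^{\mb C}$-action fully explicit: writing $h = (h_0, h_1) \in {\bm i}{\mf g} \cong {\mb R}\oplus{\mb R}$, setting $t := h_0 + h_1$, and letting $\phi_t : X \to X$ be the time-$t$ map of the real one-parameter subgroup $\{e^t\} \subset {\mb C}^* = G_0^{\mb C}$ acting on $X$, one has $e^h\cdot(x,p) = \big(\phi_t(x),\, e^{-rh_1}p\big)$. Since $(\omega, J)$ on $\wt{X}$ is the product structure and the action respects the product, the pullback splits,
\[
(e^h)^*\omega = {\rm pr}_X^*\big(\phi_t^*\omega_X\big) + e^{-2rh_1}\,{\rm pr}_{\mb C}^*\omega_{\mb C}.
\]
Both summands are nonnegative $(1,1)$-forms, since the maps involved are holomorphic; hence $(e^h)^*\omega$ is comparable to $\omega$, the comparison factor being the scalar $e^{-2rh_1}$ on the $\mb C$-factor and the pullback metric $\phi_t^*g_X$ relative to $g_X$ on the $X$-factor, and $(|h|_{\wt X})^2$ controls the logarithm of these. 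The $\mb C$-factor is immediate: it contributes $2r|h_1| \le 2r|h|$, so everything reduces to estimating $\phi_t^*\omega_X$ for $|t| \le \sqrt 2\,|h|$.

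The key step is the $X$-factor estimate, and this is where ({\bf X4}) is used. Because the ${\mb C}^*$-action on $X$ is holomorphic, $\phi_t$ is a biholomorphism, so the K\"ahler identity $\omega(\,\cdot\,,J\,\cdot\,)=g(\,\cdot\,,\,\cdot\,)$ together with $d\phi_t\circ J = J\circ d\phi_t$ gives, for any $v \in TX$, $\phi_t^*\omega_X(v, Jv) = |d\phi_t(v)|^2$ while $\omega_X(v, Jv) = |v|^2$; the distortion is therefore governed entirely by $|d\phi_t|$. The generator of $\phi_t$ is, up to a global sign, the K\"ahler gradient $\nabla({\bm i}\mu_0)$ (equivalently $\pm J{\mc X}_0$), so $V(t) := d\phi_t(v)$ solves the first-variation equation $\tfrac{D}{dt}V = \pm\nabla_V\nabla({\bm i}\mu_0)$, whence $\tfrac{d}{dt}|V|^2 = \pm 2\,\nabla^2({\bm i}\mu_0)(V,V)$. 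The two-sided bound $0 \le \nabla^2({\bm i}\mu_0) \le r$ of ({\bf X4}) then forces $\big|\tfrac{d}{dt}\log|V|^2\big| \le 2r$ pointwise on all of $X$, hence $\big|\log|d\phi_t(v)|^2 - \log|v|^2\big| \le 2r|t|$. Combining with the $\mb C$-factor, the logarithmic size of $(e^h)^*\omega$ relative to $\omega$, which is what $(|h|_{\wt X})^2$ records, is at most $2r\max(|h_0 + h_1|, |h_1|) \le 2\sqrt 2\, r\,|h|$, giving the lemma with $c = 2\sqrt 2\, r$.

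The main obstacle is getting a bound that is genuinely \emph{linear} in $|h|$, not exponential: a crude Gr\"onwall estimate on $\|d\phi_t\|$ by itself only produces something of size $e^{O(|h|)}$, and bounding $\|\nabla{\mc X}_0\|$ naively is worthless on the noncompact $X$. This is exactly what ({\bf X4}) is engineered to defeat --- it is a \emph{global, two-sided} bound on the Hessian of ${\bm i}\mu_0$ (equivalently on $\nabla(J{\mc X}_0)$), and what it controls is the logarithmic derivative $\tfrac{d}{dt}\log|V|^2$ rather than $|V|^2$ itself, which is precisely the quantity entering $|h|_{\wt X}$. The remainder is bookkeeping: pinning down the sign and normalization relating ${\mc X}_0$, $J$, $\nabla({\bm i}\mu_0)$ and the direction of $\phi_t$; noting that $\phi_t$ is defined for all $t$ because the ${\mb C}^*$-action is global; and, if needed, invoking ({\bf X2}) so that the constants in the K\"ahler relations are uniform near infinity.
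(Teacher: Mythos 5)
Your argument is, at its core, the same as the paper's: both reduce the claim to a pointwise bound on the Lie derivative of the K\"ahler data along the flow of the ${\bm i}{\mf g}$-action, and both feed in Hypothesis ({\bf X4}). Your identity $\tfrac{d}{dt}|V|^2 = \pm 2\nabla^2({\bm i}\mu_0)(V,V)$ for $V = d\phi_t(v)$ is exactly the paper's computation $({\mc L}_{J{\mc X}}\omega)(Y,Z) = 2\langle Y,\nabla_Z{\mc X}\rangle$ read through the first-variation equation of the flow, and your handling of the $\mb C$-factor is just the unpacking of the same statement for $\mu_1$. So the route is not genuinely different; it is the paper's argument made more explicit.

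There is, however, a real gap in the last step, and it is worth naming because it is easy to miss. What your calculation actually delivers is the two-sided Gr\"onwall estimate $e^{-2r|t|} \le |d\phi_t v|^2/|v|^2 \le e^{2r|t|}$ (and $e^{-2r h_1}$ on the $\mb C$-factor), i.e.\ an \emph{exponential} bound on the distortion of the metric, not a linear one. You then pass to the linear statement by declaring that ``the logarithmic size of $(e^h)^*\omega$ relative to $\omega$'' is ``what $(|h|_{\wt X})^2$ records.'' That is a reinterpretation of (\ref{equation21}), not a reading of it: as written, $(|h|_{\wt X})^2 = \|(e^h)^*\omega\|_{L^\infty(\wt X)}$, and with the pointwise norm taken against the K\"ahler metric this equals $\|\omega\|_{L^\infty}>0$ at $h=0$, so the claimed inequality $(|h|_{\wt X})^2\le c|h|$ cannot hold for the quantity literally defined in (\ref{equation21}). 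The definition must be meant as a normalization of the distortion --- either $\|(e^h)^*\omega-\omega\|_{L^\infty}$ (in which case you get $c|h|$ only after restricting to $|h|\le R$ with $c=c(R)$, which is all that Corollary \ref{cor72} needs since it only uses the lemma on $|h|\le H(E)$), or the $\log$ of the comparison factor (in which case your $c=2\sqrt 2\,r$ is a genuine global constant). You should say explicitly which correction of (\ref{equation21}) you are working with rather than substituting one silently; the paper's own proof is equally terse at the integration step and inherits the same ambiguity, so you have not introduced the issue, but a proof that asserts a linear bound after deriving an exponential one needs to account for the discrepancy.
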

\begin{proof}
For any two tangent vector fields $Y, Z$, we have
\begin{align*}
\begin{split}
\big({\mc L}_{J{\mc X}} \omega \big)(Y, Z) = &\ Y \omega (J{\mc X}, Z) - Z \omega( J{\mc X}, Y) - \omega( J{\mc X}, [Y, Z]) \\
= &\ Z \langle {\mc X}, Y \rangle - Y \langle {\mc X}, Z \rangle + \langle {\mc X}, [Y, Z]\rangle \\
= &\ \langle Y, \nabla_Z {\mc X}\rangle - \langle Z, \nabla_Y {\mc X} \rangle \\
= &\ 2 \langle Y, \nabla_Z {\mc X}\rangle.
\end{split}
\end{align*}
The last inequality follows from the fact that ${\mc X}$ is Killing. Notice that $|\nabla {\mc X}| = |\nabla^2\mu|$. Then by ({\bf X4}) of Hypothesis \ref{hyp21}, $\big| {\mc L}_{J{\mc X}} \omega \big|$ is uniformly bounded throughout $\wt{X}$. Then the lemma follows from the fact that $J{\mc X}$ is the infinitesimal ${\bm i} {\mf g}$-action.
\end{proof}

For any $b \in (0,1)$, define ${\mc F}_{b}: \wt{X} \to {\mb R}$ by
\begin{align*}
{\mc F}_b(x, p):= \mu_0(x) \cdot \Big({ {\bm i} (1- b) \over r} \Big) + {b \over 2} |p|^2 = \mu \cdot \Big(  - {{\bm i} \over r}, {{\bm i}b \over r} \Big).
\end{align*}
\begin{lemma}\label{lemma24}
For any $b \in (0, 1)$, ${\mc F}_b: \wt{X} \to {\mb R}$ is a proper function and is bounded from below. Moreover, there exist a constant $c_0 >0$, a choice of $b_0 \in (0, 1)$ and $\lambda_0 >0$ such that 
\begin{align}\label{equation22}
\big\langle \nabla {\mc F}_{b_0}, J{\mc X}_{(\lambda_0 \mu_0, \mu_1)} \big\rangle \geq {1\over c_0} \big| \mu(u) \big|^2 - c_0.
\end{align}
\end{lemma}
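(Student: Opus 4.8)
The plan is to compute $\langle \nabla{\mc F}_b, J{\mc X}_{(\lambda\mu_0,\mu_1)}\rangle$ explicitly in terms of the data $\mu_0$, $|p|^2$, and $|{\mc X}_0|^2$, and then choose the parameters $b$ and $\lambda$ so that the resulting expression is bounded below by a positive multiple of $|\mu|^2$ up to an additive constant. The first step is to recall that for a moment map $\mu$ of a Hamiltonian action and a vector in the Lie algebra $\xi$, one has $\nabla (\mu\cdot\xi) = -J{\mc X}_\xi$ (up to sign conventions fixed in the excerpt), so $\nabla{\mc F}_b = \nabla(\mu\cdot(-{\bm i}/r,\,{\bm i}b/r)) = -J{\mc X}_{(-{\bm i}/r,\,{\bm i}b/r)}$, hence
\[
\big\langle \nabla{\mc F}_b, J{\mc X}_{(\lambda\mu_0,\mu_1)}\big\rangle
= -\big\langle J{\mc X}_{(-{\bm i}/r,\,{\bm i}b/r)},\, J{\mc X}_{(\lambda\mu_0,\mu_1)}\big\rangle
= -\big\langle {\mc X}_{(-{\bm i}/r,\,{\bm i}b/r)},\, {\mc X}_{(\lambda\mu_0,\mu_1)}\big\rangle .
\]
Here the second Lie-algebra element is genuinely a \emph{function}-valued multiplier, $(\lambda\mu_0,\mu_1)\in {\bm i}{\mf g}$ pointwise (this is the standard "multiplier" trick, as in the convexity argument of \cite{Cieliebak_Gaio_Mundet_Salamon_2002}), so ${\mc X}_{(\lambda\mu_0,\mu_1)}$ denotes $\lambda({\bm i}\mu_0){\mc X}_0 + ({\bm i}\mu_1)({\mc X}_0 + {\mc X}_1^{p})$ evaluated pointwise, where ${\mc X}_1^p$ is the $G_1$-infinitesimal vector. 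Expanding the pairing using the product structure on $\wt X = X\times{\mb C}$, where ${\mc X}_0$ lives on $X$ and the $G_1$-action is the diagonal one $(\zeta x,\zeta^{-r}p)$, one gets an explicit quadratic expression in ${\bm i}\mu_0$, $|p|^2$, and $|{\mc X}_0|^2$.

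The second step is the algebra of choosing constants. Writing $t = {\bm i}\mu_0(x)\ge -c$ (bounded below, by ({\bf X3})) and $q = \tfrac{r}{2}|p|^2\ge 0$, one has ${\bm i}\mu_1 = t + q - {\bm i}\tau$ where ${\bm i}\tau$ is a fixed constant; and on the $X$-factor the term coming from ${\mc X}_0$ contributes $|{\mc X}_0|^2$, which by ({\bf X3}) is comparable to $t$. The diagonal $G_1$-part contributes a term with both an $X$-piece (comparable to $({\bm i}\mu_1)\,t$, or more precisely $({\bm i}\mu_1)|{\mc X}_0|^2/(\text{const})$) and a ${\mb C}$-piece of the form $({\bm i}\mu_1)\cdot r|p|^2 \sim ({\bm i}\mu_1)\cdot 2q$. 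Assembling, $\langle\nabla{\mc F}_b, J{\mc X}_{(\lambda\mu_0,\mu_1)}\rangle$ becomes, up to positive constants and the bounded error from ({\bf X3}), a positive-definite quadratic form in $(t,q)$ plus lower-order (linear) terms and a constant. Meanwhile $|\mu|^2 = |\mu_0|^2 + |\mu_1|^2 \sim t^2 + (t+q-{\bm i}\tau)^2$, which is also a quadratic form in $(t,q)$ of the same "size". So the whole problem reduces to: choose $b_0\in(0,1)$ and $\lambda_0>0$ so that the coefficient matrix of the computed quadratic form dominates a positive multiple of the coefficient matrix of $t^2 + (t+q)^2$; the linear terms and constants are absorbed into the $-c_0$ by completing the square (using that $t$ is bounded below so that, e.g., $\epsilon t^2 - Ct \ge -C'$). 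One then verifies this is a nonempty open condition on $(b_0,\lambda_0)$ — one expects that taking $\lambda_0$ large compared to $1$ makes the $t^2$ coefficient dominate, while $b_0$ is tuned to keep the cross term and the $q^2$ coefficient in the right range.

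The properness and lower-boundedness of ${\mc F}_b$ (the first sentence of the lemma) is the easy part: $\mu_0$ is proper by ({\bf X3}) and ${\mc F}_b = \tfrac{(1-b)}{r}({\bm i}\mu_0) \cdot({\bm i})^{-1}\cdot{\bm i}\,\cdots$ — more carefully, ${\mc F}_b(x,p) = \tfrac{1-b}{r}({\bm i}\mu_0(x))\cdot(\text{positive const})\cdot(-1)$... I would instead just note ${\mc F}_b$ is, up to a sign and scaling, $\tfrac{1-b}{r}\cdot({\bm i}\mu_0) + \tfrac{b}{2}|p|^2$ with the first term proper and bounded below (since ${\bm i}\mu_0$ is, by ({\bf X3})) and the second term proper on the ${\mb C}$-factor and nonnegative, so their sum is proper on $\wt X = X\times{\mb C}$ and bounded below. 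The genuine obstacle is the constant-chasing in the second step: one must be careful that ({\bf X3}) gives only \emph{two-sided} control of $|{\mc X}_0|^2$ by ${\bm i}\mu_0$ with the \emph{same} constant $c$ on both sides but with an additive slack, so the cross terms involving $|{\mc X}_0|^2$ cannot be pinned down exactly — only up to $O(1)$ and up to a factor in $[1/c,c]$. The argument must therefore be robust: the dominance of the quadratic form should hold for the whole \emph{range} of possible values of $|{\mc X}_0|^2$ allowed by ({\bf X3}), not just for a single value. I expect this to work because the inequality ({\bf X4}), $0\le \nabla^2({\bm i}\mu_0)\le r$, is exactly what makes the $G_1$-direction (which involves the combination $\mu_0 + \tfrac{r}{2}|p|^2$, i.e. rate $r$ in $p$ against rate $\le r$ in $x$) sign-definite — this is presumably why the upper bound $r$ in ({\bf X4}), flagged as "too strong" in the remark, is precisely the natural threshold here.
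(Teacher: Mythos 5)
Your proposal follows essentially the same route as the paper's proof: dismiss properness/lower-boundedness via ({\bf X3}), compute $\langle\nabla{\mc F}_b, J{\mc X}_{(\lambda\mu_0,\mu_1)}\rangle$ explicitly (the paper computes $J{\mc X}_{\mu_0}$ and $J{\mc X}_{\mu_1}$ separately), observe that the top-degree ``quartic'' part is a quadratic form in two nonnegative quantities, and tune $(b,\lambda)$ so that this form is positive definite, absorbing lower-order terms and the $|\mu|^2$-comparison into the constant $c_0$. The paper works with the variables $A=|{\mc X}_0|^2$, $B=|p|^2$, in which the quartic part is a bona fide quadratic form $(1+\lambda)\tfrac{1-b}{cr}A^2 - (\tfrac{1-b}{2}+rbc)AB + rbB^2$ and the positive-definiteness condition is a clean discriminant inequality satisfiable by taking $\lambda$ large; your choice of $(t,q)=({\bm i}\mu_0, \tfrac r2|p|^2)$ forces you to pass through ({\bf X3}) with two-sided slack, which works but is slightly less convenient. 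Your intuition that large $\lambda_0$ is what closes the argument is correct.

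One concrete error: your closing speculation that ({\bf X4}) is what makes the $G_1$-direction sign-definite and that its upper bound $r$ is ``the natural threshold'' for this lemma is not right. The paper's proof of Lemma \ref{lemma24} uses only ({\bf X3}); no Hessian of $\mu_0$ enters. Hypothesis ({\bf X4}) is used elsewhere — in Lemma \ref{lemma23} (to bound $|\nabla {\mc X}| = |\nabla^2\mu|$, hence $|{\mc L}_{J{\mc X}}\omega|$) and in Lemma \ref{lemma81} (to get $0\leq{\mc H}\leq 1$). In the present lemma the sign-definiteness of the quartic part is controlled purely by the growth comparison between ${\bm i}\mu_0$ and $|{\mc X}_0|^2$ from ({\bf X3}) together with the explicit $rb|p|^4$ and cross term coming from the diagonal $G_1$-action, and this is where the freedom to choose $\lambda_0$ large is spent.
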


\begin{proof}
The properness and the fact that ${\mc F}_b$ is bounded from below follow immediately from ({\bf X3}) of Hypothesis \ref{hyp21}. On the other hand, denoting $\rho  = |p|$, we have
\begin{align*}
\begin{split}
J{\mc X}_{\mu_0} (x, p) = &\ \Big(  -{\bm i} \mu_0(x)  J{\mc X}_0 (x) , 0 \Big),\\
J{\mc X}_{\mu_1} (x, p) = &\ \Big( \big( - {\bm i} \mu_0(x) + {r \over 2} |p|^2 + {\bm i} \tau \big) J {\mc X}_0(x),  r \big(  -{\bm i} \mu_0(x) + {r\over 2}|p|^2 + {\bm i} \tau      \big) \rho {\partial \over \partial \rho}  \Big).
\end{split}
\end{align*}
Then by ({\bf X3}) of Hypothesis \ref{hyp21}, we have
\begin{align*}
\begin{split}
\Big\langle \nabla {\mc F}_b, J{\mc X}_{\mu_0} \Big\rangle = &\ \Big\langle - \big({ 1- b \over r} \big)J{\mc X}_0, - {\bm i} \mu_0 J {\mc X}_0 \Big\rangle\\
= &\ \big({1-b \over r} \big) {\bm i}\mu_0 |{\mc X}_0|^2\\
\geq &\ \big( {1-b \over c r} \big) |{\mc X}_0|^4 - \big({1-b \over r} \big) |{\mc X}_0|^2;\\
\Big\langle \nabla {\mc F}_b, J{\mc X}_{\mu_1} \Big\rangle = &\ \Big\langle - \big({1- b \over r} \big) J {\mc X}_0 , \big( - {\bm i} \mu_0(x) + {r \over 2} |p|^2 + {\bm i} \tau \big) J {\mc X}_0(x) \Big\rangle\\
&\ + \Big\langle b \rho{\partial \over \partial \rho}, r \big(  -{\bm i} \mu_0(x) + {r\over 2}|p|^2 + {\bm i} \tau      \big) \rho {\partial \over \partial \rho} \Big\rangle\\
= &\ \big({1-b \over r}\big) \big( {\bm i} \mu_0 - {\bm i} \tau \big) |{\mc X}_0|^2 + r b |p|^4 - \big( {1-b \over 2} \big) |p|^2 |{\mc X}_0|^2 \\
&\ - r b {\bm i}\mu_0 |p|^2 + br {\bm i} \tau |p|^2\\
\geq &\ \big({1-b \over c r} \big) |{\mc X}_0|^4 + rb |p|^4 - \Big( \big({1- b \over 2}\big) + rb c \Big) |p|^2 |{\mc X}_0|^2 \\
&\ - \big({1- b \over r} \big) ( 1 + {\bm i} \tau) |{\mc X}_0|^2  + br ( {\bm i} \tau - c^2) |p|^2 .
\end{split}
\end{align*}
It suffices to choose $\lambda$ and $b$ so that the quartic part of $\langle \nabla {\mc F}_b, J{\mc X}_{(\lambda \mu_0, \mu_1)} \rangle$ is a positive definite form in $|{\mc X}_0|^2$ and $|p|^2$, i.e., to guarantee that the quadratic form
\begin{align*}
( 1+ \lambda) \big( {1- b \over c r} \big) A^2 - \Big( \big({1- b\over 2} \big) + rb c \Big) AB + rb B^2
\end{align*} 
is positive definite. This is equivalent to 
\begin{align*}
\Big( \big({1- b\over 2} \big) + rb c \Big) < 4 b ( 1+ \lambda) \big( {1- b \over c } \big).
\end{align*}
It holds for certain $b = b_0 \in (0, 1)$ and $\lambda = \lambda_0 >0$. Then $c_0>0$ exists.  \end{proof}

We fix $b_0$ and $\lambda_0 $ and denote ${\mc F}_{b_0}$ by ${\mc F}$. We use $\lambda_0 $ to define a metric on ${\mf g}$ as
\begin{align}\label{equation23}
\big| (\xi_0, \xi_1) \big|^2 = \lambda_0^{-1} \big| \xi_0 \big|^2 + \big| \xi_1 \big|^2.
\end{align}
This metric induces an identification ${\mf g} \simeq {\mf g}^*$ and $(\lambda_0 \mu_0, \mu_1)$ can be viewed as the dual of the moment map with respect to this metric. Then (\ref{equation22}) can be rewritten as
\begin{align}\label{equation24}
\big\langle \nabla {\mc F}, J {\mc X}_{\mu^*} \big\rangle \geq {1\over c_0} \big| \mu \big|^2 - c_0.
\end{align}

Now we give the assumptions on the function $Q$. 
\begin{hyp}\label{hyp25}
\hspace{2em}
\begin{enumerate}
\item[({\bf Q1})] There is a constant $c_Q >1 $ and a $G_0$-invariant compact subset $K_Q \subset X$ such that
\begin{align*} 
x \notin K_Q \Longrightarrow  {1\over c_Q} \left| \nabla^3 Q(x) \right| \leq \left| \nabla^2 Q(x) \right| \leq  c_Q \left| \nabla Q(x) \right|.
\end{align*}
Moreover, for every $\delta>0$, there exists $c_Q(\delta)>0$ such that 
\begin{align*} 
d(x, X_Q) \geq \delta, x \notin K_Q \Longrightarrow |\nabla Q(x)| \leq c_Q(\delta) |Q(x)|.
\end{align*}

\item[({\bf Q2})] For every $\upgamma \in {\mb Z}_r$, it is easy to see that $dQ$ vanishes along the normal bundle $N_{\upgamma} \to X_{\upgamma}$. We assume that the Hessian $\nabla^2 Q$ vanishes along $N_{\upgamma}$.
\end{enumerate}
\end{hyp}

\begin{rem}
The condition ({\bf Q2}) is not essential but it helps reduce the technicality in proving the asymptotic property of solutions in Section \ref{section3}.
\end{rem}

\begin{defn}
$\upgamma \in {\mb Z}_r$ is called {\bf broad} (resp. {\bf narrow}) if $X_\upgamma \neq \{\star\}$ (resp. $X_\upgamma = \{\star\}$).
\end{defn}

\begin{hyp}\label{hyp28}For any broad $\upgamma \in {\mb Z}_r$, there are a function $F_\upgamma: X \to {\mb C}$ and $a_\upgamma \in {\mb C}^*$ satisfying the following conditions.
\begin{enumerate}
\item[({\bf P}1)] $F_\upgamma$ can be written as 
\begin{align*}
F_\upgamma = \sum_{l=1}^{s-1} F_{\upgamma;l},\ (2 \leq s \leq r)
\end{align*}
where $F_{\upgamma;l}: X \to {\mb C}$ is a holomorphic function of degree $l$ with respect to the $G_0^{\mb C}$-action on $X$. The pull-back of $F_{\upgamma;l}$ to $\wt{X}$ is still denoted by $F_{\upgamma;l}$.

\item[({\bf P2)}] Each $F_{\upgamma,l}$ is $\upgamma$-invariant. It is easy to see that $dF_{\upgamma;l}$ vanishes along the normal bundle $N_\upgamma \to X_\upgamma$. We require that for every $l$, the Hessian $\nabla^2 F_{\upgamma;l}$ vanishes along $N_\upgamma$.

\item[({\bf P3})] For $j = 0, 1, \ldots$, there exist constants $c^{(j)} >0$ such that for $l = 1, \ldots, s-1$ and $x \in X$, 
\begin{align*} 
\Big| F_{\upgamma;l} (x) \Big| \leq c^{(0)} \Big( 1 + \big| \mu_0 (x) \big| \Big)^{1\over 2},\ \Big| \nabla^j F_{\upgamma;l} \Big| \leq c^{(j)}, j \geq 1.
\end{align*}

\item[({\bf P4})] The restriction of $F_\upgamma$ to $Q^{-1}(a_\upgamma) \cap X_{\upgamma}$ is a holomorphic Morse function. This is equivalent to saying that the Lagrange multiplier $p(Q_\upgamma - a_\upgamma ) + F_\upgamma$ is a holomorphic Morse function on $\wt{X}_\upgamma$.

\item[({\bf P5})] The perturbation has no critical point at infinity, in the following sense. There exist a compact subset $\wt{K}_\upgamma \subset \wt{X}$ and a constant $c_\upgamma > 0$ such that 
\begin{align*}
(x, p) \notin \wt{K}_\upgamma \Longrightarrow \Big| \nabla \big( W - a_\upgamma p + F_\upgamma \big)(x, p) \Big| \geq c_\upgamma.
\end{align*}
\end{enumerate}
\end{hyp}

\begin{rem}
The conditions ({\bf P1})--({\bf P5}) are modelled on linear functions on ${\mb C}^n$, when $Q$ is a nondegenerate quasi-homogeneous polynomial. Like ({\bf Q2}), the second part of the condition ({\bf P2}) is not essential and can be removed.
\end{rem}

Now for each broad $\upgamma$, we fix the choice of the perturbation data $(a_\upgamma, F_\upgamma)$. We denote $F_{\upgamma; s} = - a_\upgamma p: \wt{X} \to {\mb C}$. We introduced 
\begin{align*}
W_\upgamma':= -a_\upgamma p + \sum_{l=1}^{s-1} F_{\upgamma; l} =: \sum_{l=1}^{s} F_{\upgamma; l}.
\end{align*}
We also denote $F_0 (x, p) = W(x, p) = pQ(x)$. For notational purpose, if $\upgamma$ is narrow, we take $W_\upgamma' = \sum_{l=1}^s F_{\upgamma; l}$ to be the sum of $s$ zero functions. Then we denote
\begin{align}\label{equation25}
\wt{W}_\upgamma = W + W_\upgamma' = \sum_{l=0}^s F_{\upgamma; l}.
\end{align}

For each $t\in G_0^{\mb C} = {\mb C}^*$, we denote
\begin{align*}
\wt{W}_\upgamma^{(t)}(x, p) = t^r \wt{W}_\upgamma ( t^{-1} x, p)  = W(x, p) - t^r a_\upgamma p + \sum_{l=1}^{s-1} t^{r- l} F_{\upgamma; l}(x) =: \sum_{l=0}^s F_{\upgamma; l}^{(t)}.
\end{align*}
Most of the time we will consider the case $t \in {\mb R}_+$ and use $\delta$ instead of $t$. We have
\begin{align}\label{equation26}
(x, p)\in {\rm Crit} \big( \wt{W}_\upgamma |_{\wt{X}_\upgamma} \big) \Longleftrightarrow  (tx, p) \in {\rm Crit} \big(  \wt{W}_\upgamma^{(t)}|_{\wt{X}_\upgamma} \big).
\end{align}

For $l=1, \ldots, s-1$, we denote by $\rho_l: G^{\mb C} \simeq {\mb C}^* \times {\mb C}^* \to {\mb C}^*$ the character $(\xi_0, \xi_1) \mapsto (\xi_0^l, \xi_1^l)$; we denote by $\rho_{s}: G^{\mb C} \to {\mb C}^*$ the character which is trivial on the first ${\mb C}^*$-factor and is $\xi\mapsto \xi^{-r}$ on the second; we denote by $\rho_0: G^{\mb C} \to {\mb C}^*$ the character which is $\xi \mapsto \xi^r$ on the first ${\mb C}^*$-factor and is trivial on the second. Then each $F_{\upgamma;l}: \wt{X} \to {\mb C}$ above is $\rho_l$-equivariant for $l=0, \ldots, s$. 

\subsection{The domain}\label{subsection22}

\subsubsection*{Rigidified $r$-spin curves}

We recall the notion of rigidified $r$-spin curves following \cite[Section 2.1]{FJR2}.

Let $\Sigma$ be a compact Riemann surface and ${\bm z}= \{z_1, \ldots, z_k\}$ is a finite subset of punctures (marked points). We denote $\Sigma^*:= \Sigma \setminus {\bm z}$. We can attach orbifold charts near each puncture to obtain an {\bf orbicurve} ${\mc C}$. Suppose the local group of orbifold chart near each $z_j$ is $\Gamma_j$, which is canonically isomorphic to a cyclic group ${\mb Z}_{r_j}$. Then $\Sigma$ can be viewed as the ``desingularization'' of ${\mc C}$, also denoted by $|{\mc C}|$. There is a projection $\pi_{\mc C}: {\mc C} \to \Sigma$. The orbicurve ${\mc C}$ has the log-canonical bundle ${\mc K}_{\log}\simeq \pi_{\mc C}^* K_{\log}$, where $K_{\log}\to \Sigma$ is the bundle 
\begin{align*}
K_{\log} = K_\Sigma \otimes {\mc O}(z_1) \otimes \cdots \otimes {\mc O}(z_k).
\end{align*}

\begin{defn}\label{defn210} Fix $r \in {\mb Z}$, $r \geq 3$. An {\bf $r$-spin curve} is a triple $({\mc C}, {\mc L}, \upvarphi)$ where ${\mc C}$ is an orbicurve, ${\mc L}\to {\mc C}$ is an orbibundle, and 
\begin{align*} \upvarphi: {\mc L}^{\otimes r} \to {\mc K}_{\log}
\end{align*}
is an isomorphism of orbibundles.

A {\bf rigidification} of the $r$-spin structure $({\mc L}, \upvarphi)$ at $z_j$ is a choice of an element $e_j$ of ${\mc L}|_{z_j}$ such that
\begin{align*}
\upvarphi( e_j^{\otimes r}) = {dw \over w}.
\end{align*}

We denote a rigidification at $z_j$ by a map $\upphi_j: {\mb C}/ \Gamma_j \to {\mc L}|_{z_j}$. For a choice of rigidification $\upphi_j$ for each $j$, we call the tuple $\vec{{\mc C}}:= ({\mc C}, {\mc L}, \upvarphi; {\bm \upphi}):= ({\mc C}, {\mc L}, \upvarphi; \upphi_0, \ldots, \upphi_k)$ a {\bf rigidified $r$-spin curve}. \end{defn}

In this paper, from now on, we fix a rigidified $r$-spin curve $\vec{{\mc C}}= ({\mc C}, {\mc L}, \upvarphi; {\bm \upphi})$.

It is more convenient to look at rigidifications on the smooth curve $\Sigma$. Indeed, at each marked point $z_j$, the orbibundle ${\mc L}$ has its local monodromy, which is a representation $\Gamma_j \to S^1$. As a convention, we always assume that this representation is faithful. Then since ${\mc K}_{\log}$ always has trivial monodromy, we can view $\Gamma_j$ as a subgroup of ${\mb Z}_r$. So the generator of $\Gamma_j$ can be written as $\exp \left( {2\pi{\bm i} m_j \over r} \right)$, with $m_j \in \{0, 1, \ldots, r-1\}$. Then the $r$-spin structure induces an isomorphism
\begin{align*}
 |\upvarphi|: |{\mc L}|^{\otimes r} \to K_{\log} \otimes {\mc O}\Big( - \sum_{j=0}^k m_j z_j \Big)
\end{align*}
as usual line bundles over $\Sigma$, where $|{\mc L}| \to \Sigma$ is the desingularization of ${\mc L}$. Therefore, for any choice of local coordinate $w$ around $z_j$, a rigidification induces a choice of local frame $e_j$ of $|{\mc L}|$ near $z_j$ such that
\begin{align}\label{equation27}
|\upvarphi|(e_j^{\otimes r}) = w^{m_j} {dw \over w}.
\end{align}
We denote $\lambda_j = {\bm i} m_j/ r$ (resp. $\upgamma_j = \exp (2\pi \lambda_j)$) and call it the {\bf residue} (resp. {\bf monodromy}) of the $r$-spin structure at $z_j$. We define the type of the punctures.
\begin{defn}
A puncture $z_j$ is called {\bf narrow} (resp. {\bf broad}) if $\upgamma_j\in {\mb Z}_r$ is narrow (resp. broad).
\end{defn}

We take a smooth area form $\nu$ on the closed Riemann surface $\Sigma$. Then together with the complex structure, it determines a Riemannian metric, to which we will refer as the ``smooth metric''. On the other hand, for each $z_j$, we fix a holomorphic coordinate patch $w: B_1 \to \Sigma$ with $w(0) = z_j$ and use the $\log$ function to identify the punctured $U_j = w (B_1) \setminus \{z_j\}$ with the cylinder $\Theta_+:= [0, +\infty) \times S^1$. The latter has coordinates $s + {\bm i} t = - \log w$. We can choose a different area form $\nu'$ such that $\nu = \sigma \nu'$ where the conformal factor $\sigma: \Sigma^* \to {\mb R}_+$ is a smooth function whose restriction to each $\Theta_+$ is equal to $e^{-2s}$. The metric determined by $\nu'$ and the complex structure is called the ``cylindrical metric'' on $\Sigma^*$. 

From now on, for each puncture $z_j$, we fix the coordinate $w$ centered at $z_j$, the cylindrical end $U_j$, and its identification with $\Theta_+$. For any $S \geq 0$, we denote by $U_j(S) \subset \Sigma^*$ the subset identified with $[S, +\infty) \times S^1$. 

The cylindrical metric has injectivity radius bounded from below. We choose an $r^* \in (0, 1]$ such that for every point $q \in \Sigma^*$, there exists a holomorphic coordinate
\begin{align}\label{equation28}
z_q = s + {\bm i} t: B_{r^*}(q) \to B_{r^*} \subset {\mb C}.
\end{align}
such that $z_q(0) = q$. Here $B_{r^*}(q)$ is the $r^*$-neighborhood of $q$ with respect to the cylindrical metric. Then, for each such neighborhood $B_{r^*}(q)$, the area form $\Omega$ can be written as
\begin{align*}
\nu = \sigma_q (z_q) {{\bm i}\over 2} dz_q \wedge d\ov{z_q}.
\end{align*}
Then by shrinking $r^*$ properly, we have
\begin{align}\label{equation29}
\sup_{p \in \Sigma^*} \sup_{z_q \in B_{r^*}} \sigma_q (z_q) < \infty,\ \forall q\ \sup_{B_{r^*}} \sigma_q \leq  2 \inf_{B_{r^*}} \sigma_q.
\end{align}

We require that, if $B_{r^*}(q) \subset U_j$, then $z_q$ is the restriction of the cylindrical coordinate $ s+ {\bm i} t$ (after proper translation) to $B_{r^*}(q)$.

\subsubsection*{Adapted Hermitian metrics}

We define the weighted Sobolev space $W_\delta^{k, p}$ to be the Banach space completion of $C^\infty_0(\Sigma^*)$ with respect to the norm
\begin{align*}
\big\| f \big\|_{W_\delta^{k, p}(\Sigma^*)} := \big\| \sigma^{- {\delta \over 2}} f \big\|_{W^{k, p}(\Sigma^*)} 
\end{align*}
where the latter Sobolev norm is taken with respect to the cylindrical metric on $\Sigma^*$. It is similar to define the Sobolev space $W_\delta^{k, p}(U_j)$ for each cylindrical end $U_j$. We denote by ${\mz U}_\delta^{2, p}$ the space of $W_{loc}^{2, p}$-maps $g: \Sigma^* \to S^1$ such that for each $j$, 
\begin{align*}
g|_{U_j} = \exp \big( {\bm i}\xi_j \big),\ \xi_j \in W_\delta^{2, p}(U_j).
\end{align*}

From now on we fix $p>2$.
\begin{defn}
A $W_{loc}^{2,p}$-Hermitian metric $H$ on $|{\mc L}||_{\Sigma^*}$ is called {\bf adapted} if there is $\delta>0$ such that 
\begin{align*}
\log \big( |w|^{- {m_j \over r}} | e_j |_H \big) \in W_\delta^{2, p}(U_j).
\end{align*}
Here $m_j$ and $e_j$ are the ones in (\ref{equation27}). 
\end{defn}

Let ${\mz H}$ be the space of all adapted metrics on $|{\mc L}||_{\Sigma^*}$. There is an ${\mb R}_+$-action on ${\mz H}$ by rescaling a metric. For any $H \in {\mz H}$, denote by $P_0 (H)$ the $S^1$-frame bundle of $|{\mc L}|$ with respect to $H$. Then if $H$ is an adapted Hermitian metric, the Chern connection $A_0 (H)$ of $H$ will be a unitary connection on $P_0 (H)$ such that near each puncture, with respect to the trivialization determined by (\ref{equation27}), it can be written as
\begin{align*}
A_0 (H) = d + \lambda_j d t + \alpha_j
\end{align*}
and $\alpha_j$ is a purely imaginary valued 1-form on $U_j$ of class $W_\delta^{1, p}$ for some $\delta>0$. Note that the map $H \mapsto A_0 (H)$ is not injective but is constant on each ${\mb R}_+$-orbit of ${\mz H}$. 

Now choose (arbitrarily) a smooth element $H_0 \in {\mz H}$ as a reference, and consider the subset ${\mz H}_+ \subset {\mz H}$ consisting of metrics of the form $e^{2h_0} H_0$ with $h_0 \in W_\delta^{2, p}$ for some $\delta>0$. Then the map $H \mapsto A_0 (H)$ is injective on ${\mz H}_+$. We define
\begin{align*}
{\mz U} =\bigcup_{\delta>0} {\mz U}_\delta^{2, p},\ {\mz A}_0 = \left\{ g^* A_0 (H)\ |\  H \in {\mz H}_+,\ g\in {\mz U}  \right\}.
\end{align*}
Then every element of ${\mz A}_0$ has a unique expression as $g^* A_0 (H)$ for $g \in {\mz U}_{loc}^{2, p}$ and $H \in {\mz H}_+$.

\begin{rem}
It is necessary for us to remove the ${\mb R}_+$-action because it will cause trouble in proving compactness. On the other hand, we can release the restriction of ${\mz H}_+$ such that we can vary the value of $H$ at the punctures. However, those variations only form a finite dimensional degree of freedom, so they don't affect compactness and don't essentially change Fredholm property.
\end{rem}

Denote $P_0=P_0(H_0)$, which is the unit circle bundle of $|{\mc L}|$ with respect to the reference $H_0$. For any $H = e^{2h_0} H_0 \in {\mz H}_+$, there is a canonical isomorphism between $P_0 (H)$ and $P_0 (H_0)$, given by $v \mapsto e^{h_0} v$. Then any connection in ${\mz A}_0$ is transformed to a $S^1$-connection on $P_0$. We still denote this set of connections by ${\mz A}_0$. In particular, for every $A_0 \in {\mz A}_0$, the holomorphic line bundle structure of $P_0 \times_{S^1} {\mb C}$ determined by (the $(0,1)$-part of) $A_0$ is isomorphic to the holomorphic line bundle $|{\mc L}|$.  

Now we will choose a trivialization of $P_0$ on each $B_{r^*}(q)$ as well as on each $U_j$. On each $B_{r^*}(q)$, there is a local holomorphic section $e_q$ of $|{\mc L}|$ such that 
\begin{align}\label{equation210}
|\upvarphi|(e_q^{\otimes r}) = dz_q.
\end{align}
Here $z_q$ is the fixed one in (\ref{equation28}) and $e_q$ is unique up to a ${\mb Z}_r$-action and we just choose one of them. Then we trivialize $P_0$ over $B_{r^*}(q)$ by the local unitary frame 
\begin{align*}
\epsilon_q:= { e_q \over \|e_q\|_{H_0}}.
\end{align*}
This trivialization is denoted by 
\begin{align*}
\phi_{q, 0}: B_{r^*}(q)\times S^1 \to P_0 |_{B_{r^*}(q)}.
\end{align*}
On the other hand, on each cylindrical end $U_j$, there is a local holomorphic section $e_j$ of $|{\mc L}|$ such that 
\begin{align}\label{equation211}
|\upvarphi| (e_j^{\otimes r}) = w^{m_j} {dw \over w}.
\end{align}
$e_j$ is unique up to a ${\mb Z}_r$-action. Then we trivialize $P_0 |_{U_j}$ by the local unitary frame 
\begin{align*}
\epsilon_j:= { e_j \over \| e_j\|_{H_0} }.
\end{align*}
This trivialization is denoted by 
\begin{align*}
\phi_{j, 0}: U_j \times S^1 \to P_0 |_{U_j}.
\end{align*}

Now for each $A_0 \in {\mz A}_0$ and each $B_{r^*}(q)$ (resp. $U_j$), we define a function $h_0(A_0): B_{r^*}(q) \to {\rm Lie} {\mb C}^*$ (resp. $h_0 (A_0): U_j \to {\mb C}$) as follows. If $A_0 = g^* A_0(H)$ for $H \in {\mz H}_+$ and $g \in {\mz U}$, then for each $B_{r^*}(q)$ (resp. $U_j$), there is a unique ${\bm i} {\mb R}$-valued function $h_0':= h_0' (A_0)$ on $B_{r^*}(q)$ (resp. $U_j$) such that 
\begin{align}\label{equation212}
e^{h_0'} = g|_{B_{r^*}(q)},\  -{\bm i}h_0'(q)  \in [0, 2\pi),\ \Big( {\rm resp.}\  e^{h_0'} = g|_{U_j},\ \lim_{z \to z_j} h_0' (z) = 0 \Big).
\end{align}
On the other hand, we define 
\begin{align}\label{equation213}
h_0'':= h_0''(A_0) = {\bm i} \log \| e_q \|_{H}\ \Big( {\rm resp.}\  h_0''= {\bm i} \log \Big( \|e_j\|_{H} - |w|^{m_j \over r} \Big) \Big),
\end{align}
where $w = e^{-z}$ is the coordinate centered at $z_j$; then on either $B_{r^*}(q)$ or $U_j$, define
\begin{align}\label{equation214}
h_0:= h_0(A_0):= h_0' +  {\bm i} h_0''.
\end{align}

By the definition of the Chern connection and that of gauge transformation, on each $B_{r^*}(q)$, with respect to the trivialization $\phi_{q, 0}$ of $S|_{B_{r^*}(q)}$, if $A_0 \in {\mz A}_0$ is written as $A_0 = d + \phi_0 ds + \psi_0 dt$ for $\phi_0, \psi_0: B_{r^*}(q) \to {\bm i} {\mb R}$, then
\begin{align*}
\phi_0 = \partial_s h_0'- \partial_t h_0'',\ \psi_0 = \partial_s h_0'' + \partial_t h_0'.
\end{align*}
Similarly, if on $U_j$, $A_0 = \phi_0 ds + \psi_0 dt$, then
\begin{align*}
\phi_0 = \partial_s h_0'- \partial_t h_0'',\ \psi_0 - \lambda_j = \partial_s h_0'' + \partial_t h_0'.
\end{align*}
In either case, the curvature form of $A_0$ is equal to $\Delta h_0'' ds dt$. 

\subsubsection*{The $G_1$-bundle and connections}

We used $G_1$ to denote another copy of the group $S^1$ to distinguish from the structure group of $P_0$. We fix an arbitrary smooth $G_1$-bundle $P_1 \to \Sigma$. We denote its restriction to $\Sigma^*$ still by $P_1$ and denote by
\begin{align*}
P =  P_0 \times_{\Sigma^*} P_1 \to \Sigma^*
\end{align*}
the fibre product, which is a $G = G_0\times G_1 = S^1 \times S^1$-bundle over $\Sigma^*$. For each coordinate patch $B_{r^*}(q) \subset \Sigma^*$, we fix a trivialization $\phi_{q,1}: U_q \times G_1 \to P_1|_{U_q}$ arbitrarily. For each cylindrical end $U_j$ we can also take a trivialization $\phi_{j,1}: U_j \times G_1 \to P_1|_{U_j}$ which is the restriction of a local trivialization of $P_1$ near $z_j$. Together with the trivializations $\phi_{q, +}$ (resp. $\phi_{j, +}$), this gives a trivialization $\phi_q = (\phi_{q, 0}, \phi_{q, 1}): B_{r^*} (q) \times G \to P|_{B_{r^*}(q)}$ (resp. $\phi_j= (\phi_{j, 0}, \phi_{j, 1}): U_j \times G \to P|_{U_j}$). 

We denote ${\mz A}_1$ to be the space of $W_{loc}^{1, p}$-connections on $P_1|_{\Sigma^*}$ such that for each cylindrical end $U_j$, with respect to the trivialization of $P_1|_{U_j}$ induced from $\phi_{j, 1}$, any $A_1 \in {\mz A}_1$ can be written as $A_1 = d + \alpha_1$ where $\alpha_1$ is a ${\mf g}_1$-valued 1-form on $U_j$ of class $W_\delta^{1, p}$ for some $\delta>0$ (with respect to the cylindrical metric).

Now consider ${\mz A} = {\mz A}_0 \times {\mz A}_1$. This is a set of $G$-connections on $P$. For any $\delta>0$, denote by ${\mz G}_{1,\delta}^{2,p}$ the group of $G_1$-gauge transformations on $\Sigma^*$ of class $W_{\delta}^{2, p}$ and denote 
\begin{align*}
{\mz G}_1 = \bigcup_{\delta>0} {\mz G}_{1, \delta}^{2, p}, \ {\mz G} = {\mz U} \times {\mz G}_1.
\end{align*}
Then ${\mz G} = {\mz U} \times {\mz G}_1$ acts on ${\mz A}$ naturally. 

We would like to define functions similar to $h_0 (A_0)$ given by (\ref{equation212})--(\ref{equation214}). On $B_{r^*}(q)$, with respect to the trivialization $\phi_{q, 1}$, a $G_1$-connection $A_1 \in {\mz A}_1$ can be written as
\begin{align*}
A_1 = d + \phi_1 ds + \psi_1 dt,\ \phi_1, \psi_1: B_{r^*}(q) \to {\mf g}_1,
\end{align*}
where $s + {\bm i} t = z$ is the local coordinate. Then we define a function $h_1 = h_1' + {\bm i} h_1'':= h_1(A_1) =  h_1'(A_1) + {\bm i} h_1(A_1)'': B_{r^*}(q) \to {\mf g}^{\mb C}$ by the Cauchy integral formula 
\begin{align*}
h_1 (A_1)(z) = {1\over 4 \pi {\bm i}} \iint_{B_{r^*}(q)} \left(  { \phi_1 + {\bm i} \psi_1 \over \zeta - z} - {\phi_1 + {\bm i} \psi_1 \over \zeta} \right) d\zeta d\ov{\zeta}.
\end{align*}
Similarly, for $U_j$, we write $A_1$ as 
\begin{align*}
A_1 =d + \phi_1 ds + \psi_1 dt=  d + \vartheta dx + \varsigma dy
\end{align*}
where $w = x + {\bm i} y= e^{-z}$ is the smooth coordinate near $z_j$. Then we define 
\begin{align*}
h_1 (A_1)(z) = {1\over 4 \pi {\bm i}} \iint_{U_j} \left( {\vartheta + {\bm i} \varsigma \over w-z} - {\vartheta + {\bm i} \varsigma \over w }\right) dw d\ov{w}.
\end{align*}
Since $\phi_1 ds + \psi_1 dt$ is of class $W_\delta^{1, p}$ on $U_j$, we see that
\begin{align*}
\Big| \iint_{U_j} { \vartheta + {\bm i} \varsigma \over w} dw d\ov{w} \Big| \leq \iint_{U_j}| \phi_1 + {\bm i} \psi_1 | ds dt \leq \big\|\phi_1 + {\bm i} \psi_1 \big\|_{L_\delta^p(U_j) } \big\| e^{- \delta s} \big\|_{L^{p \over p-1}(U_j)} < \infty.
\end{align*}
Therefore $h_1$ is well-defined on $U_j$ and $\displaystyle \lim_{s \to +\infty} h_1(s, t) = 0$. On either $B_{r^*}(q)$ or $U_j$, we have
\begin{align}\label{equation215}
\phi_1 = \partial_s h_1' - \partial_t h_1'',\ \psi_0 = \partial_s h_1'' + \partial_t h_1'.
\end{align}
In particular, the curvature of $A_1$ is $F_{A_1} = \Delta h_1 (A_1)'' ds dt$.

Now for a connection $A  = (A_0, A_1)\in {\mz A}$, for $U$ being either $B_{r^*}(q)$ or $U_j$, we define
\begin{align}\label{equation216}
h_A:= (h_0, h_1)= (h_0(A_0), h_1(A_1)): U \to {\mf g}^{\mb C}.
\end{align}
This family of functions are useful when we do local analysis.

\subsubsection*{The fibre bundle}

Since $G$ acts on $\wt{X}$, we have the associated fibre bundle 
\begin{align*}
\pi: Y:= P\times_G \wt{X} \to \Sigma^*.
\end{align*}
The vertical tangent bundle $T^\bot Y \subset TY$ consists of vectors tangent to a fibre. Then since the $G$-action is Hamiltonian and preserves $J$, the K\"ahler structure on $\wt{X}$ induces a Hermitian structure on $T^\bot Y$. On the other hand, for any continuous connection $A$, the tangent bundle $TY$ splits as the direct sum of $T^\bot Y$ and the horizontal tangent bundle. The horizontal bundle is isomorphic to $\pi^* T\Sigma^*$, therefore the connection induces an almost complex structure on $Y$. Since $\wt{X}$ is K\"ahler, this almost complex structure is integrable and $Y$ becomes a holomorphic fibre bundle over $\Sigma^*$. 

We will consider sections of $Y$. A general smooth section is denoted by $u \in \Gamma(Y)$; more generally, we will consider sections $u \in \Gamma_{loc}^{1, p}(Y)$ of class $W^{1, p}_{loc}$. The group ${\mz G}$ also acts on the space of sections.

The trivialization $\phi_q: B_{r^*}(q) \times G \to P|_{B_{r^*}(q)}$ (resp. $\phi_j: U_j \times G \to P|_{U_j}$) induces a corresponding local trivialization of $Y$, which is denoted by the same symbol.

\subsection{The superpotential and gauged Witten equation}

\subsubsection*{The lift of the superpotential}

Using the $r$-spin structure $\upvarphi: {\mc L}^{\otimes r} \to {\mc K}_{\log}$ we can lift the potential function $W$ to the total space $Y$. More precisely, for each $B_{r^*}(q) \subset \Sigma^*$, let $(z_q, e_q)$ satisfy (\ref{equation210}). Let $\epsilon_{q, 1}$ be an arbitrary local frame of $P_1|_{B_{r^*}(q)}$. Then a point of $Y|_{B_{r^*}(q)}$ can be represented by $[e_q, \epsilon_{q, 1}, x]$ with the equivalence relation 
\begin{align*}
[g_0 e_q, g_1 \epsilon_{q, 1}, x] = [e_q, \epsilon_{q,1}, g_0 g_1 x],\ \forall x\in \wt{X},\ g_0 \in G_0^{\mb C},\ g_1 \in G_1.
\end{align*}
Then we define 
\begin{align*}
{\mc W}_{H_0} = \left( [e_q, \epsilon_{0, q}, x] \right) = W(x) dz_q.
\end{align*}
Then with respect to the unitary frame $\epsilon_q :=e_q/ \|e_q\|_{H_0}$ of $P_0$, we have 
\begin{align*}
{\mc W}_{H_0} \big( [\epsilon_q, \epsilon_{q, 0}, x] \big) = {\mc W}_{H_0} \big( [e_q, \epsilon_{q,0}, \|e_q\|_{H_0}^{-1} x] \big)= \left\| e_q \right\|_{H_0}^{-r} W(x) dz_q. 
\end{align*}
Then it is easy to see that the above definition is independent of the choice of the pair $(z_q, e_q)$ satisfying (\ref{equation210}) and the choice of the frame $\epsilon_{q, 0}$, so ${\mc W}_{H_0}$ is a well-defined section of the bundle $\pi^* K_\Sigma \to Y$. Moreover since $W$ is holomorphic we see that ${\mc W}_{H_0}$ is actually holomorphic with respect to the holomorphic structure on $Y$ induced from the $S^1$-connection $A_0 (H_0)$ and any $G_1$-connection $A_1$. 

Now let $H = e^{2h_0} H_0 \in {\mz H}_+$. Then we define
\begin{align*}
{\mc W}_{H} = e^{-r h_0} {\mc W}_{H_0} \in \Gamma \left( Y, \pi^* K_{\Sigma^*} \right).
\end{align*}
We see it is holomorphic with respect to the holomorphic structure on $Y$ induced from $A_0 (H)$ and any $G_1$-connection $A_1$. Moreover, for any connection $A = (A_0, A_1) \in {\mz A}$, we can express $A_0$ uniquely as $g_0^* A_0 (H)$ for some $g \in {\mz U}$ and $H\in {\mz H}_+$. Then we define
\begin{align*}
{\mc W}_A(y) = {\mc W}_{H} ( g y).
\end{align*}
Again, this is a section of $\pi^* K_\Sigma$ which is holomorphic with respect to the holomorphic structure on $Y$ induced from $A$. By the $G_1$-invariance of $W$, we also see that for any $g \in {\mz G}$, we have
\begin{align}\label{equation217}
{\mc W}_{g^* A}(y) = {\mc W}_A (gy).
\end{align}
On the other hand, using the trivialization $\phi_q: B_{r^*}(q) \times \wt{X} \to Y|_{B_{r^*}(q)}$, we have
\begin{align*}
{\mc W}_A \circ \phi_q(z, x) = e^{\rho_0(h_A(z))} W(x),
\end{align*}
where $\rho_0: G^{\mb C} \to {\mb C}^*$ is the character defined at the end of Subsection \ref{subsection21}. Similarly, the trivialization $\phi_j: U_j \times G \to P|_{U_j}$ induces a trivialization $\phi_j: U_j \times X \to Y|_{U_j}$, and
\begin{align*}
{\mc W}_A \circ \phi_j(z, x) = e^{\rho_0(h_A(z))} W(e^{\lambda_j t} x) dz = e^{\rho_0(h_A(z)) + r\lambda_j t} W(x) dz.
\end{align*}

\subsubsection*{The gauged Witten equation}

The vertical differential of ${\mc W}_A$ is a section
\begin{align*}
d{\mc W}_A \in \Gamma \Big( Y, \pi^* K_{\Sigma^*} \otimes \big( T^\bot Y \big)^* \Big).
\end{align*}
The vertical Hermitian metric on $T^\bot Y$ induces a conjugate linear isomorphism $T^\bot Y \simeq \left( T^\bot Y \right)^*$. On the other hand, the complex structure on $\Sigma^*$ induces a conjugate linear isomorphism $K_{\Sigma^*} \simeq \Lambda^{0,1}T^* \Sigma^*$. Therefore we have a conjugate linear isomorphism
\begin{align*}
\pi^* K_{\Sigma^*} \otimes \big( T^\bot Y \big)^* \simeq \pi^* \Lambda^{0,1}_{\Sigma^*} \otimes T^\bot Y.
\end{align*}
The image of $d{\mc W}_A$ under this map is called the {\bf vertical gradient} of ${\mc W}_A$, denoted by 
\begin{align*}
\nabla {\mc W}_A \in \Gamma \Big( Y, \pi^* \Lambda^{0,1}_{\Sigma^*} \otimes T^\bot Y \Big).
\end{align*}

Now we can write down {\bf the gauged Witten equation}. It is the following system on the pair $(A, u)$, where $A \in {\mz A}$ and $u \in \Gamma_{loc}^{1, p}(Y)$:
\begin{align}\label{equation218}
\left\{ \begin{array}{ccc}
\ov\partial_A u + \nabla {\mc W}_A(u) & = & 0;\\
* F_A + \mu^* (u) & = & 0.
\end{array} \right.
\end{align}
Each term in the system is defined as follows: the connection $A$ induces a continuous splitting $TY \simeq T^\bot Y \oplus \pi^* T \Sigma^*$ and $d_A u \in W^{1, p}_{loc}(T^* \Sigma^* \otimes u^* T^\bot Y )$ is the covariant derivative of $u$; the $G$-invariant complex structure $J$ induces a complex structure on $T^\bot Y$ and $\ov\partial_A u$ is the $(0, 1)$-part of $d_A u$ with respect to this complex structure. $\nabla {\mc W}_A (u)$ is the pull-back of $\nabla {\mc W}_A$ by $u$, which lies in the same vector space as $\ov\partial_A u$. $F_A\in \Omega^2(\Sigma^*) \otimes {\mf g}$ is the curvature form of $A$, $*: \Omega^2(\Sigma^*) \to \Omega^0(\Sigma^*)$ is the Hodge-star operator with respect to the smooth metric on $\Sigma$; the moment map $\mu$ lifts to a ${\mf g}$-valued function on $Y$ and $\mu^*(u)$ is the dual of $\mu(u)$ with respect to the metric defined by (\ref{equation23}).

By (\ref{equation217}) and the fact that the $G$-action is Hamiltonian, the gauged Witten equation is ${\mz G}$-invariant, in the sense that for any $(A, u)\in {\mz A} \times \Gamma_{loc}^{1, p}(Y)$ and any $g\in {\mz G}$, we have
\begin{align}\label{equation219}
\begin{split}
\ov\partial_{g^*A} (g^* u) + \nabla {\mc W}_{g^* A}( g^* u) = &\ \left( g^{-1}\right)_* \left( \ov\partial_A u + \nabla {\mc W}_A (u) \right),\\
* F_{g^* A} + \mu^* ( g^* u) = &\  {\rm Ad}_g^{-1} \left( * F_A + \mu^*(u) \right).
\end{split}
\end{align}

\begin{rem}
In this paper, all vector fields are regarded as real vector fields. So for a holomorphic function $F: X \to {\mb C}$, its gradient $\nabla F$ is the gradient of the real part of $F$.
\end{rem}

\subsection{Perturbation}

The function $W: \wt{X} \to {\mb C}$ has highly degenerate critical points. The degeneracy will cause the problem that the linearized equation doesn't give a Fredholm operator, in the presence of broad punctures. The usual way to deal with this situation is to perturb the potential ${\mc W}_A$ near the broad punctures, which is already adopted in the study of Witten equation in \cite{FJR3}. We will use the functions $F_\upgamma$ given in Hypothesis \ref{hyp28} to perturb the superpotential. 

\subsubsection*{A bounding functional on ${\mz A}$}

In this subsection we would like to construct a {\it smooth} functionals on ${\mz A}$ which can control certain Sobolev norms. The purpose of having such bounding functionals is to give uniform energy bound on solutions with fixed topological type (see the proof of Theorem \ref{thm44}).

\begin{defn}\label{defn215} For each $A = (A_0, A_1) \in {\mz A} = {\mz A}_0 \times {\mz A}_1$ and for each broad puncture $z_j$, we define
\begin{align}\label{equation220}
m_{j,A} = \sum_{l=1}^{s} \big\| e^{\rho_l (h_{j,A}) } \big\|_{L^2(U_j \setminus U_j (2))} + 1,\ \delta_{j, A} = \big( m_{j, A}\big)^{-1}.
\end{align}
Here $h_{j, A}: U_j \to {\mf g}^{\mb C}$ is the function defined by (\ref{equation216}). If $z_j$ is narrow, we define $\delta_{j, A} = 1$. 
\end{defn}

$\delta_{j, A}$ only depends on the gauge equivalence class of $A$ because a gauge transformation only changes the real part of $h_{j,A}$. Moreover, the function $A \mapsto \delta_{j,A}$ is smooth in $A \in {\mz A}$. Indeed, the map $A \mapsto h_{j,A}$ is smooth; it follows with the restriction to $U_j \setminus U_j(2)$, and a Sobolev embedding $L_1^p \to C^0$, which are both linear, hence smooth. Now $C^0(U_j \setminus U_j(2))$ is a Banach algebra, so the exponential map is smooth. It is followed by taking the $L^2$-norm of a nonzero continuous function, which is smooth.

\subsubsection*{The perturbed gauged Witten equation}

For each broad puncture $z_j$, we can lift $W_{\upgamma_j}'= \sum_{l=1}^s F_{\upgamma_j,l}$ to $Y|_{U_j}$. The trivialization $\phi_j$ gives the local frame $\epsilon_j$ of $P|_{U_j}$. We define
\begin{align*}
\begin{array}{cccc}
\displaystyle {\mc W}_{j,A}': & Y|_{U_j} & \to & ( T^* U_j )^{1,0}\\[0.2cm]
\displaystyle          & \big( [ \epsilon_j, x ] \big) & \mapsto &   \displaystyle \Big( \sum_{l=1}^{s} e^{\rho_l(h_{j,A} + \lambda_j t)} F_{\upgamma_j,l}^{(\delta_j)} ( x) \Big) {dw \over w}.
														\end{array}
														\end{align*}
Indeed, $e^{h_{j,A} + \lambda_j t} \epsilon_j$ gives a local frame of $P^{\mb C}|_{U_j}$ which is holomorphic with respect to $A$, and we have
\begin{align*}
{\mc W}_{j,A}'\Big( [e^{h_{j,A} + \lambda_j t} \epsilon_j, x ] \Big) =\Big( \sum_{l=1}^s F_{\upgamma_j,l}^{(\delta_j)} (x) \Big) {dw \over w}.
\end{align*}
This expression shows that ${\mc W}_{j,A}'$ is holomorphic with respect to the connection $A$. 

For each broad puncture $z_j$, fix a cut-off function $\beta_j$ supported in $U_j$ and $\beta|_{U_0(2)} \equiv 1$ such that with respect to the cylindrical metric,
\begin{align*}
\big| \nabla \beta_j \big| \leq 1,\ \big| \nabla^2 \beta_j \big| \leq 1
\end{align*}
Denote $\beta = \sum_{z_j\ {\rm broad}} \beta_j$. We define
\begin{align*}
\wt{\mc W}_A = {\mc W}_A + \sum_{z_j\ {\rm broad}} \beta_j {\mc W}_{j,A}'.
\end{align*}
$\wt{\mc W}_A$ is only vertically holomorphic and is holomorphic outside the support of $d\beta$. Then the {\bf perturbed gauged Witten equation} is
\begin{align}\label{equation221}
\left\{ \begin{array}{ccc}
\ov\partial_A u + \nabla \wt{\mc W}_A(u) & = & 0;\\
* F_A + \mu^* (u) & = & 0.
\end{array}\right.
\end{align}
Similar to the unperturbed case, the perturbed gauged Witten equation is gauged invariant in a similar sense as (\ref{equation219}).

\subsection{Energy} 

For $(A, u)\in {\mz A} \times \Gamma_{loc}^{1,p}(Y)$, we define its energy as
\begin{align}
E (A, u) = {1\over 2} \Big( \big\| d_A u \big\|_{L^2(\Sigma^*)}^2 + {1\over 2} \big\| F_A \big\|_{L^2(\Sigma^*)}^2 + \big\| \mu(u) \big\|_{L^2(\Sigma^*)}^2\Big)  +  \big\| \nabla \wt{\mc W}_A (u) \big\|_{L^2(\Sigma^*)}^2.
\end{align}
Here the Sobolev norms are taken with respect to the smooth metric on $\Sigma$. The sum of the first two terms is sometimes referred to as the kinetic energy, and the sum of the last two terms is sometimes referred to as the potential energy.

This energy functional generalizes the Yang-Mills-Higgs functional used in gauged Gromov-Witten theory, which can be viewed as a special case of our setting where $W = 0$. One can compare it with the bosonic part of the supersymmetric action in \cite{Witten_LGCY}.

\subsection{Regularity}

\begin{prop}\label{prop216}
Suppose $(A, u) \in {\mz A} \times \Gamma_{loc}^{1, p}(Y)$ is a solution to (\ref{equation221}). Then there exists a gauge transformation $g\in {\mz G}$ such that $g^* (A, u)$ is smooth.
\end{prop}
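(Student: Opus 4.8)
\textbf{Proof proposal for Proposition \ref{prop216}.}

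The plan is to run a standard elliptic bootstrap for the first-order system \eqref{equation221}, after first choosing a good gauge to put the connection into a local Coulomb-type form. First I would work locally: over each coordinate patch $B_{r^*}(q)$ (and each cylindrical end $U_j$) use the trivializations $\phi_q$, $\phi_j$ to regard $A$ as a $W^{1,p}_{loc}$ one-form $d + \alpha$ with values in $\mf g$ and $u$ as a $W^{1,p}_{loc}$ map into $\wt X$. The equation $*F_A + \mu^*(u) = 0$ together with the gauge-fixing condition $d^*\alpha = 0$ gives, schematically, $d\alpha = *(\text{terms in }\mu(u))$ and $d^*\alpha = 0$, i.e. an elliptic first-order system for $\alpha$ whose right-hand side is in $W^{1,p}_{loc}$ (since $\mu$ is smooth and $u \in W^{1,p}_{loc} \hookrightarrow C^0_{loc}$ because $p > 2$). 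Hence $\alpha \in W^{2,p}_{loc}$ after applying a gauge transformation that realizes the Coulomb gauge; such a transformation exists in $W^{2,p}_{loc}$ by the usual local implicit-function/iteration argument for $W^{1,p}$ connections with $p > 2$. This is the point where I invoke membership in ${\mz G}$: the gauge transformations produced are a priori only local, so one must patch them into a global element of ${\mz U}\times{\mz G}_1$ and check the decay condition at the punctures — here one uses that the weighted norms are preserved under $W^{2,p}_\delta$ gauge changes, so the prescribed asymptotics $A_0 = d + \lambda_j\,dt + \alpha_j$, $\alpha_j \in W^{1,p}_\delta$, and likewise for $A_1$, persist.

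Next I would bootstrap $u$. With $A$ now of class $W^{2,p}_{loc}$, the Cauchy–Riemann–type equation $\ov\partial_A u + \nabla\wt{\mc W}_A(u) = 0$ reads, in the local trivialization, $\ov\partial u = -(\text{zeroth-order terms built from }\alpha, h_A, \text{ and }\nabla\wt W)$, where the inhomogeneous term is a smooth function of $(z, u)$ composed with $W^{2,p}_{loc}$ data, hence lies in $W^{1,p}_{loc}$. Elliptic regularity for $\ov\partial$ (the Calderón–Zygmund estimate in $W^{k,p}$) then upgrades $u$ to $W^{2,p}_{loc}$. Feeding $u \in W^{2,p}_{loc}$ back into the second equation improves $\alpha$ to $W^{3,p}_{loc}$, which improves $u$ to $W^{3,p}_{loc}$, and so on; the induction gives $(A, u) \in C^\infty_{loc}$ after the single initial Coulomb gauge fixing (no further gauge changes are needed in the higher steps because the equations are already in good analytic form). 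Sobolev embedding $W^{k,p}_{loc}\hookrightarrow C^{k-1,\alpha}_{loc}$ then gives smoothness.

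The main obstacle I anticipate is not the interior bootstrap — which is routine once $p>2$ guarantees $W^{1,p}\hookrightarrow C^0$ and the nonlinearities $\mu$, $\nabla\wt{\mc W}_A$ are smooth in $u$ — but the bookkeeping needed to make the gauge transformation \emph{global and admissible}: one must glue the local Coulomb gauges (on the $B_{r^*}(q)$ and on the $U_j$) into a single $g \in {\mz G} = {\mz U}\times{\mz G}_1$, control its behavior on the overlaps, and verify that near each puncture it lies in the weighted space $W^{2,p}_\delta$ (possibly for a smaller $\delta$) so that $g^*(A,u)$ still has the asymptotic normal form defining ${\mz A}$ and $\Gamma^{1,p}_{loc}(Y)$. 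One handles this by first fixing the gauge on the cylindrical ends (where the asymptotic model $d + \lambda_j\,dt$ and the weighted estimates on $\alpha_j$ are available), then extending inward using a partition of unity and the local solvability of the Coulomb condition on the compact core; transition functions between the $W^{2,p}_{loc}$ local gauges lie in $W^{2,p}_{loc}$, so the glued object is a genuine $W^{2,p}_{loc}$ gauge transformation, smooth after the bootstrap. Uniqueness of the expression of elements of ${\mz A}_0$ as $g^*A_0(H)$, noted in the excerpt, also helps keep track of which component of $g$ acts where.
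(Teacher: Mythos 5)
Your bootstrap is correct and essentially matches the paper's: after fixing a Coulomb-type gauge, both arguments bootstrap the first-order system for $(\alpha,u)$ alternately using the elliptic $d\oplus d^*$ system for $\alpha$ and the Cauchy--Riemann equation for $u$, with $p>2$ giving $W^{1,p}\hookrightarrow C^0$ as the starting point. Where you diverge is in \emph{how} the gauge is fixed. You propose a local Coulomb gauge on each chart $B_{r^*}(q)$ and each $U_j$, and then spend most of your effort on the patching problem --- gluing the local gauges into a single element of ${\mz G}={\mz U}\times{\mz G}_1$ and verifying that the weighted asymptotics $A_0=d+\lambda_j\,dt+\alpha_j$ with $\alpha_j\in W^{1,p}_\delta$ survive. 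The paper avoids all of this by doing the gauge fixing \emph{globally in one stroke}: since $G$ is abelian, a gauge transformation $g=\exp h$ acts on $A$ linearly by $A\mapsto A+dh$, so the Coulomb condition $d^*(g^*A-A')=0$ relative to a smooth reference $A'\in{\mz A}$ is a \emph{linear} equation $\Delta h = d^*(A-A')$, and the Laplacian $\Delta:W^{2,p}_\delta\to L^p_\delta$ on the weighted spaces of the punctured surface is Fredholm. One therefore chooses the smooth $A'$ so that $d^*(A-A')$ lands in the finite-codimension range of $\Delta$, solves for $h\in W^{2,p}_\delta$, and sets $g=\exp h$; there is no implicit function theorem, no patching, and the decay at punctures is automatic because $h$ is produced directly in the weighted space. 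Your approach is not wrong --- local Coulomb plus gluing is the standard non-abelian route (Uhlenbeck, Donaldson--Kronheimer) --- but for an abelian structure group with a weighted-Sobolev framework already in place it is more work than necessary, and the patching step you flag as the ``main obstacle'' is precisely the step the paper's global argument makes disappear. In particular, invoking the ``local implicit-function/iteration argument for $W^{1,p}$ connections'' is an unneeded detour here: since $G=S^1\times S^1$ is abelian, the gauge-fixing equation is already linear and is solved in a single global step.
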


\begin{proof}
Suppose $A \in {\mz A}_\delta^{1, p}$ for some $\delta>0$. Let $d^*$ be the dual of $d$ with respect to the cylindrical metric and $\Delta = - d^* d$. Then $\Delta: W_\delta^{2, p}(\Sigma^*) \otimes {\mf g} \to L_\delta^p(\Sigma^*) \otimes {\mf g}$ is Fredholm. Therefore we can find a smooth element $A' \in {\mz A}$ such that $d^* (A - A')\in L_\delta^p(\Sigma^*) \otimes {\mf g}$ lies in the range of $\Delta$. Choose $h \in \Delta^{-1} \big( d^* (A - A')\big)$ and denote $g = \exp h \in {\mz G}$. Then 
\begin{align*}
d^* \big( g^* A - A' \big) = - \Delta h + d^* (A - A') = 0.
\end{align*} 
This means that $g^* A$ is in Coulomb gauge relative to $A'$. Let $\alpha = g^* A - A'$. Then
\begin{align*}
\ov\partial_{A'} u = - \nabla \wt{\mc W}_A (u) - \big({\mc X}_{\alpha} (u) \big)^{0,1},\ d\alpha = - \mu^* (u) \nu - F_{A'}.
\end{align*}
Apply the standard elliptic bootstrapping argument to the pair $(\alpha, u)$ we see that $(\alpha, u)$ is indeed smooth.
\end{proof}

So it suffices to consider smooth solutions to the perturbed gauged Witten equation. 

\section{Local and cylindrical models of gauged Witten equation}

\begin{defn}
Let $r >0$. The parameters of {\bf local models of gauged Witten equation} over $B_r$ are triples $(\beta, \sigma, \delta)$, where: $\beta: B_r \to [0,1]$, $\sigma: B_r \to [0, +\infty)$ are smooth functions satisfying
\begin{align*}
|d\beta| \leq 1,\ \sigma \leq C(\sigma), \ \sigma^+:= \sup_{B_r} \sigma \leq 2 \sigma^-:= 2\inf_{B_r} \sigma
\end{align*}
and $\delta \in (0, 1]$ is a constant. A solution to the local model with parameter $(\beta, \sigma, \delta)$ is a pair ${\bm u} = (u, h = h' + {\bm i} h'') \in C^\infty( B_r, \wt{X} \times {\mf g}^{\mb C})$ solving the equation
\begin{align}\label{equation31}
\partial_s u + {\mc X}_\phi(u) + J \big( \partial_t u + {\mc X}_\psi (u) \big) + 2 \nabla \wt{W}_h^{(\delta)}(u) = 0,\ \Delta h''  + \sigma \mu^* (u) = 0.
\end{align}
Here $(\phi, \psi): B_r \to {\mf g} \times {\mf g}$ is given by $\phi + {\bm i} \psi = 2(\partial h / \partial \ov{z})$,
\begin{align*}
\wt{W}_h^{(\delta)}(z, x) = e^{\rho_0(h(z))} F_0(x) + \beta(z) \sum_{l=1}^s e^{\rho_l(h(z))} F_l^{(\delta)}(x).
\end{align*}
Here $\wt{W} = \sum_{l=0}^s F_l$ is equal to the function $\wt{W}_\upgamma$ we specified in (\ref{equation25}) for some $\upgamma \in {\mb Z}_r$.
\end{defn}

Let $\Theta_+ = [0, +\infty) \times S^1$ and let $\sigma: \Theta_+ \to {\mb R}_+ \cup \{0\}$ be a smooth function satisfying
\begin{align}\label{equation32}
\big| \nabla^j \sigma(s, t) \big| \leq C^{(j)}(\sigma) e^{-2s},\ j = 0, 1, \ldots.
\end{align}

\begin{defn}
Let $\lambda \in  {\bm i} ({\mb Z}/r \cap [0, 1)) \subset {\mf g}_0$ and $\upgamma = \exp (2\pi \lambda)$. The parameters of {\bf cylindrical models of gauged Witten equation with residue $\lambda$} (called $\lambda$-cylindrical model for short) is a pair $(\sigma, \delta)$, where: $\sigma: \Theta_+ \to {\mb R}_+ \cup\{0\}$ is a smooth function satisfying (\ref{equation32}) and $\delta\in (0, 1]$ is a constant, such that if $\upgamma$ is narrow, then $\delta = 1$. 

A smooth solution to a $\lambda$-cylindrical model with parameter $(\sigma, \delta)$ is a map ${\bm u} = (u, h) \in C^\infty( \Theta_+, \wt{X} \times {\mf g}^{\mb C})$ which solves
\begin{align}\label{equation33}
\partial_s u + {\mc X}_\phi(u) + J \big( \partial_t u + {\mc X}_{\psi} \big) + 2 \wt{W}_{h, \lambda}^{(\delta)}(u) = 0,\ \Delta h'' + \sigma \mu^*(u) = 0.
\end{align}
and which satisfies
\begin{align}\label{equation34}
\lim_{s \to +\infty} h(s, t) = 0,\ \phi \in W_\delta^{1, p}(\Theta, {\mf g})\ {\rm for\ some\ }\delta>0.
\end{align}
Here $(\phi, \psi): \Theta_+ \to {\mf g} \times {\mf g}$ is given by $\phi + {\bm i}(\psi - \lambda) = 2( \partial h / \partial \ov{z})$ and 
\begin{align}\label{equation35}
\wt{W}_{h, \lambda}^{(\delta)}(z, x) = \sum_{l=0}^s e^{\rho_l(h(z) + \lambda t)} F_{\upgamma;l}^{(\delta)}(x)
\end{align}
where the function $\sum_{l=0}^s F_{\upgamma; l} =\wt{W}_\upgamma$ is the one we specified in (\ref{equation25}).
\end{defn}

For either the local model or the cylindrical model, we have the coordinate $z = s + {\bm i} t$. If ${\bm u} = (u, h)$ is a solution to either type of model, we abbreviate $A = \phi ds + \psi dt$, $d_A u = (\partial_s u + {\mc X}_\phi(u)) ds + (\partial_t u + {\mc X}_{\psi}(u)) dt$ and $\wt{\mc W}_A = \wt{W}_h^{(\delta)}$. The energy density of ${\bm u}$ is 
\begin{align}
e({\bm u}) = {1\over 2} \big| d_A u \big|^2 + \big| \nabla \wt{\mc W}_A(u) \big|^2 + \big| \sqrt{\sigma} \mu(u) \big|^2.
\end{align}
The total energy $E({\bm u})$ is the integral of $e({\bm u})$ over the domain (either $B_r$ or $\Theta_+$).

Suppose $(A, u)$ is a solution to the perturbed gauged Witten equation (\ref{equation221}) over the rigidified $r$-spin curve $\vec{\mc C}$. Then for any $q \in \Sigma^*$, the restriction of $(A, u)$ to $B_r(q)$ ($r \leq r^*$) gives a solution to a corresponding local model, via the local coordinate on $B_r(q)$ and the local trivialization of $P|_{B_r(q)}$. Moreover, if the puncture $z_j$ has residue $\lambda_j$, then the restriction of $(A, u)$ to any $U_j(S)\simeq \Theta_+$ ($S \geq 1$), via the trivialization $\phi_j$, gives a solution to a solution to a $\lambda_j$-cylindrical model. 

On the other hand, suppose $(u, h)$ is a solution to a $\lambda$-cylindrical model with parameter $(\sigma, \delta)$. Then on any disk $B_r \subset \Theta_+$, the function $\lambda t: B_r \to {\mf g}$ is single-valued and the pair $(u|_{B_r}, h|_{B_r} + \lambda t)$ is a solution to the local model over $B_r$ with parameter $(\beta = 1, \sigma|_{B_r}, \delta)$. 

Suppose ${\bm u} = (u, h)$ is a solution to a local model over $B_r$ and $f: U \to {\mf g}$ is a smooth function. Let $g = \exp f$. Then $g^*{\bm u} := (g^{-1} u, h + f)$ is another solution to the original model. We simply say that ${\bm u}$ and $g^* {\bm u}$ are gauge equivalent.

\subsection{Digress: holomorphic 1-forms}\label{subsection31}

To carry out local calculations, we make a digress on properties of holomorphic 1-forms on K\"ahler manifolds. These properties apply to the case of holomorphic functions directly. Within this subsection, $\wt{X}$ is a general K\"ahler manifold and $G$ is a compact Lie group. Assume that there is a Hamiltonian $G$-action on $\wt{X}$, which extends to a holomorphic $G^{\mb C}$-action. 

We say a holomorphic 1-form $\alpha$ on $\wt{X}$ is homogeneous with respect to a character $\rho: G^{\mb C} \to {\mb C}^*$ if for any $g\in G^{\mb C}$, $g^* \alpha = \rho(g) \alpha$. In particular, if $f$ is a homogeneous function, then $df$ is homogeneous with respect to the same character. On the other hand, for a holomorphic 1-form, $\alpha$, we define its metric dual $\alpha^*$ to be the {\it real} vector field satisfying that for any real vector field $Z$, 
\begin{align*}
\langle \alpha^*, Z\rangle = {\rm Re} \left( \alpha(Z)\right).
\end{align*}

\begin{lemma}\label{lemma33}
If $\alpha$ is a holomorphic 1-form which is homogeneous with respect to $\rho: G^{\mb C} \to {\mb C}^*$, then for any $\xi \in {\mf g}$,
\begin{align}\label{equation37}
\left[ \alpha^*, {\mc X}_\xi \right] = \rho(\xi) \alpha^*.
\end{align}
Moreover, for any real vector field $Z$, 
\begin{align}\label{equation38}
\left[ \nabla_Z \alpha^*, {\mc X}_\xi \right] = \rho(\xi) \nabla_Z \alpha^* + \nabla_{[Z, {\mc X}_\xi]} \alpha^*.
\end{align}
\end{lemma}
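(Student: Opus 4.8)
The plan is to prove both identities by directly unwinding what it means for $\alpha$ to be $\rho$-homogeneous and differentiating. First I would record the infinitesimal form of the homogeneity condition $g^*\alpha = \rho(g)\alpha$: differentiating this relation along a one-parameter subgroup $e^{t\xi}$ with $\xi \in {\mf g}$ and evaluating at $t=0$ gives the Lie-derivative identity ${\mc L}_{{\mc X}_\xi}\alpha = -\rho(\xi)\,\alpha$ (the sign coming from the convention that the flow of ${\mc X}_\xi$ is $e^{-t\xi}$ acting on $\wt{X}$, matched to whatever sign convention is implicit in (\ref{equation37}); I would just fix conventions so that the stated formula holds). Here $\rho(\xi)$ denotes the derivative of the character $\rho$ at the identity, a complex number, and since $\alpha$ is holomorphic and ${\mc X}_\xi$ is (real) Killing and $G$ preserves $J$, ${\mc X}_\xi + {\bm i}J{\mc X}_\xi$ is holomorphic, so no regularity subtleties arise.

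Next, for (\ref{equation37}) I would translate the statement about $\alpha$ into one about its metric dual $\alpha^*$. Since $G$ acts by isometries preserving $J$, and since the metric dual of a holomorphic $1$-form is defined via $\langle \alpha^*, Z\rangle = {\rm Re}(\alpha(Z))$, the operation $\alpha \mapsto \alpha^*$ is equivariant: $({\mc L}_{{\mc X}_\xi}\alpha)^* = {\mc L}_{{\mc X}_\xi}(\alpha^*) = [{\mc X}_\xi, \alpha^*] = -[\alpha^*, {\mc X}_\xi]$, using that Lie derivative of a vector field along ${\mc X}_\xi$ is the Lie bracket and that ${\mc X}_\xi$ is Killing so commutes with raising indices. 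One subtlety: $\rho(\xi)$ is complex, so $\rho(\xi)\alpha$ has an imaginary part; I should check that $({\bm i}\alpha)^* = -J\alpha^*$ (or $+J\alpha^*$), so that multiplying $\alpha$ by the complex scalar $\rho(\xi)$ corresponds to a well-defined real operation on $\alpha^*$ and the formula $[\alpha^*, {\mc X}_\xi] = \rho(\xi)\alpha^*$ makes sense with the same bracket convention used throughout the paper (i.e. $\rho(\xi)$ acting on the real vector field $\alpha^*$ via its real and imaginary parts composed with $J$). Combining the equivariance of the dual with the infinitesimal homogeneity then yields (\ref{equation37}) after matching signs.

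For (\ref{equation38}), I would differentiate (\ref{equation37}) covariantly. Apply $\nabla_Z$ to both sides of $[\alpha^*, {\mc X}_\xi] = \rho(\xi)\alpha^*$. The Leibniz rule for the Levi-Civita connection acting on a Lie bracket is not simply $\nabla_Z[\alpha^*,{\mc X}_\xi] = [\nabla_Z\alpha^*, {\mc X}_\xi] + [\alpha^*, \nabla_Z{\mc X}_\xi]$; rather one uses the torsion-free identity $[\alpha^*,{\mc X}_\xi] = \nabla_{\alpha^*}{\mc X}_\xi - \nabla_{{\mc X}_\xi}\alpha^*$ and then differentiates, bringing in curvature terms. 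The cleaner route is: from (\ref{equation37}), $\nabla_Z(\rho(\xi)\alpha^*) = \rho(\xi)\nabla_Z\alpha^*$, while the left side is $\nabla_Z([\alpha^*,{\mc X}_\xi])$; expand the bracket using torsion-freeness, use that ${\mc X}_\xi$ is Killing (so $\nabla^2{\mc X}_\xi$ is controlled by curvature) and that the ambient metric is Kähler, and reorganize terms to isolate $[\nabla_Z\alpha^*, {\mc X}_\xi]$. The correction term $\nabla_{[Z,{\mc X}_\xi]}\alpha^*$ appears precisely because $Z$ and ${\mc X}_\xi$ need not commute: differentiating the flow relation $e^{-t\xi}$-pullback of $\nabla\alpha^*$ picks up a term where the derivative direction $Z$ itself gets Lie-dragged. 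I expect this bookkeeping — getting every curvature and torsion term to cancel correctly and confirming that exactly the one correction term $\nabla_{[Z,{\mc X}_\xi]}\alpha^*$ survives — to be the main obstacle; everything else is formal. An alternative, possibly cleaner, derivation: note that (\ref{equation38}) is just the statement that $\nabla\alpha^*$, viewed as a $(1,1)$-tensor, transforms under the flow of ${\mc X}_\xi$ with the scaling factor $\rho(\xi)$ in its ``$\alpha^*$-slot'' and trivially in its differentiation slot; differentiating the tensorial homogeneity relation ${\mc L}_{{\mc X}_\xi}(\nabla\alpha^*) = -\rho(\xi)\nabla\alpha^*$ (which holds because $\nabla$ commutes with isometries) and evaluating on $(Z, \cdot)$ directly produces both the $\rho(\xi)\nabla_Z\alpha^*$ term and the $\nabla_{[Z,{\mc X}_\xi]}\alpha^*$ term from the Leibniz rule for ${\mc L}_{{\mc X}_\xi}$ acting on a tensor evaluated at the vector $Z$. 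I would present it this second way to avoid curvature computations entirely.
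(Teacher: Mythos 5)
Your argument is correct, and the broad structure matches the paper's: for (\ref{equation37}) both arguments boil down to the observation that the metric-dual operation $\alpha\mapsto\alpha^*$ commutes with ${\mc L}_{{\mc X}_\xi}$ because ${\mc X}_\xi$ is Killing, combined with the infinitesimal form ${\mc L}_{{\mc X}_\xi}\alpha = \rho(\xi)\alpha$ of the homogeneity (with the convention that the flow of ${\mc X}_\xi$ is $e^{t\xi}$, the sign comes out positive, so you do need the sign fix you flag). The paper implements this by pairing $[\alpha^*, {\mc X}_\xi]$ against a $G$-invariant $Z$ and pushing ${\mc L}_{{\mc X}_\xi}$ across the inner product; your statement ``$({\mc L}_{{\mc X}_\xi}\alpha)^* = {\mc L}_{{\mc X}_\xi}(\alpha^*)$'' is the same fact stated tensorially. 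You are also right to flag that $\rho(\xi)$ is complex (indeed purely imaginary for $\xi\in{\mf g}$), so $\rho(\xi)\alpha^*$ is to be read via $J$; this is an implicit convention in the paper.

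For (\ref{equation38}) your preferred second route is a genuinely different proof from the paper's. The paper reduces to $G$-invariant $Z$ (so $[Z,{\mc X}_\xi]=0$, the correction term vanishes), takes a second $G$-invariant $Z'$, and expands $\langle Z', [\nabla_Z\alpha^*,{\mc X}_\xi]\rangle$ step by step, feeding in (\ref{equation37}); the correction term is then restored implicitly by the observation that both sides of (\ref{equation38}) transform the same way under $Z\mapsto fZ$. Your route instead establishes the tensor identity ${\mc L}_{{\mc X}_\xi}(\nabla\alpha^*) = -\rho(\xi)\,\nabla\alpha^*$ once and for all — this follows from (\ref{equation37}) because ${\mc L}_X\nabla=0$ for a Killing field $X$ and because $\nabla J=0$ (needed since $\rho(\xi)$ acts through $J$) — and then simply expands the Leibniz rule ${\mc L}_{{\mc X}_\xi}(\nabla\alpha^*)(Z) = {\mc L}_{{\mc X}_\xi}(\nabla_Z\alpha^*) - \nabla_{[{\mc X}_\xi,Z]}\alpha^*$. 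This produces both terms on the right side of (\ref{equation38}) structurally, without ever specializing to $G$-invariant $Z$ and without any curvature bookkeeping, which makes it slightly cleaner; what the paper's computation buys is that it stays entirely at the level of scalar pairings, close to the definitions. (Your first route — covariantly differentiating (\ref{equation37}) and expanding the bracket via torsion-freeness — would indeed bring in curvature of ${\mc X}_\xi$ and require cancellations; you are right that the Lie-derivative-of-a-tensor route is the one to present.) One small slip: you say ``differentiating the tensorial homogeneity relation'' at the end, but no further differentiation is needed — evaluating ${\mc L}_{{\mc X}_\xi}(\nabla\alpha^*)$ on $Z$ via the Leibniz rule already gives (\ref{equation38}).
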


\begin{proof}
The homogeneity of $\alpha$ implies that ${\mc L}_{{\mc X}_\xi} \alpha = \rho(\xi) \alpha$. Therefore, for any $G$-invariant vector field $Z$, since ${\mc X}_\xi$ is Killing, we have 
\begin{align*}
\big\langle Z, [ \alpha^*, {\mc X}_\xi ] \big\rangle = - {\mc L}_{{\mc X}_\xi} \big( {\rm Re} ( \alpha(Z)) \big) = - \big({\rm Re}  {\mc L}_{{\mc X}_\xi} \alpha \big)(Z) = - {\rm Re} \big( \rho(\xi) \alpha \big) (Z) = \big\langle Z, \rho (\xi) \alpha^* \big\rangle.
\end{align*}
Therefore $[ \alpha^*, {\mc X}_\xi ] = \rho(\xi) \alpha^*$. To prove the second equality, we may assume that $Z$ is $G$-invariant. Then take another $G$-invariant vector field $Z'$, we see 
\begin{align*}
\begin{split}
\big\langle Z', [ \nabla_Z \alpha^* , {\mc X}_\xi ] \big\rangle = &\ - {\mc X}_\xi \big\langle Z', \nabla_Z \alpha^* \big\rangle \\
= &\ - {\mc X}_\xi Z \big\langle Z, \alpha^*  \big\rangle + {\mc X}_\xi \big\langle \nabla_Z Z', \alpha^*  \big\rangle \\
= &\ -Z {\mc X}_\xi \big\langle Z', \alpha^*  \big\rangle + {\mc X}_\xi \big\langle \nabla_Z Z', \alpha^*  \big\rangle \\
= &\ Z \big\langle Z', \rho (\xi) \alpha^*  \big\rangle -  \big\langle \nabla_Z Z', \rho (\xi) \alpha^* \big\rangle \\
= &\ \big\langle Z', \nabla_Z (\rho (\xi)) \alpha^*  \big\rangle \\
= &\ \big\langle Z', \rho (\xi) \nabla_Z \alpha^* \big\rangle.
\end{split}
\end{align*}
Therefore (\ref{equation38}) holds.
\end{proof}

\begin{lemma}\label{lemma34}
Suppose $\alpha$ is a holomorphic 1-form and $\alpha^*$ is the real vector field defined by $\langle \alpha^*, Z \rangle = {\rm Re} (\alpha(Z))$. Then we have
\begin{align*}
\nabla_{JZ} \alpha^* = - J \nabla_Z \alpha^*.
\end{align*}
\end{lemma}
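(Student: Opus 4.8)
The plan is to reduce the statement to the Kähler identity $\nabla_{JZ}\alpha = {\bm i}\,\nabla_Z\alpha$ for the covariant derivative of the $\mb C$-valued $(1,0)$-form $\alpha$, and then transport it to the real vector field $\alpha^*$ using metric compatibility of the Levi-Civita connection. Two pointwise facts make $\alpha^*$ computable. Since $\alpha$ has type $(1,0)$, $\alpha\circ J = {\bm i}\alpha$; since the metric is Kähler (so $J$ is skew-adjoint and $\nabla J = 0$), $\nabla_W\alpha$ is again a $(1,0)$-form for every real vector field $W$, i.e.\ $(\nabla_W\alpha)\circ J = {\bm i}\,\nabla_W\alpha$, which follows from $\nabla(\alpha\circ J) = (\nabla\alpha)\circ J$. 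Combining $\nabla g = 0$ with the defining property $\langle \alpha^*, Z\rangle = {\rm Re}(\alpha(Z))$ and $\nabla(\alpha(Z)) = (\nabla\alpha)(Z) + \alpha(\nabla Z)$ yields, for all real $W, Z$, the formula $\langle \nabla_W\alpha^*, Z\rangle = {\rm Re}((\nabla_W\alpha)(Z))$.

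Next I would prove the key identity. Decomposing $Z = Z^{1,0} + Z^{0,1}$ in the complexified tangent bundle, one has $(JZ)^{1,0} = {\bm i}\, Z^{1,0}$ and $(JZ)^{0,1} = -{\bm i}\, Z^{0,1}$. On a Kähler manifold the Levi-Civita connection restricts to the Chern connection on the bundle $\Lambda^{1,0}$ of $(1,0)$-forms, whose $(0,1)$-part is $\bar\partial$; since $\alpha$ is holomorphic this gives $\nabla_{Z^{0,1}}\alpha = \bar\partial_{Z^{0,1}}\alpha = 0$. Hence $\nabla_Z\alpha = \nabla_{Z^{1,0}}\alpha$ and $\nabla_{JZ}\alpha = \nabla_{(JZ)^{1,0}}\alpha = {\bm i}\,\nabla_{Z^{1,0}}\alpha = {\bm i}\,\nabla_Z\alpha$.

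Finally I would assemble the pieces: for an arbitrary real $W$, the formula above gives $\langle \nabla_{JZ}\alpha^*, W\rangle = {\rm Re}(({\bm i}\nabla_Z\alpha)(W)) = -{\rm Im}((\nabla_Z\alpha)(W))$, while $\langle -J\nabla_Z\alpha^*, W\rangle = \langle \nabla_Z\alpha^*, JW\rangle = {\rm Re}((\nabla_Z\alpha)(JW)) = {\rm Re}({\bm i}(\nabla_Z\alpha)(W)) = -{\rm Im}((\nabla_Z\alpha)(W))$, where I used that $J$ is skew-adjoint for the second equality and that $\nabla_Z\alpha$ has type $(1,0)$ for the fourth. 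Since $W$ is arbitrary and the metric is nondegenerate, $\nabla_{JZ}\alpha^* = -J\nabla_Z\alpha^*$.

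I expect the only genuine subtlety to be a clean justification of $\nabla_{Z^{0,1}}\alpha = 0$, that is, identifying the $(0,1)$-part of the connection induced by $\nabla$ on $\Lambda^{1,0}$ with $\bar\partial$ and invoking holomorphicity of $\alpha$; an alternative is to work in local holomorphic coordinates, writing $\alpha = \sum_j f_j\, dz^j$ with $f_j$ holomorphic and using that the only nonvanishing Christoffel symbols of a Kähler metric are of type $\Gamma^k_{ij}$ and $\Gamma^{\bar k}_{\bar i\bar j}$, though this route is computationally heavier. Everything else is routine manipulation of $J$ and real and imaginary parts.
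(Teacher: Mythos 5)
Your argument is correct, and it follows a genuinely different route from the paper's.  The paper proves the equivalent identity $\langle \nabla_{JZ}\alpha^*, V\rangle = \langle \nabla_Z\alpha^*, JV\rangle$ by a direct computation: after reducing to vector fields with $[Z,V]=[JZ,V]=0$, it writes $\langle \nabla_{JZ}\alpha^*,V\rangle$ via metric compatibility, then introduces $d\alpha$ and exploits that $d\alpha$ has type $(2,0)$ (so that $d\alpha(JZ,V)={\bm i}\,d\alpha(Z,V)$) together with $\alpha\circ J = {\bm i}\alpha$ and $\nabla J = 0$ to shuffle the $J$ from the first slot of $\nabla$ onto the test vector.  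You instead isolate the pointwise identity $\langle \nabla_W\alpha^*,Z\rangle = {\rm Re}\big((\nabla_W\alpha)(Z)\big)$ once and for all, and then reduce the lemma to the single Kähler fact $\nabla_{Z^{0,1}}\alpha = \bar\partial_{Z^{0,1}}\alpha = 0$ (the Levi-Civita connection coinciding with the Chern connection on $\Lambda^{1,0}$), which gives $\nabla_{JZ}\alpha = {\bm i}\nabla_Z\alpha$ and the type-$(1,0)$ property of $\nabla_Z\alpha$.  Your version avoids choosing commuting frames and never introduces $d\alpha$; it trades the paper's elementary-but-fiddly computation for a one-line appeal to a standard structural fact of Kähler geometry.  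Both are complete proofs; yours is cleaner if the reader is comfortable with the Chern-connection identification, while the paper's is more self-contained at the price of a longer chain of manipulations (and, incidentally, contains a couple of apparent typos in the displayed chain, e.g.\ $Z\alpha(JZ)$ where $Z\alpha(JV)$ is meant, and a sign on the $V\alpha(JZ)$ term).  The only point you flagged as a potential subtlety, the identification $\nabla^{0,1}=\bar\partial$ on $\Lambda^{1,0}$, is indeed the crux of your approach, but it is a standard consequence of the Kähler condition and needs no further elaboration.
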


\begin{proof}
It suffices to prove for any tangent vector $V$, we have
\begin{align*}
\langle \nabla_{JZ} \alpha^*, V\rangle = \langle \nabla_Z \alpha^*, J V \rangle.
\end{align*}
Indeed, the equality is bilinear in $Z$ and $V$. So it suffices to consider the case when $[Z,V] = [JZ, V] = 0$. In this case we have
\begin{align*}
\begin{split}
\big\langle \nabla_{JZ} \alpha^*, V  \big\rangle  = &\  JZ  \big\langle \alpha^*, V \big\rangle - \big\langle \alpha^*, \nabla_{JZ} V \big\rangle\\
 =&\ {\rm Re} \big( JZ \alpha(V) - \alpha (J \nabla_Z V) \big) \\
= &\ {\rm Re} \big( d\alpha (JZ, V) + V \alpha(JZ) - {\bm i} \alpha(\nabla_Z V) \big) \\
= &\ {\rm Re} \big( {\bm i} d\alpha(Z, V) - {\bm i} V \alpha(Z)  - {\bm i} \alpha( \nabla_Z V) \big) \\
= &\ {\rm Re} \big( Z \alpha(JZ) -  \alpha (\nabla_Z (JV))\big) \\
= &\  Z \big\langle \alpha^*, JV  \big\rangle - \big\langle \alpha^*, \nabla_Z (J V) \big\rangle \\
= &\ \big\langle \nabla_Z \alpha^*, JV \big\rangle.
\end{split}
\end{align*}
Here in the third and fourth equalities we used the fact that $\alpha$ is holomorphic.
\end{proof}

\subsection{Local calculations}\label{subsection32}

Consider an arbitrary smooth map $(u, \phi, \psi): U \to \wt{X}\times {\mf g} \times {\mf g}$ and denote $A = \phi ds + \psi dt$. Here $U$ is a region which is either $B_r$ or $\Theta_+$, having a coordinate $z = s + {\bm i} t$. We recall certain differential operators naturally associated to the triple $(u, \phi, \psi)$ (cf. \cite{Cieliebak_Gaio_Mundet_Salamon_2002} and \cite{Gaio_Salamon_2005} for more comprehensive treatment of such operators). For any $\xi \in \Gamma  (U, u^* T\wt{X})$, we define
\begin{align*}
D_{A, s} \xi = \nabla_s \xi + \nabla_\xi {\mc X}_\phi,\ D_{A, t} \xi = \nabla_t \xi + \nabla_\xi {\mc X}_\psi.
\end{align*}
We list some of their properties whose proofs can be found in \cite[Section 2.4]{Cieliebak_Gaio_Mundet_Salamon_2002} and \cite[Section 4]{Gaio_Salamon_2005}. 

(I) For $\xi_1, \xi_2 \in \Gamma( U, u^* T\wt{X} )$, let $\langle \xi_1, \xi_2 \rangle$ be the real inner product on $T\wt{X}$. Then
\begin{align}\label{equation39}
\begin{split}
\partial_s \langle \xi_1, \xi_2 \rangle =&\ \langle D_{A, s} \xi_1, \xi_2 \rangle + \langle \xi_1, D_{A, s} \xi_2 \rangle,\\
\partial_t \langle \xi_1, \xi_2 \rangle =&\ \langle D_{A, t} \xi_1, \xi_2 \rangle + \langle \xi_1, D_{A, t} \xi_2 \rangle.
\end{split}
\end{align}

(II) Since $J$ is integrable and $G$-invariant, we have
\begin{align}\label{equation310}
[ D_{A, s}, J ] = [ D_{A, t}, J ] = 0.
\end{align}

(III) Let $R$ be the curvature tensor of $\wt{X}$ and $F_A = \partial_s \psi - \partial_t \phi$. Denote $v_s = \partial_s u + {\mc X}_\phi$, $v_t = \partial_t u + {\mc X}_\psi$, then for $\xi \in \Gamma ( U, u^* T\wt{X} )$, we have
\begin{align}\label{equation311}
D_{A, s} v_t - D_{A, t} v_s = {\mc X}_{F_A};
\end{align}
\begin{align}\label{equation312}
[ D_{A, s}, D_{A, t} ] \xi = R(v_s, v_t) \xi + \nabla_\xi {\mc X}_{F_A}.
\end{align}

Let $h = h'+ {\bm i} h'': U \to {\mf g}^{\mb C}$ be a smooth function such that $\phi + {\bm i} \psi = 2 ( \partial h/ \partial \ov{z})$. As an application of Lemma \ref{lemma33} and \ref{lemma34}, we have
\begin{lemma}\label{lemma35}
Let $F: \wt{X} \to {\mb C}$ be a homogeneous function with respect to a character $\rho: G^{\mb C} \to {\mb C}^*$. Then we have
\begin{align*}
\begin{split}
D_{A, s} e^{\ov{\rho(h)}} \nabla F(u) = &\ e^{\ov{\rho(h)}} \big( \nabla_{v_s} \nabla F + 2 \rho ( {\bm i} \partial h'' / \partial \ov{z} ) \nabla F \big),\\ 
D_{A, t} e^{\ov{\rho(h)}} \nabla F(u) = &\ e^{\ov{\rho(h)}} \big( \nabla_{v_t} \nabla F + 2 \rho ( \partial h''/ \partial \ov{z}) \nabla F \big).
\end{split}
\end{align*}
\end{lemma}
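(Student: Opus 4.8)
~\textbf{Plan of proof.}
The two identities in Lemma~\ref{lemma35} are completely parallel (they differ only by $s \leftrightarrow t$ and $\phi \leftrightarrow \psi$), so I would establish the first one and note that the second follows by the same argument. The key observation is that $e^{\ov{\rho(h)}}\nabla F(u)$ is a product of the scalar function $e^{\ov{\rho(h)}}: U \to {\mb C}^*$ with the pull-back vector field $(\nabla F)\circ u \in \Gamma(U, u^* T\wt{X})$, so the covariant derivative $D_{A,s}$ should obey a Leibniz rule: one term where $D_{A,s}$ falls on the scalar factor, and one where it falls on $\nabla F(u)$ and behaves like the genuine covariant derivative $\nabla_s$ plus the correction $\nabla_{(\cdot)}{\mc X}_\phi$ built into the definition of $D_{A,s}$.

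The first step is to record that $D_{A,s}$ satisfies a Leibniz identity with respect to multiplication by a (complex-valued) scalar function $f$ on $U$: since $\nabla_\xi {\mc X}_\phi$ is tensorial in $\xi$, we get $D_{A,s}(f\,\xi) = (\partial_s f)\,\xi + f\,D_{A,s}\xi$, and in particular $D_{A,s}(e^{\ov{\rho(h)}}\xi) = e^{\ov{\rho(h)}}\big(\partial_s(\ov{\rho(h)})\,\xi + D_{A,s}\xi\big)$. The second step is to compute $D_{A,s}$ applied to $\xi = \nabla F(u) = (\nabla F)\circ u$ directly from the definition: $D_{A,s}\big(\nabla F(u)\big) = \nabla_s\big(\nabla F(u)\big) + \nabla_{\nabla F(u)}{\mc X}_\phi = \nabla_{v_s}\nabla F$, where I use $\nabla_s(\nabla F(u)) = \nabla_{\partial_s u}\nabla F$ (chain rule for pull-back connections, using that $\nabla F$ is a globally defined vector field) together with the homogeneity relation $[\nabla F, {\mc X}_\phi] = \rho(\phi)\nabla F$ from Lemma~\ref{lemma33} applied to $\alpha = dF$; combining $\nabla_{\partial_s u}\nabla F + \nabla_{{\mc X}_\phi}\nabla F = \nabla_{v_s}\nabla F$ is immediate since $v_s = \partial_s u + {\mc X}_\phi$, but one must be careful that $\nabla_{\nabla F(u)}{\mc X}_\phi = \nabla_{{\mc X}_\phi}\nabla F$, which again comes from Lemma~\ref{lemma33} (the bracket $[\nabla F, {\mc X}_\phi]$ together with torsion-freeness of $\nabla$, noting the extra $\rho(\phi)\nabla F$ term from $[\nabla F,{\mc X}_\phi]$ gets absorbed appropriately).

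The third step is to combine the two: $D_{A,s}\big(e^{\ov{\rho(h)}}\nabla F(u)\big) = e^{\ov{\rho(h)}}\big(\partial_s(\ov{\rho(h)})\nabla F + \nabla_{v_s}\nabla F\big)$, and then identify $\partial_s(\ov{\rho(h)})$ with $2\rho({\bm i}\,\partial h''/\partial\ov z)$. This last identification is pure bookkeeping with $\ov z$: writing $h = h' + {\bm i} h''$ and using $\partial/\partial\ov z = \tfrac12(\partial_s + {\bm i}\partial_t)$, one has $\partial h/\partial\ov z = \tfrac12(\partial_s h + {\bm i}\partial_t h)$, hence $\partial_s h = \partial h/\partial\ov z + \partial h/\partial z = 2\,{\rm Re}_{\mb C}(\partial h/\partial\ov z)$ in the appropriate sense; applying the complex-linear functional $\rho$ and taking the conjugate (because the exponent is $\ov{\rho(h)}$ rather than $\rho(h)$) turns the $\partial_s$-derivative of the exponent into $2\rho(\overline{\partial h/\partial\ov z})$'s conjugate $=2\ov{\rho}(\partial h/\partial\ov z)$, and a short manipulation separating the $h'$ and $h''$ parts — using that $\phi + {\bm i}\psi = 2\partial h/\partial\ov z$ is the hypothesis relating $h$ to the connection — yields exactly the stated coefficient $2\rho({\bm i}\,\partial h''/\partial\ov z)$.

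\textbf{Main obstacle.} The only genuinely delicate point is getting the coefficient of $\nabla F$ exactly right: tracking real versus complex conjugation through $\rho$, keeping straight that the exponent carries a bar, and correctly splitting $\partial_s h$ into its $\partial/\partial z$ and $\partial/\partial\ov z$ pieces so that only the $h''$-contribution survives with the right factor of $2$ and ${\bm i}$. The vector-field side ($\nabla_{v_s}\nabla F$) is robust and follows cleanly from the definition of $D_{A,s}$ and Lemma~\ref{lemma33}; I expect essentially no difficulty there beyond invoking torsion-freeness and the homogeneity bracket relation. Everything else is a routine Leibniz computation.
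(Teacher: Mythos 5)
Your overall plan — Leibniz rule for the scalar factor, definition of $D_{A,s}$, the homogeneity bracket from Lemma~\ref{lemma33}, then bookkeeping — is the paper's approach. But two of your intermediate identities are wrong, and they are wrong in a way that happens to cancel, which means you have not actually established either one.

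First, in your Step 2 you assert $D_{A,s}\big(\nabla F(u)\big) = \nabla_{v_s}\nabla F$, claiming that the ``extra $\rho(\phi)\nabla F$ term \ldots gets absorbed.'' It does not vanish. Torsion-freeness gives $\nabla_{\nabla F}{\mc X}_\phi = \nabla_{{\mc X}_\phi}\nabla F + [\nabla F, {\mc X}_\phi]$, and by Lemma~\ref{lemma33} the bracket equals $\rho(\phi)\nabla F$, which is nonzero. The correct identity is
\begin{align*}
D_{A,s}\big(\nabla F(u)\big) = \nabla_{v_s}\nabla F + \rho(\phi)\nabla F.
\end{align*}
Second, in Step 3 you claim $\partial_s\big(\ov{\rho(h)}\big) = 2\rho\big({\bm i}\,\partial h''/\partial\ov z\big)$. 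This is also false. Since $h',h''$ are ${\mf g}$-valued and $\rho|_{\mf g}$ is ${\bm i}{\mb R}$-valued, one has $\ov{\rho(\partial_s h)} = \rho(-\partial_s h' + {\bm i}\partial_s h'')$, while $2\rho({\bm i}\,\partial h''/\partial\ov z) = \rho({\bm i}\partial_s h'' - \partial_t h'')$; the difference is exactly $\rho(-\partial_s h' + \partial_t h'') = -\rho(\phi)$. The mechanism of the proof is precisely that the unwanted $\rho(\phi)\nabla F$ from the scalar factor cancels the $\rho(\phi)\nabla F$ coming from $[\nabla F, {\mc X}_\phi]$; by suppressing the bracket term and misstating the scalar derivative, you have eliminated both errors by fiat rather than by the actual cancellation. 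A correct write-up should keep both terms explicit and show that $\ov{\rho(\partial_s h)} + \rho(\phi) = 2\rho({\bm i}\,\partial h''/\partial\ov z)$ using $\phi = \partial_s h' - \partial_t h''$.
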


\begin{proof}
We have
\begin{align*}
\begin{split}
D_{A,s} e^{\ov{\rho(h)}} \nabla F(u)  = &\ e^{\ov{\rho(h)}} \left( \ov{\rho (\partial_s h)} \nabla F + \nabla_s \nabla F + \nabla_{\nabla F } {\mc X}_{\phi} \right)\\
= &\  e^{\ov{\rho(h)}}   \left( \ov{\rho ( \partial_s h)} \nabla F  + \nabla_{v_s} \nabla F + [\nabla F, {\mc X}_\phi]  \right)\\
= &\  e^{\ov{\rho(h)}} \left( \rho ( - \partial_s h' + {\bm i} \partial_s h'') \nabla F + \nabla_{v_s} \nabla F  + \rho ( \partial_s h' - \partial_t h'') \nabla F \right)\\
= &\  \nabla_{v_s} e^{\ov{\rho(h)}} \nabla F + \rho ( {\bm i} \partial_s h'' - \partial_t h'' ) e^{\ov{\rho(h)}} \nabla F.
\end{split}
\end{align*}
The third equality follows from Lemma \ref{lemma33}. The formula for $D_{A, t} e^{\ov{\rho(h)}} \nabla F(u)$ follows in the same way.
\end{proof}

On the other hand, consider a vector field $Z$ along $u: U \to \wt{X}$. We have
\begin{align}\label{equation313}
\begin{split}
&\ D_{A, s} \Big( e^{\ov{\rho(h)}} \nabla_Z \nabla F \Big) \\
= &\ e^{\ov{\rho(h)}} \Big( \ov{\rho(\partial_s h)} \nabla_Z \nabla F + \nabla_s \nabla_Z \nabla F + \nabla_{\nabla_Z \nabla F} {\mc X}_\phi \Big)\\
= &\ e^{\ov{\rho(h)}} \Big(  \ov{\rho(\partial_s h) } \nabla_Z \nabla F + \big[ \nabla_Z \nabla F, {\mc X}_\phi \big] + \big( \nabla_s + \nabla_{{\mc X}_\phi} \big) \nabla_Z \nabla F \Big)\\
= &\ e^{\ov{\rho(h)}} \Big( 2 \rho ({\bm i}  \partial h''/ \partial \ov{z} ) \nabla_Z \nabla F +  \nabla_{[Z, {\mc X}_\phi]} \nabla F +    \big( \nabla_s + \nabla_{{\mc X}_\phi} \big) \nabla_Z \nabla F \Big)\\
= &\ e^{\ov{\rho (h)}} \Big(  2 \rho ({\bm i}  \partial h''/ \partial \ov{z} ) \nabla_X \nabla F +  \nabla_{\nabla_s Z + \nabla_{{\mc X}_\phi} Z + [Z, {\mc X}_\phi] } \nabla F \Big) \\
  &\ + e^{\ov{\rho (h)}} \Big( \big( \nabla_s + \nabla_{{\mc X}_\phi} \big) \nabla_Z \nabla F - \nabla_{\nabla_s Z + \nabla_{{\mc X}_\phi} Z} \nabla F \Big) \\
	= &\ e^{\ov{\rho(h)}} \Big( 2 \rho ({\bm i}  \partial h''/ \partial \ov{z} ) \nabla_Z \nabla F + \nabla_{D_{A, s} Z} \nabla F + G_F (v_s, Z) \Big)	
	\end{split}
	\end{align}
where the tensor $G_F$ is the third derivative of $F$, given by 
\begin{align*}
G_F (V, Z) = \nabla_V(\nabla_Z \nabla F) - \nabla_{\nabla_V Z} \nabla F.
\end{align*}
In deriving the third equality we used the second part of Lemma \ref{lemma33}. Similar to (\ref{equation313}),
\begin{align}\label{equation314}
D_{A, t} \Big( e^{\ov{\rho(h)}} \nabla_Z \nabla F \Big) = e^{\ov{\rho(h)}} \Big( 2 \rho( \partial h''/ \partial \ov{z}) \nabla_Z \nabla F + \nabla_{D_{A, t} Z} \nabla F + G_F (v_t, Z) \Big).	
\end{align}

Now we denote
\begin{align*}
D_A^{1, 0} = \big( D_{A, s} - J D_{A, t} \big)/2,\ D_A^{0,1} = \big( D_{A, s} + J D_{A, t} \big)/2.
\end{align*}
Then Lemma \ref{lemma35} and Lemma \ref{lemma34} imply that
\begin{align}\label{equation315}
\begin{split}
D_A^{1,0} e^{\ov{\rho(h)}} \nabla F(u) = &\ \nabla_{\ov\partial_A u} e^{\ov{\rho(h)}} \nabla F(u),\\
D_A^{0,1} e^{\ov{\rho(h)}} \nabla F(u) = &\ \nabla_{\partial_A u } e^{\ov{\rho(h)}} \nabla F(u) + 2 e^{\ov{\rho(h)}} \rho( {\bm i} \partial h''/ \partial \ov{z} ) \nabla F(u).
\end{split}
\end{align}

Suppose $(\beta, \sigma, \delta)$ parametrizes a local model. For any smooth $(u, h): B_r \to \wt{X} \times {\mf g}^{\mb C}$, abbreviate the inhomogeneous term in the first equation of (\ref{equation31}) as
\begin{align*}
\nabla \wt{\mc W}_A=  e^{\ov{\rho_0(h(z))}} \nabla W(x)  + \beta(z) \sum_{l=1}^s e^{\ov{\rho_l (h(z))}}\nabla F_l^{(\delta)} (x) = {\mc W}_A  + \beta {\mc W}_A'.
\end{align*}
Then by (\ref{equation315}), we have
\begin{align}\label{equation317}
\begin{split}
D_A^{1,0} \nabla \wt{\mc W}_A(u) = &\ \nabla_{\ov\partial_A u} \nabla \wt{\mc W}_A(u) + (\partial \beta / \partial z) \nabla {\mc W}_A'(u);
\end{split}\\
\begin{split}\label{equation318}
D_A^{0,1} \nabla \wt{\mc W}_A (u) = &\ \nabla_{\partial_A u} \nabla \wt{\mc W}_A (u) + ( \partial \beta / \partial \ov{z}) \nabla {\mc W}_A'(u) \\
 +\ 2 e^{\ov{\rho_0(h)}} \rho_0( {\bm i} \partial h''/\partial \ov{z} ) \nabla F_0^{(\delta)}(u) & +  2 \beta \sum_{l=1}^s e^{\ov{\rho_l(h)}} \rho_l ( {\bm i}  \partial h''/ \partial \ov{z}) \nabla F_l^{(\delta) } (u) .
\end{split}
\end{align}

Moreover, for $l = 0, 1, \ldots, s$, we define
\begin{align}\label{equation319}
\begin{split}
H_{A, s}^{(l)}(u, d_A u, Z) = &\ e^{\ov{\rho_l (h)}} \Big(  2 \rho_l ( {\bm i} \partial h''/\partial \ov{z})  \nabla_Z \nabla F_l^{(\delta)}  + G_{F_l^{(\delta)} } (v_s, Z) \Big),\\
H_{A, t}^{(l)}(u, d_A u, Z) = &\ e^{\ov{\rho_l (h)}}\Big(  2 \rho_l (\partial h''/ \partial \ov{z}) \nabla_Z \nabla F_l^{(\delta)}  + G_{F_l^{(\delta)}} (v_t, Z) \Big).
\end{split}
\end{align}
We define
\begin{align}\label{equation320}
\begin{split}
\wt{H}_{A, s} = H_{A, s}^{(0)} + \beta \sum_{l =1}^s H_{A, s}^{(l)}&, \ \wt{H}_{A, t} =  H_{A, t} + \beta \sum_{l =1}^s H_{A, t}^{(l)},\\
 \wt{H}_A^{0,1} = {1\over 2} &\big( \wt{H}_{A, s} + J \wt{H}_{A, t}\big).
\end{split}
\end{align}
Then by (\ref{equation313}) and (\ref{equation314}) we have
\begin{align}\label{equation321}
\begin{split}
&\ D_A^{0,1} \nabla_Z \nabla \wt{\mc W}_A (u)\\
= &\ D_A^{0,1} \big( e^{\ov{\rho(h)}} \nabla_Z \nabla F_0(u) \big) + (\partial \beta / \partial \ov{z}) \nabla_Z \nabla {\mc W}_A'(u) + \beta D_A^{0,1} \sum_{l=1}^s e^{\ov{\rho_l(h)}} \nabla_Z \nabla F_l^{(\delta)}(u) \\
= &\ \wt{H}_A^{0,1}(u, d_A u, Z) + (\partial \beta /\partial \ov{z}) \nabla_Z \nabla {\mc W}_A'(u) + \nabla_{D_A^{1,0} Z} \nabla \wt{\mc W}_A(u).
\end{split}
\end{align}

\section{Asymptotic behavior}\label{section3}

In this section we consider the asymptotic behavior of solutions to the gauged Witten equation. It suffices to consider the equation over cylindrical ends of $\Sigma^*$ and hence we can use cylindrical models introduced in the last section. 

Within this section, we fix $\lambda \in {\bm i} \big( {\mb Z}/r \cap [0, 1) \big)$ and denote $\upgamma = \exp(2\pi \lambda)$. 

\begin{defn}\label{defn41}
A solution ${\bm u}$ to a cylindrical model is called {\bf bounded}, if $E({\bm u}) < \infty$ and there is a compact subset $\wt{K} \subset \wt{X}$ such that $u(\Theta_+) \subset \wt{K}$. 

A solution $(A, u)$ to the perturbed gauged Witten equation over the rigidified $r$-spin curve $\vec{\mc C}$ is called {\bf bounded} if its restriction to any of its cylindrical ends gives a bounded solution to the corresponding cylindrical model.
\end{defn}

Our main theorems of this section are
\begin{thm}\label{thm42} Suppose ${\bm u} = (u, h)$ is a bounded solution to a $\lambda$-cylindrical model with parameters $(\sigma, \delta)$. Then there is a point $\upkappa \in \wt{X}_\upgamma$ such that 
\begin{align*}
\lim_{s \to +\infty} e({\bm u})(s, t) = 0, \lim_{s \to +\infty} e^{\lambda t} u(s, t) = \upkappa.
\end{align*}
both uniformly for $t\in S^1$.
\end{thm}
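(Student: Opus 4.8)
\textbf{Proof proposal for Theorem \ref{thm42}.}

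The plan is to prove the asymptotic convergence in two stages: first establish that the energy density decays to zero along the cylindrical end, and then upgrade this to convergence of the rescaled map $e^{\lambda t}u$ to a single critical point $\upkappa\in\wt{X}_\upgamma$, together with exponential decay. The starting point is that $E({\bm u})<\infty$ together with the boundedness of the image $u(\Theta_+)\subset\wt K$. First I would establish a mean-value / $\epsilon$-regularity estimate for the cylindrical model, in the spirit of the interior estimates for the symplectic vortex equation (cf.\ \cite{Cieliebak_Gaio_Mundet_Salamon_2002}, \cite{Gaio_Salamon_2005}): on a unit ball $B_1(z)$ with $z=(s,t)$, $s$ large, if the local energy $\int_{B_1(z)} e({\bm u})$ is below a threshold, then $e({\bm u})(z)$ is controlled by that local energy. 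Here one uses that on $U_j(S)$ with $S$ large the perturbation factor $\sigma$ is exponentially small (by \eqref{equation32}), the coefficients $e^{\rho_l(h+\lambda t)}$ are controlled since $h\to 0$ (from \eqref{equation34}) and the relevant $F_{\upgamma;l}$ satisfy the bounds in Hypothesis \ref{hyp28} ({\bf P3}) (or vanish in the narrow case), so the nonlinear terms in the elliptic system for $(u,h)$ are of the right order. Feeding $\int_{[s,\infty)\times S^1}e({\bm u})\to 0$ as $s\to\infty$ (a consequence of $E({\bm u})<\infty$) into the mean-value inequality yields $\lim_{s\to\infty}e({\bm u})(s,t)=0$ uniformly in $t$, which is the first assertion.

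Next I would analyze the limiting behavior. Since the energy density goes to zero, for any sequence $s_i\to\infty$ the translated solutions ${\bm u}(s+s_i,t)$ converge (after passing to a subsequence, modulo gauge) to a solution of the translation-invariant limit equation on ${\mb R}\times S^1$, which, because $\sigma$ and all kinetic terms die at infinity, forces the limit to be constant in $s$ and, after absorbing the holonomy $\lambda t$, a $t$-independent critical point of $\wt W_\upgamma$ restricted to the appropriate fixed locus; by equivariance and the fact that $\lambda$ has finite order $r$, such a critical point must lie in the fixed-point set $\wt{X}_\upgamma$. So the $\omega$-limit set of $s\mapsto e^{\lambda t}u(s,\cdot)$ is a nonempty compact connected subset of $\mathrm{Crit}\,\wt W_\upgamma\cap\wt{X}_\upgamma$. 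To conclude it is a single point, I would use the standard argument via the energy identity combined with a Poincar\'e / \L ojasiewicz-type inequality: writing the cylindrical equation as a negative gradient flow of an action functional (a perturbed symplectic action), the energy on $[s,\infty)\times S^1$ controls the length of the path $s\mapsto u(s,\cdot)$ in an appropriate norm, provided one has a spectral gap for the linearized operator at the limit or, failing that, a \L ojasiewicz inequality for the action functional near its critical set. This gives both the convergence to a unique $\upkappa$ and the exponential decay rate of $e({\bm u})$.

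The main obstacle is the last step in the presence of \emph{broad} punctures, where $\wt W_\upgamma$ need not be Morse on $\wt X_\upgamma$ before perturbation — but this is exactly why Hypothesis \ref{hyp28} ({\bf P4}) is imposed: after the perturbation by $F_\upgamma$, the Lagrange multiplier $p(Q_\upgamma-a_\upgamma)+F_\upgamma$ \emph{is} holomorphic Morse on $\wt X_\upgamma$, so the critical points on the fixed locus are nondegenerate and the linearized operator on the cylinder has a genuine spectral gap; condition ({\bf P5}) ensures there is no escape of mass to infinity of $\wt X$ along the end. In the narrow case the relevant critical point set is just $\{(\star,0)\}$ up to the ${\mb C}$-factor, handled directly. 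The remaining care is bookkeeping: controlling the components of $u$ transverse to $\wt X_\upgamma$ (these decay because $dQ$ and $d F_{\upgamma;l}$ and their Hessians vanish along $N_\upgamma$ by ({\bf Q2}) and ({\bf P2}), so the transverse linearization is governed by a positive operator), and keeping track of the $h$-component via the second equation $\Delta h''+\sigma\mu^*(u)=0$ with \eqref{equation32} giving $h\to 0$ with exponential rate. The exponential decay of $e({\bm u})$ then follows by combining the spectral gap with a standard three-interval / differential-inequality argument on $\int_{[s,s+1]\times S^1}e({\bm u})$.
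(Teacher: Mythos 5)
Your proposal follows the same overall two-stage strategy as the paper — first energy-density decay via a mean-value / $\epsilon$-regularity estimate (this is exactly Proposition \ref{prop45}, which derives $\Delta(|v_s|^2+|v_t|^2)\geq -C-C(|v_s|^2+|v_t|^2)^2$ and applies Lemma \ref{lemmaa5}), then uniqueness of the limit plus exponential decay — and you identify the correct hypotheses doing the work: ({\bf P4}) for Morse nondegeneracy at broad punctures, ({\bf P5}) against escape to infinity, ({\bf Q2})/({\bf P2}) for the vanishing of Hessians along $\wt N_\upgamma$. The difference is in how the second stage is executed. You phrase it as a \L ojasiewicz / Morse--Bott argument for a perturbed twisted symplectic action, using subsequential compactness of translates to locate the $\omega$-limit set in ${\rm Crit}(\wt W_\upgamma^{(\delta)}|_{\wt X_\upgamma})$ and then a spectral gap or \L ojasiewicz inequality to promote $\omega$-limit to actual limit with rate. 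The paper instead proceeds geometrically and elementarily: Lemma \ref{lemma411} shows $u(\Theta_+)$ enters a tubular neighborhood of $\wt X_\upgamma$ once $e({\bm u})$ is small, one writes $u=\exp_{\ov u}\xi$ and proves $\xi\to 0$ exponentially via the convexity estimate $v''(s)\geq(\tau_0)^2v(s)$ for $v(s)=\|\xi(s,\cdot)\|_{L^2}^2$ (Proposition \ref{prop48}), and then treats the tangential part $\ov v=e^{\lambda t}\ov u$ by the same second-order ODE device with the coercive operator coming from the Morse Hessian (Proposition \ref{prop49}). This avoids ever invoking a \L ojasiewicz inequality and never needs to think of the equation as a gradient flow.

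Where your sketch is genuinely underspecified is the narrow case. There $\wt W_\upgamma|_{\wt X_\upgamma}\equiv 0$, so every point of $\wt X_\upgamma\cong\{\star\}\times{\mb C}$ is critical, and the twisted linearized operator $J(\partial_t+\mathcal X_\lambda)$ at a constant has nontrivial kernel along the $\wt X_\upgamma$-directions; there is no global spectral gap, so the ``spectral gap at the limit'' option fails and ``handled directly'' glosses over the real work. The paper's explicit implementation (Proposition \ref{prop410}) is the center-of-mass decomposition $\ov v(s,t)=\exp_{\alpha(s)}\eta(s,t)$ with $\int_{S^1}\eta\,dt=0$: the operator $J\partial_t$ is coercive on the zero-average subspace, giving exponential decay of $\eta$, and then a separate estimate from the equation shows $|\alpha'(s)|\lesssim e^{-\tau_0 s/2}$, which yields convergence of the center of mass. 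Your \L ojasiewicz framework can absorb the Morse--Bott degeneracy in principle (exponent $\theta=1/2$), but you would still need to establish the \L ojasiewicz inequality for the twisted action near the whole critical manifold and show that the non-gradient-flow remainders (the $\sigma$-dependent vortex term and the $h$-corrections) do not destroy the length estimate; the paper's Lemmas \ref{lemma412}, \ref{lemma414}, \ref{lemma416} are exactly this bookkeeping. So your proposal is sound in outline and differs mainly in packaging (\L ojasiewicz vs.\ direct second-order differential inequalities), but the narrow-case step should be spelled out — the center-of-mass splitting, or an equivalent Morse--Bott mechanism — because that is where the naive spectral-gap picture breaks down.
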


\begin{thm}\label{thm43}
For every $G$-invariant compact subset $\wt{K} \subset \wt{X}$ and every $\ud\delta\in (0, 1]$, there are constants $\epsilon (\wt{K}, \ud\delta ),  c (\wt{K}, \ud\delta), \tau (\ud\delta) > 0$ satisfying the following conditions. Suppose ${\bm u} = (u, h)$ is a bounded solution to a $\lambda$-cylindrical model parametrized by $(\sigma, \delta)$ such that $u (\Theta_+) \subset \wt{K}$ and if $\upgamma$ is broad, then $\delta \geq \ud\delta$. Then 
\begin{align*}
\big\| e({\bm u}) \big\|_{L^\infty(\Theta_+)} \leq \epsilon (\wt{K}, \ud\delta ) \Longrightarrow e({\bm u})(s, t) \leq c (\wt{K}, \ud\delta ) e^{-\tau (\ud\delta) s}.
\end{align*}
\end{thm}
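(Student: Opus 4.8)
The plan is to establish the exponential decay of the energy density via a differential inequality argument combined with a long-exact-sequence-type rescaling.

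\textbf{Step 1: Uniform small-energy implies pointwise control.} First I would observe that the local and cylindrical models enjoy elliptic regularity (as in Proposition \ref{prop216}), and invoke a local a-priori estimate (presumably one of the appendix lemmas in Appendix \ref{appendixa}): if a solution $\bm u=(u,h)$ to the local model over a unit disc has sufficiently small total energy, then its energy density at the center is bounded by a constant times the total energy over the disc. Applying this on discs $B_1(s_0,t_0)\subset\Theta_+$ and using $E(\bm u)<\infty$, I get that $e(\bm u)(s,t)\to 0$ as $s\to\infty$ (this is really Theorem \ref{thm42}, which I may assume). In particular, there exists $S_0$ such that for $s\ge S_0$ the solution lands in a small neighborhood of a critical point $\upkappa\in\wt X_\upgamma$, and $\sigma$ is exponentially small there by \eqref{equation32}. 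The point of the smallness hypothesis $\|e(\bm u)\|_{L^\infty}\le\epsilon(\wt K,\ud\delta)$ is precisely to make $S_0=0$ work uniformly.

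\textbf{Step 2: The energy density satisfies a favorable differential inequality.} The heart of the proof: I would derive, using the Bochner–Weitzenböck-type computations from Subsection \ref{subsection32} (the commutation formulas \eqref{equation311}, \eqref{equation312} and the identities \eqref{equation315}, \eqref{equation321}), an inequality of the form
\begin{align*}
\Delta e(\bm u) \ge -c\, e(\bm u)^2 - c\, e(\bm u) + \text{(good terms)}
\end{align*}
valid on $\Theta_+$, where the linear-in-$e(\bm u)$ term comes with a definite positive sign once $u$ stays near the Morse critical point $\upkappa$ of the perturbed superpotential (this is where Hypothesis \ref{hyp28}, in particular (\textbf{P4}) and (\textbf{P5}), and condition (\textbf{X3})–(\textbf{X4}) enter: the Hessian of $\wt W_\upgamma$ is nondegenerate transverse to $\wt X_\upgamma$, and $\mu$ controls the curvature). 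More precisely, near a nondegenerate critical point one expects $|\nabla\wt{\mathcal W}_A(u)|^2 \gtrsim \mathrm{dist}(u,\upkappa)^2$ and a spectral gap that yields $\Delta e(\bm u)\ge \lambda_1 e(\bm u)$ for some $\lambda_1>0$, modulo quadratically small errors which are absorbed once $e(\bm u)$ itself is small. Combining with Step 1 (so that $e(\bm u)\le\epsilon$ everywhere), the quadratic term is dominated and one gets a genuine subsolution inequality $\Delta e(\bm u)\ge \tau^2 e(\bm u)$ on $\Theta_+$ with $e(\bm u)\to 0$ at the end.

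\textbf{Step 3: Maximum principle / comparison to conclude exponential decay.} With $\Delta e(\bm u)\ge\tau^2 e(\bm u)$, $e(\bm u)\ge 0$, $e(\bm u)\to 0$ as $s\to\infty$, and $e(\bm u)\le\epsilon$ at $s=0$, I would compare with the barrier function $\epsilon\, e^{-\tau s}$ (harmless on $S^1$, and a supersolution of $\Delta-\tau^2$ on the cylinder) and conclude $e(\bm u)(s,t)\le \epsilon\, e^{-\tau s}$ by the maximum principle on each truncated cylinder $[0,S]\times S^1$, letting $S\to\infty$. This gives the claimed bound with $c(\wt K,\ud\delta)$ and $\tau(\ud\delta)$ extracted from the constants in Steps 1–2; the dependence of $\tau$ only on $\ud\delta$ (not on $\sigma$) reflects that $\sigma$-terms are already exponentially negligible and that the spectral gap of the Hessian of $\wt W^{(\delta)}_\upgamma$ is controlled once $\delta\ge\ud\delta$.

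\textbf{Main obstacle.} The delicate point is Step 2: establishing the sign of the linear term uniformly. Because $\wt X$ is noncompact and $\mathrm{Crit}\,W$ is a stratified set with a degenerate (Bott-type, or worse) component, the "spectral gap" argument only works cleanly on the region where $u$ is near a genuinely nondegenerate critical point of the \emph{perturbed} potential $\wt W_\upgamma$ on $\wt X_\upgamma$; away from $\wt X_\upgamma$ one must separately control the component of $u$ normal to $\wt X_\upgamma$, using (\textbf{Q2}), (\textbf{P2}) (vanishing of the relevant Hessians along the normal bundle) and the convexity bounds (\textbf{X3}), (\textbf{X4}) to show that this normal part also decays and does not spoil the inequality. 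Marrying the ``near-$\wt X_\upgamma$'' and ``transverse-to-$\wt X_\upgamma$'' estimates into a single differential inequality on all of $\Theta_+$ is the technical crux; it is also where the restriction to Lagrange-multiplier superpotentials pays off, since then the structure of $\mathrm{Crit}\,\wt W_\upgamma$ is explicit.
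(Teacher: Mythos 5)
Your Step 2 is where the proposal breaks down. You claim a pointwise differential inequality of the form $\Delta e(\bm u)\ge \tau^2 e(\bm u)$ (minus quadratically small errors) near the nondegenerate critical point. Such an inequality does not hold, even in the linear model. If $\xi$ solves the constant-coefficient linearization $\partial_s\xi + J\partial_t\xi + S\xi = 0$ with $S$ self-adjoint, nondegenerate and $SJ=-JS$ (which is exactly the Hessian of a holomorphic Morse function), then $\Delta\xi = S^2\xi + 2JS\,\partial_t\xi$, and the cross term $2\langle JS\,\partial_t\xi,\xi\rangle$ has no sign; after Cauchy--Schwarz the best one gets pointwise is $\tfrac12\Delta|\xi|^2 \ge |S\xi|^2 - |S|^2|\xi|^2 + |\partial_s\xi|^2$, which is merely nonnegative (and can be degenerate when the eigenvalues of $S$ have equal magnitude), not $\ge\tau^2|\xi|^2$. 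A maximum-principle / barrier comparison as in your Step 3 therefore cannot be run directly on $e(\bm u)$.

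The paper proves the statement by a genuinely different route, which is the standard Floer-theoretic mechanism. It decomposes $u$ into a component $\xi$ normal to $\wt X_\upgamma$ (Proposition~\ref{prop48}) and a tangential part $\ov v = e^{\lambda t}\ov u$, further written as $\exp_\upkappa\eta$ (Propositions~\ref{prop49} and~\ref{prop410}). The exponential decay is first established at the level of the \emph{$L^2(S^1)$-norm in $t$}: one sets $v(s)=\|\xi(s,\cdot)\|_{L^2(S^1)}^2$ (resp.\ $\|\eta(s,\cdot)\|^2$), uses coercivity of the $S^1$-operator ${\mc L}(s)$ proved via Fourier series (\ref{equation418}), and derives a \emph{second-order ODE} $v''(s)\ge\tau_0^2 v(s)$ (\ref{equation420}), which gives $v(s)\le c\,e^{-\tau_0 s}$. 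Crucially, this $L^2(S^1)$-coercivity is exactly what survives where your pointwise sign argument fails. The pointwise decay is then extracted not from a good-sign Bochner inequality but from the mean-value estimate (Lemma~\ref{lemmaa5}) applied to a \emph{quadratic} inequality $\Delta|\xi|^2\ge -A - B|\xi|^4$ with $A,B$ chosen depending on the already-established $L^2$ decay rate (\ref{equation422})--(\ref{equation424}). Finally Theorem~\ref{thm43} is deduced by writing $u=\exp_\upkappa\xi'$ with $\xi'$ exponentially small and bootstrapping the Cauchy--Riemann equation for $\xi'$. Your Step 1 (smallness of energy density implies proximity to $\wt X_\upgamma$ and to a critical point) and the final comparison/bootstrap idea of Step 3 are both sound; it is the missing "integrate over $S^1$ first, then run a second-order ODE on $v(s)$" step that you need to insert in place of the direct pointwise Bochner inequality.
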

The proof of Theorem \ref{thm42} and Theorem \ref{thm43} are given in Subsection \ref{subsection41}.

It follows from Theorem \ref{thm42} that we can define the evaluations of a bounded solutions the perturbed gauged Witten equation at the punctures. Indeed, let $(A, u)$ be a bounded solution to the perturbed gauged Witten equation over $\vec{{\mc C}}$. Restrict $(A, u)$ to the cylindrical end near $z_j$ with residue $\lambda_j$, we obtain a bounded solution to a $\lambda_j$-cylindrical model. Then by Theorem \ref{thm42}, we have the well-defined limit
\begin{align*}
\lim_{z \to z_j}  e^{-\lambda_j t} \phi_j^{-1} u(z) = \upkappa_j \in \wt{X}_{\upgamma_j}.
\end{align*}
We denote $ev_j(A, u) = \upkappa_j$ but indeed, the evaluation of the solution $(A, u)$ is a point on the fibre of $Y$ at $z_j$. On the other hand, for each $j =0, \ldots, k$, we have a solution to a $\lambda_j$-cylindrical model with parameters $(\sigma_j, \delta_j)$, where $\sigma_j ds dt $ is the restriction of the area form $\nu$ onto $U_j$ and $\delta_j = \delta_{j, A}$. Then the residue of $(A, u)$ at $z_j$ is
\begin{align*}
{\rm Res}_j (A, u) = \sum_{l=0}^s F_{\upgamma_j; l}^{(\delta_j)}( \upkappa_j) \in {\mb C}.
\end{align*}
Indeed, the residue ${\rm Res}_j(A, u)$ is nonzero only if $z_j$ is a broad puncture of $\vec{\mc C}$. A corollary to Theorem \ref{thm42} is the following uniform energy bound.

\begin{thm}\label{thm44} If $(A, u)$ is a bounded solution to the perturbed gauged Witten equation, then $u$ extends to a continuous orbifold section ${\mc U}$ of ${\mc Y} \to {\mc C}$, which defines a rational homology class
\begin{align*}
\big[ A, u \big] \in H^G_2 \big( \wt{X}; {\mb Z}[r^{-1}] \big).
\end{align*}
(See Appendix \ref{appenxib} for the precise meanings.) We have, 
\begin{align*}
E(A, u) = \big\langle \big[ \omega - \mu \big], \big[ A, u \big] \big\rangle +  {\rm Re} \Big( \int_\Sigma {\bm i}{\mc W}_A'(u) \wedge \ov\partial \beta - 4 \pi \sum_{j=0}^k {\rm Res}_j (A, u)\Big).
\end{align*}
Here $\big[ \omega - \mu \big] \in H^2_G \big( \wt{X}; {\mb R} \big)$ is the equivariant cohomology class represented by the equivariant symplectic form $\omega - \mu$. Moreover, there is a constant $E$ depending only on the class $\big[ A, u \big]$ such that $E(A, u) \leq E$.
\end{thm}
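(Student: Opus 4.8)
\textbf{Proof proposal for Theorem \ref{thm44}.}

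The plan is to establish the energy identity by integrating the pointwise energy density over $\Sigma^*$, using the gauged Witten equation to rewrite $e(A,u)$ in terms of exact forms plus curvature/moment-map contributions, and then applying Stokes' theorem on the compact orbicurve after accounting for the boundary contributions at the punctures. Concretely, the first step is to derive the standard local ``energy identity'' as in the symplectic vortex literature (e.g.\ \cite{Cieliebak_Gaio_Mundet_Salamon_2002}): using the two equations in (\ref{equation221}), one shows that
\[
{1\over 2}\big(|d_A u|^2 + |F_A|^2 + |\mu(u)|^2\big)\,\nu + |\nabla\wt{\mc W}_A(u)|^2\,\nu
\]
equals $u^*(\omega-\mu) + d(\text{something involving }A\text{ and }\mu)$ plus terms coming from $\nabla\wt{\mc W}_A$. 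Since $\wt{\mc W}_A$ is vertically holomorphic, the Witten-equation term $\ov\partial_A u + \nabla\wt{\mc W}_A(u)=0$ combined with holomorphicity gives, away from $\operatorname{supp} d\beta$, a contribution of the form $d(\text{Re}(\text{something}))$ localized so that it integrates to a sum of residues $-4\pi\sum_j \mathrm{Res}_j(A,u)$; on $\operatorname{supp} d\beta$ the failure of holomorphicity produces exactly the term $\mathrm{Re}\big(\int_\Sigma {\bm i}{\mc W}_A'(u)\wedge\ov\partial\beta\big)$.

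Next I would address the topological term. By Theorem \ref{thm42}, $e^{-\lambda_j t}\phi_j^{-1}u(z)\to\upkappa_j\in\wt X_{\upgamma_j}$ at each puncture, and the exponential decay of $e(A,u)$ from Theorem \ref{thm43} ensures $d_A u\to 0$ and $\mu(u)\to 0$ there; this is what allows $u$ to extend to a continuous orbifold section $\mc U$ of $\mc Y\to\mc C$ and hence to define the equivariant homology class $[A,u]\in H_2^G(\wt X;{\mb Z}[r^{-1}])$ (using the material of Appendix \ref{appenxib}; the $r^{-1}$ appears because the orbifold points have isotropy of order dividing $r$). The integral $\int_{\Sigma^*}u^*(\omega-\mu)$ then equals the pairing $\langle[\omega-\mu],[A,u]\rangle$ by definition of the equivariant class, with the improper integral converging precisely because of the decay estimates. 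Assembling the local identity, integrating, and applying Stokes on $\mc C$ minus small disks around the punctures and letting the radii shrink (the boundary integrals converging to the residue terms by Theorem \ref{thm42} again) yields the stated energy formula.

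Finally, for the uniform bound $E(A,u)\le E$: the first term $\langle[\omega-\mu],[A,u]\rangle$ depends only on $[A,u]$, so it is literally constant on the relevant component. The issue is the correction terms. The residue sum $\sum_j\mathrm{Res}_j(A,u)=\sum_j\sum_l F_{\upgamma_j;l}^{(\delta_j)}(\upkappa_j)$ is bounded because each $\upkappa_j$ is a critical point of the perturbed Morse function $\wt W_{\upgamma_j}^{(\delta_j)}|_{\wt X_{\upgamma_j}}$, and there are only finitely many such critical points (by ({\bf P4}) and ({\bf P5}) in Hypothesis \ref{hyp28}), with values bounded independently of $\delta_j\in(0,1]$; here one uses the growth control ({\bf P3}) and ({\bf P5}) to get a bound uniform in the non-local parameter $\delta_{j,A}$. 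The term $\mathrm{Re}\big(\int_\Sigma{\bm i}{\mc W}_A'(u)\wedge\ov\partial\beta\big)$ is the delicate one: ${\mc W}_A'(u)$ on $\operatorname{supp} d\beta_j\subset U_j\setminus U_j(2)$ involves $e^{\rho_l(h_{j,A})}F_{\upgamma_j;l}^{(\delta_j)}(u)$, and a priori $h_{j,A}$ can be large; this is exactly why the non-local parameter $\delta_{j,A}=(m_{j,A})^{-1}$ was built into the perturbation (Definition \ref{defn215}), so that $\delta_{j,A}^{\,l}$ cancels the growth of $\|e^{\rho_l(h_{j,A})}\|_{L^2(U_j\setminus U_j(2))}\le m_{j,A}$, giving $\int|{\mc W}_A'(u)\wedge\ov\partial\beta|\le C$ with $C$ absolute (using $|\ov\partial\beta|\le1$, ({\bf P3}), and ({\bf X3}) to control $F_{\upgamma_j;l}(u)$ by $\mu_0(u)$, which is itself controlled by the energy near the puncture). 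I expect this last estimate --- making the contribution of the perturbation term uniformly bounded independently of the solution, by exploiting the scaling $\delta_{j,A}$ --- to be the main obstacle, and it is the technical heart of why the non-local parameter is introduced.
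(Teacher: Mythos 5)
Your proposal follows essentially the same route as the paper: the kinetic-energy identity from the vortex literature (the analogue of \cite[Proposition 2.2]{Cieliebak_Gaio_Mundet_Salamon_2002}), the splitting of the Witten-equation contribution into the residue sum plus the $\ov\partial\beta$ correction via vertical holomorphicity of $\wt{\mc W}_A$, the extension of $u$ over the punctures via Theorems \ref{thm42}--\ref{thm43}, and the uniform bound via the non-local scaling parameter $\delta_{j,A}$.

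There is one slip in the final step worth flagging. The estimate of $\int_{\Sigma}{\bm i}\ov\partial\beta\wedge{\mc W}_A'(u)$ does \emph{not} produce an absolute constant. Using ({\bf P3}) the $F_{\upgamma_j;l}(u)$ factor in the integrand is controlled by $\sqrt{1+|\mu_0(u)|}$, and after cancelling the $e^{\rho_l(h)}$ growth against $\delta_{j,A}$, one is left with the $L^2$-norm of $\sqrt{1+|\mu(u)|}$ over the annulus $U_j\setminus U_j(2)$, which is bounded only by $c\big(1+\sqrt{E(A,u)}\big)$ --- a quantity depending on the very energy you are trying to control. You correctly identify this dependence (``controlled by the energy near the puncture'') and then contradict yourself by declaring $C$ absolute. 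The right conclusion is the self-improving inequality
\begin{align*}
E(A,u) \ \leq\ \big\langle [\omega-\mu],[A,u]\big\rangle + c' + c'\sqrt{E(A,u)},
\end{align*}
from which the uniform bound follows because $x^2 \leq a + c' x$ forces $x$ to lie below a constant determined by $a$ and $c'$. This bootstrap is the point of the argument and should be made explicit; otherwise your approach coincides with the paper's.
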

Its proof is given in Subsection \ref{subsection44}.

\subsection{Proof of Theorem \ref{thm42} and \ref{thm43}}\label{subsection41}

\subsubsection*{Decay of energy density}

We first prove the first half of Theorem \ref{thm42}. 

\begin{prop}\label{prop45}
For any bounded solution $(u, h)$ to a $\lambda$-cylindrical model, we have
\begin{align*}
\lim_{s \to +\infty} \big| \partial_s u + {\mc X}_\phi(u) \big| = \lim_{s \to +\infty} \big| \partial_t u + {\mc X}_\psi (u) \big| = \lim_{s \to +\infty} \big| \nabla \wt{W}_{h, \lambda}^{(\delta)}(u) \big| = 0.
\end{align*}
In particular, 
\begin{align*}
\big\| e({\bm u}) \big\|_{L^\infty(\Theta_+)} < +\infty.
\end{align*}
\end{prop}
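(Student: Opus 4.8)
The plan is to prove a Bochner-type inequality for the energy density $e({\bm u})$ on the cylinder and then combine it with the finite-energy assumption to deduce the three pointwise limits. First I would set up the standard identities: with $v_s = \partial_s u + {\mc X}_\phi(u)$, $v_t = \partial_t u + {\mc X}_\psi(u)$, and the inhomogeneous term $w := 2\nabla\wt W_{h,\lambda}^{(\delta)}(u)$, the first equation of (\ref{equation33}) reads $v_s + Jv_t + w = 0$. Differentiating this using the operators $D_{A,s}, D_{A,t}$ together with the curvature identities (\ref{equation311}), (\ref{equation312}) and the fact that $F_A = -\sigma\mu^*(u)$ (from $\Delta h'' + \sigma\mu^*(u)=0$), I would derive a second-order elliptic inequality of the form
\begin{align*}
\Delta e({\bm u}) \geq -c_1\, e({\bm u})^2 - c_2\, e({\bm u}) - (\text{lower-order terms involving }\sigma, \nabla\beta)
\end{align*}
valid wherever $u$ stays in a fixed compact set $\wt K$ — here I would use the uniform bounds on the curvature tensor $R$ of $\wt X$ from (\textbf{X2}), the estimates (\textbf{P3}) on the derivatives of the perturbation $F_{\upgamma;l}$, Lemma \ref{lemma35} and the derivative formulas (\ref{equation313})--(\ref{equation321}) to control the terms arising from $D_A^{0,1}\nabla\wt{\mc W}_A$, and the decay (\ref{equation32}) of $\sigma$ on the cylinder. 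The boundedness hypothesis $u(\Theta_+)\subset\wt K$ is essential precisely so that these geometric quantities are uniformly controlled.

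Once the differential inequality is in hand, the next step is a mean-value / elliptic-regularity argument on unit balls. Since $E({\bm u}) = \int_{\Theta_+} e({\bm u}) < \infty$, for any $\epsilon > 0$ there is $S_\epsilon$ so that $\int_{B_1(s_0,t_0)} e({\bm u}) < \epsilon$ for all $s_0 \geq S_\epsilon$. Feeding a small enough such $\epsilon$ into the standard consequence of the Bochner inequality (a version of the $\epsilon$-regularity lemma — I expect this is among the results collected in Appendix \ref{appendixa}) yields a pointwise bound $e({\bm u})(s_0,t_0) \leq C\int_{B_1(s_0,t_0)} e({\bm u})$, and the right-hand side tends to $0$ as $s_0 \to +\infty$. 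This gives $\lim_{s\to+\infty} e({\bm u})(s,t) = 0$ uniformly in $t$, hence in particular $|v_s|, |v_t|$ and $|\nabla\wt W_{h,\lambda}^{(\delta)}(u)|$ all tend to $0$. The last assertion, $\|e({\bm u})\|_{L^\infty} < \infty$, then follows: on a compact initial piece $[0,S_\epsilon]\times S^1$ one has a uniform bound by the same $\epsilon$-regularity estimate (the energy on any unit ball is at most the total finite energy, and $u$ is a fixed smooth map there), and on $[S_\epsilon,\infty)\times S^1$ the density is already shown to be small.

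The main obstacle I anticipate is the derivation of the Bochner inequality itself, specifically handling the terms that are \emph{not} present in the classical symplectic vortex case: the contributions of the holomorphic perturbation $W_{\upgamma}'$ and the way the gauge component $h$ enters through the exponential factors $e^{\ov{\rho_l(h)}}$. One must check that the extra terms $\rho_l(\partial h''/\partial\ov z)\nabla F_l^{(\delta)}$ appearing in (\ref{equation318}), (\ref{equation319}) do not spoil the sign — this is where the relation $\phi + {\bm i}\psi = 2\partial h/\partial\ov z$, the curvature equation $F_{A_1} = \Delta h'' ds\,dt = -\sigma\mu^*$, and the bounds (\textbf{P3}) must be used together so that these terms are absorbed into $c_2\, e({\bm u})$ plus $\sigma$-dependent terms that decay. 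The cut-off $\beta$ also produces terms supported where $d\beta \neq 0$, but since that region is a fixed compact subset of the cylinder these are harmless for the asymptotic statement (they only affect the global $L^\infty$ bound, which is fine). I would also need to be slightly careful that the constants in the $\epsilon$-regularity lemma depend only on $\wt K$ and the fixed geometric data, not on the particular solution or on $\sigma, \delta$ beyond the stated bounds — this uniformity is what makes the limit argument work and is presumably why the hypotheses are phrased the way they are.
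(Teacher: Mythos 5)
Your proposal is correct and follows essentially the same route as the paper's proof: derive a lower bound of the form $\Delta(\text{density}) \geq -C(\bm u) - C(\bm u)(\text{density})^2$ from the curvature identities (\ref{equation311})--(\ref{equation312}), the equation $\Delta h'' = -\sigma\mu^*(u)$, Lemma \ref{lemma35}, the bounds ({\bf X2})/({\bf P3}), and the boundedness of $u$, then apply the mean value estimate (Lemma \ref{lemmaa5}) and finite total energy. Two small remarks: the paper works with $|v_s|^2 + |v_t|^2$ rather than the full $e({\bm u})$ and obtains $|\nabla\wt W_{h,\lambda}^{(\delta)}(u)|\to 0$ afterwards directly from the equation $v_s + Jv_t + 2\nabla\wt W = 0$ (this avoids differentiating the $\sigma|\mu(u)|^2$ term, and note the cylindrical model has no cut-off $\beta$ at all, so those terms don't arise); and the bound from Lemma \ref{lemmaa5} carries an additive $r^2$ term, so the pointwise estimate you wrote should instead be $f(z) \leq L\big(r^2 + r^{-2}\int_{B_r(z)}f\big)$, forcing the standard two-step limit (first $r$ small, then $s\to+\infty$) rather than the single bound $e({\bm u})(z)\leq C\int_{B_1(z)}e({\bm u})$.
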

	
\begin{proof}
We abbreviate $v_s = \partial_s u + {\mc X}_\phi(u)$, $v_t = \partial_t u + {\mc X}_\psi(u)$. The proof is based on estimating $\Delta |v_s|^2$ and $\Delta |v_t|^2$. For any $z \in {\rm Int} \Theta_+$, choose a small disk $B_r(z) \subset \Theta_+$. Then the function $\lambda t$ is single-valued on $B_r(z)$ and the restriction of $(u, h+\lambda t)$ to $B_r(z)$ gives a solution to the local model parametrized by $(\beta = 1, \sigma|_{B_r(z)}, \delta)$. Replacing $h$ by $h + \lambda t$, and using the notations introduced in Subsection \ref{subsection32}, by (\ref{equation311}) and (\ref{equation312}), we have
\begin{align}\label{equation41}
\begin{split}
&\ \big( D_{A, s}^2 + D_{A, t}^2 \big) v_s\\
 = &\ D_{A, s} \big( D_{A, s} v_s + D_{A, t} v_t \big) - \big[ D_{A, s}, D_{A, t} \big] v_t - D_{A, t} \big( D_{A, s} v_t - D_{A, t} v_s \big)\\
= &\ D_{A, s} \Big( D_{A, s} \big( -J v_t - 2 \nabla \wt{W}_{h, \lambda}^{(\delta)} \big) + D_{A, t} \big( J v_s + 2 J \nabla \wt{W}_{h, \lambda}^{(\delta)} \big) \Big) \\
&\ - R(v_s, v_t) v_t - \nabla_{v_t} {\mc X}_{F_A} - D_{A, t} {\mc X}_{F_A}\\
= &\ - J D_{A, s} {\mc X}_{F_A} - D_{A, t} {\mc X}_{F_A} - \nabla_{v_t} {\mc X}_{F_A} - R(v_s, v_t) v_t - 4 D_{A, s} D_A^{1, 0} \nabla \wt{W}_{h, \lambda}^{(\delta)}\\
= &\ J D_{A, s} \big( \sigma {\mc X}_{\mu^*}\big) + D_{A, t} \big(\sigma {\mc X}_{\mu^*}\big) + \sigma \nabla_{v_t} {\mc X}_{\mu^*} - R(v_s, v_t) v_t - 4 D_{A, s} D_A^{1, 0} \nabla \wt{W}_{h, \lambda}^{(\delta)}.
\end{split}
\end{align}
By the definition of $D_{A, s}$, $D_{A, t}$, the invariance of $\mu$ and the boundedness of ${\bm u}$, there is a constant $C ({\bm u})>0$ such that
\begin{align}
\begin{split}
&\ \big| JD_{A, s} \big( \sigma {\mc X}_{\mu^*} \big) + D_{A, t} \big( \sigma {\mc X}_{\mu^*} \big) \big| \\
 \leq &\ 2 \big| \ov\partial \sigma \big| \big| {\mc X}_{\mu^*} \big| + \big| \sigma \big| \big( \big| {\mc X}_{d \mu^* \cdot v_s}\big| + \big| {\mc X}_{d \mu^*\cdot v_t} \big| + \big| \nabla {\mc X}_{\mu^*} \big| \big| d_A u \big| \big)\\
\leq &\ C ({\bm u}) \big( 1 + \big| d_A u \big| \big).
\end{split}
\end{align}
On the other hand, by (\ref{equation317}) and (\ref{equation313}), we have 
\begin{align}\label{equation43}
 D_{A, s} D_A^{1, 0} \nabla \wt{W}_{h, \lambda}^{(\delta)} = &\  D_{A, s} \nabla_{\ov\partial_A u} \nabla \wt{W}_h^{(\delta)} =\nabla_{D_{A, s} \ov\partial_A u } \nabla \wt{W}_{h, \lambda}^{(\delta)} + \wt{H}_{A, s} (u, d_A u, \ov\partial_A u) .
\end{align}
Here $\wt{H}$ is the tensor field defined by (\ref{equation320}). We see that in the expression of $H_{A,s}^{(l )}$ in (\ref{equation319}), the tensor field $G_{F_l^{(\delta)}}$ and the Hessian of $F_l^{(\delta)}$ are uniformly bounded because $u(\Theta_+)$ is contained in a compact subset of $\wt{X}$. Moreover, the equation $\Delta h'' = -\sigma \mu^* (u)$ and the condition $\lim_{s \to +\infty} h = 0$ imply the uniform bound on $dh''$ and $e^{\ov{\rho_l (h)}}$. Therefore, abusing $C({\bm u})>0$, we have 
\begin{align}\label{equation44}
\begin{split}
\big| \wt{H}_{A, s} (u, d_A u, \ov\partial_A u ) \big| \leq  &\ C ({\bm u}) \big( 1 + \big| d_A u \big| \big) \big| \ov\partial_A u \big|;\\
\big| \nabla_{D_{A, s} \ov\partial_A u} \nabla \wt{W}_{h, \lambda}^{(\delta)} \big| \leq &\ C({\bm u}) \big( \big| D_{A, s} v_s \big| + \big| D_{A, s} v_t \big|\big).
\end{split}
\end{align}
By (\ref{equation41})-(\ref{equation44}) and abusing $C( {\bm u})$, we obtain
\begin{align*}
\begin{split}
{1\over 2} \Delta|v_s|^2 = &\ \big\langle (D_{A, s}^2 + D_{A, t}^2 ) v_s, v_s \big\rangle + \big| D_{A, s} v_s \big|^2+  \big| D_{A, t} v_s \big|^2 \\
= & \big\langle - J D_A^{1,0} {\mc X}_{F_A} - \nabla_{v_t} {\mc X}_{F_A} - R(v_s, v_t) v_t , v_s \big\rangle \\
& -4 \big\langle D_{A, s} D_A^{1, 0} \nabla \wt{W}_h^{(\delta)}, v_s \big\rangle +\big| D_A v_s\big|^2\\
\geq & - C( {\bm u}) \big( 1 + |v_s|^4 + |v_t|^4 \big) + \big| D_A v_s \big|^2 - C({\bm u}) \big(\big| D_{A, s} v_s \big| + \big| D_{A, s} v_t \big| \big) \big| v_s \big|
\end{split}
\end{align*}
In the same way, we have 
\begin{align*}
\Delta |v_t|^2 \geq - C( {\bm u}) \big( 1+ |v_s|^4 + |v_t|^4 \big)  + \big| D_A v_t \big|^2 - C({\bm u}) \big( \big| D_{A, t} v_s \big| + \big| D_{A, t} v_t \big| \big) |v_t|.
\end{align*}
Therefore, abusing $C({\bm u})$ again, we have
\begin{align*}
\Delta \big( |v_s|^2 + |v_t|^2 \big) \geq -C({\bm u}) - C({\bm u}) \big( |v_s|^2 + |v_t|^2 \big)^2.
\end{align*}
Then by the mean value estimate (Lemma \ref{lemmaa5}), there exist positive numbers $\epsilon, L >0$ depending on $C({\bm u})$, such that for any $z\in \Theta_+$ and $B_r(z) \subset \Theta_+$, we have
\begin{align*}
\int_{B_r(z)} \big( |v_s|^2 + |v_t|^2 \big) \leq \epsilon \Longrightarrow |v_s(z)|^2 + |v_t(z)|^2 \leq L \Big( r^2 + {1\over r^2} \int_{B_r(z)} \big(|v_s|^2 + |v_t|^2 \big) \Big).
\end{align*}
Since the energy of the solution is finite, this estimate implies that
\begin{align*}
\lim_{s \to +\infty} \left(| v_s(s, t)|^2 + |v_t(s, t)|^2 \right)= 0.
\end{align*}
The equation (\ref{equation33}) implies $\displaystyle \lim_{s \to +\infty} \big| \nabla \wt{W}_{h, \lambda}^{(\delta)} (u(s, t)) \big|= 0$. \end{proof}

\subsubsection*{Temporal gauge}

Suppose ${\bm u} = (u, h)$ is a bounded solution to a $\lambda$-cylindrical model with parameters $(\sigma, \delta)$ and $u(\Theta_+) \subset \wt{K}$. Then we can transform it into temporal gauge as follows. Define
\begin{align}\label{equation45}
f(s, t) = \int_s^{+\infty} \phi(v, t) dv,\ g(s, t) = \exp f(s, t).
\end{align}
By (\ref{equation34}) $f$ is finite and has limit $0$ as $s \to +\infty$. Denote $u'(s, t) = g^{-1}(s, t) u(s, t),\ h' = h + f$. We call $(u', h')$ a temporal gauge solution.

For $l = 1, \ldots, s$, we abbreviate $F_{\upgamma; l}$ by $F_l$. We denote
\begin{align*}
\wt{W}^{(\delta)} := \wt{W}^{(\delta)} (x) := \sum_{l=0}^s F_l^{(\delta)}(x),\ \wt{W}_\lambda^{(\delta)} (z, x) = \wt{W}_\lambda^{(\delta)} (e^{\lambda t} x),
\end{align*}
Note that although $e^{\lambda t}x$ is multi-valued, $\wt{W}_\lambda^{(\delta)}$ is single-valued. We denote
\begin{align*}
R_h^{(\delta)} (z, x) = \wt{W}_{h, \lambda}^{(\delta)} (z, x) - \wt{W}_\lambda^{(\delta)}(z, x).
\end{align*}
Then by the expression (\ref{equation35}), it is easy to see that, for every $l_1, l_2 \geq 0$, there is a constant $C^{l_1, l_2}(\wt{K}) >0$ only depending on the compact subset $\wt{K}$ such that
\begin{align}\label{equation46}	
\sup_{x \in \wt{K}} e^{2s} \big| \nabla_z^{(l_1)} \nabla_x^{(l_2)} R_h^{(\delta)} (z, x) \big|  \leq C^{l_1, l_2}(\wt{K}) \big| \nabla^{(l_1)} h \big|.
\end{align}
Here $\nabla^{(l)}_z$ (resp. $\nabla^{(l)}_x$) means the derivative in the $z$-direction (resp. $x$-direction) of order $l$. The norm is taken with respect to the cylindrical metric.

By elliptic regularity and the boundedness of the solution, it is easy to prove 
\begin{lemma}\label{lemma46} 
For any real number $M>0$ and any natural number $l$, there exists a constant $C^l(\wt{K}, M) >0$ satisfying the following condition. If $(u, h)$ is a smooth bounded temporal gauge solution to a cylindrical model and $u(\Theta_+) \subset \wt{K}$, $\big\| e({\bm u}) \big\|_{L^\infty} \leq  M$, then 
\begin{align*}  
\big\| h \big\|_{C^l(\Theta_+)} + \big\| du \big\|_{C^l(\Theta_+)} \leq C^l(\wt{K},M). 
\end{align*} 
\end{lemma}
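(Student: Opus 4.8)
The plan is to bootstrap from the first-order bound $\|d u\|_{L^\infty}, \|e({\bm u})\|_{L^\infty} \leq M$ and the PDE $\Delta h'' = -\sigma\mu^*(u)$, $\lim_{s\to\infty} h = 0$, using interior elliptic estimates on unit balls inside $\Theta_+$. Since the constants are required to be uniform over $\Theta_+$, I would work on a fixed covering of $\Theta_+$ by balls $B_1(z_0)$ with $z_0$ ranging over $\Theta_+$ (together with the collar $[0,1]\times S^1$, handled identically), and derive all estimates on $B_{1/2}(z_0)$ with constants independent of $z_0$; this is legitimate because the cylindrical metric and the weight $\sigma$ satisfy the uniform bounds \eqref{equation32}, and $u$ stays in the fixed compact set $\wt K$ so that $J$, the curvature $R$, the metric on $\wt X$, the Killing fields ${\mc X}_\xi$ and all the tensors $G_{F_l^{(\delta)}}$, $\nabla^2 F_l^{(\delta)}$ appearing in Subsection \ref{subsection32} are bounded by constants depending only on $\wt K$ (and not on $\delta\in(0,1]$, since $F_l^{(\delta)}$ and its derivatives are controlled uniformly in $\delta$ on $\wt K$ by Hypothesis \ref{hyp25}/\ref{hyp28}).

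The first step is to control $h$ at order zero and one. From $\Delta h'' = -\sigma\mu^*(u)$ with $|\sigma|\leq C$ and $|\mu^*(u)|\leq C(\wt K)$ on $\wt K$, and the normalization $h \to 0$ at the puncture end, a standard argument (write $h''$ via the Green's representation on $\Theta_+$, or more simply use interior $W^{2,p}$ estimates on each $B_1(z_0)$ together with the fact that $h$ vanishes at infinity to rule out growth) gives $\|h''\|_{C^1(\Theta_+)}\leq C(\wt K, M)$; then $\phi + {\bm i}(\psi - \lambda) = 2\partial h/\partial\bar z$ gives $\|\phi\|_{C^0}, \|\psi - \lambda\|_{C^0} \leq C(\wt K, M)$, and since $h' = \int_s^\infty\phi$ in temporal gauge... actually here one uses only that $h'$ and its first derivatives are bounded (temporal gauge plus the bound on $\phi$, or directly $\partial_s h' = -\partial_t h'' + \phi$ etc.), so $\|h\|_{C^1(\Theta_+)}\leq C(\wt K,M)$. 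The second step is the interior Schauder/$L^p$ bootstrap: rewrite the first equation of \eqref{equation33} in the local trivialization as $\partial_s u + J\partial_t u = -{\mc X}_\phi(u) - J{\mc X}_\psi(u) - 2\wt W_{h,\lambda}^{(\delta)}(u)$, i.e. as an inhomogeneous $\bar\partial$-type (Cauchy–Riemann) equation for $u$ with right-hand side a smooth function of $(z, u, h)$; since $u \in \wt K$, $|du|\leq M$, and $h\in C^1$ with uniform bounds, the right-hand side is bounded in $C^{0,\alpha}$ on $B_{1/2}(z_0)$, and elliptic regularity for the CR operator yields $\|du\|_{C^{1,\alpha}(B_{1/4}(z_0))}\leq C$. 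Simultaneously, $\Delta h'' = -\sigma\mu^*(u)$ now has $C^{0,\alpha}$ right-hand side, giving $\|h''\|_{C^{2,\alpha}}\leq C$, hence $\|h\|_{C^{2,\alpha}}\leq C$. Feeding these improved bounds back into the two equations and iterating $l$ times (each round differentiating the equations and using that $\wt W_{h,\lambda}^{(\delta)}$, $\mu$ are smooth in their arguments with derivatives bounded on $\wt K$) produces $\|h\|_{C^l(\Theta_+)} + \|du\|_{C^l(\Theta_+)} \leq C^l(\wt K, M)$.

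The one point requiring a little care — and the main obstacle — is keeping the constants independent of $\delta\in(0,1]$ and of the position $z_0$ simultaneously. The $\delta$-independence is handled by Hypothesis \ref{hyp25}(\textbf{Q2}) and Hypothesis \ref{hyp28}(\textbf{P2}), (\textbf{P3}): on the fixed compact set $\wt K$, the functions $F_{\upgamma;l}^{(\delta)}(x) = \delta^{r-l}F_{\upgamma;l}(x)$ (and $F_0^{(\delta)} = pQ$) together with all their $x$-derivatives are bounded uniformly in $\delta\leq 1$, so every tensor entering \eqref{equation319}–\eqref{equation321} is bounded by a constant depending only on $\wt K$; and the exponential factors $e^{\overline{\rho_l(h)}}$ are controlled because $\|h\|_{C^l}$ was bounded in the previous step without reference to $F_l^{(\delta)}$. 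The $z_0$-independence is automatic from \eqref{equation29}/\eqref{equation32}: translating in $s$ (and rotating in $t$) is an isometry of the model up to the controlled variation of $\sigma$, so the interior elliptic constants on $B_{1/2}(z_0)$ do not depend on $z_0$. Assembling these observations, the iteration closes and the lemma follows.
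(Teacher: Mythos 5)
Your proposal follows essentially the same route as the paper: in temporal gauge one bounds $\psi$ (and then $dh''$ and $h$) from the vortex equation $\partial_s\psi = -\sigma\mu^*(u)$ and the decay condition $\lim_{s\to\infty}h = 0$, then treats the first equation of (\ref{equation33}) as an inhomogeneous Cauchy--Riemann equation for $u$ with bounded coefficients and bootstraps. Two small slips worth noting: after the temporal gauge transformation the new $\phi$ vanishes, so $h' = -\int_s^\infty\partial_t h''\,dv$ (not $\int_s^\infty\phi$, which would be zero); and the relation coming from (\ref{equation215}) is $\partial_s h' = \phi + \partial_t h''$, not $-\partial_t h'' + \phi$ — both harmless here since only the resulting boundedness of $h'$ is used. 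Your extra observations on the $\delta$-uniformity of the $F_{\upgamma;l}^{(\delta)} = \delta^{r-l}F_{\upgamma;l}$ bounds and the translation-invariance of the interior elliptic constants are correct and make explicit what the paper leaves implicit.
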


\begin{proof} In radial gauge, $\partial_s \psi = - \sigma \mu^* (u)$. Then by (\ref{equation32}), $\psi$ has a uniform $C^0$-bound. Moreover, the radial gauge condition implies that 
\begin{align*}
\partial_s h = \partial_s h' + {\bm i} \partial_s h'' = \partial_t h'' + {\bm i} \partial_s h'',\ \partial_t h = \partial_t h' + {\bm i} \partial_t h'' = \psi -\lambda - \partial_s h'' + {\bm i} \partial_t h'',
\end{align*}
which is bounded by $\psi$ and $dh''$, while $dh''$ can be bounded via elliptic estimate by $\Delta h'' = F_A = - \sigma \mu^* (u)$. The uniform bound on $h$ follows from the fact that $\lim_{s \to +\infty} h = 0$. On the other hand, the bound on energy density implies uniform gradient bound on $u$. Therefore, using (\ref{equation46}) to bound the inhomogeneous term, by elliptic bootstrapping for Cauchy-Riemann equations and Sobolev embedding we obtain the uniform bounds on all derivatives of $u$. 
\end{proof}

To proceed with the proof of exponential convergence, we need the following result.
\begin{lemma}\label{lemma47}
For any natural number $l$ and any real number $M>0$, there is a constant $C^l(\wt{K}, M) >0$ such that if $(u, h)$ is a solution to a cylindrical model and $\big\| e({\bm u}) \big\|_{L^\infty} \leq M$, then for any $s \geq 0$, we have
\begin{align}\label{equation47}
\big\| \partial_s \psi \big\|_{C^l([s, +\infty)\times S^1)} \leq C^l(\wt{K}, M) e^{-2s}.
\end{align}
\begin{align*}
\big\| \partial_s u \big\|_{C^l([s, +\infty) \times S^1)} \leq C^l(\wt{K}, M) \Big( e^{-2s} + \big\|\partial_s  u \big\|_{C^0([s-1, +\infty) \times S^1)} \Big).
\end{align*}
\end{lemma}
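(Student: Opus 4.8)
The plan is to prove both estimates by differentiating the $\lambda$-cylindrical model equation (\ref{equation33}) in the $s$-direction and applying a mean value / elliptic bootstrapping argument, exactly in the spirit of the proof of Proposition \ref{prop45} but now tracking the explicit exponential factor $e^{-2s}$ coming from (\ref{equation32}) and (\ref{equation46}). For the first inequality: in temporal gauge $\partial_s\psi=-\sigma\mu^*(u)$ holds, and $\sigma$ satisfies $|\nabla^j\sigma|\le C^{(j)}(\sigma)e^{-2s}$; since $\mu^*(u)$ is uniformly bounded (because $u(\Theta_+)\subset\wt K$) together with all its covariant derivatives (by Lemma \ref{lemma46}), the product $\sigma\mu^*(u)$ and all its derivatives are bounded by a constant times $e^{-2s}$. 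This gives (\ref{equation47}) directly, with the understanding that in a general (not necessarily temporal) gauge one first passes to temporal gauge via (\ref{equation45}) — the gauge transformation $g=\exp f$ with $f=\int_s^{+\infty}\phi\,dv$ is itself controlled, and $|\psi|$, $|dh''|$ are bounded as in Lemma \ref{lemma46}.

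For the second inequality, I would set $\xi=\partial_s u+{\mc X}_\phi(u)=v_s$ (in temporal gauge $\phi$ is essentially absorbed, so $v_s$ and $\partial_s u$ differ by a controlled term) and derive a differential inequality for $|v_s|^2$. Differentiating (\ref{equation33}) in $s$ and using the commutation formulas (\ref{equation311}), (\ref{equation312}), the curvature equation $\Delta h''=-\sigma\mu^*(u)$, and the expansion (\ref{equation317})--(\ref{equation321}) for the vertical Hessian terms, one obtains an equation of the schematic form
\begin{align*}
\big(D_{A,s}^2+D_{A,t}^2\big) v_s = \big(\text{terms }\lesssim e^{-2s}\big) + \big(\text{terms linear in }D_A v_s, v_s\text{ with bounded coefficients}\big),
\end{align*}
where the $e^{-2s}$ terms collect the contributions of $\partial_s\sigma$, of $\partial_s\psi$, and — crucially — of $\partial_s$ hitting the remainder $R_h^{(\delta)}=\wt W_{h,\lambda}^{(\delta)}-\wt W_\lambda^{(\delta)}$, which by (\ref{equation46}) decays like $e^{-2s}|\nabla h|$, together with $\partial_s$ hitting $\wt W_\lambda^{(\delta)}$ itself, which is independent of $s$ (it depends on $z$ only through $e^{\lambda t}x$). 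Converting this into $\tfrac12\Delta|v_s|^2\ge -C e^{-2s} - C|v_s|^2 + |D_A v_s|^2 - C(|D_{A,s}v_s|+|D_{A,t}v_s|)|v_s|$ and then applying the mean value estimate (Lemma \ref{lemmaa5}) on balls $B_{1/2}(z)$ for $z=(s_0,t_0)$ with $s_0\ge s$, one controls $|\partial_s u(z)|$ (and, iterating on higher derivatives via Lemma \ref{lemma46} and interior Schauder/elliptic estimates for the Cauchy--Riemann type operator, $\|\partial_s u\|_{C^l}$) by $e^{-2s}$ plus the $C^0$-norm of $\partial_s u$ on the slightly larger half-cylinder $[s-1,+\infty)\times S^1$, which is the stated form of the estimate.

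The main obstacle I expect is the careful bookkeeping of the inhomogeneous terms when differentiating (\ref{equation33}) in $s$: one must verify that every term that is \emph{not} already proportional to $v_s$ (or its covariant derivatives) genuinely carries the factor $e^{-2s}$. The dangerous candidates are the terms involving $\partial_s$ of the potential: $\partial_s\big(\nabla\wt W_{h,\lambda}^{(\delta)}\big)$ splits as $\nabla_{v_s}\nabla\wt W_{h,\lambda}^{(\delta)}$ (good — proportional to $v_s$, coefficient bounded on $\wt K$) plus terms where $\partial_s$ lands on the explicit $z$-dependence; the latter are exactly governed by $\partial_s\rho_l(h(z)+\lambda t)$ and by the structure of $\wt W_\lambda^{(\delta)}$, and one needs that $\partial_s h=\partial_t h''+{\bm i}\partial_s h''$ in temporal gauge is itself $O(e^{-2s})$ — which follows by feeding $\Delta h''=-\sigma\mu^*(u)=O(e^{-2s})$ and $\lim_{s\to\infty}h=0$ into the already-proved estimate (\ref{equation47}) for $\partial_s\psi$ together with elliptic estimates for $\Delta$ on the half-cylinder. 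Once this is in place the mean-value/bootstrap machinery is routine, so I would present the $s$-differentiated identity explicitly, isolate the $O(e^{-2s})$ part, and then quote Lemma \ref{lemmaa5} and Lemma \ref{lemma46}.
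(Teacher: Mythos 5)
Your first estimate is exactly the paper's argument: $\partial_s\psi=-\sigma\mu^*(u)$ plus the uniform derivative bounds of Lemma~\ref{lemma46} and the decay $|\nabla^j\sigma|\lesssim e^{-2s}$ from~(\ref{equation32}) immediately give~(\ref{equation47}). For the second estimate, the paper takes a shorter route than you do: it applies $\nabla_s$ to~(\ref{equation33}) and reads the result as a \emph{first-order} inhomogeneous Cauchy--Riemann equation for $\partial_s u$,
\begin{align*}
2\nabla^{0,1}\partial_s u = -J\nabla{\mc X}_\psi(\partial_s u) - J{\mc X}_{F_A}(u) - 2\nabla^2\wt{W}_h^{(\delta)}(\partial_s u) - 2(\nabla_s\nabla R_h)(u),
\end{align*}
with the right-hand side split into ``linear in $\partial_s u$ with bounded coefficients'' (first and third terms) and ``$O(e^{-2s})$'' (the second term, via $F_A=\partial_s\psi\,ds\,dt$ and your already-proved bound, and the fourth term, via~(\ref{equation46})). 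Interior $W^{k,p}$ elliptic estimates for the Cauchy--Riemann operator on unit strips then give the stated bound directly, with the shift from $s$ to $s-1$ coming from the interior estimate; higher $k$ is handled by induction since all coefficients have uniformly bounded derivatives by Lemma~\ref{lemma46}. Your route, passing to a second-order inequality for $|v_s|^2$ and invoking the mean value estimate, identifies the same exponentially decaying forcing terms (which is the real issue), but the mean value step is essentially vacuous at the $C^0$ level --- bounding $|\partial_s u(z)|$ by the $L^2$-average of $|\partial_s u|^2$ over a unit ball plus an $e^{-2s}$ error is already implied by $|\partial_s u(z)|\le\|\partial_s u\|_{C^0([s-1,\infty)\times S^1)}$. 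The nontrivial content is the gain of derivatives, and for that you correctly fall back on first-order elliptic bootstrapping anyway, so your argument lands in the same place after an unnecessary detour. One small caution: the inequality you write for $|v_s|^2$ is linear in $f=|v_s|^2$, not quadratic, so Lemma~\ref{lemmaa5} as stated (which treats $\Delta f\ge -A-Bf^2$ with a smallness condition on $\int f$) is not the right reference; you would want the standard local maximum principle for $\Delta f\ge -g$ instead.
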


\begin{proof}
It is easy to see that (\ref{equation47}) follows from the vortex equation $\partial_s \psi = - \sigma \mu^* (u)$ and the uniform bound on all derivatives of $u$, which is provided by Lemma \ref{lemma46}. On the other hand, apply $\nabla_s$ to the (\ref{equation33}), we obtain
\begin{align*}
2 \nabla^{0,1} \partial_s u = -  J \nabla {\mc X}_{\psi} (\partial_s u ) - J {\mc X}_{F_A} (u) - 2 \nabla^2 \wt{W}_h^{(\delta)}(\partial_s u) - 2 (\nabla_s \nabla R_h)(u).
\end{align*}
Here $2\nabla^{0,1} = \nabla_s + J \nabla_t$. Fix $p > 2$. Lemma \ref{lemma46} implies uniform bounds on $\psi$, $F_A$. Then by (\ref{equation46}) and elliptic estimate, there is a constant $b_1(\wt{K}, M)>0$ such that
\begin{align*}
\big\| \partial_s u \big\|_{W^{1, p}([s, s+1]\times S^1)} \leq b_1(\wt{K}, M) \Big( e^{-2s}  + \big\|\partial_s u \big\|_{C^0([s-1, +\infty)\times S^1)} \Big).
\end{align*}
By elliptic bootstrapping we can replace the $W^{1, p}$-norm by the $W^{k, p}$-norm and the constant $b_1(\wt{K}, M)$ by some $b_k(\wt{K}, M)$. Indeed, if it is true for $k\geq 1$, then we see, the term $J \nabla {\mc X}_\psi(\partial_s u)$ and the term $\nabla^2 \wt{W}_h^{(\delta)} (\partial_s u)$ are linear in $\partial_s u$ and all derivatives of $J \nabla {\mc X}_\psi$ and $\nabla^2 \wt{W}_h^{(\delta)}$ are uniformly bounded by Lemma \ref{lemma46}; the term $J {\mc X}_{F_A} (u)$ is linear in $F_A = \partial_s \psi ds dt$ and all derivatives of $u$ are uniformly bounded by Lemma \ref{lemma46}; finally, all derivatives of $\nabla_s R_h$ are uniformly exponentially decay by (\ref{equation46}). Therefore, (\ref{equation47}), elliptic estimate and induction hypothesis imply that there is $b_{k+1}(\wt{K}, M) >0$ such that
\begin{align*}
\big\| \partial_s u \big\|_{W^{k+1, p}([s, s+1]\times S^1)} \leq b_{k+1}(\wt{K}, M) \Big( e^{-2s} + \big\| \partial_s u \big\|_{C^0([s-1, +\infty)\times S^1)} \Big).
\end{align*}
The bound on $C^l$-norm is obtained by Sobolev embedding.
\end{proof}

\subsubsection*{Exponential decay}

Let $\wt{N}_\upgamma \to \wt{X}_\upgamma$ be the normal bundle. Let $D >0$ be a small number and let $\wt{N}_\upgamma^D \cap \wt{K}$ be the $D$-neighborhood of $\wt{X}_{\upgamma}\cap \wt{K}$. There is a small $D_0>0$ such that the exponential map identifies $\wt{N}_\upgamma^{D_0}\cap \wt{K}$ with a neighborhood of the zero section of $\wt{N}_\upgamma \cap \wt{K}$. A point in this neighborhood is denoted either by $\exp_{\ov{x}} \xi$ or $(\ov{x}, \xi)$, for $\ov{x}\in \wt{X}_\upgamma$ and $\xi \in \wt{N}_{\upgamma}|_{\ov{x}}$. 

Now we state the result about the exponential decay of the normal component, which will be proved in Subsection \ref{subsection42}. The derivative of ${\mc X}_\lambda$ in the direction of $\wt{N}_\upgamma$ defines a skew-adjoint map $d{\mc X}_\lambda^N: \wt{N}_\upgamma \to \wt{N}_\upgamma$, whose spectra are locally constant and are disjoint from ${\bm i} {\mb Z}$. We define
\begin{align*}
\tau_0 = \tau_0(\lambda):= d \big( {\bm i} {\mb Z}, {\rm Spec} ( d{\mc X}_\lambda^N ) \big) \in (0,1).
\end{align*}

\begin{prop}\label{prop48} 
For every $G$-invariant compact subset $\wt{K} \subset \wt{X}$, there exist a constant $\epsilon_1 = \epsilon_1 (\wt{K})>0$ and for every $l$, a constant $C^l(\wt{K}) >0$ satisfying the following conditions. Suppose $(u, h)$ is a smooth temporal gauge solution to a $\lambda$-cylindrical model, and $u(\Theta_+) \subset \wt{K}$. Suppose
\begin{align}\label{equation48}
 \big\| e({\bm u}) \big\|_{L^\infty(\Theta_+)} \leq (\epsilon_1 )^2.
\end{align}
Then $u(\Theta_+) \subset \wt{N}_\upgamma^{D_0} \cap \wt{K}$. Moreover, if we write $u = \exp_{\ov{u}} \xi$ where $\ov{u}: \Theta_+ \to \wt{X}_\upgamma$ and $\xi \in \Gamma \left( \ov{u}^* \wt{N}_\upgamma\right)$, then for each $l$ and every $s \geq 0$, we have
\begin{align}\label{equation49}
\| \xi \|_{C^l([s, +\infty) \times S^1)} \leq C^l(\wt{K}) e^{- {1 \over 2} \tau_0 s}.
\end{align}
\end{prop}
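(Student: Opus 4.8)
The plan is to combine two ingredients: (i) a local convergence statement saying that once the energy density is small and monotone-decaying, $u$ enters a neighborhood of the critical manifold $\wt{X}_\upgamma$; and (ii) a differential inequality for the squared norm of the normal component $\xi$ that forces exponential decay with rate governed by $\tau_0$. First I would invoke Proposition \ref{prop45} and Lemma \ref{lemma47}: the energy density tends to zero, so for $\epsilon_1$ small enough, $\|e({\bm u})\|_{L^\infty}\le \epsilon_1^2$ guarantees (via Theorem \ref{thm42}, whose first half is already proved) that $e^{\lambda t}u(s,t)$ is within $D_0$ of the fixed point $\upkappa\in\wt{X}_\upgamma$ for all $s\ge 0$, hence $u(\Theta_+)\subset \wt{N}_\upgamma^{D_0}\cap\wt{K}$ and the decomposition $u=\exp_{\ov u}\xi$ is well-defined and smooth. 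The bounds on all derivatives of $u$ and $h$ from Lemma \ref{lemma46}, together with the exponential smallness of $R_h$ from \eqref{equation46} (since $h\to 0$), will control all the inhomogeneous terms that arise.

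Next I would derive the governing equation for $\xi$ on a disk $B_r(z)\subset\Theta_+$, where $\lambda t$ is single-valued. Writing the cylindrical model as a local model with $\beta=1$, the section equation becomes $\partial_s u+{\mc X}_\phi(u)+J(\partial_t u+{\mc X}_\psi(u))+2\nabla\wt W_{h,\lambda}^{(\delta)}(u)=0$. On $\wt X_\upgamma$ the potential restricts to a holomorphic Morse function (Hypothesis \ref{hyp28}(P4)), and the key point is that its \emph{normal} Hessian vanishes along $N_\upgamma$ (Hypothesis \ref{hyp25}(Q2) and \ref{hyp28}(P2)), so to leading order the normal component of the linearized operator at $\xi=0$ is just the linearized gauged Cauchy–Riemann operator; the only non-negligible zeroth-order term is $d{\mc X}_\lambda^N$ acting on $\xi$ coming from the $e^{\rho_l(\lambda t)}$ twist. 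Projecting the equation onto $\wt N_\upgamma$ and using the gauge-fixing $\phi\in W_\delta^{1,p}$, $h\to 0$, one obtains, schematically,
\begin{align*}
\partial_s \xi + J\big(\partial_t \xi + d{\mc X}_\lambda^N\,\xi\big) = {\mc R}, \qquad |{\mc R}|\le C(\wt K)\big(e^{-2s} + |\xi|^2 + |\xi|\,|\partial_s u| + \ldots\big).
\end{align*}
Then I would set $f(s)=\int_{S^1}|\xi(s,t)|^2\,dt$ and compute $f''(s)$. The second-order term yields, after integrating by parts in $t$ and using that $\mathrm{Spec}(d{\mc X}_\lambda^N)$ is disjoint from ${\bm i}{\mb Z}$ with distance $\tau_0$, a lower bound $f''\ge (2\tau_0)^2 f - (\text{error})$, where the error is $O(e^{-4s}) + O(f^{3/2})$-type and small once $\epsilon_1$ is small. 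A standard ODE comparison argument (of the type used for Floer trajectories, cf. the mean-value / maximum-principle lemmas in Appendix \ref{appendixa}) then gives $f(s)\le C e^{-2\tau_0 s}$, hence $\|\xi\|_{C^0}\le C e^{-\tau_0 s}$ after a Sobolev/mean-value step; the loss to $\tfrac12\tau_0$ in \eqref{equation49} absorbs the interaction between the decay of $\xi$ and the decay of $\partial_s u$ and gives room in the bootstrap. Finally, the $C^l$-estimates follow by applying $\nabla_s$, $\nabla_t$ to the $\xi$-equation, using Lemma \ref{lemma46}, Lemma \ref{lemma47}, \eqref{equation46}, and elliptic bootstrapping on $[s,s+1]\times S^1$, exactly as in the proof of Lemma \ref{lemma47}.

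The main obstacle I anticipate is closing the differential inequality for $f$ in the presence of the \emph{nonlinear} error terms: $|\xi|^2$ is borderline against the linear coercive term $\tau_0^2 f$, and the cross term $|\xi|\,|\partial_s u|$ couples the normal decay to the (a priori only qualitatively established) decay of $\partial_s u$. Handling this requires first establishing a weak a priori smallness $\|\xi\|_{C^0([s,\infty)\times S^1)}\to 0$ (from Theorem \ref{thm42}) so that on $[s_0,\infty)$ the nonlinear terms are dominated by $\tfrac14\tau_0^2 f$ plus an exponentially small inhomogeneity, and then running the comparison argument on that half-cylinder; the contribution of the tangential component $\ov u$ and of $h$ must be shown to decay at least as fast, which is where the vortex equation $\partial_s\psi=-\sigma\mu^*(u)$ with $\sigma=O(e^{-2s})$ and the temporal gauge condition are essential. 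This is deferred to Subsection \ref{subsection42} as indicated.
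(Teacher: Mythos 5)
Your proposal follows the paper's proof essentially step by step: decompose $u=\exp_{\ov u}\xi$ in $\wt N_\upgamma$, project the cylindrical-model equation onto the normal bundle, prove coercivity of the self-adjoint operator $\eta\mapsto J^N(\nabla_t\eta+\mc X_\lambda(\eta))$ on $L^2(S^1)$ with spectral gap $\tau_0$ via Fourier expansion, run the convexity inequality $v''\ge\tau_0^2 v$ for $v(s)=\|\xi(s,\cdot)\|_{L^2(S^1)}^2$, and upgrade to pointwise $C^l$ estimates by a mean-value argument and elliptic bootstrap --- this is precisely Lemmas~\ref{lemma412}--\ref{lemma413} and equations~\eqref{equation414}--\eqref{equation424}. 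The one step to repair is the first assertion $u(\Theta_+)\subset\wt N_\upgamma^{D_0}$: you cannot cite Theorem~\ref{thm42} here, since that theorem is \emph{established using} this proposition; instead one needs the direct loop argument of Lemma~\ref{lemma411}, namely that a $C^1$ loop $(x,\eta)$ with $|x'+\mc X_\eta(x)|$ and $|\eta-\lambda|$ small has $x(0)$ nearly $\upgamma$-fixed and hence $x(S^1)\subset\wt N_\upgamma^D$, the required smallness being supplied by the energy-density bound \eqref{equation48} and the vortex equation $\partial_s\psi=-\sigma\mu^*(u)$.
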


In the broad case, $\wt{W}^{(\delta)}: \wt{X}_\upgamma \to {\mb C}$ is a holomorphic Morse function having finitely many critical points. Then for any $\ud\delta > 0$, there exists $\tau_1 = \tau_1( \ud\delta )>0$ such that for any $\delta \in [ \ud\delta, 1]$, for any critical point $\upgamma$ of $\wt{W}^{(\delta)}|_{\wt{X}_\upgamma}$, each eigenvalue of the Hessian of $\wt{W}^{(\delta)}|_{\wt{X}_\upgamma}$ has absolute value no less than $\tau_1$.

The following two propositions will be proved in Subsection \ref{subsection43}.
\begin{prop}\label{prop49}
Suppose $\upgamma$ is broad. Then for every $G$-invariant compact subset $\wt{K}\subset \wt{X}$ and every $\ud\delta>0$, there are constants $\epsilon_2 = \epsilon_2( \wt{K}, \ud\delta )>0, C_2 = C_2(\wt{K}, \ud\delta ) >0$ satisfying the following conditions. Suppose $(u, h)$ is a bounded smooth temporal gauge solution to a $\lambda$-cylindrical model with parameters $(\sigma, \delta)$ such that $\delta \geq \ud\delta$. Suppose
\begin{align*} 
\big\| e({\bm u}) \big\|_{L^\infty(\Theta_+)} \leq ( \epsilon_2 )^2.
\end{align*}
Then $u(\Theta_+) \subset \wt{N}_\upgamma^{D_0} \cap \wt{K}$ and there is a unique critical point $\upkappa$ of $\wt{W}^{(\delta)} |_{\wt{X}_\upgamma}$ such that for all $(s, t)\in \Theta_+$,
\begin{align*}
d \big( e^{\lambda t} \ov{u}(s, t), \upkappa \big) \leq C_2 e^{- {1\over 2} \min\{ \tau_0, \tau_1 \} s}.
\end{align*}
\end{prop}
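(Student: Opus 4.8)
The strategy is to reduce the equation, in a neighbourhood of $\wt{X}_\upgamma$, to the perturbed holomorphic gradient (Witten) equation for the Morse function $g:=\wt{W}^{(\delta)}|_{\wt{X}_\upgamma}$ on $\wt{X}_\upgamma$ with an exponentially small inhomogeneous term, and then to run the standard exponential-convergence argument for such an equation near a nondegenerate critical point. First I would take $\epsilon_2\leq\epsilon_1(\wt{K})$ so that Proposition \ref{prop48} applies: then $u(\Theta_+)\subset\wt{N}_\upgamma^{D_0}\cap\wt{K}$, we may write $u=\exp_{\ov{u}}\xi$ with $\ov{u}\colon\Theta_+\to\wt{X}_\upgamma$ and $\xi\in\Gamma(\ov{u}^*\wt{N}_\upgamma)$, and $\|\xi\|_{C^l([s,+\infty)\times S^1)}\leq C^l(\wt{K})e^{-\tau_0 s/2}$ for every $l$. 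In the temporal gauge $\phi\equiv 0$ the vortex equation gives $\partial_s\psi=-\sigma\mu^*(u)$; since $|\nabla^j\sigma|\leq C^{(j)}e^{-2s}$, $u(\Theta_+)\subset\wt{K}$, $\lim_{s\to+\infty}h=0$, and $\psi-\lambda=\partial_t h'+\partial_s h''$, a standard elliptic estimate for $\Delta h''=-\sigma\mu^*(u)$ on the half-cylinder (together with Lemma \ref{lemma47}) yields $\|h\|_{C^l}+\|\psi-\lambda\|_{C^l}=O(e^{-2s})$ on $[s,+\infty)\times S^1$, whence by (\ref{equation46}) also $\|\nabla_z^{(l_1)}\nabla_x^{(l_2)}R_h^{(\delta)}(z,\cdot)\|_{C^0(\wt{K})}=O(e^{-2s})$.

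Next I would pass to the reduced equation. Set $w(s,t)=e^{\lambda t}\ov{u}(s,t)$. Since $\lambda\in{\mf g}_0={\rm Lie}(S^1)$, the element $e^{\lambda t}$ lies in the compact group $G_0$, hence acts by a holomorphic isometry preserving $\wt{X}_\upgamma$, so $w\colon\Theta_+\to\wt{X}_\upgamma$ is well defined. Applying $(e^{\lambda t})_*$ to the first equation of (\ref{equation33}), using $(e^{\lambda t})_*\partial_t u=\partial_t(e^{\lambda t}u)-{\mc X}_\lambda(e^{\lambda t}u)$, the $G$-equivariance of the vector fields, the $G$-invariance of $W$ and the $\rho_l$-equivariance of $F_{\upgamma;l}$, one gets
\begin{align*}
\partial_s(e^{\lambda t}u)+J\big(\partial_t(e^{\lambda t}u)+{\mc X}_{\psi-\lambda}(e^{\lambda t}u)\big)+2\big(\nabla\wt{W}^{(\delta)}\big)(e^{\lambda t}u)+2(e^{\lambda t})_*\big(\nabla R_h^{(\delta)}\big)(u)=0 .
\end{align*}
Since $e^{\lambda t}u=\exp_w\eta$ with $\eta=(e^{\lambda t})_*\xi$ and $\|\eta\|_{C^l}=O(e^{-\tau_0 s/2})$, I would Taylor-expand each term about $w\in\wt{X}_\upgamma$ and project onto $T\wt{X}_\upgamma$. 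The key inputs are that by ({\bf Q2}) and ({\bf P2}) both $d\wt{W}^{(\delta)}$ and $\nabla^2\wt{W}^{(\delta)}$ vanish in the normal directions $\wt{N}_\upgamma$ over $\wt{X}_\upgamma$, so $\nabla\wt{W}^{(\delta)}(w)=\nabla g(w)\in T\wt{X}_\upgamma$ and the $\eta$-linear term in the expansion of $(\nabla\wt{W}^{(\delta)})(\exp_w\eta)$ drops, leaving an $O(|\eta|^2)$ contribution; combining this with the decay of ${\mc X}_{\psi-\lambda}$, of $R_h^{(\delta)}$, and of $\eta$ together with its covariant $s$- and $t$-derivatives, the tangential projection reads
\begin{align*}
\partial_s w+J\partial_t w+2\nabla g(w)={\mc R},\qquad \big\|{\mc R}(s,\cdot)\big\|_{C^l([s,+\infty)\times S^1)}\leq C^l(\wt{K},\ud\delta)\,e^{-\tau_0 s/2}.
\end{align*}

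Finally I would carry out the exponential convergence. By Proposition \ref{prop45}, $e({\bm u})\to 0$, so $\nabla\wt{W}_{h,\lambda}^{(\delta)}(u)\to 0$, and with $\xi,h\to 0$ and the reduction above this forces $\nabla g(w(s,t))\to 0$ uniformly in $t$; as $g$ is Morse with ${\rm Crit}(g)$ finite and all its Hessian eigenvalues of absolute value $\geq\tau_1(\ud\delta)$, $w(s,\cdot)$ approaches ${\rm Crit}(g)$, and for $\epsilon_2$ small enough there are $s_0$ and a single critical point $\upkappa$ with $d(w(s_0,t),\upkappa)$ arbitrarily small, uniformly in $t$. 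Linearizing the reduced equation at $\upkappa$, the asymptotic operator $J\partial_t+2\nabla^2 g(\upkappa)$ on $L^2(S^1,T_\upkappa\wt{X}_\upgamma)$ is self-adjoint and, by nondegeneracy of $\upkappa$, invertible, with spectral gap about $0$ bounded below in terms of $\tau_1$; a standard three-interval differential-inequality argument applied to $\partial_s w+J\partial_t w+2\nabla g(w)={\mc R}$ with $\|{\mc R}\|=O(e^{-\tau_0 s/2})$ then produces $d(w(s,t),\upkappa)\leq C_2\,e^{-\frac12\min\{\tau_0,\tau_1\}s}$, and a posteriori excludes escape to a different critical point; bootstrapping via elliptic estimates upgrades this to the $C^l$-statements used elsewhere. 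I expect the main obstacle to be the bookkeeping in the reduction step — isolating the genuine gradient $\nabla g(w)$ from the twist ${\mc X}_\lambda$, the normal fluctuation $\xi$, the running gauge $h$, and the tail $R_h^{(\delta)}$, and showing each of these contributes only an exponentially small error; this is precisely where ({\bf Q2}), ({\bf P2}), ({\bf P4}), the temporal-gauge normalization, and (\ref{equation46}) are used.
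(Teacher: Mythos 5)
Your proposal is correct and follows essentially the same route as the paper's proof in Subsection~\ref{subsection43}: use Proposition~\ref{prop48} to reduce to the tangential component, rotate by $e^{\lambda t}$ to obtain the perturbed gradient-flow equation for $g=\wt{W}^{(\delta)}|_{\wt{X}_\upgamma}$ with an error term bounded by $C e^{-\tau_0 s/2}$ (the paper's $R_0^T$ of Lemma~\ref{lemma414}), isolate a unique nearby critical point $\upkappa$ using the uniform separation of critical points for $\delta\in[\ud\delta,1]$, and then exploit coercivity of the linearized operator $J^T(\upkappa)\partial_t+{\sf A}_\upkappa^{(\delta)}$ with spectral gap $\tau_1(\ud\delta)$ to derive the decay rate $e^{-\frac12\min\{\tau_0,\tau_1\}s}$ via a differential inequality for $v(s)=\|\eta(s,\cdot)\|_{L^2(S^1)}^2$. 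The only small deviation is that you claim the tangential remainder from Taylor-expanding $\nabla\wt{W}^{(\delta)}$ is $O(|\eta|^2)$ using ({\bf Q2})/({\bf P2}), whereas the paper only records the weaker (but sufficient) bound $O(|\xi|)$ for $R_\lambda^{(\delta),T}$; this does not change the argument.
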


\begin{prop}\label{prop410}
Suppose $\upgamma$ is narrow. Then for every $G$-invariant compact subset $\wt{K}\subset X$, there are constants $\epsilon_3 = \epsilon_3(\wt{K} ) >0$, $C_3 = C_3 (\wt{K} ) >0$ satisfying the following conditions. Suppose $(u, h)$ is a bounded smooth temporal gauge solution to a $\lambda$-cylindrical model. Suppose
\begin{align*}
 \big\| e({\bm u}) \big\|_{L^\infty(\Theta_+)} \leq ( \epsilon_3 )^2.
\end{align*}
Then $u(\Theta_+) \subset \wt{N}_\upgamma^{D_0} \cap \wt{K}$ and there is a point $\upkappa\in \wt{X}_{\upgamma}$ such that for all $(s, t) \in \Theta_+$, 
\begin{align*}
d\big( e^{\lambda t} \ov{u}(s, t), \upkappa \big) \leq C_3 e^{-{1\over 2} \tau_0  s}.
\end{align*}
\end{prop}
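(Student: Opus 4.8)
The plan is to deduce Proposition \ref{prop410} from Proposition \ref{prop48} together with a removable-singularity argument for an approximately holomorphic map into $\wt{X}_\upgamma$; the substantive content of Proposition \ref{prop48} (exponential decay of the normal component) will be used as a black box.

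First I would set $\epsilon_3 := \epsilon_1(\wt{K})$ (shrinking it further if convenient), so that under the hypothesis $\|e({\bm u})\|_{L^\infty(\Theta_+)} \le (\epsilon_3)^2$ Proposition \ref{prop48} applies and yields $u(\Theta_+) \subset \wt{N}_\upgamma^{D_0} \cap \wt{K}$, a decomposition $u = \exp_{\ov u} \xi$ with $\ov u \colon \Theta_+ \to \wt{X}_\upgamma$ and $\xi \in \Gamma(\ov u^* \wt{N}_\upgamma)$, and the bounds $\|\xi\|_{C^l([s,+\infty)\times S^1)} \le C^l(\wt{K})\, e^{-\frac12 \tau_0 s}$ for every $l$. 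Since $\upgamma$ is narrow, $X_\upgamma = \{\star\}$ and $\wt{X}_\upgamma = \{\star\}\times {\mb C}$; as $\star$ is the unique critical point of $Q$ it is fixed by $G_0^{\mb C}$, and $G_0^{\mb C}$ acts trivially on the ${\mb C}$-factor, so $e^{\lambda t}$ fixes $\wt{X}_\upgamma$ pointwise. Hence $e^{\lambda t}\ov u(s,t) = \ov u(s,t)$, the induced metric on $\wt{X}_\upgamma$ is the flat metric of ${\mb C}$, and it remains only to find $\upkappa \in \wt{X}_\upgamma$ with $d(\ov u(s,t),\upkappa) \le C_3(\wt{K})\, e^{-\frac12 \tau_0 s}$.

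Next I would extract an approximate Cauchy--Riemann equation for $\ov u$ by projecting the first equation of (\ref{equation33}) onto $T\wt{X}_\upgamma$ along the tubular neighbourhood. Three points keep the inhomogeneity small: (i) in the narrow case $\wt{W}_\upgamma = W$, and both $\nabla W$ and $\nabla^2 W$ vanish along $\wt{X}_\upgamma$ (using $Q(\star)=0$, $dQ(\star)=0$ and the vanishing of $\nabla^2 Q$ along $N_\upgamma$), so $\nabla \wt{W}_{h,\lambda}^{(\delta)}(\exp_{\ov u}\xi) = O(|\xi|^2)$; (ii) ${\mc X}_\lambda$ vanishes on $\wt{X}_\upgamma$ with linearization preserving $\wt{N}_\upgamma$, while, by Lemma \ref{lemma47} and $\psi \to \lambda$ (since $\lim_{s\to+\infty} h = 0$), one has $|\psi - \lambda| \le C(\wt{K})\, e^{-2s}$, so the connection term contributes only $O(|\xi|^2 + e^{-2s})$ to the tangential equation; (iii) $\wt{X}_\upgamma$ is totally geodesic (the fixed locus of the holomorphic isometry $\upgamma$), so the extra terms produced by differentiating $\exp_{\ov u}\xi$ are ambient curvature corrections of order $O(|\xi|\,|\nabla \xi| + |\xi|^2)$. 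Feeding in the $C^l$ bounds of Proposition \ref{prop48} and Lemmas \ref{lemma46}--\ref{lemma47}, and writing $\ov u$ in the flat coordinate of ${\mb C} \cong \wt{X}_\upgamma$, I obtain on $\{s \ge 1\}$ an equation $\ov\partial \ov u = \mathscr E$ with $\|\mathscr E\|_{C^l([s,+\infty)\times S^1)} \le C^l(\wt{K})\, e^{-\frac12\tau_0 s}$.

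Finally I would read this in the coordinate $w = e^{-z}$ on the punctured disc, where $\ov u$ becomes a bounded ${\mb C}$-valued function $v$ with $\ov\partial_w v = f$, $|f(w)| \le C(\wt{K})\, |w|^{\tau_0/2}$, and split $v = v_h + v_p$ with $\ov\partial_w v_p = f$ solved so that $|v_p(w)| \le C(\wt{K})\, |w|^{\tau_0/2}$ --- possible precisely because $\tau_0/2 \notin {\mb Z}$, so no homogeneous mode $w^n$ resonates. Then $v_h$ is bounded and holomorphic on the punctured disc, hence extends holomorphically across $0$; putting $\upkappa := (\star, v_h(0)) \in \wt{X}_\upgamma$, the estimates $|v_h(w) - v_h(0)| \le C|w|$ and $\tau_0/2 < 1$ give $d(\ov u(s,t),\upkappa) = |v(w) - v_h(0)| \le C_3(\wt{K})\, e^{-\frac12\tau_0 s}$, which is the assertion since $e^{\lambda t}$ acts trivially on $\wt{X}_\upgamma$. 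The main obstacle is essentially resolved upstream: Proposition \ref{prop48} is where the work lies, whereas what is genuinely new here --- the error bookkeeping in (ii)--(iii) above, plus the elementary removable-singularity step whose only delicate point is the non-resonance $\tau_0 = \tau_0(\lambda) \in (0,1)$ (which both kills integer resonances in $v_p$ and forces $e^{-s} \le e^{-\frac12\tau_0 s}$) --- is comparatively routine.
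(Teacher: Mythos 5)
Your argument is correct, but it takes a genuinely different route from the paper's. The paper treats the narrow case in parallel with the broad case: it writes $\ov v(s,\cdot) = \exp_{\alpha(s)}\eta(s,\cdot)$ using the \emph{center of mass} $\alpha(s)\in\wt X_\upgamma$ of each circle, projects the tangential equation onto the zero-average subspace of $L^2(S^1)$ to kill $\alpha'(s)$, exploits the coercivity of $J\partial_t$ there (eigenvalues in $\mathbf i\mathbb Z\setminus\{0\}$, so norm $\ge 1$), runs the same differential inequality $v''\ge (\mathrm{const})\,v - (\mathrm{error})$ as in Lemma~\ref{lemma413}/\ref{lemma417} to get $\|\eta(s,\cdot)\|_{L^2}\lesssim e^{-\tau_0 s/2}$, then bootstraps to a pointwise bound and finally integrates $|\alpha'(s)|\lesssim e^{-\tau_0 s/2}$. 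You instead exploit what is special about narrowness: $\wt X_\upgamma=\{\star\}\times\mathbb C$ is a \emph{flat} copy of $\mathbb C$ on which $e^{\lambda t}$ acts trivially, so (\ref{equation426}) becomes a scalar inhomogeneous Cauchy--Riemann equation $\ov\partial\ov u=\mathscr E$ with $|\mathscr E|\lesssim e^{-\tau_0 s/2}$, and you finish by a removable-singularity argument for $\ov\partial$ on the punctured disc $w=e^{-z}$. Both work and both ultimately need $\tau_0<1$; the paper's method has the virtue of being uniform with Proposition~\ref{prop49}, while yours is shorter and more transparent once the narrow-case simplifications are noted. Two small points to fix in a writeup. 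First, the exponent on the inhomogeneity in the $w$-coordinate is off by one: $\ov\partial_w v = -\ov w^{-1}\mathscr E$, so $|f(w)|\lesssim |w|^{\tau_0/2-1}$, not $|w|^{\tau_0/2}$; since $\tau_0/2-1\in(-1,-1/2)$ the Cauchy transform still produces $v_p$ with $|v_p(w)-v_p(0)|\lesssim|w|^{\tau_0/2}$, so your final bound survives, but the intermediate statement as written is wrong. Second, the ``non-resonance'' remark is a bit of a red herring: since $\tau_0/2\in(0,1/2)$ there is no integer in the relevant range anyway, and in any case the constant ($n=0$) mode is absorbed into $v_h$ by fiat; what actually matters is $\tau_0<1$, exactly as in the paper's coercivity argument.
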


\begin{proof}[Proof of Theorem \ref{thm42} and \ref{thm43}]
It is easy to see that Proposition \ref{prop45}, \ref{prop49} and \ref{prop410} imply Theorem \ref{thm42}. On the other hand, we write $u = \exp_\upkappa \xi$. By Proposition \ref{prop48}, \ref{prop49} and \ref{prop410}, $\xi$ decays exponentially. Then Theorem \ref{thm43} follows from the elliptic estimates for a Cauchy-Riemann equation in $\xi$. The choices of the constants in Theorem \ref{thm43} are obvious.
\end{proof}

\subsection{Proof of Proposition \ref{prop48}}\label{subsection42}

\begin{lemma}\label{lemma411}
For any compact $G$-invariant subset $\wt{K}\subset \wt{X}$ and $D >0$, there is an $\epsilon_4 = \epsilon_4 (\wt{K}, D) >0$ such that if a $C^1$-loop $(x, \eta): S^1 \to \wt{K} \times {\mf g}$ satisfies
\begin{align}\label{equation410}
\sup_{t \in S^1}\Big( \big| x'(t) + {\mc X}_\eta (x(t)) \big|  + \sup_{t \in S^1}  \big| \eta(t) -\lambda \big| \Big) \leq \epsilon_4,
\end{align} 
then $x(S^1) \subset \wt{N}_{\upgamma}^D$.
\end{lemma}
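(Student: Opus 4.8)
The plan is to argue by contradiction via an Arzel\`a--Ascoli compactness argument, which reduces the statement to the elementary fact that a genuine $2\pi$-periodic orbit of the fundamental vector field $-{\mc X}_\lambda$ must lie entirely in the fixed point locus $\wt{X}_\upgamma$.

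Suppose the conclusion fails for some $\wt{K}$ and $D>0$. Then there is a sequence of $C^1$-loops $(x_n, \eta_n)\colon S^1 \to \wt{K}\times {\mf g}$ with
\[
\epsilon_n := \sup_{t\in S^1}\Big( \big|x_n'(t) + {\mc X}_{\eta_n}(x_n(t))\big| + \big|\eta_n(t) - \lambda\big| \Big) \longrightarrow 0,
\]
but with some point $x_n(t_n)\notin \wt{N}_\upgamma^D$, $t_n\in S^1$. Since $\eta_n\to\lambda$ uniformly, the $\eta_n$ take values in a fixed bounded subset of ${\mf g}$; as $(\xi, x)\mapsto {\mc X}_\xi(x)$ is continuous and $\wt{K}$ is compact, $\|x_n'\|_{C^0(S^1)}$ is uniformly bounded. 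Hence $\{x_n\}$ is equicontinuous with values in the compact set $\wt{K}$, and after passing to a subsequence (and arranging $t_n\to t_\infty$) we get $x_n\to x_\infty$ uniformly for some $x_\infty\in C^0(S^1,\wt{K})$. Writing $x_n'(t) = -{\mc X}_{\eta_n}(x_n(t)) + \big(x_n'(t)+{\mc X}_{\eta_n}(x_n(t))\big)$ and letting $n\to\infty$ shows $x_n'\to -{\mc X}_\lambda(x_\infty)$ uniformly, so $x_\infty$ is $C^1$ and solves $x_\infty' = -{\mc X}_\lambda(x_\infty)$ on $S^1$.

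Next I would identify the limit. The flow of $-{\mc X}_\lambda$ at time $t$ is the action of $\exp(-t\lambda)\in G_0$, so $x_\infty(t) = \exp(-t\lambda)\cdot x_\infty(0)$; since $x_\infty$ has period $2\pi$, $\exp(-2\pi\lambda)\cdot x_\infty(0) = x_\infty(0)$, i.e.\ $\upgamma^{-1} x_\infty(0) = x_\infty(0)$, so $x_\infty(0)\in \wt{X}_\upgamma$. Because $G_0\cong S^1$ is abelian, the fixed point set $\wt{X}_\upgamma$ of $\upgamma$ is $G_0$-invariant, hence invariant under $\exp(-t\lambda)$; therefore $x_\infty(t)\in \wt{X}_\upgamma$ for all $t$, in particular $x_\infty(t_\infty)\in \wt{X}_\upgamma\cap\wt{K}$. (Alternatively, since the ODE is autonomous, each time-translate $x_\infty(\cdot + t_0)$ is again a loop, and the same argument gives $x_\infty(t_0)\in\wt{X}_\upgamma$ directly.) But $\wt{N}_\upgamma^D$ is the $D$-neighborhood of $\wt{X}_\upgamma$, and $x_n(t_n)\to x_\infty(t_\infty)\in \wt{X}_\upgamma$ forces $d\big(x_n(t_n),\wt{X}_\upgamma\big)\to 0$, so $x_n(t_n)\in \wt{N}_\upgamma^D$ for large $n$ --- contradicting the choice of $t_n$. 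This proves the lemma.

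I do not expect a serious obstacle here: the only points requiring care are the uniform bound on $\|x_n'\|_{C^0}$ (which uses only $\eta_n\to\lambda$ uniformly and compactness of $\wt{K}$, together with linearity of $\xi\mapsto{\mc X}_\xi$) and the observation that $\wt{X}_\upgamma$ is preserved by the entire circle $\exp(-t\lambda)$ --- this is what upgrades ``$x_\infty(0)\in\wt{X}_\upgamma$'' to ``the whole orbit lies in $\wt{X}_\upgamma$''. If one prefers an effective statement instead, the same conclusion follows from a direct Gronwall comparison of $x$ with $t\mapsto \exp(-t\lambda)x(0)$ over $t\in[0,2\pi]$, using that $|{\mc X}_\lambda - {\mc X}_\eta| = |{\mc X}_{\lambda-\eta}|\le C(\wt{K})|\lambda - \eta|$ on $\wt{K}$: this bounds $d\big(x(0),\upgamma^{-1}x(0)\big)$ by $C(\wt{K})\epsilon_4$, and since $\upgamma$ has finite order dividing $r$ and acts by isometries, an almost-$\upgamma$-fixed point of $\wt{K}$ is $O(\epsilon_4)$-close to $\wt{X}_\upgamma$; propagating this estimate along the orbit yields the claim with an explicit $\epsilon_4(\wt{K},D)$.
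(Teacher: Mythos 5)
Your argument is correct, but it is a genuinely different route from the paper's. You argue by soft compactness: a hypothetical sequence of counterexample loops $(x_n,\eta_n)$ has uniformly bounded $\|x_n'\|_{C^0}$ (because $\eta_n\to\lambda$ uniformly, $\wt{K}$ is compact, and $\xi\mapsto\mc{X}_\xi$ is linear), so Arzel\`a--Ascoli gives a uniform limit $x_\infty$ which solves the autonomous ODE $x_\infty'=-\mc{X}_\lambda(x_\infty)$; $2\pi$-periodicity forces $\upgamma^{-1}x_\infty(0)=x_\infty(0)$, and $G_0$-invariance of $\wt{X}_\upgamma$ (or, as you observe, autonomy of the ODE) upgrades this to $x_\infty(S^1)\subset\wt{X}_\upgamma$, contradicting $x_n(t_n)\notin\wt{N}_\upgamma^D$. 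The paper works instead with the gauge transformation $g(t)=\exp\big(\int_0^t\eta(\tau)\,d\tau\big)$: the gauge-transformed loop has derivative of norm $\le\epsilon_4$ and its endpoints differ by the action of $g(2\pi)$, which is within $2\pi\epsilon_4$ of $\upgamma$; this gives $d(\upgamma x(0),x(0))=O(\epsilon_4)$, a compactness step on $\wt{K}$ then places $x(0)$ in $\wt{N}_\upgamma^{D/2}$, and the small derivative together with $G$-invariance of $\wt{N}_\upgamma^D$ finishes the proof. Your Gronwall sketch at the end is essentially this quantitative version, phrased as a comparison of $x$ with $t\mapsto\exp(-t\lambda)x(0)$. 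The trade-off: the paper's route yields an $\epsilon_4(\wt{K},D)$ that is in principle explicit, while your compactness argument is non-constructive but isolates the conceptual content cleanly --- a genuine $2\pi$-periodic orbit of $-\mc{X}_\lambda$ must lie in the $\upgamma$-fixed locus. Both routes rely on the same ingredients ($G$-invariance of the metric and of $\wt{X}_\upgamma$, and compactness of $\wt{K}$) and both are valid.
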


\begin{proof}
Define $(g, y): [0,2\pi] \to G \times \wt{X}$ by 
\begin{align*}
g(t) = \exp \Big( \int_0^t \eta(\tau) d\tau \Big),\  y(t) = g(t) x(0).
\end{align*}
Then $y'(t) = g(t)_* \left( x'(t) + {\mc X}_{\eta(t)} (x(t)) \right)$ and (\ref{equation410}) implies $d(y(2\pi), y(0)) \leq 2\pi \epsilon_4$. Then
\begin{multline*}
d ( \upgamma x(0), x(0) ) \leq d ( \upgamma x(0), y(2\pi) ) + d ( y(2\pi), x(2\pi) )\\
= d \Big( \exp ( 2\pi \lambda) x(0), \exp \Big( \int_0^{2\pi} \eta(\tau) d\tau \Big) x(0) \Big)  + d ( y(2\pi), y(0) ) \\
\leq d \Big( \upgamma x(0), \exp \Big( \int_0^{2\pi} \eta(\tau) d\tau \Big) x(0) \Big) + 2\pi \epsilon_4.
\end{multline*}
(\ref{equation410}) also implies that $\big| 2\pi \lambda - \int_0^{2\pi} \eta(\tau) d\tau \big| \leq 2\pi \epsilon_4$. Then since $x(0)$ is in a compact subset, for $\epsilon_4$ small enough, $\upgamma x(0)$ is sufficiently close to $x(0)$ so that $x(0) \in \wt{N}_{\upgamma}^{{1\over 2} D} \cap \wt{K}$. Then since $|y'(t)|$ is very small, $y([0, 2\pi])$ is contained $\wt{N}_\upgamma^D \cap \wt{K}$ for $\epsilon_4$ small enough.
\end{proof}

Now let ${\bm u} = (u, h)$ is a bounded smooth temporal gauge solution to a $\lambda$-cylindrical model with $u(\Theta_+) \subset \wt{K}$. Take $\epsilon_1 = \epsilon_1 (\wt{K}, \upgamma) > 0$ undetermined. Then if $\big\| e({\bm u}) \big\|_{L^\infty} \leq (\epsilon_1)^2$, we have
\begin{align*}
\big| \partial_t u(s, t)  + {\mc X}_{\psi(s, t)} (u(s, t)) \big| \leq \epsilon_1,
\end{align*}
\begin{align*}
\big| \psi(s, t) - \lambda \big| \leq \int_s^{+\infty} \big| \sigma(\rho, t) \mu^* (u) \big| d\rho \leq \sqrt{ \epsilon_1} \int_s^\infty \sqrt{\sigma(\rho, t)} d\rho \leq \sqrt{ \epsilon_1 C^{(0)}(\sigma)} e^{-s}.
\end{align*}
Here $C^{(0)}(\sigma)$ is the one in (\ref{equation32}). Then we can choose $\epsilon_1$ sufficiently small so that by Lemma \ref{lemma411}, the first claim of Proposition \ref{prop48} is satisfied, i.e., $u ( \Theta_+ ) \subset \wt{N}_{\upgamma}^{D_0}$. Then we can use the exponential map to write $u = \exp_{\ov{u}} \xi$ for $\ov{u}: \Theta_+ \to \wt{X}_{\upgamma}$ and $\xi \in \Gamma \left( \ov{u}^* \wt{N}_\upgamma \right)$. 

Now we consider the equation that the normal component $\xi$ of $u$ should satisfy. Let $\pi: \wt{N}_\upgamma \to \wt{X}_\upgamma$, $\pi(\exp_{\ov{x}}\xi) = \ov{x}$ be the projection. The exponential map induces a bundle isomorphism
\begin{align}\label{equation411}
T\wt{X}|_{\wt{N}_\upgamma^{D_0}} \simeq \pi^* T\wt{X}_\upgamma \oplus \pi^* \wt{N}_\upgamma.
\end{align}
For any $V \in T \wt{X}|_{\wt{N}_\upgamma^{D_0}}$, we denote by $V^T$ the tangential component and $V^N$ the normal component, with respect to the above decomposition. This decomposition respects the $G$-action, i.e. for any $g \in G$ and $(\ov{x}, \xi) \in \wt{N}_\upgamma^{D_0}$, $g (\ov{x}, \xi) = (g \ov{x}, g \xi)$. Therefore,
\begin{align*}
{\mc X}_\lambda (\exp_{\ov{x}} \xi) = \left( {\mc X}_\lambda(\ov{x}) , {\mc X}_\lambda^N( \ov{x}, \xi) \right)
\end{align*}
where the second component is linear in $\xi$. However, the decomposition (\ref{equation411}) may not respect the complex structure and we can write the complex structure as
\begin{align*}
J (\ov{x}, \xi) = \left( \begin{array}{cc} J^T(\ov{x}) & 0\\
                                          0 & J^N(\ov{x})
\end{array} \right) + R_J(\ov{x}, \xi),
\end{align*}
where $R_J$ depends smoothly on $(\ov{x}, \xi)$ and there is a constant $C_J(\wt{K})>0$ depending on the compact set $\wt{K}$ such that for $(\ov{x},\xi) \in \wt{K}$, we have
\begin{align}\label{equation412}
| R_J (\ov{x}, \xi) | \leq C_J(\wt{K}) | \xi |.
\end{align}
Lastly, by ({\bf Q2}) of Hypothesis \ref{hyp25} and ({\bf P2}) of Hypothesis \ref{hyp28}, the Hessian of $\wt{W}_h^{(\delta)}$ vanishes along the normal bundle $\wt{N}_\upgamma$. By the uniform bound on $h$ (Lemma \ref{lemma46}), there is a constant $c^N(\wt{K})$ depending only on $\wt{K}$ such that 
\begin{align}\label{equation413}
\big| \big( \nabla \wt{W}_h^{(\delta)} (\ov{x}, \xi) \big)^N \big| \leq c^N(\wt{K}) | \xi |^2.
\end{align}

Use the above notations, the normal component of (\ref{equation33}) can be written as
\begin{multline}\label{equation414}
 \nabla_s \xi + J^N(\ov{u}) ( \nabla_t \xi + {\mc X}_\lambda(\xi) )  \\
 = - \big( R_J (\ov{u}, \xi) \left( \partial_t u + {\mc X}_{ \lambda}(u) \right) \big)^N -  \big( J {\mc X}_{\psi-\lambda} (u) \big)^N - 2 \big( \nabla \wt{W}_h^{(\delta)} (\ov{u}, \xi) \big)^N.
\end{multline}

\begin{lemma}\label{lemma412}
Denote the right hand side of (\ref{equation414}) by $R(s, t)$. There exists $c_1 >0$ and for any $\rho>0$, there are constants $\varepsilon_1 = \varepsilon_1(\rho)>0$ and  $S_1 = S_1(\rho)>0$ such that if $\big\| e({\bm u}) \big\|_{L^\infty} \leq (\varepsilon_1)^2$, then for $s \geq S_1$, we have
\begin{align*}
| R(s, t) | \leq \rho |\xi| ,\ | \nabla_s R(s, t) | \leq \rho^2 |\xi| + \rho |\nabla_s \xi|,\ | \nabla_t R | \leq c_1 (  |\xi|  + |\nabla_t \xi| ).
\end{align*}
\end{lemma}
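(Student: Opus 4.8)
The plan is to estimate the right-hand side $R(s,t)$ of (\ref{equation414}) term by term, using the exponential decay of $\xi$ (which at this point is not yet available, so I cannot cite (\ref{equation49})) replaced by the a priori smallness $\|e({\bm u})\|_{L^\infty} \leq (\varepsilon_1)^2$ together with Lemma \ref{lemma411}, and the derivative bounds from Lemma \ref{lemma46} and Lemma \ref{lemma47}. Recall $R(s,t) = -\big( R_J(\ov u, \xi)(\partial_t u + {\mc X}_\lambda(u)) \big)^N - \big( J{\mc X}_{\psi-\lambda}(u)\big)^N - 2\big( \nabla\wt W_h^{(\delta)}(\ov u, \xi)\big)^N$. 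The three pieces are handled as follows. The last term is bounded by $c^N(\wt K)|\xi|^2$ via (\ref{equation413}), and since $u(\Theta_+) \subset \wt N_\upgamma^{D_0}\cap \wt K$ gives $|\xi| \leq D_0$, this is $\leq c^N(\wt K) D_0 |\xi|$, which is $\leq \tfrac{\rho}{3}|\xi|$ after shrinking the neighbourhood size $D_0$ — but since $D_0$ is already fixed, instead one uses that $|\xi(s,t)|\to 0$ as $s\to\infty$ (a consequence of Proposition \ref{prop45}: $|\partial_t u + {\mc X}_\psi(u)|\to 0$ forces the loop $u(s,\cdot)$ to converge to an orbit in $\wt X_\upgamma$, so $|\xi|\to 0$), hence for $s\geq S_1$ large enough $|\xi|$ is small and the quadratic term is $\leq \tfrac{\rho}{3}|\xi|$. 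The second term $\big(J{\mc X}_{\psi-\lambda}(u)\big)^N$ is bounded by $C(\wt K)|\psi-\lambda|$, and from the computation already in the text, $|\psi(s,t)-\lambda| \leq \sqrt{\varepsilon_1 C^{(0)}(\sigma)}\,e^{-s}$; so for $s\geq S_1$ this is $\leq \tfrac{\rho}{3}|\xi|$ provided $|\xi|$ is bounded below — which it need not be. The cleaner route: bound $|\psi - \lambda| \leq C e^{-s}$ absolutely and absorb it into the RHS as an inhomogeneous exponentially small term; in the final ODE/elliptic argument of Proposition \ref{prop48} such a term is harmless. For the purposes of the stated inequality $|R|\leq \rho|\xi|$ one should read it together with the understanding (as in analogous lemmas, e.g. \cite{FJR3}) that the genuinely dangerous part is the one not controlled by an explicit exponential; I would state and prove $|R(s,t)| \leq \rho|\xi| + C e^{-s}$ if the bare form fails, but following the paper's convention I expect the first term (quadratic in $\xi$) plus the $|\psi-\lambda|$ term, both made small for $s\geq S_1$, plus $R_J$-term $\leq C_J(\wt K)|\xi|\cdot|\partial_t u + {\mc X}_\lambda(u)|$, and $|\partial_t u + {\mc X}_\lambda(u)| \leq |\partial_t u + {\mc X}_\psi(u)| + |{\mc X}_{\psi-\lambda}| \leq \varepsilon_1 + C e^{-s}$, which for $s\geq S_1$ and $\varepsilon_1$ small is $\leq \rho$, giving $\leq \rho|\xi|$.

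Next, for $|\nabla_s R|$: differentiating each term in $s$. The term $\nabla_s\big(\nabla\wt W_h^{(\delta)}(\ov u,\xi)\big)^N$ produces $\nabla^2\wt W_h^{(\delta)}$ contracted with $\nabla_s$ of the arguments, but since $\nabla^2\wt W_h^{(\delta)}$ vanishes on the normal bundle, a Taylor expansion shows this is $O(|\xi|(|\nabla_s\xi| + |\xi||d_A u|))$ plus an $\partial_s h$-term which is itself $O(e^{-2s})$ by Lemma \ref{lemma47}; using $|\xi|\leq\rho$ and $|d_A u|\leq$ const for $s\geq S_1$, this is $\leq \rho|\nabla_s\xi| + \rho^2|\xi| + Ce^{-2s}$. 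The term $\nabla_s\big(J{\mc X}_{\psi-\lambda}(u)\big)^N$ involves $\partial_s\psi = -\sigma\mu^*(u)$, which is $O(e^{-2s})$ by (\ref{equation32}), plus $|\psi-\lambda|\cdot O(|d_A u|) \leq Ce^{-s}$. The term $\nabla_s\big(R_J(\ov u,\xi)(\partial_t u + {\mc X}_\lambda(u))\big)^N$: differentiating $R_J$ gives $O(|\xi| + |\nabla_s\xi|)$ times $|\partial_t u + {\mc X}_\lambda(u)| \leq \varepsilon_1 + Ce^{-s} \leq \rho$, plus $|R_J|\leq C_J|\xi|$ times $|\nabla_s(\partial_t u + {\mc X}_\lambda(u))|$, and the latter is controlled by $|D_{A,s}v_t|$-type quantities which are $\leq C(\wt K,M)(e^{-2s} + \|\partial_s u\|_{C^0})$ by Lemma \ref{lemma47} — and $\|\partial_s u\|_{C^0}$ is bounded, so after another application of smallness ($|\xi|\leq\rho$ for $s\geq S_1$) this is $\leq \rho^2|\xi| + \rho|\nabla_s\xi| + Ce^{-2s}$. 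Collecting, $|\nabla_s R| \leq \rho^2|\xi| + \rho|\nabla_s\xi| + Ce^{-2s}$, which is the claimed form up to the harmless exponential (again, I would include $+Ce^{-2s}$ explicitly or absorb it as is standard).

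Finally, for $|\nabla_t R|$: here no smallness is claimed, only a fixed constant $c_1$. Differentiating in $t$ is completely analogous but now we do not need the $\rho$-smallness, so every term is simply bounded using Lemma \ref{lemma46} (uniform $C^1$-bounds on $u$, $h$, and hence on $\psi$, $R_J$, $\nabla^2\wt W_h^{(\delta)}$): $|\nabla_t R| \leq C(\wt K,M)(|\xi| + |\nabla_t\xi|)$, with $c_1 := C(\wt K,M)$. The dependence on $M$ can be removed a posteriori since $\|e({\bm u})\|_{L^\infty} \leq (\varepsilon_1)^2 \leq 1$, i.e. we may take $M = 1$. The main obstacle I anticipate is bookkeeping: keeping track of which terms are genuinely quadratic/small in $\xi$ (to get the $\rho$-factors) versus which are only exponentially small in $s$ but not controlled by $|\xi|$, and making sure the statement as written is interpreted with the correct choice of $S_1$ so that "exponentially small in $s$" automatically implies "$\leq \rho|\xi|$" — this is false in general but works here because the subsequent Proposition \ref{prop48} argument only feeds $R$ into an estimate where an extra $Ce^{-\tau_0 s/2}$ inhomogeneity changes nothing; if one wants a literally correct standalone lemma, the honest statement carries an additive $Ce^{-s}$ (resp. $Ce^{-2s}$) term, and I would prove that version.
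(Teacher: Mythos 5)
Your term-by-term strategy is the same as the paper's, and your treatment of the $R_J$-term is essentially right. The gap is in how you handle $\big(J{\mc X}_{\psi-\lambda}(u)\big)^N$. You bound it by a multiple of $|\psi-\lambda|$, then worry that this cannot be majorised by $\rho|\xi|$ because $|\xi|$ might be tiny, and you conclude that the lemma "as stated" may fail and needs an additive $Ce^{-s}$ correction. This is not so: the decomposition (\ref{equation411}) respects the $G$-action, i.e.\ $g(\ov{x},\xi)=(g\ov{x},g\xi)$, and since $G$ is abelian it preserves $\wt{X}_\upgamma$. Hence for any $\eta\in{\mf g}$ the infinitesimal action at a point $(\ov{x},\xi)$ of the tubular neighbourhood splits as ${\mc X}_\eta(\ov{x},\xi)=\big({\mc X}_\eta(\ov{x}),\ {\mc X}_\eta^N(\ov{x},\xi)\big)$ with ${\mc X}_\eta^N$ \emph{linear in} $\xi$ (it is the linear action of $\eta$ on the fibre of $\wt{N}_\upgamma$). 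The same holds for $\eta=\psi-\lambda$, and combined with $J=(J^T,J^N)+R_J$, $|R_J|\leq C_J|\xi|$, one gets
\begin{align*}
\big|\big(J{\mc X}_{\psi-\lambda}(u)\big)^N\big| \leq C\big|\psi-\lambda\big|\,|\xi| \leq C a_1 e^{-2s}|\xi|,
\end{align*}
which is $\leq\rho|\xi|$ once $s\geq S_1(\rho)$. This is exactly what the paper means by "it is linear in $\xi$"; there is no leftover inhomogeneous term and no need to weaken the statement. The same mechanism disposes of the analogous pieces in $\nabla_s R$ and $\nabla_t R$.

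A secondary point: for the quadratic term $\big(\nabla\wt{W}_h^{(\delta)}(\ov{u},\xi)\big)^N$ you argue that $|\xi(s,\cdot)|\to 0$ as $s\to\infty$ and choose $S_1$ large accordingly. That makes $S_1$ depend on the individual solution, whereas the lemma requires $S_1=S_1(\rho)$ depending only on $\rho$ (and $\wt{K}$). The uniform argument is to shrink $\varepsilon_1(\rho)$: a smaller energy density bound, fed into Lemma \ref{lemma411} with a smaller target $D$, gives $|\xi|\leq D(\rho)$ everywhere on $\Theta_+$ with $c^N(\wt{K})D(\rho)\leq\rho$, uniformly in the solution. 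The role of $S_1$ is only to kill the explicit $e^{-2s}$ factors coming from $|\psi-\lambda|$ and $|\partial_s\psi|$, and the role of $\varepsilon_1$ is to control $|\xi|$ and $|\partial_s\ov{u}|$; conflating the two loses the claimed uniformity.
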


\begin{proof}
We will estimate each term in the expression of $R(s, t)$ and all constants appeared below will depend on $\wt{K}$. 

First, by the vortex equation, there is a constant $a_1>0$ such that $| \psi - \lambda |  + |\partial_s \psi | \leq a_1 e^{-2s}$. Moreover, by Lemma \ref{lemma46}, there is a constant $a_2>0$ such that $|\partial_t \psi | \leq a_2$. Then in the expression of $R$, the contribution from $(J{\mc X}_{\psi - \lambda}(u))^N$ can be bounded in the desired way since it is linear in $\xi$. Moreover, by (\ref{equation413}), Lemma \ref{lemma46} and (\ref{equation46}), the contribution of $\big( \nabla \wt{W}_h^{(\delta)}(\ov{u}, \xi) \big)^N$ can be controlled in the desired way.

On the other hand, by (\ref{equation412}), we have
\begin{multline}\label{equation415}
\big| \big( R_J(\ov{u}, \xi) \left( \partial_t u + {\mc X}_\lambda(u)\right) \big)^N \big| \leq \big| R_J(\ov{u}, \xi) \big| \big|  \partial_t u + {\mc X}_\lambda(u) \big|\\
\leq C_J(\wt{K}) \big| \xi \big| \big( \big| \partial_t u + {\mc X}_\psi(u) \big| + \big| {\mc X}_{\psi - \lambda}(u) \big| \big) \leq  C_J(\wt{K}) \big( \varepsilon_1 + a_1 e^{-2s} \big) |\xi|;
\end{multline}
applying $\nabla_s$, we have that there is a constant $a_3>0$ such that 
\begin{multline}\label{equation416}
\big| \nabla_s \big( R_J (\ov{u}, \xi) (\partial_t u + {\mc X}_\lambda(u)) \big)^N \big| \\
 \leq a_3 \big( \big| \partial_s \ov{u} \big| \big| \xi \big| + \big| \nabla_s \xi \big| \big) \big| \big(\partial_t u + {\mc X}_\lambda(u) \big)^N \big| + a_3 \big| \xi \big| \big( \big| \nabla_s \partial_t u \big| + \big|\nabla_s {\mc X}_\lambda(u) \big| \big).
\end{multline}
Then by choosing $\varepsilon_1$ sufficiently small, $S_1$ sufficiently large, and using Lemma \ref{lemma47} to control $\nabla_t \partial_s u$, we see that for $s \geq S_1$, we have
\begin{align*}
\left| \nabla_s \left( R_J(\ov{u}, \xi) (\partial_t u + {\mc X}_\lambda(u)) \right)^N \right| \leq \rho^2 |\xi| + \rho|\nabla_s \xi|.
\end{align*}
Applying $\nabla_t$ to $R_J(\partial_t u + {\mc X}_\lambda(u))^N$ and using Lemma \ref{lemma46}, we see there are constant $a_4, a_5>0$ such that 
\begin{multline*}
\left| \nabla_t R_J(\ov{u}, \xi) (\partial_t u + {\mc X}_\lambda(u))^N \right| \\
\leq a_4 \left( |\partial_t \ov{u}| |\xi| + |\partial_t \xi|\right) \left| (\partial_t u + {\mc X}_\lambda(u))^N \right| + a_4 |\xi| \left( |\nabla_t \partial_t u | + |\nabla_t {\mc X}_\lambda(u) |\right)\\
\leq a_5 |\xi| + a_5 |\nabla_t \xi|.
\end{multline*}
So the lemma is proven.
\end{proof}

\begin{lemma}\label{lemma413}
There exist $c_2>0$ and $\varepsilon_2>0$ depending only on $\wt{K}$ that satisfy the following conditions. If $u(\Theta_+) \subset \wt{N}_{\upgamma}^{D_0} \cap \wt{K}$ and $\big\| e({\bm u}) \big\|_{L^\infty} \leq (\varepsilon_2)^2$, then 
\begin{align}\label{equation417}
\| \xi \|_{L^2(\{s\}\times S^1)} \leq c_2 e^{-{1\over 2} \tau_0 s}.
\end{align}
\end{lemma}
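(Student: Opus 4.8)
\emph{Strategy.} I would convert (\ref{equation414}) into a scalar second-order differential inequality for
\[
f(s):=\int_{S^1}\big|\xi(s,t)\big|^2\,dt
\]
and close the argument with a convexity/maximum-principle estimate. The quantity to prove is: there are $\varepsilon_2,S_1>0$ (depending only on $\wt K$ and the fixed data) such that $\|e({\bm u})\|_{L^\infty}\le\varepsilon_2^2$ implies $f''(s)\ge\tau_0^2 f(s)$ for $s\ge S_1$. Since $u(\Theta_+)\subset\wt N_\upgamma^{D_0}\cap\wt K$ gives $f(s)\le 2\pi D_0^2$, such an inequality forces $f$ to decay like $e^{-\tau_0 s}$, hence $\|\xi\|_{L^2(\{s\}\times S^1)}=f(s)^{1/2}$ like $e^{-\frac12\tau_0 s}$; in fact the natural leading constant below will be $4\tau_0^2$ rather than $\tau_0^2$, which is exactly the margin needed to swallow the error terms and is the reason the exponent in (\ref{equation417}) is $\tfrac12\tau_0$ rather than $\tau_0$.

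\emph{The linearized operator.} Write (\ref{equation414}) as $\nabla_s\xi=-A_s\xi+R$, where $A_s:=J^N(\ov u)\big(\nabla_t+d{\mc X}_\lambda^N\big)$ acts on sections over $S^1$ of $\ov u(s,\cdot)^*\wt N_\upgamma$, and $R$ is the right-hand side of (\ref{equation414}), controlled by Lemma \ref{lemma412}: given $\rho>0$ there are $\varepsilon_1(\rho),S_1(\rho)>0$ so that $\|e({\bm u})\|_{L^\infty}\le\varepsilon_1(\rho)^2$ yields $|R|\le\rho|\xi|$ and $|\nabla_s R|\le\rho^2|\xi|+\rho|\nabla_s\xi|$ for $s\ge S_1(\rho)$. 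Two facts about $A_s$ are used. (i) It is self-adjoint on $L^2(S^1)$: the connection induced on $\wt N_\upgamma$ from the K\"ahler structure is the Chern connection, so $\nabla J^N=0$; and since ${\mc X}_\lambda$ is a holomorphic Killing field, $d{\mc X}_\lambda^N$ is skew-adjoint and commutes with $J^N$, so $J^N d{\mc X}_\lambda^N$ is pointwise self-adjoint, and integrating by parts in $t$ gives $A_s^*=A_s$. (ii) It has a uniform spectral gap: along any orbit of $-{\mc X}_\lambda|_{\wt X_\upgamma}$, trivializing by the flow turns $A_s$ into the constant-coefficient operator $J_0\partial_t+J_0 d{\mc X}_\lambda^N$, whose eigenvalues are $\{k+\mu_j:k\in{\mb Z}\}$ with $\{{\bm i}\mu_j\}=\mathrm{Spec}(d{\mc X}_\lambda^N)$, so its distance to $0$ is $\tau_0$; by Proposition \ref{prop45} and Lemma \ref{lemma46} the loops $\ov u(s,\cdot)$ are $C^1$-close, for $s$ large, to such an orbit, and a perturbation argument then gives $\|A_s\xi\|_{L^2(S^1)}\ge(\tau_0-\epsilon(s))\|\xi\|_{L^2(S^1)}$ with $\epsilon(s)\to 0$.

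\emph{The inequality and conclusion.} Differentiating $f$ and substituting $\nabla_s\xi=-A_s\xi+R$ (using $A_s^*=A_s$) gives
\[
f''(s)=4\|A_s\xi\|_{L^2(S^1)}^2-2\big\langle(\nabla_s A_s)\xi,\xi\big\rangle_{L^2(S^1)}-6\big\langle A_s\xi,R\big\rangle_{L^2(S^1)}+2\big\langle\nabla_s R,\xi\big\rangle_{L^2(S^1)}+2\|R\|_{L^2(S^1)}^2.
\]
Here $\nabla_s A_s$ contains only $\nabla_s J^N$, $\nabla_s(d{\mc X}_\lambda^N)$ and the curvature term $[\nabla_s,\nabla_t]=R^{\wt N}(\partial_s\ov u,\partial_t\ov u)$, all of size $O(|\partial_s\ov u|)$ or $O(|\partial_s\ov u||\partial_t\ov u|)$; in temporal gauge $\partial_s\ov u$ is governed by $v_s\to 0$ (Proposition \ref{prop45}), so this is small for $s$ large. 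Using $|\nabla_t\xi|\le|A_s\xi|+C|\xi|$ to reabsorb $t$-derivatives, Lemma \ref{lemma412} for the $R$-terms, and Cauchy--Schwarz, each of the last four terms is bounded by $\delta_0\big(\|A_s\xi\|_{L^2(S^1)}^2+f(s)\big)$ with $\delta_0\to 0$ as $\rho\to 0$ and $s\to\infty$; hence, past a threshold, $f''(s)\ge(4-\delta_0)(\tau_0-\epsilon(s))^2 f(s)-\delta_0 f(s)\ge\tau_0^2 f(s)$. Finally, on $[S_1,\infty)$ the function $f$ is nonnegative, bounded, and convex with $f''\ge\tau_0^2 f$, which forces $f\to 0$; then $w(s):=f(s)-f(S_1)e^{-\tau_0(s-S_1)}$ vanishes at $S_1$, tends to $0$, and satisfies $w''\ge\tau_0^2 w$, so it can have no positive interior maximum and $f(s)\le f(S_1)e^{-\tau_0(s-S_1)}$ for $s\ge S_1$. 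With $f(S_1)\le 2\pi D_0^2$ and the trivial bound $f\le 2\pi D_0^2$ on $[0,S_1]$, this yields $\|\xi\|_{L^2(\{s\}\times S^1)}=f(s)^{1/2}\le c_2 e^{-\frac12\tau_0 s}$, with $c_2$ and $\varepsilon_2:=\varepsilon_1(\rho)$ depending only on $\wt K$.

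\emph{Main obstacle.} The $f''$ identity and the absorption of the error terms are routine. The delicate point is the uniform spectral-gap estimate for $A_s$: its coefficients genuinely depend on $t$ (because $\partial_t\ov u$ need not be small), so one must compare the loop operator along $\ov u(s,\cdot)$ with the constant-coefficient model along a $-{\mc X}_\lambda|_{\wt X_\upgamma}$-orbit using only the $C^1$-closeness supplied by Proposition \ref{prop45}, Lemma \ref{lemma46} and a compactness argument; it is precisely here that the identities $\nabla J^N=0$ and $[d{\mc X}_\lambda^N,J^N]=0$, and the choice of $\tau_0$ as $d\big({\bm i}{\mb Z},\mathrm{Spec}(d{\mc X}_\lambda^N)\big)$, are essential.
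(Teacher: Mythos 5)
Your proposal is correct and follows essentially the same route as the paper: both convert (\ref{equation414}) into a second-order differential inequality for $v(s)=\|\xi(s,\cdot)\|_{L^2(S^1)}^2$, use self-adjointness and coercivity (with gap $\tau_0$) of the loop operator $A_s={\mc L}(s)$, control the error terms via Lemma~\ref{lemma412}, and close with a convexity argument. The one place where you diverge is in how the coercivity is justified: the paper establishes $\|{\mc L}(s)\eta\|_{L^2}\ge\tau_0\|\eta\|_{L^2}$ directly by a Fourier expansion (implicitly treating the coefficients of ${\mc L}(s)$ as $t$-independent and then separately controlling the variation $\|{\mc L}'(s)\|\le a_6\varepsilon_2$), while you instead obtain the gap $\tau_0-\epsilon(s)$ with $\epsilon(s)\to 0$ by comparing $A_s$ to the constant-coefficient model along a $-{\mc X}_\lambda|_{\wt X_\upgamma}$-orbit, invoking the $C^1$-closeness of $\ov u(s,\cdot)$ to such an orbit (Proposition~\ref{prop45}, Lemma~\ref{lemma46}) and a perturbation argument. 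Your formulation is slightly more careful on this point, since $\ov u(s,\cdot)$ is only asymptotically a $-{\mc X}_\lambda$-orbit and the coefficients of $A_s$ do genuinely depend on $t$; the trade-off is that your lower bound on the spectral gap is only asymptotic, which you then correctly compensate by keeping the leading coefficient $4$ in the $f''$ identity (the paper keeps $\|\nabla_s\xi\|^2$ unexpanded and gets the same headroom from it). Your concluding maximum-principle argument for $w(s)=f(s)-f(S_1)e^{-\tau_0(s-S_1)}$ is equivalent to the paper's monotonicity of $e^{-\tau_0 s}(v'+\tau_0 v)$. One tiny inaccuracy in your preamble: the factor $\tfrac12$ in the exponent of (\ref{equation417}) is just the square root of $v\lesssim e^{-\tau_0 s}$, not a margin used to absorb errors.
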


\begin{proof}
Let $s \geq S_1$ where $S_1$ is the one in Lemma \ref{lemma412}. Let $\ov{u}_s: S^1 \to \wt{X}_\upgamma$ be the restriction of $\ov{u}$ to $\{s\}\times S^1$. We denote by ${\mc L}(s): L^2( \ov{u}_s^* \wt{N}_\upgamma) \to L^2( \ov{u}_s^* \wt{N}_\upgamma)$ the following self-adjoint operator
\begin{align*}
{\mc L}(s) \eta = J^N(\ov{u}_s) \left( \nabla_t \eta + {\mc X}_\lambda(\eta)\right).
\end{align*}
We claim that for all $s \geq S_1$, ${\mc L}(s)$ is coercive in the sense that 
\begin{align}\label{equation418}
\big\| {\mc L}(s) \eta \big\|_{L^2(S^1)}^2 \geq  (\tau_0)^2 \big\| \eta \big\|_{L^2(S^1)}^2.
\end{align}
Indeed, any $\eta \in L^2(S^1)$ can be written as Fourier series $\eta = \sum_{k \in {\mb Z}} \eta_k e^{{\bm i} k t}$. Then
\begin{align*} 
\big\| {\mc L}(s) \eta \big\|_{L^2} = \Big\|  \sum_{k\in {\mb Z}} e^{{\bm i} k t} ( - k \eta_k + {\bm i} {\mc X}_\lambda( \eta_k) ) \Big\|_{L^2} =  \Big( \sum_{k\in {\mb Z}} |  - k \eta_k + {\bm i} {\mc X}_\lambda( \eta_k) |^2 \Big)^{1\over 2} \geq \tau_0 \big\| \eta \big\|_{L^2}.
\end{align*}

On the other hand, since the covariant derivative on $\wt{N}_{\upgamma}$ preserves the complex structure $J^N$, we have
\begin{align*}
{\mc L}'(s) = J^N \left(  [\nabla_s, \nabla_t] + \nabla_s {\mc X}_\lambda \right)= J^N \left( {\sf R}^N( \partial_s \ov{u}, \partial_t \ov{u}) + \nabla_s {\mc X}_\lambda  \right).
\end{align*}
Here ${\sf R}^N$ is the curvature tensor in the normal bundle. Then ${\mc L}'(s)$ is a family of bounded operators of $L^2$, and there exists a constant $a_6>0$ depending on $\wt{K}$ such that 
\begin{align}\label{equation419}
\big\|{\mc L}'(s) \big\| \leq a_6 \big\| \partial_s \ov{u} \big\|_{L^\infty} \leq a_6 \varepsilon_2.
\end{align}
Here we used the fact of Lemma \ref{lemma46} that $\|du\|$ is uniformly bounded. Then applying $\nabla_s$ to (\ref{equation414}), we obtain
\begin{align*}
\nabla_s^2 \xi + {\mc L}(s) \nabla_s \xi + {\mc L}'(s) \xi = \nabla_s R(s, t).
\end{align*}
Denote $v(s) = \| \xi(s, \cdot) \|_{L^2(S^1)}^2$. We claim that there exist $\rho>0$, $S_2 \geq 0$ and $\varepsilon_2>0$ such that
\begin{align}\label{equation420}
s\geq S_2 \Longrightarrow v''(s) \geq  \left\| {\mc L}(s) \xi \right\|_{L^2(S^1)}^2.
\end{align}
Indeed, for any $\rho>0$, for $s\geq S_1$ where $S_1$ is the one in Lemma \ref{lemma412}, we have 
\begin{align*}
\begin{split}
{1\over 2} v''(s) = &\   \big\langle \nabla_s^2 \xi , \xi \big\rangle +  \big\| \nabla_s \xi \big\|^2\\
                  = &\  \big\langle \nabla_s R - {\mc L} (s) \nabla_s \xi - {\mc L}'(s) \xi , \xi \big\rangle + \big\| \nabla_s \xi \big\|^2\\
									= &\  \big\langle \nabla_s R - {\mc L}'(s) \xi + {\mc L}(s) ( {\mc L}(s) \xi - R), \xi \big\rangle + \big\| \nabla_s \xi \big\|^2\\
									= &\ \big\| {\mc L}(s) \xi \big\|^2 + \big\langle -R, {\mc L}(s) \xi \big\rangle +  \big\langle  \nabla_s R - {\mc L}'(s) \xi, \xi \big\rangle + \big\| \nabla_s \xi \big\|^2 \\
									\geq &\  \big\| {\mc L}(s) \xi \big\|^2 - \big\| R \big\| \big\| {\mc L}(s) \xi \big\| - \big\| \nabla_s R \big\| \big\| \xi \big\| - \big\| {\mc L}'(s) \xi \big\| \big\| \xi\big\| + \big\| \nabla_s \xi \big\|^2 \\
									\geq &\  \big\| {\mc L}(s) \xi \big\|^2 - {1\over 4} \big\|{\mc L}(s) \xi \big\|^2 - 2 \rho^2 \big\| \xi \big\|^2 - \rho \big\| \xi \big\| \big\| \nabla_s \xi \big\| - a_6 \varepsilon_2 \big\| \xi\big\|^2 + \big\| \nabla_s \xi \big\|^2 \\
									\geq &\ {3\over 4} \big\| {\mc L}(s) \xi \big\|^2  -  { 9 \over 4} \rho^2 \big\| \xi \big\|^2  - a_6 \varepsilon_2 \big\| \xi \big\|^2.
									\end{split}
\end{align*}
Here norms and inner products are the ones in the $L^2$ space, and we used (\ref{equation419}) and the estimates of Lemma \ref{lemma412}. We choose $\rho$, $\varepsilon_2$, $S_2$ so that 
\begin{align*}
{9 \over 4} \rho^2 \leq {1 \over 8} (\tau_0)^2,\ \varepsilon_2 \leq \min \big\{ \varepsilon_1(\rho), (\tau_0)^2/ 8 a_1 \big\},\ S_2 \geq S_1(\rho).
\end{align*}
Then for $s \geq S_2$, (\ref{equation420}) holds, and by (\ref{equation418}), $v''(s) \geq (\tau_0)^2 v(s)$. Thus the function 
\begin{align*}
 e^{-\tau_0 s } \big( v'(s) + \tau_0 v(s) \big) 
\end{align*}
is non-decreasing on $[S_2, +\infty)$. Since $\displaystyle \lim_{s \to \infty} v(s) = 0$, we see that for $s\geq S_2$, 
\begin{align*}
v'(s) + \tau_0  v(s) \leq 0 \Longleftrightarrow {d\over ds}  \big( e^{\tau_0 s} v(s) \big) \leq 0.
\end{align*}
Therefore $ v(s) \leq e^{- \tau_0 s} \big( v(S_2) e^{ \tau_0 S_2} \big)$. Moreover, since $v(s)$ is uniformly bounded for all $s \geq 0$, there is $c_2>0$ such that (\ref{equation417}) holds.
\end{proof}

Then in the above situation, there is $c_3 = c_3(\wt{K}) > 0$ such that
\begin{align}\label{equation421}
\| \xi \|_{L^2([s-1, s+1]\times S^1)} \leq c_3 e^{-{1\over 2} \tau_0 s}
\end{align}
To derive pointwise estimate, we apply $\nabla_s - J^N \nabla_t$ to (\ref{equation414}). Then we obtain
\begin{align}\label{equation422}
\Delta \xi = (\nabla_s - J^N \nabla_t )(\nabla_s + J^N \nabla_t )\xi =  \left( \nabla_s - J^N \nabla_t \right) \left( R - J {\mc X}_\lambda(\xi) \right).
\end{align}
Choose $z_0 = (s_0, t_0) \in [1, +\infty) \times S^1$. Then by the uniform bound on derivatives of $u$ and Lemma \ref{lemma412}, we see there is a constant $c_4 = c_4(\wt{K}) >0$ such that  
\begin{align}
{1\over 2} \Delta |\xi|^2  \geq  \langle \Delta \xi , \xi \rangle \geq - c_4 |\xi|^2 \geq - {1\over 2} \Big( {\pi \over 16 c_3^2} e^{\tau_0 s_0}|\xi|^4 +  {16 c_4^2 c_3^2 \over \pi} e^{- \tau_0 s_0 }  \Big).
\end{align}
Denote
\begin{align*}
 A = {16 c_4^2 c_3^2 \over \pi} e^{- \tau_0 s_0 },\ B = {\pi \over 16 c_3^2} e^{ \tau_0 s_0}.
\end{align*}
By (\ref{equation421}), $\int_{B_r(z_0 )} |\xi|^2 \leq \pi/ 16 B$. Then by the mean value estimate (Lemma \ref{lemmaa5}) for the differential inequality $\Delta u \geq - A - Bu^2$, for $r = 1$, we have
\begin{align}\label{equation424}
|\xi(z_0 )|^2 \leq { 8 \over \pi} \int_{B_r(z_0 )} |\xi|^2 + {A\over 4} = \Big( {8 c_3^2 \over \pi}  + {4 c_4^2 c_3^2 \over \pi} \Big)  e^{ -  \tau_0 s_0 }=: c_5 e^{- \tau_0 s}.
\end{align}
For $l \geq 1$ the estimate (\ref{equation49}) follows from elliptic estimate.

\subsection{Proof of Proposition \ref{prop49} and \ref{prop410}}\label{subsection43}

In this subsection we use the symbols $c_1, c_2, \ldots$ abusively, which could be different from the ones in the last subsection. We also use $\nabla$ to denote the Levi-Civita connection on $\wt{X}_\upgamma$.

Now suppose ${\bm u} = (u, h)$ is a temporal gauge solution to a $\lambda$-cylindrical model, satisfying (\ref{equation48}). Then $u(\Theta_+) \subset \wt{N}_\upgamma^{D_0} \cap \wt{K}$ and $u$ can be written as $u = \exp_{\ov{u}}\xi$. Then with respect to the decomposition (\ref{equation411}), the tangential direction of $ \nabla \wt{W}_\lambda^{(\delta)}(u)$ is 
\begin{align*}
\big( \nabla \wt{W}_\lambda^{(\delta)} \big)^T = \nabla \wt{W}_\lambda^{(\delta)} (\ov{u}) + R_\lambda^{(\delta), T} (\ov{u}, \xi),
\end{align*}
where the remainder $R_\lambda^{(\delta), T}( \ov{u}, \xi)$ has norm less than a constant multiple of $|\xi |$. Then if we project the first equation of (\ref{equation33}) to the tangential direction, we have
\begin{multline}\label{equation425}
\partial_s \ov{u}  +  J^T \big( \partial_t \ov{u} + {\mc X}_{\lambda}( \ov{u} )  \big) + 2 \nabla \wt{W}_\lambda^{(\delta)} (\ov{u}) \\
 = - \big( J {\mc X}_{\psi - \lambda}(u) \big)^T - \big( R_J ( \partial_t u + {\mc X}_\psi(u) ) \big)^T - 2 \big( R_h^{(\delta)} (u)\big)^T  - 2 R_\lambda^{(\delta), T} (\ov{u}, \xi).
\end{multline}
Denote the right hand side by $R_0^T$. 

\begin{lemma}\label{lemma414}
There is a constant $c_1>0$ depending only on $\wt{K}$ such that for 
\begin{align*}
\big| R_0^T ( \ov{u}, \xi) \big| + \big| \nabla_s R_0^T  (\ov{u}, \xi) \big| + \big| \nabla_t R_0^T (\ov{u}, \xi) \big|  \leq c_1  e^{-{1\over 2} \tau_0 s}.
\end{align*}
\end{lemma}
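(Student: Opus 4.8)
The plan is to estimate each of the four terms comprising $R_0^T(\ov u, \xi)$ in \eqref{equation425} separately, using the exponential decay already established for $\xi$ (Proposition \ref{prop48}, in particular \eqref{equation49}), the uniform derivative bounds of Lemma \ref{lemma46}, the decay \eqref{equation47} of $\partial_s\psi$, and the exponential decay \eqref{equation46} of $R_h^{(\delta)}$. First I would bound $\big(J{\mc X}_{\psi-\lambda}(u)\big)^T$: since $(u,h)$ is in temporal gauge, $\partial_s\psi = -\sigma\mu^*(u)$, so $|\psi-\lambda|\le \int_s^\infty |\sigma\mu^*(u)| \le c\,e^{-s}$ by \eqref{equation32}, and thus this term and its $\nabla_s,\nabla_t$ derivatives (which bring in $\partial_s\psi$, $\partial_t\psi$, both controlled by Lemma \ref{lemma46} and \eqref{equation47}) are $O(e^{-s})$, hence $O(e^{-\frac12\tau_0 s})$ since $\tau_0\in(0,1)$.

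Next I would treat the curvature-of-$J$ remainder $\big(R_J(\partial_t u + {\mc X}_\psi(u))\big)^T$. By \eqref{equation412}, $|R_J(\ov u,\xi)|\le C_J(\wt K)|\xi|$, and $|\partial_t u + {\mc X}_\psi(u)| = |v_t|$ is bounded by $\|e({\bm u})\|_{L^\infty}^{1/2}$, so this term is $O(|\xi|) = O(e^{-\frac12\tau_0 s})$ by \eqref{equation49}. For $\nabla_s$ and $\nabla_t$ of this term, one differentiates: the derivatives of $R_J$ in the $(\ov x,\xi)$-directions are bounded on $\wt K$, the derivatives of $v_t$ are bounded by Lemma \ref{lemma46} (using \eqref{equation47} to control $\nabla_s\partial_t u$ via $\partial_s\psi$), and $\nabla_s\xi,\nabla_t\xi$ are themselves $O(e^{-\frac12\tau_0 s})$ by \eqref{equation49}; so the whole thing stays $O(e^{-\frac12\tau_0 s})$. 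The term $2\big(R_h^{(\delta)}(u)\big)^T$ is even easier: by \eqref{equation46} it and its derivatives decay like $e^{-2s}$ (times powers of $|\nabla h|$, which is bounded by Lemma \ref{lemma46}), hence are $O(e^{-\frac12\tau_0 s})$. Finally, for $2R_\lambda^{(\delta),T}(\ov u,\xi)$, one uses that this remainder (the difference between the tangential part of $\nabla\wt W_\lambda^{(\delta)}$ along $u$ and its restriction to $\wt X_\upgamma$) vanishes to first order in $\xi$ in the sense that $|R_\lambda^{(\delta),T}(\ov u,\xi)|\le c|\xi|$, with the same bound for its covariant derivatives after accounting for $\nabla_s\xi,\nabla_t\xi$ and the bounded derivatives of $\wt W^{(\delta)}$ on $\wt K$; this again gives $O(e^{-\frac12\tau_0 s})$ by \eqref{equation49}.

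Assembling the four estimates, taking $c_1$ to be a constant depending only on $\wt K$ that dominates all the individual constants, gives the claim. The main obstacle I anticipate is the bookkeeping in the derivative estimates for the $R_J$ term: one must make sure that $\nabla_s\partial_t u$ and $\nabla_t\partial_s u$ are controlled, which is exactly what Lemma \ref{lemma47} is for (it bounds $\|\partial_s u\|_{C^l}$ in terms of $e^{-2s}$ plus a $C^0$ norm of $\partial_s u$ that is itself small by the energy bound), so invoking Lemma \ref{lemma46} and Lemma \ref{lemma47} in the right places is the only subtle point; everything else is a routine application of the already-established decay \eqref{equation49} together with \eqref{equation46}.
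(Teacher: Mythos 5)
Your proposal is correct and follows essentially the same route as the paper's: the paper's proof of Lemma \ref{lemma414} is a one-liner pointing out that $(J{\mc X}_{\psi-\lambda})^T$ and $(R_h^{(\delta)}(u))^T$ decay like $e^{-2s}$ while the $R_J$ and $R_\lambda^{(\delta),T}$ remainders are controlled (along with their derivatives) by $|\xi|$, which is $O(e^{-\frac12\tau_0 s})$ by Proposition \ref{prop48}. You have simply filled in the bookkeeping — including the correct appeal to Lemmas \ref{lemma46} and \ref{lemma47} to handle $\nabla_s\partial_t u$ in the derivative estimates — that the paper leaves implicit.
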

\begin{proof}
On the right hand side of (\ref{equation425}), $(J{\mc X}_{\psi- \lambda})^T$ and $\big( R_h^{(\delta)} (u)\big)^T$ decay like $e^{-2s}$ (with all derivatives) which is faster than $e^{-{1\over 2} \tau_0 s}$. The other two terms together with their derivatives can be controlled by $|\xi|$, which decays like $e^{-{1\over 2} \tau_0 s}$ (with all derivatives) by Proposition \ref{prop48}.
\end{proof}

Since the image of $\ov{u}$ is contained in $\wt{X}_\upgamma$, the map $\ov{v}:= e^{\lambda t} \ov{u}(s, t)$ is still a smooth map from $\Theta_+$ to $\wt{X}_\upgamma$, which satisfies
\begin{align}\label{equation426}
\partial_s \ov{v} + J^T(\ov{v}) \partial_t \ov{v} + \nabla \wt{W}^{(\delta)} (\ov{v})  = e^{-\lambda t} R_0^T. 
\end{align}

\subsubsection*{The broad case}

Now we suppose that $\upgamma = \exp(2\pi \lambda)$ is broad and $\delta \geq \ud\delta_0>0$. Then there exists $D_1 = D_1 (\ud\delta)>0$ such that for all $\delta \in [\ud\delta, 1]$ and any two distinct critical points $\upkappa, \upkappa'$ of $\ov{W}^{(\delta)}: \wt{X}_\upgamma \to {\mb C}$ as distance bigger than $2D_1$. Therefore we can take $\epsilon_2$ small enough so that for each such solution, there is a unique critical point $\upkappa$ of $\wt{W}^{(\delta)}$ such that $\ov{u}(\Theta_+)$ is contained in the $D_1$-neighborhood of $\upkappa$. We also assume that $D_1$ is smaller than the injectivity radius of $\wt{X}_\upgamma \cap \wt{K}$. Then we can write $\ov{v} = \exp_{\upkappa} \eta$ for $\eta \in \Gamma ( \Theta_+, T_\upkappa \wt{X}_\upgamma )$. 

The derivative of $\exp_{\upkappa}$ induces a smooth family of isomorphisms $E_2(\eta): T_{\upkappa} \wt{X}_\upgamma \to T_{\exp_{\upkappa} \eta} \wt{X}_\upgamma$. Then for the tangential part $J^T$ of the complex structure $J$, we have
\begin{align*}
J^T(\exp_{\upkappa} \eta) E_2(\eta) =  E_2(\eta) J^T( \upkappa) + B_J^T(\eta).
\end{align*}
$B_J^T$ depends smoothly on $\eta$ and there is a constant $c_{\upkappa}>0$ such that $| B_J^T (\eta) | \leq c_\upkappa |\eta|$. On the other hand, let ${\sf A}_\upkappa^{(\delta)}: T_\upkappa \wt{X}_\upgamma \to T_\upkappa \wt{X}_\upgamma$ be the Hessian of $\wt{W}^{(\delta)} |_{\wt{X}_\upgamma}$ at $\upkappa$. We can write
\begin{align*}
\nabla \wt{W}^{(\delta)} ( \exp_\upkappa \eta) =  E_2(\eta) ( {\sf A}_{\upkappa}^{(\delta)} \eta ) + R_1^T (\eta).
\end{align*}
$R_1^T$ depends smoothly on $\eta$ and we may assume $| R_1^T (\eta) | \leq c_\upkappa |\eta|^2$ for the same $c_\upkappa$. This $c_\upkappa$ can be taken uniformly for all $\delta \in [\ud\delta, 1]$.

Therefore, (\ref{equation426}) can be written as
\begin{align}\label{equation427}
E_2 \left( \partial_s \eta + J^T(\upkappa)\left(  \partial_t \eta\right) + {\sf A}_{\upkappa} \eta \right)= e^{-\lambda t} R_0^T + R_1^T.
\end{align}

The following lemma can be proved in a similar way as proving Lemma \ref{lemma412}. We leave the proof to the reader.
\begin{lemma}\label{lemma415}
There exists $c_3 = c_3 ( \wt{K}, \ud\delta, \upgamma) >0$ and for any $\rho>0$, there are constants $\varepsilon_3 = \varepsilon_3(\rho)>0$ and $S_3 = S_3(\rho)>0$ (which also depend on $\wt{K}, \ud\delta, \upgamma$) satisfying the following condition. If $\big\| e({\bm u}) \big\|_{L^\infty(\Theta_+)} \leq (\varepsilon_3)^2$, then
\begin{align*}
\big| R_1^T(s, t) \big| \leq \rho \big| \eta \big| ,\ \big| \nabla_s R_1^T (s, t) \big| \leq \rho^2 \big| \eta \big| + \rho \big| \nabla_s \eta \big|;
\end{align*}
\begin{align*}
\big| \nabla_t R_1^T \big| \leq c_3 \big(  |\eta |  + |\nabla_t \eta | \big).
\end{align*}
\end{lemma}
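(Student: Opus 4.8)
\textbf{Proof proposal for Lemma \ref{lemma415}.}

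The plan is to mimic the structure of the proof of Lemma \ref{lemma412}, replacing the role played there by the normal component $\xi$ (which satisfies equation \eqref{equation414}) with the tangential fluctuation $\eta$ defined by $\ov v = \exp_\upkappa \eta$, and replacing the decay estimate \eqref{equation49} of Proposition \ref{prop48} with the decay of $|\xi|$ that one already has. The remainder $R_1^T(\eta)$ is explicitly of the form $\nabla \wt W^{(\delta)}(\exp_\upkappa \eta) - E_2(\eta)({\sf A}_\upkappa^{(\delta)}\eta)$, which is quadratic in $\eta$ near $\eta = 0$ with uniform constants over $\delta \in [\ud\delta, 1]$ (this uniformity is exactly where Hypothesis \ref{hyp28}, in particular ({\bf P3}) and ({\bf P4}), enters, since it forces the critical points of $\wt W^{(\delta)}|_{\wt X_\upgamma}$ and the size of their Hessians to lie in a compact family). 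So there is $c_\upkappa = c_\upkappa(\wt K,\ud\delta,\upgamma)$ with $|R_1^T(\eta)| \le c_\upkappa |\eta|^2$.

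First I would fix $\rho > 0$. Since $\big\| e({\bm u})\big\|_{L^\infty} \le (\varepsilon_3)^2$ is small, Proposition \ref{prop45} gives decay of the energy density, and then Proposition \ref{prop48} (applicable after passing to temporal gauge, which is already assumed) gives $\|\xi\|_{C^l([s,\infty)\times S^1)} \le C^l(\wt K) e^{-\frac12\tau_0 s}$. In particular there is $S_3 = S_3(\rho)$ such that $c_\upkappa |\eta(s,t)| \le \rho$ for $s \ge S_3$ (one bounds $|\eta|$ by a constant depending on $\wt K$ and $\ud\delta$, then shrinks $\varepsilon_3$ so that $\ov u$ stays in a $D_1$-ball about $\upkappa$ and $|\eta|$ is as small as needed; note that $|\eta|$ itself need not decay, but $D_1 = D_1(\ud\delta)$ can be taken small). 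Then for $s \ge S_3$,
\[
|R_1^T(s,t)| \le c_\upkappa |\eta|^2 \le \rho |\eta|,
\]
which is the first claimed bound.

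Next I would differentiate the identity $R_1^T(\eta) = \nabla \wt W^{(\delta)}(\exp_\upkappa \eta) - E_2(\eta)({\sf A}_\upkappa^{(\delta)}\eta)$ covariantly in $s$. Using the chain rule, $\nabla_s R_1^T$ is a sum of terms each of which is either (i) quadratic: a product of a bounded tensor (a derivative of $\exp_\upkappa$, of $\nabla\wt W^{(\delta)}$, or of $E_2$, all bounded by Lemma \ref{lemma46}-type arguments and ({\bf P3})) with $|\eta|$ and $|\nabla_s\eta|$ — giving a contribution $\lesssim |\eta|\,|\nabla_s\eta|$ — or (ii) a term that is linear in $\nabla_s\eta$ but carries an extra factor $|\eta|$ from the Taylor remainder structure, hence $\lesssim |\eta|\,|\nabla_s\eta|$ as well, or (iii) a purely $|\eta|$-quadratic term $\lesssim |\eta|^2\,|\partial_s\ov u|$. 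Since $|\partial_s\ov u|$ and $|\eta|$ are both $\le$ (const)$\cdot\max\{\varepsilon_3, e^{-\frac12\tau_0 s}\}$ for $s$ large (again by Proposition \ref{prop48} and the vortex equation controlling $\psi - \lambda$), shrinking $\varepsilon_3$ and enlarging $S_3$ yields
\[
|\nabla_s R_1^T(s,t)| \le \rho^2 |\eta| + \rho |\nabla_s\eta|, \qquad s \ge S_3.
\]
For the $\nabla_t$-derivative we do not ask for smallness: differentiating in $t$ and using the uniform $C^1$-bounds of Lemma \ref{lemma46} on $u$ (hence on $\ov u$ and $\eta$) together with $|R_1^T| \lesssim |\eta|^2$, $|\nabla_t R_1^T| \lesssim |\eta|(|\eta| + |\nabla_t\eta|) \le c_3(|\eta| + |\nabla_t\eta|)$ for a constant $c_3 = c_3(\wt K,\ud\delta,\upgamma)$, which is the last claimed bound.

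The only mild subtlety — the analogue of what makes Lemma \ref{lemma412} nontrivial — is bookkeeping: one must be careful that every ``bounded tensor'' factor appearing after differentiation is genuinely bounded uniformly in $\delta \in [\ud\delta,1]$ and in the solution, and that no term survives which is merely $O(|\nabla_s\eta|)$ without an accompanying small factor. This is exactly parallel to the treatment of $R(s,t)$ in the proof of Lemma \ref{lemma412} (compare \eqref{equation415}, \eqref{equation416}), and since the paper explicitly invites the reader to reproduce that argument, I would present the estimates in the same telegraphic style, citing Lemma \ref{lemma46}, Lemma \ref{lemma47}, Proposition \ref{prop48}, and Hypothesis \ref{hyp28}({\bf P3}) at the appropriate points. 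I expect the main (though still routine) obstacle to be organizing the $\nabla_s$-differentiation so that every term is visibly of the form (small)$\cdot|\eta|$ or (small)$\cdot|\nabla_s\eta|$; once the quadratic-in-$\eta$ nature of $R_1^T$ is used, this is immediate.
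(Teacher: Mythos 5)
The paper leaves this proof to the reader, saying only that it parallels Lemma \ref{lemma412}, and your proposal is a correct realization of exactly that intended argument: the quadratic vanishing of $R_1^T$ at $\eta=0$ (with constants uniform in $\delta\in[\ud\delta,1]$) together with the smallness of $|\eta|$ forced by shrinking $\varepsilon_3$ yields all three bounds. The one bookkeeping point worth making explicit is that the $R_1^T$ actually appearing in (\ref{equation427}) must also absorb the term $B_J^T(\eta)\partial_t\eta$ arising from commuting $J^T(\ov{v})$ past $E_2(\eta)$, but this term is controlled by precisely the estimates you already cite (smallness of $|\partial_t\eta|$ from the energy density bound, and Lemma \ref{lemma47} for the mixed derivative), in exact parallel with (\ref{equation415})--(\ref{equation416}).
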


Let ${\mc L}_\upkappa^{(\delta)}: L^2(S^1, T_\upkappa \wt{X}_\upgamma) \to L^2(S^1, T_\upkappa \wt{X}_\upgamma)$ be the operator
\begin{align*}
{\mc L}_\upkappa(\eta) = J^T(\upkappa) \partial_t \eta + {\sf A}_\upkappa^{(\delta)} \eta.
\end{align*}
In the same way as proving (\ref{equation418}) we can show that it is self-adjoint and coercive, i.e.,
\begin{align}\label{equation428}
\big\| {\mc L}_\upkappa^{(\delta)} (\eta ) \big\|^2 \geq (\tau_1)^2 \big\| \eta \big\|^2.
\end{align}

Denoting $R^T = E_2( \upkappa, \eta )^{-1} \left( e^{- \lambda t} R_0^T + R_1^T \right)$, (\ref{equation427}) implies that
\begin{align*}
\partial_s^2 \eta = \partial_s \big( R^T - {\mc L}_\upkappa^{(\delta)} (\eta) \big) = \partial_s R^T - {\mc L}_\upkappa^{(\delta)} ( R^T) + \big( {\mc L}_\upkappa^{(\delta)} \big)^2 (\eta).
\end{align*}
Then we denote $v(s) = \big\| \eta(s, \cdot) \big\|_{L^2(S^1)}^2$. Then by Lemma \ref{lemma414} and Lemma \ref{lemma415}, we have
\begin{multline*}
{1\over 2} v''(s) = \big\langle \partial_s^2 \eta, \eta \big\rangle + \big\| \partial_s \eta \big\|^2  = \big\langle \partial_s R^T - {\mc L}_\upkappa^{(\delta)} (R^T) +\big( {\mc L}_\upkappa^{(\delta)} \big)^2 (\eta) , \eta \big\rangle + \big\| \partial_s \eta \big\|^2 \\
\geq - \rho^2 \big\| \eta \big\|^2 - c_1 e^{-{1\over 2} \tau_0 s} \big\| \eta \big\| - \rho \big\| \partial_s \eta \big\| \big\| \eta \big\| - \big\|{\mc L}_\upkappa^{(\delta)} \eta \big\| \big(  \rho \big\| \eta \big\| + c_1 e^{-{1\over 2} \tau_0 s} \big)  + \big\|{\mc L}_\upkappa^{(\delta)} \eta \big\|^2 + \big\|\partial_s \eta \big\|^2 \\
\geq {5 \over 8} \big\| {\mc L}_\upkappa^{(\delta)} \eta \big\|^2 - {9 \over 4} \rho^2  \big\| \eta \big\|^2 + 2 \big( c_1 e^{-{1\over 2} \tau_0 s} \big)^2  - c_1 e^{-{1\over 2} \tau_0 s} \big\| \eta \big\|.
\end{multline*}
Then by (\ref{equation428}) and Lemma \ref{lemma415}, there are $\rho_4 >0$, $S_4 \geq S_3(\rho_4) > 0$ and $c_4 >0$ such that if $\big\| e({\bm u}) \big\|_{L^\infty(\Theta_+)} \leq (\varepsilon_3(\rho_4 ))^2$, then for $s \geq S_4$, we have
\begin{align}\label{equation429}
v''(s) \geq  \big\|{\mc L}_\upkappa^{(\delta)}  \eta \big\|^2 - c_4 e^{- \tau_0 s}\geq (\tau_1)^2 \big\| \eta \big\|^2 - c_4 e^{- \tau_0 s}.
\end{align}
This implies that the function
\begin{align*}
e^{- \tau_1 s} \Big( v'(s) + \tau_1 v(s) - {  c_4 \over \tau_0 + \tau_1}  e^{- \tau_0 s} \Big)
\end{align*}
is non-decreasing on $[S_4, +\infty)$. Then by the fact that $\lim_{s \to \infty} v(s) = 0$, we see for $s \geq S_4$, 
\begin{align*}
v'(s) + \tau_1 v(s) - { c_4 \over \tau_0 +\tau_1} e^{-\tau_0 s}\leq 0.
\end{align*}
We can assume that $\tau_0 \neq \tau_1$; otherwise we can slightly improve (\ref{equation418}) or (\ref{equation428}) so that the $\tau_0$ and $\tau_1$ appeared there are different. Therefore
\begin{align*}
{d\over ds} \Big( e^{\tau_1 s} \big( v(s) +    {  c_4 \over (\tau_0 - \tau_1 )(\tau_0 + \tau_1)} e^{ - \tau_0 s} \big) \Big) \leq 0.
\end{align*}
Therefore we see there is a constant $c_5 >0$ such that for $s \geq S_4$, 
\begin{multline}\label{equation430}
v(s) \leq  { c_4 \over (\tau_1 - \tau_0) (\tau_1 + \tau_0)} e^{-\tau_0 s} + e^{- \tau_1 s} \Big( v(S_4) + {c_4 \over ( \tau_1 - \tau_0 ) (\tau_1 + \tau_0 )} e^{-\tau_0  S_4 } \Big) \\
\leq c_5 e^{- \min\{\tau_0, \tau_1\} s}.
\end{multline}

To derive pointwise estimate we can use the similar method as in did in (\ref{equation422})--(\ref{equation424}). Indeed, apply $\partial_s - J^T(\upkappa) \partial_t$ to (\ref{equation427}), we obtain
\begin{align*}
\Delta \eta = (\partial_s - J^T(\upkappa)\partial_t ) R^T - ( \partial_s - J^T(\upkappa)\partial_t ) {\sf A}_{\upkappa}^{(\delta)} \eta.
\end{align*}
Therefore by Lemma \ref{lemma414} and Lemma \ref{lemma415}, there is a constant $C$ such that
\begin{align*}
{1\over 2} \Delta |\eta|^2 = \langle \Delta \eta, \eta \rangle + |  d\eta |^2 \geq -C ( e^{-\tau_0 s} + |\eta|^2 ).
\end{align*}
Denoting $\tau = \min\{ \tau_0, \tau_1\}$, then there is another constant $C'>0$ such that 
\begin{align*}
\Delta |\eta|^2 \geq - C' ( e^{-\tau s} + e^{\tau s} |\eta|^4 ).
\end{align*}
This allows us to derive a similar mean value estimate as did in (\ref{equation422})--(\ref{equation424}) and therefore Proposition \ref{prop49} is proven.

\subsubsection*{The narrow case}

Now we assume $\upgamma = \exp(2\pi \lambda)$ is narrow. 

For the compact set $\wt{X}_\upgamma \cap \wt{K}$, there is a constant $D_2 > 0$ satisfying the following condition. For any smooth loop $x: S^1 \to \wt{X}_{\upgamma}\cap \wt{K}$, if ${\rm diam} (  x(S^1) ) \leq D_2$, then we can define the {\bf center of mass}, which is a unique point $\alpha \in \wt{X}_{\upgamma}$ such that there is a function $\eta: S^1 \to T_\alpha \wt{X}_\upgamma$ such that 
\begin{align*}
x(t) = \exp_\alpha \eta(t), \ \int_{S^1} \eta(t) dt = 0.
\end{align*}
Therefore, it is easy to see that there is a constant $\varepsilon_4>0$ such that if $\big\| e({\bm u}) \big\|_{L^\infty} \leq (\varepsilon_4)^2$, then $| \partial_t \ov{u} + {\mc X}_\lambda(\ov{u}) |$ is small enough and hence the diameter of the loop $\ov{v}(s, \cdot)$ is smaller than $D_2$. Then the {\bf center of mass} of $\ov{v}(s, \cdot)$ is smooth curve $\alpha: [0, +\infty) \to  \wt{X}_\upgamma$. We regard $\alpha$ as a map $\alpha: \Theta_+ \to \wt{X}_\upgamma$ which is independent of the $t$-variable. Then there is a section $\eta \in \Gamma ( \Theta_+, \alpha^* T \wt{X}_\upgamma )$ so that
\begin{align*}
\ov{v}(s, t) = \exp_{\alpha(s)} \eta(s, t),\ \int_{S^1} \eta(s, t) dt = 0.
\end{align*}

Let $E_1, E_2$ be the components of the derivative of the exponential map of $\wt{X}_{\upgamma}$, i.e., 
\begin{align*}
d \exp_x V = E_1(x, V) dx + E_2(x, V) \nabla V,\ x \in \wt{X}_\upgamma,\ V \in T_x \wt{X}_\upgamma.
\end{align*}
Then using the center of mass, we rewrite (\ref{equation426}) as 
\begin{align}\label{equation431}
E_1( \alpha, \eta) \alpha'(s) + E_2( \alpha, \eta) \nabla_s \eta + J^T(\ov{v}) \left( E_2(\alpha, \eta) \partial_t \eta  \right) = e^{-\lambda t} R_0^T.
\end{align}
Moreover, there exists a linear map $R_J^T(s, t) : T_{\alpha(s)}  \wt{X}_\upgamma \to T_{\ov{v}(s, t)} \wt{X}_\upgamma$ such that
\begin{align}\label{equation432}
E_2(\alpha, \eta)^{-1} J^T( \ov{v}) E_2 (\alpha, \eta) - J^T (\alpha) = R_J^T,
\end{align}
We denote $R_1^T = R_J^T (\partial_t \eta)$, $R_2^T = E_2^{-1} E_1 \alpha'(s) - \alpha'(s)$ and $R^T = R_1^T + R_2^T$. Then (\ref{equation431}) can be rewritten as 
\begin{align}\label{equation433}
\alpha'(s) +  \nabla_s \eta + J^T(\alpha) \partial_t \eta = E_2^{-1} \big( e^{-\lambda t} R_0^T \big) + R^T.
\end{align}

\begin{lemma}\label{lemma416}
There exists $c_6 >0$ and for any $\rho>0$, there are constants $\varepsilon_6 = \varepsilon_6(\rho)>0$ and  $S_6 = S_6 (\rho)>0$ satisfying the following conditions. If $\big\| e({\bm u}) \big\|_{L^\infty}  \leq (\varepsilon_6 )^2$, then for $s \geq S_6$, we have
\begin{align*}
\big| R^T(s, t) \big| \leq \rho \big| \eta \big| ,\ \big| \nabla_s R^T (s, t) \big| \leq \rho^2 \big| \eta \big| + \rho \big| \nabla_s \eta \big|;
\end{align*}
\begin{align*}
\big| \nabla_t R^T \big| \leq c_6 \big(  \big| \eta \big|  + \big| \nabla_t \eta \big| \big).
\end{align*}
\end{lemma}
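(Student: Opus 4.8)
The plan is to estimate each of the two pieces $R_1^T = R_J^T(\partial_t\eta)$ and $R_2^T = E_2^{-1}E_1\alpha'(s) - \alpha'(s)$ separately, exploiting that the center-of-mass construction forces the $S^1$-average of $\eta$ to vanish, which converts a pointwise smallness of $\partial_t\eta$ (coming from the energy bound) into pointwise smallness of $\eta$ itself.

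\smallskip

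First I would record the basic smallness inputs. By Proposition \ref{prop45} and Lemma \ref{lemma46}, on any temporal gauge solution with $\|e({\bm u})\|_{L^\infty}\leq(\varepsilon_6)^2$ small and $u(\Theta_+)\subset\wt{K}$, all derivatives of $u$ (hence of $\ov{v}$, hence of $\eta$) are uniformly bounded by constants depending only on $\wt{K}$, and moreover $|\partial_t u + {\mc X}_\psi(u)|\leq\varepsilon_6$, $|\psi-\lambda|\leq c\,\varepsilon_6 e^{-s}$, so that $|\partial_t\eta|\leq c\,\varepsilon_6$ after passing through $\ov{v}=e^{\lambda t}\ov u$. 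Since $\int_{S^1}\eta(s,t)\,dt = 0$, the Poincar\'e/Wirtinger inequality on $S^1$ gives $\|\eta(s,\cdot)\|_{L^\infty(S^1)}\leq c\|\partial_t\eta(s,\cdot)\|_{L^2(S^1)}\leq c\,\varepsilon_6$; combined with the uniform $C^1$-bound this yields $|\eta(s,t)|\leq c\,\varepsilon_6$ pointwise for $s$ large.

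\smallskip

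Next I would bound $R_1^T = R_J^T(\partial_t\eta)$. From \eqref{equation432}, $R_J^T$ is a smooth bundle map vanishing when $\eta = 0$, so $|R_J^T(s,t)|\leq c\,|\eta(s,t)|$, and likewise its covariant $s$- and $t$-derivatives are controlled by $|\eta|$, $|\nabla_s\eta|$, $|\nabla_t\eta|$ together with the uniformly bounded derivatives of $\ov v$ and $\alpha$. Hence $|R_1^T|\leq c\,|\eta|\,|\partial_t\eta|\leq c\,\varepsilon_6|\eta|$ (absorbing $\varepsilon_6$ to make the constant $\rho$), $|\nabla_s R_1^T|\leq c(|\nabla_s\eta|\,|\partial_t\eta| + |\eta|\,|\nabla_s\partial_t\eta|)\leq\rho|\nabla_s\eta| + \rho^2|\eta|$ after using the uniform bound on $\nabla_s\partial_t\eta$ and the smallness of $|\partial_t\eta|$, and $|\nabla_t R_1^T|\leq c(|\eta| + |\nabla_t\eta|)$ directly. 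For $R_2^T = (E_2^{-1}E_1 - \mathrm{Id})\alpha'(s)$ the factor $E_2^{-1}E_1 - \mathrm{Id}$ vanishes when $\eta=0$, so $|R_2^T|\leq c\,|\eta|\,|\alpha'(s)|$; the key point is that $\alpha'(s)$ is itself small — indeed $|\alpha'(s)|\leq c\|\partial_s\ov v(s,\cdot)\|_{L^2(S^1)}\leq c\,\varepsilon_6$ since $\alpha$ is the center of mass and $\partial_s\ov v$ is controlled by the energy density — so $|R_2^T|\leq\rho|\eta|$, and the $\nabla_s$, $\nabla_t$ estimates follow in the same fashion using $|\nabla_s\alpha'(s)|\leq c(e^{-2s} + \|\partial_s\ov v\|_{C^0})$ from Lemma \ref{lemma47} and the uniform bounds. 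Adding the two gives the claim with $S_6$ chosen so that the exponentially decaying error terms are below $\rho$ and $\varepsilon_6$ chosen small relative to $\rho$.

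\smallskip

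The main obstacle is the $\nabla_s$-estimate on $R_2^T$: controlling $\nabla_s\alpha'(s) = \alpha''(s)$ requires differentiating the defining relation of the center of mass (or equivalently \eqref{equation431}) once more in $s$ and feeding in the $C^0$-bound on $\partial_s u$ from Lemma \ref{lemma47}, so one must be careful that this bound is of the form $e^{-2s} + \|\partial_s u\|_{C^0([s-1,\infty)\times S^1)}$ with the second term small (it is, by Proposition \ref{prop45}) rather than merely bounded; this is exactly the structure that lets the quadratic-type term $\rho^2|\eta|$ and the linear term $\rho|\nabla_s\eta|$ appear on the right-hand side rather than an uncontrolled $|\nabla_s\eta|$. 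Everything else is the routine "remainder of a Taylor expansion of the exponential map" bookkeeping already carried out in the proof of Lemma \ref{lemma412}, which is why the statement is safely left to the reader.
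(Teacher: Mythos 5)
The paper leaves this proof to the reader with the remark that it is ``similar to Lemma \ref{lemma412},'' so there is no explicit proof to compare against; your argument is consistent with that indicated route and is essentially correct. You correctly identify the structure: both pieces $R_1^T = R_J^T(\partial_t\eta)$ and $R_2^T = (E_2^{-1}E_1-\mathrm{Id})\alpha'(s)$ carry a factor vanishing at $\eta=0$ (hence $\lesssim|\eta|$), multiplied by a factor ($\partial_t\eta$, resp.\ $\alpha'(s)$) that is made small by the energy-density bound, and the $\nabla_s$-estimate requires the $C^l$-control of $\partial_s u$ from Lemma \ref{lemma47} together with the decay from Proposition \ref{prop45} to make $\nabla_s\partial_t\eta$ and $\alpha''(s)$ not merely bounded but small — which you flag as the delicate point, correctly.

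Two small remarks. First, the Wirtinger/Poincar\'e observation, while pleasant, is not actually needed: the pointwise estimates $|R_1^T|\leq\rho|\eta|$ and $|R_2^T|\leq\rho|\eta|$ already come from the smallness of $|\partial_t\eta|$ and $|\alpha'(s)|$ respectively, not from the smallness of $|\eta|$ itself, and the fact that $\eta$ lies in the regime where the Taylor remainders $R_J^T$ and $E_2^{-1}E_1-\mathrm{Id}$ are $O(|\eta|)$ with controlled constants is already guaranteed by the diameter bound $D_2$ used to set up the center of mass. Second, for $|\alpha'(s)|\lesssim\|\partial_s\ov v(s,\cdot)\|_{L^2(S^1)}$ one should note that integrating $\partial_s\ov v = E_1\alpha' + E_2\nabla_s\eta$ over $S^1$ kills the $\nabla_s\eta$ term only up to correction terms of size $O(|\eta|)$ coming from $E_1\neq\mathrm{Id}$ and $E_2\neq\mathrm{Id}$; these are absorbed in the same way, so the conclusion stands, but the derivation should not be presented as an exact identity. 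Neither point affects the validity of the proof.
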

It can be proved in a similar way as Lemma \ref{lemma412}. The proof is left to the reader.

\begin{lemma}\label{lemma417}
There exist $c_7>0$ and $\varepsilon_7>0$ depending only on $\wt{K}$ that satisfy the following condition. If $\big\| e({\bm u}) \big\|_{L^\infty} \leq (\varepsilon_7)^2$, then 
\begin{align}\label{equation434}
\big\| \eta(s, \cdot) \big\|_{L^2(S^1)} \leq c_7 e^{-{1\over 2} \tau_0 s}.
\end{align}
\end{lemma}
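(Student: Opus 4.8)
The goal is to prove the $L^2$-decay estimate $\|\eta(s,\cdot)\|_{L^2(S^1)} \leq c_7 e^{-\tau_0 s/2}$ for the center-of-mass reduction $\eta$ in the narrow case, following closely the structure of Lemma \ref{lemma413} and the broad-case argument. The key difference from the broad case is that there is no Morse-function Hessian term, so the relevant operator on $S^1$ is just ${\mc L}(\eta) = J^T(\alpha)\partial_t \eta$, whose kernel consists of the constant sections; the center-of-mass normalization $\int_{S^1}\eta(s,t)\,dt = 0$ is precisely what removes this kernel.

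\smallskip

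\textbf{Plan of the proof.} First I would differentiate equation (\ref{equation433}) in $s$. Since $\alpha'(s)$ is a function of $s$ alone and is, by (\ref{equation433}) and the estimates of Lemma \ref{lemma416} together with Lemma \ref{lemma414}, exponentially small (one integrates the $S^1$-average of (\ref{equation433}), using $\int_{S^1}\eta\,dt \equiv 0$, to see $|\alpha'(s)| \leq c\, e^{-\tau_0 s/2}$), the equation for $\eta$ becomes, schematically, $\nabla_s \eta + J^T(\alpha)\partial_t\eta = \wt R^T$ where $\wt R^T := E_2^{-1}(e^{-\lambda t}R_0^T) + R^T - \alpha'(s)$ satisfies the same type of bounds as in Lemma \ref{lemma416} (controlled by $\rho|\eta| + \rho|\nabla_s\eta|$ plus an $e^{-\tau_0 s/2}$ inhomogeneous piece). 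Applying $\nabla_s$ once more and substituting $\nabla_s\eta = \wt R^T - {\mc L}(s)\eta$ gives
\begin{align*}
\nabla_s^2 \eta = \nabla_s \wt R^T - {\mc L}(s)\wt R^T + {\mc L}(s)^2 \eta + {\mc L}'(s)\eta,
\end{align*}
where ${\mc L}'(s)$ is bounded by a constant times $\|\partial_s\alpha\|_{L^\infty} \leq c\,\varepsilon_7$, exactly as in (\ref{equation419}).

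\smallskip

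Next, set $v(s) = \|\eta(s,\cdot)\|_{L^2(S^1)}^2$. The crucial coercivity input is that on the $L^2$-orthogonal complement of the constants, $\|{\mc L}(s)\eta\|_{L^2}^2 \geq \tau_0^2\|\eta\|_{L^2}^2$ — this is the Fourier computation $\|\sum_{k}e^{{\bm i}kt}(-k\eta_k + {\bm i}{\mc X}_\lambda(\eta_k))\|^2 = \sum_k |{-}k\eta_k + {\bm i}{\mc X}_\lambda(\eta_k)|^2 \geq \tau_0^2\|\eta\|^2$, valid because narrowness forces $\wt X_\upgamma = \{\star\}$ so $\eta$ is the full tangent vector and $d{\mc X}_\lambda$ has no integer eigenvalue (here actually one uses the kernel-freeness coming from $\int\eta\,dt=0$, which kills the $k=0$ contribution). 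Then, estimating as in Lemma \ref{lemma413}, one gets for $\rho$ small and $s$ large that $\tfrac12 v''(s) \geq \tfrac34\|{\mc L}(s)\eta\|^2 - \tfrac94\rho^2\|\eta\|^2 - c\varepsilon_7\|\eta\|^2 - (\text{exp.\ small}) \geq \tfrac12\tau_0^2 v(s) - c\,e^{-\tau_0 s}$, hence $v''(s) \geq \tau_0^2 v(s) - c\,e^{-\tau_0 s}$ for $s \geq S_7$. Since $\tfrac12\tau_0 < \tau_0$, a standard ODE comparison (the function $e^{-\tau_0 s}(v'(s) + \tau_0 v(s) + \text{const}\cdot e^{-\tau_0 s})$ is monotone, combined with $v(s)\to 0$) yields $v(s) \leq c\,e^{-\tau_0 s}$ for $s$ large, and uniform boundedness of $v$ on $[0,S_7]$ upgrades this to all $s \geq 0$, which is (\ref{equation434}).

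\smallskip

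\textbf{Main obstacle.} The delicate point is not the ODE argument but making the center-of-mass reduction rigorous: one must verify that $\alpha(s)$ is smooth in $s$, that $\nabla_s$ commutes appropriately through the implicit-function-theorem definition of $\alpha$, and — most importantly — that the exponential smallness of $\alpha'(s)$ genuinely holds. This requires integrating (\ref{equation433}) over $S^1$ and noting $\int_{S^1}J^T(\alpha)\partial_t\eta\,dt$ is not literally zero (because $\alpha$ depends on $s$ only, $J^T(\alpha)$ is $t$-independent, so $\int_{S^1}J^T(\alpha)\partial_t\eta\,dt = J^T(\alpha)\int_{S^1}\partial_t\eta\,dt = 0$ — this does work) together with $\nabla_s\int_{S^1}\eta\,dt = 0$; so $\alpha'(s) = \text{average of }\wt R^T$, which is $O(\rho|\eta| + e^{-\tau_0 s/2})$, and a bootstrap with the $v$-estimate closes the loop. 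Care is needed because $|\eta|$ itself is only known a priori to be small, not yet exponentially decaying, so the estimates on $R^T$ in Lemma \ref{lemma416} must be stated in the self-improving form $\rho|\eta| + \rho|\nabla_s\eta|$ rather than with a fixed decay rate — this is exactly why Lemma \ref{lemma416} is phrased the way it is, and the proof here simply feeds that into the differential inequality.
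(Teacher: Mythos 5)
Your overall strategy — eliminate $\alpha'(s)$ via the zero-average constraint, derive a second-order differential inequality for $v(s)=\|\eta(s,\cdot)\|_{L^2}^2$, and close with an ODE comparison — is the same as the paper's. But the paper does the elimination by a one-line projection: $L^2(S^1)$ splits as (constants) $\oplus \ov{H}_s$ where $\ov{H}_s$ is the zero-average subspace, $\nabla_s$ preserves $\ov{H}_s$, and projecting (\ref{equation433}) onto $\ov{H}_s$ deletes $\alpha'(s)$ outright without any need to estimate it separately. Your plan of integrating over $S^1$ to bound $\alpha'(s)$ and then bootstrapping is workable but more involved, and in particular you would have to control $\|\eta(s,\cdot)\|_{L^\infty}$ by $\|\eta(s,\cdot)\|_{L^2}$ when you substitute into the $v''$ computation; the projection simply avoids this.

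There is, however, a genuine error in your coercivity step. You write $\|{\mc L}(s)\eta\|_{L^2}^2\geq\tau_0^2\|\eta\|_{L^2}^2$ and justify it by the Fourier identity $\sum_k|{-}k\eta_k+{\bm i}{\mc X}_\lambda(\eta_k)|^2\geq\tau_0^2\|\eta\|^2$. This is the coercivity estimate for the \emph{normal} operator ${\mc L}(s)\eta=J^N(\nabla_t\eta+{\mc X}_\lambda(\eta))$ used in Lemma \ref{lemma413}; it has no business appearing here. After passing to $\ov v=e^{\lambda t}\ov u$ in (\ref{equation426}), the residue has already been absorbed, and moreover in the narrow case $\wt X_\upgamma=\{\star\}\times{\mb C}$ is fixed by $G_0$, so ${\mc X}_\lambda$ acts trivially on $T\wt X_\upgamma$ and the relevant operator is just $J^T(\alpha)\partial_t$. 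On the zero-average subspace the Fourier modes satisfy $|k|\geq 1$, so the coercivity constant is $1$, not $\tau_0^2$. This is not cosmetic: your constant puts the homogeneous decay rate at exactly $\tau_0$, which \emph{resonates} with the $e^{-\tau_0 s}$ inhomogeneous term coming from $R_0^T$ (the broad case explicitly arranges $\tau_0\neq\tau_1$ to avoid this very issue), and the resonant ODE comparison would degrade (\ref{equation434}) to $c\sqrt{1+s}\,e^{-\tau_0 s/2}$. With the correct constant $1$ and the observation $\tau_0<1$ — which the paper flags for exactly this reason — the two rates are separated and the clean bound $v(s)\leq c\,e^{-\tau_0 s}$ follows.
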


\begin{proof}
Denote by $R(s, t)$ the right hand side of (\ref{equation433}) and let $\ov{H}_s \subset L^2(T_{\alpha(s)} T \wt{X}_{\upgamma})$ be the subspace of functions with zero average on $S^1$. Then $\nabla_s$ preserves this subspace. Projecting (\ref{equation433}) onto $\ov{H}_s$, $\alpha'(s)$ is killed and we have
\begin{align*}
\nabla_s \eta + J^T \partial_t \eta = \ov{R}(s, t)
\end{align*}
where $\ov{R}(s, \cdot) \in \ov{H}_s$ is the image of $R(s, \cdot)$ under the projection. 

The operator $J \partial_t$ is coercive on $\ov{H}_s$, satisfying 
\begin{align*}
\| J \partial_t \eta \|_{L^2}^2 \geq \|\eta\|_{L^2}^2.
\end{align*}
Notice that $\tau_0 < 1$. Then (\ref{equation434}) can be derived in the same way as deriving (\ref{equation430}).
\end{proof}

Applying the mean value estimate as did in (\ref{equation422})--(\ref{equation424}), we can prove that 
\begin{align*}
| \eta(s, t) | \leq c_7 e^{-{1\over 2} \tau_0 s}.
\end{align*}
Then it implies that $\left| R(s, t) \right| \leq c_7 e^{- {1\over 2}\tau_0 s}$ with $c_7$ abusively used. Taking $L^2$-paring of (\ref{equation433}) with $\alpha'(s)$, one has
\begin{align*}
\left| \alpha'(s) \right| \leq c_7 e^{- {1\over 2} \tau_0 s}.
\end{align*}
Then it implies that there exists $\upkappa \in \wt{X}_\upgamma$ such that 
\begin{align*}
\lim_{s \to \infty} \alpha(s) = \lim_{s \to \infty} \ov{v}(s, t) = \upkappa. 
\end{align*}
Therefore Proposition \ref{prop410} is proven.

\subsection{Proof of Theorem \ref{thm44}}\label{subsection44}

Let $(A, u)$ be a bounded solution to the perturbed gauged Witten equation over $\vec{\mc C}$. By Theorem \ref{thm42} and \ref{thm43}, $u$ extends to a continuous orbifold section ${\mc U}: {\mc C} \to {\mc Y}$. 

Now we compute the energy of $(A, u)$. For the fibration $Y \to \Sigma$ and any $G$-connection $A$ on $P$, we have the minimal coupling form $\omega_A \in \Omega^2(Y)$. For any smooth section $u: \Sigma^* \to Y$, the following identity is well-known (see for example, the proof of \cite[Proposition 2.2]{Cieliebak_Gaio_Mundet_Salamon_2002}).
\begin{align*}
{1\over 2} \big\| d_A u \big\|^2 \nu = u^* \omega_A + \big\| \ov\partial_A u \big\|^2 \nu + \mu (u) \cdot F_A.
\end{align*}
Then by the definition of the kinetic energy we have:
\begin{align}\label{equation435}
E_K (A, u)= \big\| \ov\partial_A u \big\|_{L^2(\Sigma^*)}^2 + \int_\Sigma u^*\omega_A + {1\over 2} \big\| * F_A  + \mu^* (u)  \big\|_{L^2(\Sigma^*)}^2 - {1\over 2}\big\| \mu(u) \big\|_{L^2(\Sigma^*) }^2.
\end{align}
Since $u$ extends to ${\mc U}$, we have
\begin{align}\label{equation436}
\int_{\Sigma^*} u^* \omega_A = \big\langle \big[ {\mc U} \big], \big[ \omega - \mu \big] \big\rangle = \big\langle \big[ A, u\big], \big[\omega - \mu\big] \big\rangle.
\end{align}
On the other hand, let $\langle \cdot, \cdot \rangle$ be the real part of the Hermitian pairing on $T\wt{X}$. Then
\begin{align}\label{equation437}
\begin{split}
 & \big| \ov\partial_A u \big|^2 + \big| \nabla \wt{\mc W}_A (u) \big|^2 - \big| \ov\partial_A u + \nabla \wt{\mc W}_A (u) \big|^2 \\
 =& - 2 \big\langle \ov\partial_A u , \nabla \wt{\mc W}_A (u) \big\rangle = 2 {\rm Im} * \big(  d \wt{\mc W}_A (u) \wedge \ov\partial_A u \big) \\
=& - 2 {\rm Im} * \big( \ov\partial \wt{\mc W}_A(u) \big) +2 {\rm Im} * \big( \ov\partial \beta \wedge {\mc W}_A'(u) \big).
\end{split}
\end{align}
Now we identify each cylindrical end $U_j(2)$ with $\Theta_+$ and identify the restriction of $(A, u)$ to $U_j(2)$ with a solution ${\bm u}_j = (u_j, h_j)$ to a $(\lambda_j, \delta_j)$-cylindrical model, where $\delta_j = \delta_{j, A}$. Then by Stokes formula, Theorem \ref{thm42} and \ref{thm43}, we have
\begin{align}\label{equation438}
\begin{split}
- 2{\rm Im} \int_{\Sigma^*} \ov\partial \wt{\mc W}_A(u) = &\ - 2 \int_{\Sigma^*} d \big( {\rm Im} \wt{\mc W}_A(u) \big) \\
                  = &\ -2 {\rm Re} \Big[ \sum_{j=1}^k \lim_{s \to +\infty}  \int_{\{s \} \times S^1} \wt{W}_{h_j, \lambda_j}^{(\delta_j)}(u_j (s, \cdot)) dt \Big] \\
								=	&\ - 4\pi {\rm Re} \Big[ \sum_{j=0}^k \wt{W}_{\upgamma_j}^{(\delta_j)} (\upkappa_j) \Big]\\
						    = &\ -4 \pi {\rm Re} \Big[ \sum_{j=0}^k {\rm Res}_j(A, u)\Big].
\end{split}
\end{align}
Then by (\ref{equation435})-(\ref{equation438}),
\begin{align}\label{equation439}
E(A, u) = \big\langle \big[A, u\big] , \big[ \omega, - \mu\big] \big\rangle - {\rm Re} \Big[ 4\pi \sum_{j=1}^k {\rm Res}_j(A, u) + \int_{\Sigma^*} {\bm i} \ov\partial \beta \wedge {\mc W}_A'(u) \Big].
\end{align}
Therefore the first part of Theorem \ref{thm44} is proved. 

To prove the second part, i.e., the uniform bound on the energy, we have to control the residues ${\rm Res}_j(A, u)$ and the integral of $\ov\partial\beta \wedge {\mc W}_A'(u)$. Indeed, if $z_j$ is narrow, ${\rm Res}_j(A, u) = 0$; if $z_j$ is broad, the limit $\upkappa_j = (x_j, p_j) \in X \times {\mb C}$ is a critical point of the function 
\begin{align*}
\wt{W}^{(\delta_j)}_{\upgamma_j} = \sum_{l=0}^s F_{\upgamma_j; l}^{(\delta_j)}.
\end{align*}
By (\ref{equation26}), $( \delta_j^{-1} x_j, p_j)$ is a critical point of the function $\wt{W}_{\upgamma_j}|_{\wt{X}_{\upgamma_j}}$, which is independent of $(A, u)$. Therefore
\begin{align}\label{equation440}
\wt{W}^{(\delta_j)}_{\upgamma_j} ( \upkappa_j) = \sum_{l=0}^s F_{\upgamma_j;l}^{(\delta_j)} (x_j, p_j) = \sum_{l=1}^{s-1} (\delta_j)^{r- l} F_{\upgamma_j; l} ( x_j ) = (\delta_j)^r F_{\upgamma_j; l}( \delta_j^{-1} x_j) .
\end{align}
So the sum of the residues in (\ref{equation439}) is uniformly bounded. On the other hand, for each broad $z_j$, denote $C_j:= U_j\setminus U_j(2)$, which contains the support of $d\beta_j$. We have
\begin{align}\label{equation441}
\begin{split}
\Big| \int_{\Sigma^*} {\bm i} \ov\partial \beta \wedge {\mc W}_A'(u) \Big| \leq &\ \sum_{z_j\ {\rm broad}} \Big| \int_{C_j}  \ov\partial \beta_j \wedge \sum_{l =1}^s e^{\rho_l (h + \lambda_j t) } F_{\upgamma_j; l}^{(\delta_j)} (u_j) dz \Big| \\
 \leq &\ \sum_{z_j\ {\rm broad}} \delta_j \int_{C_j} \Big( \sum_{l =1}^{s} \big| e^{\rho_l (h)} \big|  \big| F_{\upgamma_j; l} (u_j) \big| \Big) ds dt \\
\leq &\ c^{(0)}  \sum_{z_j\ {\rm broad}} \delta_j \int_{N_j} \Big( \sum_{l =1}^{s} \big| e^{\rho_l (h)} \big| \Big) \sqrt{  1 + \big| \mu (u_j) \big|}  ds dt\\
\leq &\ c^{(0)} \sum_{z_j\ {\rm broad}} \delta_j \Big( \sum_{l =1}^{s} \big\| e^{\rho_l (h)} \big\|_{L^2(N_j)} \Big) \big\| \sqrt{ 1 + \big| \mu (u_j) \big|} \big\|_{L^2(N_j) } \\
 \leq & \ c \Big( 1 + \big\| \sqrt{\sigma} \mu (u) \big\|_{L^2(\Sigma^*)} \Big) \\[0.2cm]
\leq &\ c \Big( 1 +  \sqrt{ E (A, u) }  \Big).
\end{split}
\end{align}
Here the second line follows from the fact that each $F_{\upgamma_j;l}^{(\delta_j)}$ has at least one $\delta_j$ factor; the third line uses ({\bf P3}) of Hypothesis \ref{hyp28}; the fifth line uses the definition of $\delta_j$; $c>0$ is a constant depending on $c^{(0)}$. It follows from (\ref{equation439})--(\ref{equation441}) that for some constant $c'$ independent of $(A,u)$, we have
\begin{align*}
E (A, u) \leq \big\langle \big[ A, u \big], \big[ \omega - \mu \big] \big\rangle + c' + c'\sqrt{ E (A, u)}.
\end{align*}
It implies a uniform bound on $E(A, u)$ in terms of $\big[ A, u \big]$, so Theorem \ref{thm44} is proved.

\begin{rem}
In the narrow case, i.e., when all punctures are narrow, we don't have to perturb the equation. Then by (\ref{equation437}), the $L^2$-norm of $\ov\partial_A u$ is zero because all the residues are zero. So any solution of the gauged Witten equation is also a solution to the symplectic vortex equation, whose image is contained in ${\rm Crit} W$. So analysis in the narrow case are much easier than the broad case (when there is at least one broad puncture). 
\end{rem}

\section{Linear Fredholm Theory}\label{section4}

In this section we consider the linearized operator of the perturbed gauged Witten equation modulo gauge transformations. This section is more or less independent of the other sections of this paper and can be treated under a much more general set-up, for example, the Lie group $G$ could be nonabelian, and the superpotential $\wt{\mc W}_A$ need not be holomorphic. The condition on narrowness and broadness of punctures can also be more flexible. However for simplicity we still use the set-up of given in Section \ref{section2}.

\subsection{Banach manifolds, Banach bundles and sections}

Let $\vec{\mc C}$ be a rigidified $r$-spin curve with underlying Riemann surface $\Sigma$. Let ${\bm z} =\{ z_1, \ldots, z_k\}$ be the set of punctures and the monodromy of $\vec{\mc C}$ at $z_j$ is $\upgamma_j$. The corresponding punctured Riemann surface $\Sigma^*$ is equipped with the cylindrical metric, which is used to define the weighted Sobolev spaces. Let $\tau>0$. We use $W_\tau^{k, p}(\Sigma^*, E)$ to denote the space of sections of a vector bundle $E$ over $\Sigma^*$, of class $W_\tau^{k, p}$, with respect to some fixed choice of connection on $E$. We will omit the domain $\Sigma^*$ in this section and abbreviate the space by $W_\tau^{k, p}(E)$.

For each $\upgamma \in {\mb Z}_r$, we have the function $\wt{W}_\upgamma: \wt{X} \to {\mb C}$ as introduced in (\ref{equation25}) such that if $\upgamma$ is broad, then $\wt{W}_\upgamma |_{\wt{X}_\upgamma}$ is a holomorphic Morse function with finitely many critical points $\upkappa_\upgamma^{(\iota)},\ \iota = 1,\ldots, m_\upgamma$. For $\delta \in (0, 1]$, $\upkappa_{\upgamma; \delta}^{(\iota)} := \delta \upkappa_\upgamma^{(\iota)}$ is a critical point of $\wt{W}_\upgamma^{(\delta)}|_{\wt{X}_\upgamma}$.

We abbreviate $\wt{X}_j = \wt{X}_{\upgamma_j}$. Now for each $A\in {\mz A}$, we have defined $\delta_{j,A}$ in (\ref{equation220}). Then for each $A \in {\mz A}$, denote $\upkappa_{j, A}^{(\iota)} = \upkappa_{\upgamma_j; \delta_{j,A}}^{(\iota)}$.

For any $x_j \in \wt{X}_j$, define $\wt{x}_j: S^1 \to \wt{X}_j$ by $\wt{x}_j(t) = e^{- \lambda_j t} \upkappa_j$. The pull-back of $\wt{x}_j$ to any cylinder $[a, b]\times S^1$ via the projection $[a, b] \times S^1 \to S^1$ is still denoted by $\wt{x}_j$. When $z_j$ is broad, denote $\wt\upkappa^{(\iota)}_{j,A}= e^{-\lambda_j t} \upkappa^{(\iota)}_{j,A}$ for all $A \in {\mz A}$.

\subsubsection*{The Banach manifold}

Choose $\ud\delta \in (0, 1]$ and $\tau \in (0, \tau(\ud\delta)/2 )$ where $\tau(\ud\delta)>0$ is the one of Theorem \ref{thm43}. Choose $B \in H_2^G\big( \wt{X}; {\mb Z}[r^{-1}]\big)$. For each broad puncture $z_j$, choose $\iota_j \in \{1, \ldots, m_{\upgamma_j}\}$ and denote
\begin{align*}
\vec\upkappa = \big( \upkappa_{\upgamma_j}^{(\iota_j)}\big)_{z_j\ {\rm broad}}.
\end{align*}

Consider the space
\begin{align*}
{\mz B} = {\mz B}_{\tau, \ud\delta}\big( B, \vec\upkappa\big) \subset {\mz A}_\tau^{1, p} \times  W_{loc}^{1, p} \big( \Sigma^*, Y  \big),
\end{align*}
consisting of pairs $(A, u)\in {\mz A}_\tau^{1, p}\times W_{loc}^{1, p}(\Sigma^*, Y)$ such that $\delta_{j,A} \in (\ud\delta, 1]$ and such that there is an $S>0$ satisfying the following conditions.

(I) For each broad $z_j$, there is a section $\wt\eta_j \in W_\tau^{1, p} \big( \Theta_+(S), \big( \wt\upkappa_{j,A}^{(\iota_j)} \big)^* T\wt{X} \big)$ such that
\begin{align}\label{equation51}
u\circ \phi_j|_{\Theta_+(S)} = \exp_{\wt\upkappa_{j, A}^{(\iota_j)}} \wt\eta_j.
\end{align}

(II) For each narrow $z_j$, there is $(\star, p_j) \in \wt{X}_j$ and $\wt\eta_j \in  W_\tau^{1, p} \big( \Theta_+(S), T_{p_j} \wt{X} \big)$ such that
\begin{align}\label{equation52}
u\circ \phi_j |_{\Theta_+(S)} = \exp_{p_j} \wt\eta_j.
\end{align}

(III) The above two conditions implies that $u$ extends to an orbifold section over $\vec{\mc C}$ and we require that $\big[ A, u\big] = B$.

From now on within this section, $\vec\upkappa$ is fixed and we abbreviate $\upkappa_j = \upkappa_{\upgamma_j}^{(\iota_j)}$, $\upkappa_{j, A} = \upkappa_{j, A}^{(\iota_j)}$. 

The space of connections ${\mz A}_\tau^{1, p}$ doesn't contain all $W_\tau^{1, p}$-connections on $P$ but it gives a constrain on the holomorphic structure. By the definition of ${\mz A}_\tau^{1, p}$, ${\mz A}_\tau^{1, p}$ is an affine space modeled on the vector space
\begin{align*}
T{\mz A}_\tau^{1, p} \simeq \Big\{   \alpha = ( \alpha_0, \alpha_1) \in W_\tau^{1, p} \big({\mf g}_0 \oplus {\mf g}_1 \big) \ |\ \alpha_0 = * d h + d f,\ f, h \in W_\tau^{2, p}({\mf g}_0)  \Big\}.
\end{align*}
Here $* d h$ is the infinitesimal change of $A$ with respect to the infinitesimal change of the Hermitian metric, and $d f$ is the infinitesimal gauge transformation. Then we have

\begin{lemma}
${\mz B}$ carries a Banach manifold structure, whose tangent space at ${\mz X}= (A, u) \in {\mz B}$ is isomorphic to 
\begin{align}\label{equation53}
T_{\mz X} {\mz B}\simeq T{\mz A}_\tau^{1, p} \oplus W_\tau^{1, p} \big( u^* T^\bot Y \big) \oplus {\mb C}^{k_n}.
\end{align}
Here $k_n$ is equal to the number of narrow punctures. 

Moreover, the group ${\mz G}_\tau = {\mz G} \cap {\mz G}_\tau^{2, p}$ acts smoothly on ${\mz B}$ such that the isomorphism (\ref{equation53}) is equivariant in a natural way.
\end{lemma}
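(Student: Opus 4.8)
The plan is to exhibit $\mz B$ as a submanifold of the ambient Banach manifold $\mz A_\tau^{1,p}\times W^{1,p}_{loc}(\Sigma^*,Y)$ by charts built from the decay description (I)--(III), and to verify that the gauge group acts smoothly. First I would fix a reference connection $A_0\in\mz A$ and, for a point $\mz X=(A,u)\in\mz B$, use the identification of $\mz A_\tau^{1,p}$ with the affine space modeled on $T\mz A_\tau^{1,p}$; the constraint defining $T\mz A_\tau^{1,p}$ is a closed linear subspace of $W_\tau^{1,p}({\mf g})$ (it is the graph/image of the continuous linear map $(f,h,\alpha_1)\mapsto(\ast dh+df,\alpha_1)$, using that $\Delta:W_\tau^{2,p}\to L_\tau^p$ and $d:W_\tau^{2,p}\to W_\tau^{1,p}$ are bounded), hence itself a Banach space and $\mz A_\tau^{1,p}$ a Banach affine manifold. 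The map $A\mapsto\delta_{j,A}$ is smooth (as recorded after Definition \ref{defn215}), so the open condition $\delta_{j,A}\in(\ud\delta,1]$ cuts out an open subset, and on it $A\mapsto\upkappa_{j,A}=\delta_{j,A}\upkappa_j$ is a smooth $\wt X_j$-valued map.

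Next I would set up the local chart near $\mz X$. Fix $S$ large; away from the ends (on $\Sigma^*\setminus\bigcup_j U_j(S)$, a domain with bounded geometry) the section $u$ is described by $W^{1,p}_{loc}$ sections of $u^*T^\bot Y$ via the fiberwise exponential map of the vertical metric, which depends smoothly on $(A,u)$ through the connection-dependent splitting $TY\simeq T^\bot Y\oplus\pi^*T\Sigma^*$ and is smooth because $\wt X$ has bounded geometry (Hypothesis \ref{hyp21}(X2)). On each broad end, using $\phi_j$ and the trivialization, condition (\ref{equation51}) exhibits $u$ as $\exp_{\wt\upkappa_{j,A}^{(\iota_j)}}\wt\eta_j$ with $\wt\eta_j\in W_\tau^{1,p}$; since $\wt\upkappa_{j,A}$ varies smoothly with $A$ and the weighted exponential chart is a smooth map between Banach spaces, the pair $(\alpha,\wt\eta_j)$ (where $\alpha=A-A_0$) gives a coordinate. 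On each narrow end, the extra datum is the basepoint $(\star,p_j)\in\wt X_j\cong\{\star\}\times\mb C$; this contributes the finite-dimensional factor $\mb C^{k_n}$, and $\wt\eta_j\in W_\tau^{1,p}(T_{p_j}\wt X)$ depends smoothly on $p_j$. Patching the end-charts to the interior chart with cutoffs, one obtains a homeomorphism from a neighborhood of $\mz X$ in the ambient space (intersected with the open region $\delta_{j,A}>\ud\delta$) onto an open subset of $T\mz A_\tau^{1,p}\oplus W_\tau^{1,p}(u^*T^\bot Y)\oplus\mb C^{k_n}$; the homology constraint $[A,u]=B$ in (III) is automatically preserved under such small deformations (it is locally constant by the homotopy-invariance of the equivariant class, since the deformations are continuous and compactly-supported-up-to-decay), so it selects a union of connected components and does not change the model space. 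Transition maps between two such charts are compositions of the smooth exponential/inverse-exponential maps and the smooth assignments $A\mapsto\delta_{j,A},\upkappa_{j,A}$, hence smooth, giving the Banach manifold structure and the isomorphism (\ref{equation53}).

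Finally, for the $\mz G_\tau$-action: $\mz G_\tau$ acts on $\mz A$ by $g^*A$ and on sections by $g^*u$; the formulas are algebraic in $g$ and its first derivatives, so by the standard Banach-manifold argument (the module structure of $W_\tau^{2,p}\hookrightarrow C^0$ on $W_\tau^{1,p}$, $p>2$) the action $\mz G_\tau\times\mz B\to\mz B$ is smooth, provided one checks it preserves $\mz B$: gauge transformations do not change $\delta_{j,A}$ (only the real part of $h_{j,A}$, as noted after Definition \ref{defn215}), do not change the homology class $[A,u]$, and preserve the decay conditions (I)--(III) because $g$ itself decays to a constant on each end and the asymptotic limits $\upkappa_j,p_j$ transform by that constant. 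Differentiating the action recovers the usual equivariance of (\ref{equation53}) with the infinitesimal action $\xi\mapsto(-d_A\xi,-\mathcal X_\xi(u))$ as the derivative of the orbit map. The main obstacle I expect is the bookkeeping at the broad ends: one must show that the chart is well-defined independently of the auxiliary large constant $S$ and of the order in which one exponentiates (first to $\wt X_j$ along the fixed locus, then in the normal direction), and that the resulting coordinate $\wt\eta_j$ genuinely lies in the \emph{weighted} space $W_\tau^{1,p}$ with the weight $\tau<\tau(\ud\delta)/2$ compatible with the decay rate furnished by Theorem \ref{thm42}--\ref{thm43}; this is where the choice of $\tau$ relative to the spectral gap $\tau(\ud\delta)$ is essential and where a careless estimate would break smoothness of the transition maps.
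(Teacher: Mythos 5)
Your plan is essentially the same as the paper's proof: exhibit local charts of $\mz B$ by composing the fiberwise exponential map with cutoff-supported corrections at the ends, use the smoothness of $A\mapsto\delta_{j,A}$ (hence of $A\mapsto\upkappa_{j,A}$) to handle the moving asymptotic basepoints at broad punctures, and record the extra $\mb C^{k_n}$ parameter for the free basepoint $p_j$ at the narrow ends. Both the identification of $T\mz A_\tau^{1,p}$ as a closed subspace and the observation that the homology constraint is locally constant and the gauge action preserves all the data are handled the same way.

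The one place where you are less explicit than the paper, and where some care is actually needed, is the broad-end chart. You write that ``the pair $(\alpha,\wt\eta_j)$ gives a coordinate,'' but when $\alpha$ varies, the asymptotic basepoint $\wt\upkappa_{j,A+\alpha}^{(\iota_j)}$ moves, so the new section cannot be expressed as $\exp_{\wt\upkappa_{j,A}^{(\iota_j)}}(\cdot)$ of something in $W_\tau^{1,p}$ directly. The paper resolves this by constructing the correction vector $\wt\xi_j(\alpha)\in T_{\upkappa_{j,A}}\wt X_j$ solving $\upkappa_{j,A+\alpha}=\exp_{\upkappa_{j,A}}\xi_j(\alpha)$, then parallel-transporting it along the geodesics $x_{s,t}(\epsilon)=\exp_{\wt\upkappa_{j,A}}\epsilon\,\wt\eta_j(s,t)$ to make it a vector field along $u\circ\phi_j$, and then cutting it off; the full tangent vector is $V(\alpha,\xi,\bm\zeta)=\xi+\sum\beta_j'\wt\xi_j+\sum\beta_j'\wt\zeta_j$ and the chart is $(\alpha,\xi,\bm\zeta)\mapsto(A+\alpha,\exp_u V(\alpha,\xi,\bm\zeta))$. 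Your description of ``patching end-charts to the interior chart'' amounts to the same construction once written out, so the argument is sound, but this intermediate parallel-transport step is the part you would have to make precise to show the map $(\alpha,\xi,\bm\zeta)\mapsto V(\alpha,\xi,\bm\zeta)$ is a smooth Banach-space-valued map and that the resulting transition functions are smooth. Your closing remarks about the weight $\tau<\tau(\ud\delta)/2$ and well-definedness in $S$ correctly anticipate the remaining checks.
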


\begin{proof}
We define an exponential map for ${\mz X} = (A, u) \in {\mz B}$ and 
\begin{align*}
\big( \alpha, \xi, {\bm \zeta} \big) \in T{\mz A}_\tau^{1, p} \oplus W_\tau^{1, p}\big( u^* T^\bot Y \big) \oplus {\mb C}^{k_n}
\end{align*}
with sufficiently small norm. This will give a local chart of the Banach manifold structure. Let $S$ and $\wt\eta_j$ be the same as in (\ref{equation51}) and (\ref{equation52}).

For a broad puncture $z_j$, since the map $A \mapsto \upkappa_{j,A}$ is smooth, for $\|\alpha\|_{W^{1, p}_\tau}$ sufficiently small, there is a unique $\xi_j(\alpha) \in T_{\upkappa_{j, A}} \wt{X}_j$ such that
\begin{align*}
\upkappa_{j, A + \alpha} = \exp_{\upkappa_{j,A}} \xi_j(\alpha).
\end{align*}
Denote $\wt\xi_j(\alpha):= e^{-\lambda t} \xi_j(\alpha)$, which is along the map $\wt\upkappa_{j,A}$. Then we can extend $\wt\xi_j$ to a vector field along $( u \circ \phi_0 )|_{\Theta_+(S)}$, by the parallel transport of $\wt\xi_j$ along the family of geodesics
\begin{align*}
x_{s, t}(\epsilon) = \exp_{\wt\upkappa_{j, A}} \epsilon \wt\eta_j (s, t),\ (s, t) \in \Theta_+,\ \epsilon \in [0,1].
\end{align*}
Denote the vector field still by $\wt\xi_j$. Then choose a cut-off function $\beta_j': \Sigma^* \to [0,1]$ vanishing outside $\Theta_+(S+1)$ and being identically $1$ on $\Theta_+(S+2)$. Then $\beta_j' \wt\xi_j$ extends to a vertical tangent vector field along $u$, denoted by the same symbol.

For a narrow puncture $z_j$, any $\zeta_j \in {\mb C} \simeq T_{(\star, p_j)} \wt{X}_j$ can be viewed as a vector field $u\circ \phi_j$. We choose a cut-off function $\beta_j'$ similarly as the above, and denote $\wt\zeta_j = \beta_j' \zeta_j \in \in W_{loc}^{1, p} \big( \Sigma^*, u^* T^\bot Y \big)$. 

We define
\begin{align*}
V(\alpha, \xi, {\bm \zeta}):= \xi + \sum_{z_j\ {\rm broad}} \beta_j' \wt\xi_j + \sum_{z_j\ {\rm narrow}} \beta_j' \wt\zeta_j.
\end{align*}
Then the section $\exp_u V(\alpha, \xi, {\bm \zeta})$ represent the same homology class $\big[ A, u\big]$. Moreover, if $\big\|\alpha\big\|_{W_\tau^{1, p}}$ is small enough, then $\delta_{j; A + \alpha} > \ud\delta$. Therefore the object $(A', u') = ( A + \alpha, u') \in {\mz B}$. This gives local charts of ${\mz B}$. The assertion about the ${\mz G}_\tau$-action is easy to check.
\end{proof}

\begin{prop}
There exist $\tau>0$ and $\ud\delta\in (0, 1]$ such that for any bounded solution $(A, u)$ to the perturbed gauged Witten equation over $\vec{\mc C}$, if $\big[ A, u \big] = B$ and for each broad puncture $z_j$, $ev_j(A, u) = \upkappa_{j, A}$, then there is a gauge transformation $g\in {\mz G}$ such that $g^*(A, u) \in {\mz B}_{\tau, \ud\delta}(B, \vec{\upkappa})$.
\end{prop}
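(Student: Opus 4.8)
The plan is to combine the regularity statement of Proposition~\ref{prop216}, the uniform energy bound of Theorem~\ref{thm44}, and the exponential convergence results of Section~\ref{section3}. The constant $\ud\delta$ is forced by Theorem~\ref{thm44}; once it is fixed, $\tau$ may be taken to be any number in $(0,\tau(\ud\delta)/2)$, where $\tau(\ud\delta)$ is the decay rate supplied by Theorem~\ref{thm43}.

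\emph{Choice of $\ud\delta$.} Since $[A,u]=B$, Theorem~\ref{thm44} gives $E(A,u)\le E_B$ with $E_B$ depending only on $B$. For a broad puncture $z_j$ the number $\delta_{j,A}=(m_{j,A})^{-1}$ of~(\ref{equation220}) is gauge invariant, and $m_{j,A}\ge1$ forces $\delta_{j,A}\le1$; the issue is a lower bound. Writing $h_{j,A}=h_{j,A}'+{\bm i}h_{j,A}''$, a gauge transformation changes only the (purely imaginary) part $h_{j,A}'$, so $|e^{\rho_l(h_{j,A})}|=e^{\rho_l({\bm i}h_{j,A}'')}$ depends only on the curvature potential $h_{j,A}''$, which solves $\Delta h_{j,A}''=-\sigma_j\mu^*(u)$ on $U_j$ with $h_{j,A}''\to0$ at the puncture. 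Since $0\le\sigma_j\le e^{-2s}\le1$ and $\|\sqrt{\sigma_j}\,\mu(u)\|_{L^2(U_j)}^2\le 2E(A,u)\le 2E_B$, the standard elliptic estimate for $\Delta$ on functions decaying to $0$ on the half-cylinder (where $\Delta$ is injective with closed range) bounds $\|h_{j,A}''\|_{C^0(U_j\setminus U_j(2))}$ by a constant depending only on $E_B$. Hence $m_{j,A}$, an $L^2$-norm over the \emph{fixed} annulus $U_j\setminus U_j(2)$, is bounded above by some $M(B)$, and we set $\ud\delta=M(B)^{-1}$. I expect this to be the main obstacle: it is the only place where the uniform energy bound and a genuine a priori estimate enter, the rest of the argument being a repackaging of the asymptotic analysis of Section~\ref{section3}.

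\emph{The gauge transformation.} By Proposition~\ref{prop216} there is $g_1\in{\mz G}$ with $g_1^*(A,u)$ smooth, and by~(\ref{equation219}) and Theorem~\ref{thm42} this changes neither $\delta_{j,A}$, nor $[A,u]=B$, nor $ev_j(A,u)=\upkappa_{j,A}$; so I may assume $(A,u)$ smooth. Restricting $(A,u)$ to each cylindrical end $U_j\simeq\Theta_+$ gives a bounded solution of a $\lambda_j$-cylindrical model with parameter $\delta_j=\delta_{j,A}\in(\ud\delta,1]$, whose energy density decays like $e^{-\tau(\ud\delta)s}$ by Theorem~\ref{thm43}. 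Propositions~\ref{prop48}, \ref{prop49} and~\ref{prop410} then give, for $s$ large, a representation $u\circ\phi_j=\exp_{\wt\upkappa_{j,A}^{(\iota_j)}}\wt\eta_j$ at a broad puncture — the limiting critical point being exactly $\upkappa_{j,A}$ by the hypothesis $ev_j(A,u)=\upkappa_{j,A}$ — and $u\circ\phi_j=\exp_{(\star,p_j)}\wt\eta_j$ at a narrow puncture, with $p_j\in{\mb C}$ and a $t$-independent limiting point because $e^{-\lambda_j t}$ fixes $\star$. In both cases all $C^l$-norms of $\wt\eta_j$ decay at a rate strictly larger than $\tau$, hence $\wt\eta_j\in W_\tau^{1,p}(\Theta_+(S))$; this verifies conditions (I) and (II) in the definition of ${\mz B}_{\tau,\ud\delta}(B,\vec\upkappa)$. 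A further gauge transformation on the ends puts the connection into temporal gauge, where $\partial_s\psi=-\sigma_j\mu^*(u)$ together with the boundedness of $u$ and elliptic bootstrapping (Lemmas~\ref{lemma46} and~\ref{lemma47}) shows that $A-\lambda_j\,dt$ is $O(e^{-2s})$ with all derivatives, hence of class $W_\tau^{1,p}$, so $A\in{\mz A}_\tau^{1,p}$. Finally $u$ extends to a continuous orbifold section with $[A,u]=B$ by Theorem~\ref{thm44}, which is condition (III). Taking $g$ to be the composition of the two gauge transformations above, $g^*(A,u)\in{\mz B}_{\tau,\ud\delta}(B,\vec\upkappa)$, as required.
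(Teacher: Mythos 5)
Your overall structure matches the paper's: bound $\delta_{j,A}$ from below via the uniform energy bound of Theorem~\ref{thm44}, pass to temporal gauge, then invoke the exponential-convergence results of Section~\ref{section3} to verify conditions (I)--(III) of the definition of ${\mz B}_{\tau,\ud\delta}(B,\vec\upkappa)$. The paper's own proof is terser than yours (it simply asserts the existence of $\ud\delta$ without spelling out an estimate), and the temporal-gauge and decay steps are handled the same way in both.

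The one place where you try to add content — the a priori bound on $\|h_{j,A}''\|_{C^0(U_j\setminus U_j(2))}$ — is also where there is a genuine gap. You assert that $\Delta$ is \emph{injective} on functions on the half-cylinder $\Theta_+$ that decay to $0$ at $s\to\infty$. That is false: $e^{-ns}\cos(nt)$ and $e^{-ns}\sin(nt)$, $n\ge1$, are all decaying harmonics, so the kernel is infinite-dimensional and the equation $\Delta h_{j,A}''=-\sigma_j\mu^*(u)$ on $U_j$ together with decay at infinity does \emph{not} pin down $h_{j,A}''$, let alone estimate it by the right-hand side. The injectivity you want does hold, but only \emph{globally} on $\Sigma^*$: a function in $W^{2,p}_\delta(\Sigma^*)$ ($\delta>0$) that is harmonic extends to a harmonic function on the closed surface $\Sigma$ by removability, hence is constant, hence is $0$. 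So the correct route is to work with the global function $\eta$ (the conformal exponent in $H=e^{2\eta}H_0\in{\mz H}_+$, and the global version of $h_1$), solving $\Delta$ on all of $\Sigma^*$ with the energy-controlled curvature as source; restricting that estimate to the annulus then bounds $m_{j,A}$. A second, smaller issue: you pass from the $L^2$ bound $\|\sqrt{\sigma_j}\mu(u)\|_{L^2}\le\sqrt{2E_B}$ to a $C^0$ bound on $h_{j,A}''$, but a two-dimensional $\Delta$ needs $L^p$, $p>2$, on the source to give a sup bound; going from $L^2$ to $L^p$ for $\Delta h''=-\sigma\mu^*(u)$ uses the extra $\sigma$-decay and/or the uniform boundedness of $u$, and should be said explicitly. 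Neither gap threatens the statement, but as written the half-cylinder injectivity claim is simply incorrect and would need to be replaced by the global argument.
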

\begin{proof}
By Theorem \ref{thm44}, such solutions have a uniform energy bound $E(B)$. Therefore there exists $\ud\delta\in (0, 1]$ such that for any such solution $(A, u)$ and any broad puncture $z_j$, $\delta_{j, A}\geq \ud\delta$. There exists $g\in {\mz G}$ such that on each cylindrical end $g^*A$ is in temporal gauge. Then by Theorem \ref{thm42}, the vortex equation, the decay of the area form $\nu$, $A\in {\mz A}_\tau^{1, p}$ for any $\tau < 2$. On the other hand, Theorem \ref{thm43} implies that $u$ converges exponentially fast to its limits at punctures. Therefore, there exists $\tau>0$ such that $g^* (A, u) \in {\mz B}_{\tau, \ud\delta}(B, \vec{\upkappa})$. 
\end{proof}

\subsubsection*{The Banach bundle and the section}

Now we consider the ${\mz G}_\tau$-equivariant Banach vector bundle ${\mz E}\to {\mz B}$ whose fibre over $(A, u) \in {\mz B}$ is
\begin{align}\label{equation54}
{\mz E}|_{(A, u)} =  L_\tau^p \big( \Lambda^{0,1}T^* \Sigma^*\otimes u^* T^\bot Y \big) \oplus  L_\tau^p ({\mf g} ). 
\end{align}
The fact that ${\mz E}$ carries a smooth Banach bundle structure over ${\mz B}$ is the same as many classical cases (for example, Gromov-Witten theory, see \cite[Section 3]{McDuff_Salamon_2004}); a local trivialization of ${\mz E}$ can be obtained by using parallel transport. The ${\mz G}_\tau$-action also lifts naturally to a linear action on ${\mz E}$, making ${\mz E}$ a ${\mz G}_\tau$-equivariant bundle.

The perturbed gauged Witten equation gives a smooth section of ${\mz E}\to {\mz B}$. More precisely, for $(A, u) \in {\mz B}$, the left-hand-side of (\ref{equation221}) defines 
\begin{align*}
{\mz W}(A, u) \in {\mz E}|_{(A, u)}.
\end{align*}
The smoothness of ${\mz W}$ relies on the smoothness of the function $A \mapsto \delta_{j,A}$ (see Definition \ref{defn215}). It is also an equivariant section by the gauge invariance property of the perturbed gauged Witten equation. Moreover, for every ${\mz X} \in {\mz B}$, $g\in {\mz G}_\tau$ and ${\mz X}'= g^* {\mz X}$, $g$ induces an isomorphism
\begin{align*}
g^*: \left( T_{\mz X} {\mz B}, E|_{{\mz X}} \right) \to \left( T_{{\mz X}'} {\mz B}, E|_{{\mz X}'} \right).
\end{align*}
This makes the isomorphism (\ref{equation53}) and (\ref{equation54}) both transform naturally.

\subsubsection*{The deformation complex and the index formula}

The linearization of ${\mz W}$ at ${\mz X}\in {\mz B}$ is a bounded linear map
\begin{align*}
d{\mz W}_{\mz X}: T_{\mz X} {\mz B} \to {\mz E}|_{{\mz X}}.
\end{align*}
With respect to (\ref{equation53}) and (\ref{equation54}), it reads, 
\begin{align}\label{equation55}
d{\mz W}_{\mz X}(\alpha, \xi, {\bm \zeta}) = \big( {\mz D}_{\mz X} V(\alpha, \xi, {\bm \zeta}) + \delta_\alpha \nabla \wt{\mc W}_A(u),\ *_c d\alpha + \sigma d\mu^* (u) V(\alpha, \xi, {\bm \zeta} )   \big).
\end{align}
Here ${\mz D}_{\mz X}: W_{loc}^{1, p} \big( u^* T^\bot Y \big) \to L_{loc}^p \big( \Lambda^{0,1} T^* \Sigma^* \otimes u^* T^\bot Y \big)$ is the linearization of $\ov\partial_A u + \nabla \wt{\mc W}_A (u)$ in the direction of $\xi$, which reads
\begin{align*}
{\mz D}_{\mz X} (V) = \ov\partial_A V + \nabla_V \nabla \wt{\mc W}_A (u);
\end{align*}
and
\begin{align*}
\delta_\alpha \nabla \wt{\mc W}_A(u) = {d\over dt}|_{t=0} \nabla {\mc W}_{A + t\alpha}(u).
\end{align*}
The second component of (\ref{equation55}) is the linearization of the left-hand-side of the vortex equation $*_c ( F_A + \mu^* (u) \nu)$, where $\nu$ is the smooth area form and $*_c$ is the Hodge-star of of the cylindrical metric; we use this modification because otherwise the $* d$ is not uniformly elliptic with respect to the cylindrical coordinates.

On the other hand, the linearization of infinitesimal gauge transformation at ${\mz X}\in {\mz B}$ is a linear operator
\begin{align*}
\begin{array}{cccc}
d{\mz G}_{\mz X}: & {\rm Lie} {\mz G}_\tau:=  W_\tau^{2, p} ({\mf g}) & \to & \big( W_\tau^{1, p} ( T^* \Sigma \otimes {\mf g} ) \big) \oplus W_\tau^{1, p} \big( u^* T^{\bot}Y  \big)\\
 &  \xi & \mapsto & \big( d \xi,  - {\mc X}_\xi \big).
\end{array}
\end{align*}
Then the {\bf deformation complex} at ${\mz X}$ is the following complex of Banach spaces
\begin{align}\label{equation56}
{\mz C}_{\mz X}: \begin{CD}
{\rm Lie} {\mz G}_\tau  @> d{\mz G}_{\mz X} > > T_{\mz X}{\mz B} @> d{\mz W}_{\mz X} >> {\mz E}|_{\mz X}.
\end{CD}
\end{align}
For any ${\mz X}\in {\mz B}$, we abbreviate $A_1 = {\rm Lie} {\mz G}_\tau $, $A_2 = T_{\mz X} {\mz B}$ and $A_3 = {\mz E}|_{{\mz X}}$. 

To state the index formula, we need to introduced some notations. For each broad puncture $z_j$, we define $b_j(\vec{\mc C}) = {\rm dim}_{\mb C} \wt{X}_j$. We define
\begin{align*}
b(\vec{\mc C}) = \sum_{z_j \ {\rm broad}} b_j(\vec{\mc C}).
\end{align*}
On the other hand, for each $j$, the normal bundle $\wt{N}_j:= \wt{N}_j \to \wt{X}_j$ splits as 
\begin{align}
\wt{N}_j = \bigoplus_{i=1}^{{\rm codim} \wt{X}_j} \wt{N}_j^{(i)}
\end{align}
where each line bundles $\wt{N}^{(i)}_j$ has an associated weight $\nu^{(i)}_j \in {\mb Z}$ such that $(\upgamma_j)^{\nu^{(i)}_j} \neq 1$. We define
\begin{align*}
n_j(\vec{\mc C}) =  - {\bm i} \sum_i \big( \nu^{(i)}_j \lambda_j - \big\lfloor \nu^{(i)}_j \lambda_j \big\rfloor \big) \in {\mb Q}_{\geq 0},\ n(\vec{\mc C}) = \sum_{j=1}^k n_j(\vec{\mc C}).
\end{align*}
Here $\lfloor a\rfloor\in {\mb Z}$ is the greatest integer which is no greater than $a\in {\mb R}$. 

Our main theorem of this section is the following.
\begin{thm}\label{thmx52}
For any bounded solution ${\mz X} \in {\mz B}$ to the perturbed gauged Witten equation, the deformation complex (\ref{equation56}) is Fredholm. That means, the image of $d{\mz G}_{\mz X}$ is a closed subspace of $T_{\mz X}{\mz B}$ and has finite codimension in ${\rm ker} \left( d{\mz W}_{\mz X} \right)$. Moreover, in this case, the Euler characteristic of ${\mz C}_{\mz X}$ is given by the formula
\begin{align}\label{equationx58}
\chi ({\mz C}_{\mz X} ) := (2-2g) {\rm dim}_{\mb C} X + 2 c_1^G \cdot \big[ {\mz X} \big] - b(\vec{\mc C}) - 2 n(\vec{\mc C}).
\end{align}
Here $c_1^G$ is the equivariant first Chern class of $T\wt{X}$, and $[{\mz X}]\in H_2^G \big( X; {\mb Z}[r^{-1}] \big)$ is the homology class of ${\mz X}$.
\end{thm}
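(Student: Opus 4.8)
The plan is to reduce the Fredholm property and the index formula for the deformation complex $\mz C_{\mz X}$ to a standard Cauchy–Riemann type operator on an orbibundle over the closed orbicurve $\mc C$, for which a Riemann–Roch calculation is available. First I would fold the complex (\ref{equation56}) into a single operator. Since the equation is ${\mz G}_\tau$-invariant, one augments $d{\mz W}_{\mz X}$ by the formal adjoint $d{\mz G}_{\mz X}^*$ (with respect to the weighted $L^2$ pairing), obtaining the "rolled-up" operator
\begin{align*}
{\mz F}_{\mz X} = d{\mz W}_{\mz X} \oplus d{\mz G}_{\mz X}^* : T_{\mz X}{\mz B} \to {\mz E}|_{\mz X} \oplus W_\tau^{1,p}({\mf g}),
\end{align*}
and the deformation complex is Fredholm with $\chi({\mz C}_{\mz X}) = - \operatorname{ind} {\mz F}_{\mz X}$ provided ${\mz F}_{\mz X}$ is Fredholm and $d{\mz G}_{\mz X}$ is injective (the latter because the ${\mz G}_\tau$-action is free away from a set of connections we may avoid, or has finite stabilizers, giving no kernel in $W_\tau^{2,p}$ for $\tau>0$). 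Then I would observe that ${\mz F}_{\mz X}$ differs from its "principal part" by a compact operator: the term $\delta_\alpha \nabla {\mc W}_A(u)$ and the lower-order pieces coming from $V(\alpha,\xi,{\bm \zeta})$ versus $\xi$, and the zeroth-order coupling terms, are all relatively compact because on the cylindrical ends the solution decays exponentially (Theorem \ref{thm42}, Theorem \ref{thm43}) and the weight $\tau < \tau(\ud\delta)/2$ is chosen below the decay rate; the finite-dimensional factor ${\mb C}^{k_n}$ contributes only a finite shift to the index.

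Next I would analyze the model operator on each cylindrical end. On $U_j \simeq \Theta_+$, after temporal gauge the linearized operator takes the asymptotic form $\partial_s + L_j$ where $L_j$ is a self-adjoint operator on $L^2(S^1, \wt\upkappa_j^* T\wt X)$ built from $J^N \partial_t + J^N {\mc X}_{\lambda_j}$ on the normal directions and $J^T\partial_t + {\sf A}_{\upkappa_j}^{(\delta_j)}$ on the tangential directions (cf. (\ref{equation418}), (\ref{equation428})); the combinatorial input is exactly that $\operatorname{Spec}(d{\mc X}_\lambda^N)$ avoids ${\bm i}{\mb Z}$ and the Hessian ${\sf A}$ is invertible, i.e. $\tau_0>0$, $\tau_1>0$. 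Hence each $L_j$ is invertible, so the weighted operator $\partial_s + L_j$ on $W_\tau^{1,p}$ is Fredholm for the chosen weight $\tau$, and standard gluing/parametrix patching (as in \cite{McDuff_Salamon_2004}, Appendix \ref{appendixa}) upgrades this to: ${\mz F}_{\mz X}$ is Fredholm on $\Sigma^*$. This is the technical heart but is by now routine given the exponential decay already proved in Section \ref{section3}.

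For the index formula I would compute $\operatorname{ind}{\mz F}_{\mz X}$ via excision/deformation to a complex-linear operator. Deforming $J$-antilinear lower-order terms away, ${\mz F}_{\mz X}$ is homotopic through Fredholm operators to the Dolbeault operator $\ov\partial_E$ of the holomorphic orbibundle $E = u^* T^\bot Y$ over the orbicurve $\mc C$, with prescribed asymptotics at the punctures encoding the residues $\lambda_j$ and the decay to $\upkappa_j$. The real index of the gauged problem combines the index over $X$-directions and the two ${\mf g}$-directions of the bundle $P\times_G{\mf g}^{\mb C}$; the $(0,1)$-constraint built into ${\mz A}_\tau^{1,p}$ makes the connection variation contribute exactly the ${\mf g}$-copy of a trivial holomorphic structure, whose contribution is absorbed into the $\mz G_\tau$-reduction. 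Applying the orbifold Riemann–Roch theorem to $E$ — with the degree-shift $-n_j(\vec{\mc C})$ at each puncture from the fractional monodromy weights $\nu_j^{(i)}\lambda_j$ and the $-b_j(\vec{\mc C})$ from fixing the asymptotic limit $\upkappa_j$ inside the $b_j$-dimensional $\wt X_j$ — gives
\begin{align*}
\operatorname{ind}_{\mb R} = (2g-2)\dim_{\mb C} X - 2c_1^G\cdot[{\mz X}] + b(\vec{\mc C}) + 2n(\vec{\mc C}),
\end{align*}
and $\chi({\mz C}_{\mz X}) = -\operatorname{ind}_{\mb R}$ yields (\ref{equationx58}). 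I expect the main obstacle to be the bookkeeping in this last step: correctly matching the weighted Sobolev setup at broad punctures (where the limit is a nondegenerate critical point of $\wt W_{\upgamma_j}^{(\delta_j)}|_{\wt X_{\upgamma_j}}$, so the tangential asymptotic operator contributes no index but the normal directions do via $n_j$) with narrow punctures (where the extra ${\mb C}^{k_n}$ moduli appear and the target direction is rigid), and verifying that the $G$-equivariant Chern number $c_1^G\cdot[{\mz X}]$ is the right curvature integral — this requires care with the orbifold structure and the choice of weight, but no new analytic input beyond Section \ref{section3}.
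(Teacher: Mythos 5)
Your plan follows the paper's strategy at every structural step: roll the deformation complex into a Fredholm operator, get the Fredholm property from the asymptotic model $\partial_s + L_j$ on the cylindrical ends (invertibility of $L_j$ being exactly $\tau_0, \tau_1 > 0$), and compute the index by an orbifold Riemann--Roch count with per-puncture corrections $-b_j$ and $-2n_j$ --- this is what Propositions \ref{prop54}, \ref{prop55}, and \ref{prop57} do. Two places in your version need repair, and both are exactly where the paper is careful. First, you roll up via the formal adjoint $d{\mz G}_{\mz X}^*$ and invoke a Hodge-type argument for $\chi = -{\rm ind}\,{\mz F}_{\mz X}$; but in the weighted $W^{k,p}$ setting with $p\neq 2$ the $L^2$-adjoint of $d{\mz G}_{\mz X}(\xi) = (d\xi, -{\mc X}_\xi)$ is not a bounded operator into the spaces in play, and Hodge theory does not directly give the Euler-characteristic identity. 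The paper avoids this by choosing $\delta_1(\alpha,\xi,{\bm\zeta}) = -*_c d *_c\alpha$ --- a \emph{partial} adjoint acting only on the connection variable --- so that $\delta_1\, d{\mz G}_{\mz X} = \Delta$ is a bona fide Fredholm operator on $W_\tau^{2,p}({\mf g})$, and Lemma~\ref{lemma56} supplies the Euler characteristic without any Hodge decomposition. Second, saying the connection variation is ``absorbed into the ${\mz G}_\tau$-reduction'' skips the actual computation: the operator $\alpha\mapsto(*_c d\alpha, -*_c d*_c\alpha)$ contributes ${\rm ind} = -2k - 2(1-g)$ (the constraint $\alpha_0 = *dh + df$ built into $T{\mz A}_\tau^{1,p}$ gives the Laplacian contribution $-2k$; the ${\mf g}_1$-part, after the K\"ahler identities, becomes $\ov\partial^*$ and gives $-2(1-g)$), and it is the cancellation of the $\pm 2k$ against ${\rm ind}\,\Delta = -2k$ that produces the clean relation $\chi = {\rm ind}\,{\mz D}_{\mz X} - 2(1-g)$ in Proposition~\ref{prop54}. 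Combined with ${\dim}_{\mb C}\wt X = {\dim}_{\mb C}X + 1$ and the ${\mb C}^{k_n}$ factor, this is how $(\ref{equationx58})$ emerges; your flagged ``bookkeeping'' worry is precisely where the signs must be tracked, and it does require the explicit splitting into admissible type-I, type-${\rm II}_1$, type-${\rm II}_2$ punctures from Proposition~\ref{prop57} rather than a generic Riemann--Roch invocation.
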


The theorem follows immediately from the following two propositions. 
\begin{prop}\label{prop54}
For any ${\mz X} \in {\mz B}$, the complex ${\mz C}_{\mz X}$ is Fredholm if and only if the operator ${\mz D}_{\mz X}: W_\tau^{1, p}\big( u^* T^\bot Y\big) \to L_\tau^p \big( \Lambda^{0,1} T^* \Sigma \otimes u^* T^\bot Y \big)$ is Fredholm. In that case, 
\begin{align*}
\chi \big( {\mz C}_{\mz X} \big) = {\rm ind} \big( {\mz D}_{\mz X} \big) - 2 (1-g).
\end{align*}
\end{prop}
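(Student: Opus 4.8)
The plan is to reduce the Fredholm property and Euler characteristic of the deformation complex $\mz C_{\mz X}$ to a statement purely about the Cauchy-Riemann type operator $\mz D_{\mz X}$ on vertical tangent vectors, by splitting off the gauge-theoretic part. First I would observe that the complex $\mz C_{\mz X}$ carries a natural subcomplex coming from the gauge directions: the map $d\mz G_{\mz X}(\xi) = (d\xi, -\mz X_\xi)$ injects $A_1 = W_\tau^{2,p}(\mf g)$ into $A_2$, and because $\mz X$ solves the equation, $d\mz W_{\mz X}\circ d\mz G_{\mz X} = 0$ (this is the infinitesimal form of gauge invariance, i.e.\ the linearization of \eqref{equation219}). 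So $\mz C_{\mz X}$ is genuinely a three-term complex and $\chi(\mz C_{\mz X}) = \dim\ker d\mz G_{\mz X} - \mathrm{ind}(d\mz W_{\mz X}/\mathrm{im}\,d\mz G_{\mz X})$ in the appropriate sense; more cleanly, $\chi(\mz C_{\mz X}) = -\dim A_1 + \dim A_2 - \dim A_3$ computed as an alternating sum of Fredholm indices once we know the relevant maps are Fredholm.

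The key step is to pass to a "good gauge slice." I would use the operator $*_c d$ (acting on the $\mf g$-valued one-form component $\alpha$) together with a Coulomb-type condition $d^*\alpha = 0$ to split $T\mz A_\tau^{1,p}$, so that the curvature-linearization block $*_c d\alpha + \sigma d\mu^*(u)V$ becomes, modulo the vertical contribution, an invertible-up-to-finite-dimensions operator on the slice. Concretely: the operator $d^*d$ on $W_\tau^{2,p}(\mf g)\to L_\tau^p(\mf g)$ is Fredholm with respect to the cylindrical metric for the chosen weight $\tau$ (this is the standard Lockhart-McOwen / APS weighted-analysis fact, and the earlier Proposition~\ref{prop216} already uses $\Delta$ being Fredholm between these weighted spaces), and its index is computed from the residues; combined with the Hodge-star, $*_c d \oplus d^*$ is a Fredholm operator between the connection-space model and $L_\tau^p(\Lambda^{0,1}\otimes\mf g)\oplus L_\tau^p(\mf g)$. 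After this reduction, the full linearized operator $d\mz W_{\mz X}$ restricted to the gauge-fixed slice is a triangular perturbation of $\mz D_{\mz X} \oplus (*_cd\oplus d^*)$, and a perturbation-by-a-relatively-compact-(or lower-order)-operator argument shows it is Fredholm exactly when $\mz D_{\mz X}$ is, with index the sum of the two indices. The bookkeeping of the additional finite-dimensional summands $\mb C^{k_n}$ in \eqref{equation53} (the narrow puncture parameters) and the closedness/nondegeneracy of $d\mz G_{\mz X}$ (injectivity is clear since $\mz X_\xi = 0$ with $d\xi = 0$ forces $\xi$ constant, hence $0$ in the weighted space) then yields the promised formula $\chi(\mz C_{\mz X}) = \mathrm{ind}(\mz D_{\mz X}) - 2(1-g)$, where the $-2(1-g) = -\chi(\Sigma)\cdot\dim_{\mb C}(\text{trivial})$... more precisely the $2(1-g)$ arises as the index of $*_c d \oplus d^*$ on the one (complex) gauge direction per factor of $G$ combined with the Euler-characteristic contribution, which is exactly the classical Riemann surface gauge-theory count.

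The main obstacle I expect is the weighted-analysis at the punctures: one must verify that for the chosen weight $\tau \in (0,\tau(\ud\delta)/2)$ the asymptotic operators (the $s$-independent limit operators on the cylinder $\mb R\times S^1$, obtained by freezing coefficients at the limiting critical point $\upkappa_{j,A}$ or $(\star,p_j)$) have no kernel, equivalently that $i\tau$ avoids the spectrum of the relevant self-adjoint operator on $S^1$ — this is where the hypotheses on broad vs.\ narrow punctures, the spectral gaps $\tau_0,\tau_1$ from Section~\ref{section3}, and the exponential-decay theorem enter, and it is what makes all three operators ($d^*d$, $*_cd\oplus d^*$, and $\mz D_{\mz X}$) simultaneously Fredholm on the same weighted spaces. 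A secondary technical point is checking that the cross-terms — in particular $\delta_\alpha \nabla\wt{\mc W}_A(u)$, the variation of the perturbed superpotential with respect to the connection, which is genuinely nonlocal through $\delta_{j,A}$ — are lower-order (compact) relative to the leading symbol, so that they do not affect the Fredholm property or the index; this uses the smoothness and the explicit $L^2$-type dependence of $A\mapsto\delta_{j,A}$ established after Definition~\ref{defn215}. Once these are in hand, the index additivity and the identification of the individual indices (via Riemann-Roch for $\mz D_{\mz X}$, to be done in the companion Proposition on $\mathrm{ind}(\mz D_{\mz X})$, and via the standard computation for the gauge block) complete the proof.
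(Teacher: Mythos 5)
Your overall strategy mirrors the paper's: quotient out gauge by an auxiliary Coulomb-type operator (the paper's $\delta_1(\alpha,\xi,\bm\zeta) = -{*_c}d{*_c}\alpha$, so that $\delta_1\circ d\mz G_{\mz X} = \Delta$), invoke Fredholmness of $\Delta$ on the cylindrical weighted spaces, and reduce the remaining operator $\mz I = (\delta_1, d\mz W_{\mz X})$ to a block-triangular form whose diagonal blocks are $\mz D_{\mz X}$ and the gauge-block $\alpha\mapsto(*_c d\alpha, -{*_c}d{*_c}\alpha)$.

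Two places where your account is imprecise enough to cause trouble if followed literally. First, you present the compactness of the cross-term $\delta_\alpha\nabla\wt{\mc W}_A(u)$ as a necessary check, but the paper's Lemma \ref{lemma57} requires \emph{only} that the off-diagonal block exist in a strictly triangular decomposition; no compactness (or lower-order property) of that entry is needed, and in fact the paper makes no such claim about $\delta_\alpha\nabla\wt{\mc W}_A(u)$. The term that \emph{is} discarded as compact is $\sigma\, d\mu^*(u)V(\alpha,\xi,\bm\zeta)$, and the reason is the vanishing of $\sigma$ at the punctures, not a symbol-order argument. Insisting on compactness of the $\delta_\alpha$ term would send you chasing an estimate you don't need and likely can't prove. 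Second, your description of how $-2(1-g)$ arises (``one complex gauge direction per factor of $G$'') misses the essential asymmetry built into the definition of $T\mz A_\tau^{1,p}$: the $\mf g_0$-component of $\alpha$ is constrained to the form $*dh + df$ with $h,f\in W_\tau^{2,p}$, whereas the $\mf g_1$-component is an unconstrained $\mf g_1$-valued $1$-form. Consequently the $\mf g_0$-block contributes index $-2k$ via a pair of Laplacians, while the $\mf g_1$-block contributes index $-2(1-g)$ via a Kähler-identity reduction to $\ov\partial^*$; and the final $\chi(\mz C_{\mz X}) = {\rm ind}(\mz I) - {\rm ind}(\Delta) = \big({\rm ind}(\mz D_{\mz X}) - 2k - 2(1-g)\big) + 2k$ only closes up because of this exact cancellation. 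Treating the two $S^1$-factors symmetrically would give the wrong count.
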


\begin{prop}\label{prop55}
For $\tau\in (0, 1)$ sufficiently small, the operator ${\mz D}_X: W_\tau^{1, p}\big( u^* T^\bot Y\big) \to L_\tau^p \big( \Lambda^{0,1} T^* \Sigma \otimes u^* T^\bot Y \big)$ is Fredholm and 
\begin{align*}
{\rm ind} \big( D_{\mz X} \big) = (2- 2g) {\rm dim}_{\mb C} \wt{X} + 2 c_1^G \cdot \big[ {\mz X} \big] - b(\vec{\mc C}) - 2n(\vec{\mc C}) - 2k_n.
\end{align*}
\end{prop}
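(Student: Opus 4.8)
The plan is to reduce the computation of $\mathrm{ind}({\mz D}_{\mz X})$ to a Riemann--Roch calculation on the orbicurve $\vec{\mc C}$, by first observing that ${\mz D}_{\mz X}$ is a compact perturbation of a complex-linear Cauchy--Riemann operator acting on the complex vector bundle $u^*T^\bot Y$ over $\Sigma^*$, and then matching its weighted Sobolev setting to the Fredholm theory of CR operators on cylinders with asymptotic operators having prescribed spectral gaps. Concretely, I would first write ${\mz D}_{\mz X}\xi=\bar\partial_A\xi+\nabla_\xi\nabla\wt{\mc W}_A(u)$ and note that the zeroth-order term $\nabla_\bullet\nabla\wt{\mc W}_A(u)$ is, on each cylindrical end, asymptotic to the Hessian of $\wt W_{\upgamma_j}^{(\delta_j)}$ at the critical point $\upkappa_{j,A}$ (using Theorem~\ref{thm42} and Theorem~\ref{thm43} for exponential convergence), and in the interior it is a lower-order operator; so up to a compact operator ${\mz D}_{\mz X}$ agrees with a genuine holomorphic CR operator plus its asymptotic models. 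The Fredholm property for weights $\tau\in(0,1)$ small then follows from the standard theory (e.g.\ the Lockhart--McOwen / Donaldson framework, or \cite{McDuff_Salamon_2004} adapted to cylindrical ends): one needs $\tau$ to avoid the spectrum of the asymptotic operators on $S^1$, which near the broad punctures is governed by the Hessian eigenvalues (all nonzero by ({\bf P4}), giving a gap, so any small $\tau$ works), and near the narrow punctures is governed by the normal weights $\nu_j^{(i)}\lambda_j$, whose distance to ${\bm i}{\mb Z}$ is $\tau_0(\lambda_j)>0$ — this is exactly why $\tau$ must be small.

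Next I would split the bundle $u^*T^\bot Y$ along each cylindrical end according to the decomposition into the tangent and normal directions of $\wt X_{\upgamma_j}$, as in \eqref{equation411}, and compute the index contribution of each piece separately. For the tangential part over a broad puncture, the operator is asymptotic to $\partial_s+J^T\partial_t+{\sf A}_\upkappa^{(\delta)}$ with a nondegenerate self-adjoint ${\sf A}$, which behaves like a Morse critical point and contributes an integer shift of $-b_j(\vec{\mc C})$ relative to the "naive" closed-surface count (one half-dimension of boundary condition per complex tangent direction, as in Floer theory). For the normal directions, the relevant asymptotic operator has eigenvalues shifted by the fractional parts $\nu_j^{(i)}\lambda_j-\lfloor\nu_j^{(i)}\lambda_j\rfloor$, and the standard index jump formula for CR operators with such asymptotics produces the term $-2n_j(\vec{\mc C})$; this is precisely the orbifold Riemann--Roch correction for the orbibundle ${\mc L}$ and its associated bundles over the orbicurve. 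For the narrow punctures there is the additional finite-dimensional freedom ${\mb C}^{k_n}$ in $T_{\mz X}{\mz B}$ encoding the choice of $p_j$, which accounts for the extra $-2k_n$: on a narrow end the limit point is allowed to vary in the ${\mb C}$-factor, so the operator ${\mz D}_{\mz X}$ with fixed weight $\tau>0$ sees a kernel/cokernel deficit of one complex dimension per narrow puncture compared to the version where that freedom is built in.

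Having assembled the contributions, I would identify the "naive" count: for a complex-linear CR operator on a rank-$n$ complex bundle $E\to\Sigma$ of genus $g$ with degree $\deg E$, the index is $2n(1-g)+2\deg E$ by Riemann--Roch. Here $n={\rm dim}_{\mb C}\wt X$, and the appropriate degree, interpreted equivariantly over the orbicurve with the $W$-structure and the extra $G_1$-bundle, is $c_1^G\cdot[{\mz X}]$; the orbifold/fractional corrections are exactly the $-2n(\vec{\mc C})$ already extracted, and the Morse-type boundary conditions at broad punctures give the $-b(\vec{\mc C})$. Combining, $\mathrm{ind}({\mz D}_{\mz X})=(2-2g){\rm dim}_{\mb C}\wt X+2c_1^G\cdot[{\mz X}]-b(\vec{\mc C})-2n(\vec{\mc C})-2k_n$, which is the claimed formula. (Then Proposition~\ref{prop54} adds back $2(1-g)$, coming from the ${\rm dim}_{\mb C}{\mb C}=1$ factor absorbed by the Lagrange-multiplier variable versus the $-1$ in passing from $\wt X$ to $X$ plus the $-2k_n$ being cancelled by the narrow-puncture evaluation data, yielding \eqref{equationx58}.)

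The main obstacle will be the careful bookkeeping of the asymptotic operators and the precise index-jump contributions, particularly distinguishing the broad-tangential ("Morse", half-index $-b_j$) from the broad-normal and narrow-normal ("fractional weight", index $-2n_j$) contributions, and correctly tracking how the choice of weight $\tau$ interacts with each spectral set; one must verify that for small $\tau$ the weight lands in the same "chamber" for every end simultaneously and that no spectral value of any asymptotic operator is crossed. A secondary technical point is justifying that the zeroth-order term of ${\mz D}_{\mz X}$ is genuinely a compact perturbation of its asymptotic model in the weighted spaces — this uses the exponential decay of $e({\bm u})$ from Theorem~\ref{thm43} together with the uniform bound $\tau<\tau(\ud\delta)/2$, and the fact that the non-holomorphic correction coming from $d\beta_j$ is compactly supported hence harmless. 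Once these points are settled, the computation is a routine (if lengthy) assembly of known index formulas.
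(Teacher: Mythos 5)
Your proposal follows essentially the same route as the paper's proof: split $u^*T^\bot Y$ along each cylindrical end into the tangential and normal directions of $\wt X_{\upgamma_j}$, classify the resulting asymptotic Cauchy--Riemann operators by type (Morse-type for the broad tangential part, trivial for the narrow tangential $p$-direction, purely weighted by the fractional parts $\nu_j^{(i)}\lambda_j$ for the normal part), and then sum the resulting index contributions against an orbifold Riemann--Roch degree that becomes $c_1^G\cdot[\mz X]$. The paper makes this precise by isolating a notion of \emph{admissible} real-linear CR operator on a Hermitian line bundle with types I, $\mathrm{II}_1$, $\mathrm{II}_2$ (Definition~\ref{defn58}) and proving a line-bundle index formula (Proposition~\ref{prop57}); it then reduces to line bundles on each end and \emph{extends the splitting over all of $\Sigma^*$} by noting that the flag manifold $\mathrm{Flag}(\mathbb{C}^n)$ is simply connected, so the frame data on $\bigcup_j\partial U_j$ extends across the interior. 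That extension step is implicit in your sketch but is a necessary ingredient: without a global decomposition $E=\bigoplus L^{(i)}$ one cannot simply add up line-bundle indices. Your identification of $-b(\vec{\mc C})$ (one real condition per complex tangent direction at a broad end), $-2n(\vec{\mc C})$ (fractional-weight jumps at the normal directions), and $-2k_n$ (type $\mathrm{II}_2$ punctures, two real conditions each, one per narrow puncture since $\dim_{\mathbb C}\wt X_{\upgamma_j}=1$ there) matches the paper's $b(\mc L,D)$ and $\lfloor\mc L\rfloor$ bookkeeping, and your remark about the admissible weight range (below both the Hessian gap $\tau_1(\ud\delta)$ of ({\bf P4}) and the normal spectral gap $\tau_0(\lambda_j)$) is exactly the condition the paper needs. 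The only thing I would flag is your parenthetical at the end about how Proposition~\ref{prop54} recombines with this to give~\eqref{equationx58}: the cancellation of the $-2k_n$ term actually happens because the deformation complex's tangent space carries the extra $\mathbb{C}^{k_n}$ summand, which enlarges the domain and contributes $+2k_n$ to the index of the top row of $\mz I$ — this is cleaner than the heuristic you gave and worth making precise if you write this up.
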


\subsection{Proof of Proposition \ref{prop54}}

The following results about Fredholm property of complexes of Banach spaces are standard.
\begin{lemma}\label{lemma56}
Suppose that $\begin{CD}A_1 @> d_1 >> A_2 @ > d_2 >> A_3\end{CD}$ is a complex of Banach spaces, and assume that there exists another Banach space $B$ and an operator $\delta_1: A_2 \to B$ such that the operator $\delta_1 d_1: A_1 \to B$ and $F= (\delta_1, d_2): A_2 \to B \oplus A_3$ are both Fredholm. Then the cohomology of the original complex is finite dimensional, and its Euler characteristic is
\begin{align*}
\chi = {\rm ind} (F) - {\rm ind} (\delta_1 d_1).
\end{align*}
\end{lemma}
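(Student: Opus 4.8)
\emph{Proof plan.} The idea is to deduce the lemma from elementary linear algebra performed along a short string of exact sequences, once the closed‑range technicalities have been cleared away. Throughout I write $H^0=\ker d_1$, $H^1=\ker d_2/{\rm im}\,d_1$, $H^2=A_3/{\rm im}\,d_2$, and I keep in mind that $d_2d_1=0$.

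First I would settle the topology. Since $\delta_1d_1$ is Fredholm, $\ker d_1\subseteq\ker(\delta_1d_1)$ is finite dimensional; splitting $A_1=\ker(\delta_1d_1)\oplus A_1''$ with $A_1''$ closed and using that $\delta_1d_1$ is bounded below on $A_1''$, the operator $d_1$ is bounded below on $A_1''$ as well, so ${\rm im}\,d_1$ is the sum of a finite dimensional subspace and a closed subspace, hence closed. For $d_2$: Fredholmness of $F=(\delta_1,d_2)$ makes ${\rm im}\,F$ closed of finite codimension, so its image under the projection $B\oplus A_3\to A_3$, which is ${\rm im}\,d_2$, has finite codimension; a bounded operator with finite‑codimensional range automatically has closed range, so ${\rm im}\,d_2$ is closed and $H^2$ is finite dimensional. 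Finite dimensionality of $H^1$ will fall out of the dimension count.

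The engine of the proof is the bounded operator $\bar d_1\colon \ker(\delta_1d_1)\to\ker F$, $a\mapsto d_1a$, which is well defined because $d_2d_1a=0$ always and $\delta_1d_1a=0$ by hypothesis, and whose kernel is exactly $H^0$. This yields the exact sequence $0\to H^0\to\ker(\delta_1d_1)\xrightarrow{\bar d_1}\ker F\to{\rm coker}\,\bar d_1\to0$. Using ${\rm im}\,d_1\subseteq\ker d_2$ one then checks three further identifications: ${\rm coker}\,\bar d_1$ embeds into $H^1$ with cokernel canonically $Q_1:=\delta_1(\ker d_2)/\delta_1({\rm im}\,d_1)$; ${\rm coker}\,F$ surjects onto $H^2$ with kernel canonically $Q_2:=B/\delta_1(\ker d_2)$; and ${\rm coker}(\delta_1d_1)=B/\delta_1({\rm im}\,d_1)$ sits in an exact sequence $0\to Q_1\to{\rm coker}(\delta_1d_1)\to Q_2\to0$. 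Since ${\rm coker}(\delta_1d_1)$ is finite dimensional, so are $Q_1$ and $Q_2$, and hence so is $H^1$, which completes the finiteness claim.

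To conclude I would add up dimensions with alternating signs along these four exact sequences. Writing ${\rm ind}(\delta_1d_1)=\dim\ker(\delta_1d_1)-\dim{\rm coker}(\delta_1d_1)$ and ${\rm ind}(F)=\dim\ker F-\dim{\rm coker}\,F$ and substituting the relations above, the contributions of $\dim\ker F$, $\dim{\rm coker}\,\bar d_1$, $\dim Q_1$ and $\dim Q_2$ all cancel, leaving exactly $\chi={\rm ind}(F)-{\rm ind}(\delta_1d_1)$ (with $\chi=\dim H^1-\dim H^0-\dim H^2$). The only genuinely delicate point is the closed‑range/finite‑codimension bookkeeping of the second paragraph; everything after that is routine homological algebra in finite dimensional quotients. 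In the application this abstract lemma is invoked with $(A_1,A_2,A_3)=({\rm Lie}\,{\mz G}_\tau,\,T_{\mz X}{\mz B},\,{\mz E}|_{\mz X})$ and $\delta_1$ the Coulomb gauge‑fixing operator, which is what reduces Proposition \ref{prop54} to the Fredholm property of ${\mz D}_{\mz X}$ alone.
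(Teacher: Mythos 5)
The paper offers no proof of this lemma; it is stated together with Lemma \ref{lemma57} under the single remark that such Fredholm facts for complexes of Banach spaces are ``standard.'' So there is no in-paper argument to compare against, and the relevant question is simply whether your proof is correct.

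It is. Your closed-range preliminaries are sound: $\ker d_1\subseteq\ker(\delta_1 d_1)$ is finite dimensional because $\delta_1 d_1$ is Fredholm; $d_1$ is bounded below on a closed complement $A_1''$ of $\ker(\delta_1 d_1)$ because $\|d_1 a\|\geq\|\delta_1\|^{-1}\|\delta_1 d_1 a\|\geq c\|a\|$ there, so $\operatorname{im}d_1=d_1(A_1'')+d_1(\ker(\delta_1 d_1))$ is closed plus finite-dimensional, hence closed; and $\operatorname{im}d_2=\pi_{A_3}(\operatorname{im}F)$ has finite codimension, which for a bounded operator forces the range to be closed (push through $(A_2/\ker d_2)\oplus V\to A_3$ and apply the open mapping theorem). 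Your four exact sequences of finite-dimensional spaces then check out: $\ker\bar d_1=\ker d_1=H^0$, $\operatorname{coker}\bar d_1=(\ker\delta_1\cap\ker d_2)/(\operatorname{im}d_1\cap\ker\delta_1)$ injects into $H^1$ with cokernel $Q_1=\delta_1(\ker d_2)/\delta_1(\operatorname{im}d_1)$, the kernel of the natural surjection $\operatorname{coker}F\to H^2$ is $Q_2=B/\delta_1(\ker d_2)$, and $0\to Q_1\to B/\delta_1(\operatorname{im}d_1)\to Q_2\to 0$ is the filtration of $\operatorname{coker}(\delta_1 d_1)$. The alternating-sum bookkeeping then yields $\operatorname{ind}(F)-\operatorname{ind}(\delta_1 d_1)=\dim H^1-\dim H^0-\dim H^2$, which is the sign convention the paper uses throughout (the deformation-theory ``virtual dimension'' convention, as Propositions \ref{prop54}--\ref{prop55} make clear), so there is no inconsistency. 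A finite-dimensional sanity check confirms the sign: there $\operatorname{ind}(F)-\operatorname{ind}(\delta_1 d_1)=\dim A_2-\dim A_3-\dim A_1=-(\dim H^0-\dim H^1+\dim H^2)$.
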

\begin{lemma}\label{lemma57}
Suppose $D: A_1 \oplus A_2 \to B_1 \oplus B_2$ is a bounded operator, which is written in the matrix form as
\begin{align*}
D = \left( \begin{array}{cc} D_1 & \alpha_1 \\
                              0 & D_2 \end{array}\right)。
\end{align*}
If $D_1: A_1 \to B_1$, $D_2: A_2 \to B_2$ are both Fredholm, then $D$ is Fredholm and
\begin{align*}
{\rm ind} (D) = {\rm ind} (D_1) + {\rm ind} (D_2).
\end{align*}
\end{lemma}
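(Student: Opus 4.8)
The plan is to treat Lemma \ref{lemma57} as the elementary functional-analytic fact it is: first establish that $D$ is Fredholm by exhibiting an explicit two-sided parametrix built from parametrices of $D_1$ and $D_2$, and then compute its index by a linear homotopy to the block-diagonal operator $D_1\oplus D_2$, using that the Fredholm index is locally constant on the space of Fredholm operators. The only thing that makes the upper-triangular case marginally less trivial than the block-diagonal case is the off-diagonal entry $\alpha_1$, and the point I would emphasize is that this entry produces no real difficulty: in the parametrix identities it always reappears multiplied by one of the compact ``error'' operators.

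Concretely, since $D_1$ and $D_2$ are Fredholm, I would fix bounded operators $P_i:B_i\to A_i$ together with compact operators $K_i$ on $A_i$ and $K_i'$ on $B_i$ such that $P_iD_i={\rm Id}-K_i$ and $D_iP_i={\rm Id}-K_i'$ for $i=1,2$, and then set
\begin{align*}
Q:=\left(\begin{array}{cc} P_1 & -P_1\alpha_1 P_2 \\ 0 & P_2\end{array}\right): B_1\oplus B_2\longrightarrow A_1\oplus A_2.
\end{align*}
A direct matrix multiplication gives
\begin{align*}
QD-{\rm Id}=\left(\begin{array}{cc} -K_1 & P_1\alpha_1 K_2 \\ 0 & -K_2\end{array}\right),\qquad DQ-{\rm Id}=\left(\begin{array}{cc} -K_1' & K_1'\alpha_1 P_2 \\ 0 & -K_2'\end{array}\right).
\end{align*}
Since $\alpha_1$ is bounded, every entry of both right-hand sides has a compact factor, so $QD-{\rm Id}$ and $DQ-{\rm Id}$ are compact; hence $Q$ is a two-sided parametrix for $D$ and $D$ is Fredholm.

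For the index, I would introduce the affine, norm-continuous family $T_t:=\left(\begin{array}{cc} D_1 & t\alpha_1 \\ 0 & D_2\end{array}\right)$ for $t\in[0,1]$, so that $T_1=D$ and $T_0=D_1\oplus D_2$. Repeating the parametrix computation verbatim with $\alpha_1$ replaced by $t\alpha_1$ shows that every $T_t$ is Fredholm, and since the index is locally constant on the open set of Fredholm operators and $t\mapsto T_t$ is continuous, ${\rm ind}(T_t)$ is independent of $t$; in particular ${\rm ind}(D)={\rm ind}(D_1\oplus D_2)$. Finally, for the block-diagonal operator one has ${\rm ker}(D_1\oplus D_2)={\rm ker}\,D_1\oplus{\rm ker}\,D_2$ and, since the image of $D_1\oplus D_2$ is ${\rm im}\,D_1\oplus{\rm im}\,D_2$, also ${\rm coker}(D_1\oplus D_2)={\rm coker}\,D_1\oplus{\rm coker}\,D_2$, so ${\rm ind}(D_1\oplus D_2)={\rm ind}(D_1)+{\rm ind}(D_2)$, and combining the two displays finishes the proof. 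There is no substantive obstacle here; the only step requiring a word of justification is the uniform parametrix bound along the path $T_t$, which is immediate because the correction terms depend continuously (indeed affinely) on $t$. An alternative, equally short route that I would mention would be to view $0\to(A_1\xrightarrow{D_1}B_1)\to(A_1\oplus A_2\xrightarrow{D}B_1\oplus B_2)\to(A_2\xrightarrow{D_2}B_2)\to0$ as a short exact sequence of two-term complexes and read off ${\rm ind}(D)={\rm ind}(D_1)+{\rm ind}(D_2)$ from the vanishing of the Euler characteristic of the associated six-term long exact sequence, exactly in the spirit of Lemma \ref{lemma56}.
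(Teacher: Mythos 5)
Your proof is correct. Note that the paper itself gives no proof of this lemma at all: it is stated (together with Lemma \ref{lemma56}) under the heading that these facts about Fredholm complexes are ``standard,'' and the text moves directly to the proof of Proposition \ref{prop54}. So there is no argument in the paper to compare against; what you have supplied is a complete justification of an omitted step. Your two parametrix identities check out by direct matrix multiplication (the off-diagonal entries $P_1\alpha_1 K_2$ and $K_1'\alpha_1 P_2$ are indeed compact since each contains a compact factor), the affine homotopy $T_t$ is uniformly Fredholm by the same computation with $\alpha_1$ replaced by $t\alpha_1$, and local constancy of the index then reduces everything to the block-diagonal case, where kernel and cokernel split. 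The alternative you sketch at the end, viewing the triangular operator as the middle term of a short exact sequence of two-term complexes, is equally valid and is arguably closer in spirit to how the paper deploys Lemma \ref{lemma56}; either route is acceptable, and nothing is missing.
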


\begin{proof}[Proof of Proposition \ref{prop54}]
The operator $(\alpha, \xi, {\bm \zeta})\mapsto \sigma d\mu^* (u) V(\alpha, \xi, {\bm \zeta})$ inside (\ref{equation55}) is a compact operator because it is of zero-th order and $\sigma$ converges to zero at punctures. Therefore it can be ignored when considering Fredholm properties. So we denote by
\begin{align*}
\wt{\mz D}_{\mz X}: W_\tau^{1, p} \big( T^* \Sigma \otimes {\mf g} \big) \oplus W_\tau^{1, p} \big( u^* T^\bot Y \big) \to L_\tau^p \big( \Lambda^{0,1}T^* \Sigma^* \otimes u^* T^\bot Y \big) \oplus L_\tau^p ( {\mf g})
\end{align*}
the operator defined by
\begin{align*}
\wt{\mz D}_{\mz X}\big( \alpha, \xi, {\bm \zeta} \big)  = \big( {\mz D}_{\mz X} (V(\alpha, \xi, {\bm \zeta})) + \delta_\alpha \nabla \wt{\mc W}_A(u) ,\ *_c d \alpha  \big).
\end{align*}
Then since $G$ is abelian, the modified sequence
\begin{align*}
{\mz C}_{\mz X}': \begin{CD} A_1 @> d{\mz G}_{\mz X} >> A_2 @> \wt{\mz D}_{\mz X} >> A_3\end{CD} 
\end{align*}
is still a chain complex. It has the same Euler characteristic as ${\mz C}_{\mz X}$ when one of them is Fredholm.

Now we define
\begin{align*}
\begin{array}{cccc}
\delta_1 : & A_2  &\to & L_\tau^p({\mf g})\\
& \big( \alpha , \xi, {\bm \zeta}\big) & \mapsto &  - *_c d *_c \alpha.
\end{array}
\end{align*}
Then for $h \in A_1$, we have $\delta_1 d{\mz G}_{\mz X}(h) = \Delta h$, where $\Delta: A_1 \to L_\tau^p( {\mf g})$ is the positive-definite Laplacian with respect to the cylindrical metric. Then by Lemma \ref{lemma56}, we see that ${\mz C}_{\mz X}'$ is Fredholm if and only if both $\Delta$ and ${\mz I} = (\delta_1, d{\mz W}_{\mz X}): A_2\to  A_3 \oplus L_\tau^p( {\mf g})$ are Fredholm operators. Indeed, since $\tau \in (0, 1)$, $\Delta$ is Fredholm and
\begin{align*}
{\rm ind} \Delta = -k {\rm dim}G = - \# \{{\rm punctures}\}\cdot {\rm dim} G = -2k.
\end{align*}
Therefore, if $\delta  \in (0, 1)$ and ${\mz I}$ is Fredholm, then by Lemma \ref{lemma56}, we have
\begin{align}\label{equation59}
\chi\big( {\mz C}_{\mz X}' \big) = {\rm ind} \big({\mz I} \big) - {\rm ind} \Delta = {\rm ind} \big( {\mz I} \big) + 2k.
\end{align}

Now we look at the operator ${\mz I}$, which is 
\begin{align*}
{\mz I} \left( \begin{array}{c}
\alpha  \\ \xi \\ {\bm \zeta}
\end{array}\right) \mapsto \left( \begin{array}{c}  \wt{\mz D}_{\mz X} (\alpha, \xi, {\bm \zeta}) \\ - *_c d *_c a
\end{array}        \right)= \left( \begin{array}{c} {\mz D}_{\mz X} (V(\alpha, \xi, {\bm \zeta})) + \delta_\alpha \nabla \wt{\mc W}_A(u) \\ *_c d \alpha \\  - *_c d *_c  \alpha
\end{array} \right).
\end{align*}
We claim that when $\tau \in (0,1)$, the operator $\alpha \mapsto ( *_c d\alpha, \ - *_c d *_c \alpha)$ is Fredholm and has index equal to $-2k - 2(1-g)$. By Lemma \ref{lemma57} and (\ref{equation59}), the proposition follows from this claim. 

To prove the claim, notice that the ${\mf g}_0$-component of $\alpha$, denoted by $\alpha_0$, is mapped by 
\begin{align*}
\alpha_0 = *_c d h + d f \mapsto (  *_c d *_c dh,\ - *_c d *_c df  ) = ( - \Delta h, \Delta f).
\end{align*}
It is Fredholm and has index $-2k$. On the other hand, for the ${\mf g}_1$ component of $\alpha$, denoted by $\alpha_1$, we define ${\mb R}$-linear isomorphisms $\iota_1: \Lambda^{0,1} T^* \Sigma^* \otimes_{{\mb C}} {\mf g}_1^{\mb C} \to T^*\Sigma^* \otimes_{\mb R} {\mf g}_1$ by $b\mapsto  (b + \ov{b})$ and $\iota_2: {\mf g}_1 \oplus {\mf g}_1 \to {\mf g}_1^{\mb C}$ by $\iota(a_1, a_2) = a_1 + {\bm i} a_2$. Then we have
\begin{align*}
\begin{split}
\iota_2 \big( *_c d,  - *_c d *_c \big) \iota_1 \theta = & *_c d( \theta + \ov\theta) - {\bm i}  *_c d *_c ( \theta + \ov\theta )\\
 = & *_c (\partial \theta + \ov\partial \ov\theta ) + {\bm i} ( \ov\partial^* \theta + \partial^* \ov\theta )\\
 = &\ {\bm i} \ov\partial^* \theta - {\bm i} \partial^* \ov\theta + {\bm i} \ov\partial^* \theta + {\bm i} \partial^* \ov\theta \\
 = &\ 2 {\bm i} \ov\partial^* \theta.
\end{split}
\end{align*}
Here $\partial^*$ and $\ov\partial^*$ are the adjoint of $\partial$ and $\ov\partial^*$ with respect to the cylindrical metric, respectively; the third equality follows from the K\"ahler identities on $\Sigma^*$. Therefore we see that the operator $\alpha_1 \mapsto (*_c d\alpha_1, -*_c d *_c \alpha_1)$ is Fredholm if and only if the operator
\begin{align*}
\ov\partial^*: W_\tau^{1, p} \big( \Lambda^{0,1} T^* \Sigma^* \big) \to L_\tau^p\otimes {\mb C}
\end{align*}
is Fredholm. When $\tau \in (0, 1)$, it is the case and 
\begin{align*}
{\rm ind}_{\mb R} \big( \ov\partial^* \big) = -2 (1-g).
\end{align*}
\end{proof}

\subsection{Proof of Proposition \ref{prop55}}

The proof of Proposition \ref{prop55} is a generalization of the computation of Fredholm indices in \cite{Mundet_Tian_Draft} and \cite[Section 5.1]{FJR3}.

\subsubsection*{Riemann-Roch for orbifold line bundles}

We consider a smooth Hermitian line bundle $L \to \Sigma^*$ together with a meromorphic unitary connection $A$. Suppose for each marked point $z_j$, over the cylindrical ends $U_j\simeq \Theta_+$, we choose a unitary trivialization $\xi_j: U_j \times {\mb C} \to  L|_{U_j}$ so that the connection form is
\begin{align*}
A = d + \alpha + \lambda_j dt
\end{align*}
where $\alpha \in \Omega^1(\Theta_+, {\bm i} {\mb R})$ extends to a continuous 1-form over the marked point and $\lambda_j \in {\bm i} {\mb R}$ (the residue) is a constant. $\lambda_j$ only depends on the homotopy class of the local trivialization $\xi_j$, and for different trivializations, the residues differ by an integer multiple of ${\bm i}$. $\exp (2\pi \lambda_j) \in U(1)$ is called the monodromy of the connection.

We assume that for every $z_j$, $\lambda_j \in {\bm i} {\mb Z}/ r$. Then we can define an ``orbifold completion'' ${\mc L} \to {\mc C}$ of $L \to \Sigma^*$, where ${\mc C}$ is an orbicurve obtained by adding orbifold charts near $z_j$ to $\Sigma^*$, and ${\mc L}$ is an orbifold line bundle. The orbifold degree of ${\mc L}$ is defined as follows. The trivializations ${\bm \xi}= (\xi_j)_{j=1}^k$ defines a smooth line bundle $L({\bm \xi}) \to \Sigma$. We define
\begin{align*}
{\rm deg}^{orb} {\mc L} = {\rm deg} L({\bm \xi})  - {\bm i} \sum_{j=1}^k \lambda_j \in {\mb Z}/r.
\end{align*}
We also define
\begin{align*}
\lfloor {\mc L} \rfloor = {\rm deg} L({\bm \xi}) + \sum_{j=1}^k \lfloor -{\bm i} \lambda_j \rfloor  \in {\mb Z}.
\end{align*}
Both ${\rm deg}^{orb} {\mc L}$ and $\lfloor {\mc L} \rfloor$ are independent of the choice of ${\bm \xi}$. 

Consider a class of real linear Cauchy-Riemann operators 
\begin{align*}
D: \Omega^0 (L) \to \Omega^{0,1}(L).
\end{align*}
Their Fredholm properties essentially only depends on their behavior near the punctures. 

\begin{defn}\label{defn58}
Let $L \to \Theta_+$ be a Hermitian line bundle and $D: \Omega^0(\Theta_+, L) \to \Omega^{0,1}(\Theta_+, L)$ is a real linear, first-order differential operator. $D$ is called {\bf admissible} if the following conditions are satisfied
\begin{enumerate}
\item $D - \ov\partial_A$ is a zero-th order operator for some meromorphic unitary connection $A$ on $L$.

\item If the monodromy of $A$ at the infinity of $\Theta_+$ is not 1, then $D = \ov\partial_A$. In this case we say that $D$ is of type I (at the puncture at infinity).

\item If the monodromy of $A$ at the infinity of $\Theta_+$ is 1, then there exists a trivialization $\xi: \Theta_+ \times {\mb C} \to L$ such that with respect to this trivialization, either $Df = \ov\partial f +  \tau \ov{f}$ for some $\tau >0$, or $Df = \ov\partial f$. In the first case we say that $D$ is of type ${\rm II}_1$ and in the second case we say that $D$ is of type ${\rm II}_2$.
\end{enumerate}

If $L\to \Sigma^*$ is a Hermitian line bundle and $D: \Omega^0(\Sigma^*, L) \to \Omega^{0,1}(\Sigma^*, L)$ is a real linear first-order differential operator, then we say that $D$ is admissible if its restriction to each cylindrical end $U_j \simeq \Theta_+$ is admissible in the above sense. If the restriction of $D$ to $U_j$ is of one of the three types defined above, we say that $z_j$ is a puncture of that type. We define $b({\mc L}, D) \in {\mb Z}$ be the number of type ${\rm II}_1$ punctures plus twice of the number of type ${\rm II}_2$ punctures. 
\end{defn}

We have the following index formula
\begin{prop}\label{prop57}
Suppose $D: \Omega^0(\Sigma^*, L) \to \Omega^{0,1}(\Sigma^*, L)$ is admissible. Then there exists $\tau_0>0$ such that for $\tau \in (0, \tau_0)$, the operator $D$ defines a Fredholm operator
\begin{align*}
D: W_\tau^{1, p} (L ) \to L_\tau^p (\Lambda^{0,1} T^* \Sigma^* \otimes L ).
\end{align*}
Moreover, its (real) index is given by
\begin{align*}
{\rm ind} ( D ) = 2-2 g- b({\mc L}, D)  + 2 \lfloor {\mc L} \rfloor. 
\end{align*}
\end{prop}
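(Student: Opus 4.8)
Proof proposal for Proposition \ref{prop57}.

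\textbf{Overall strategy.} The plan is to reduce the index computation for an arbitrary admissible $D$ to the two model situations that are fully understood: a genuine $\ov\partial$-operator on an orbifold line bundle, and the local type ${\rm II}_1$/${\rm II}_2$ models on the half-cylinder. First I would establish Fredholmness. On each cylindrical end $U_j \simeq \Theta_+$, after passing to the cylindrical coordinates $z = s+{\bm i}t$ and using the chosen unitary trivialization $\xi_j$, the operator $D$ restricted to $U_j$ is, up to a zeroth-order exponentially decaying perturbation, one of the translation-invariant model operators: $\ov\partial_A$ with $A = d+\lambda_j\,dt$ (type I), $\ov\partial f + \tau\ov f$ (type ${\rm II}_1$), or $\ov\partial f$ (type ${\rm II}_2$). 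For each of these, the associated asymptotic operator on $L^2(S^1)$ is a first-order self-adjoint operator whose spectrum I would compute explicitly by Fourier series in $t$: in the type I case the eigenvalues are $\{k - {\bm i}\lambda_j : k \in {\mb Z}\}$, which avoids $0$ precisely because $\lambda_j \notin {\bm i}{\mb Z}$; in the type ${\rm II}_1$ case the relevant operator has a spectral gap of size $\tau>0$ around $0$; in the type ${\rm II}_2$ case $0$ is an eigenvalue. Choosing $\tau_0 < \min_j \operatorname{dist}(0, \operatorname{Spec})$ over all type I and type ${\rm II}_1$ ends, and $\tau_0 < $ (first positive eigenvalue) at type ${\rm II}_2$ ends, the standard theory of weighted Sobolev spaces on manifolds with cylindrical ends (as in Lockhart--McOwen, or the exposition used for Cauchy--Riemann operators in \cite{FJR3}, \cite{Mundet_Tian_Draft}) gives that $D: W_\tau^{1,p}(L) \to L_\tau^p(\Lambda^{0,1}T^*\Sigma^* \otimes L)$ is Fredholm for all $\tau \in (0,\tau_0)$, and that the index is locally constant in $\tau$ on that interval.

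\textbf{Reduction and index count.} With Fredholmness in hand, I would compute the index by a deformation-plus-excision argument. First, the space of admissible operators with fixed asymptotic types at the punctures is connected, and deforming within it (keeping the asymptotic model fixed at each end, homotoping the zeroth-order term to zero away from the ends) does not change the index; so I may assume $D$ agrees with its model on each $U_j$. Now treat each end according to its type. At a type I end the operator is already $\ov\partial_{\mc L}$ for the orbifold line bundle, and contributes nothing beyond what the Riemann--Roch count for $\lfloor {\mc L}\rfloor$ already accounts for. At a type ${\rm II}_2$ end, $Df = \ov\partial f$ with trivial monodromy, so this end behaves like an ordinary puncture of a line bundle on the closed surface; imposing the weight $\tau>0$ (decay) versus $\tau<0$ (allowing a constant) changes the index by $2$, and the convention in $\lfloor{\mc L}\rfloor$ together with the definition $b({\mc L},D)$ counting such a puncture twice is exactly calibrated so that each type ${\rm II}_2$ puncture lowers the index by $2$ relative to the no-puncture count. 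At a type ${\rm II}_1$ end, $Df = \ov\partial f + \tau\ov f$; here I would compare with the type ${\rm II}_2$ operator $\ov\partial f$ via a homotopy $Df = \ov\partial f + t\tau\ov f$, $t\in[0,1]$: the asymptotic operator's spectrum crosses $0$ exactly once as $t$ moves from $0$ (eigenvalue $0$ present) to $t>0$ (gap opens), so the weighted index jumps by $1$ (one real dimension), which is why a type ${\rm II}_1$ puncture is counted once in $b({\mc L},D)$. Assembling: the baseline is the Riemann--Roch index $2(1-g) + 2\lfloor{\mc L}\rfloor$ for $\ov\partial$ on a line bundle of integer degree $\lfloor{\mc L}\rfloor$ over the closed surface $\Sigma$, and then each type ${\rm II}_1$ puncture subtracts $1$ and each type ${\rm II}_2$ puncture subtracts $2$, giving
\begin{align*}
{\rm ind}(D) = 2 - 2g - b({\mc L}, D) + 2\lfloor {\mc L}\rfloor .
\end{align*}

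\textbf{Main obstacle.} The delicate point is matching the orbifold bookkeeping — the definitions of ${\rm deg}^{orb}{\mc L}$, $\lfloor{\mc L}\rfloor$, and $b({\mc L},D)$ — with the analytic index in the weighted spaces, so that all the integer shifts land correctly; in particular one must be careful that $\lfloor {\mc L}\rfloor$ already incorporates the $\lfloor -{\bm i}\lambda_j\rfloor$ corrections at type I ends (where the residue is genuinely fractional) while the $b$-term only sees the trivial-monodromy ends, and that the choice of weight $\tau \in (0,\tau_0)$ rather than $\tau \in (-\tau_0,0)$ is the one producing these particular floors. I expect the spectral-flow computation at the type ${\rm II}_1$ ends, and verifying the excision/gluing formula that expresses ${\rm ind}(D)$ as the closed-surface index plus local contributions, to be where the real work lies; the rest is a careful but routine transcription of the Riemann--Roch theorem for orbicurves together with the Lockhart--McOwen framework. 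I would cite \cite{FJR3} and \cite{Mundet_Tian_Draft} for the analytic backbone and only spell out the modifications forced by the presence of all three puncture types simultaneously.
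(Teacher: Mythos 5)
Your proof is correct and reaches the same formula, but by a genuinely different route from the paper's. The paper invokes the index gluing formula for Cauchy--Riemann operators with totally real boundary conditions (cf.\ \cite[Appendix~C]{McDuff_Salamon_2004}): it cuts $\Sigma$ into pairs of pants, disks, and cylinders, each carrying a CR operator with known index, and reduces the only unknown contributions to a model operator $D_0$ on the trivial line bundle over the one-punctured sphere with a type ${\rm II}_1$ or ${\rm II}_2$ puncture. For ${\rm II}_1$ it computes the index via the Conley--Zehnder index of the symplectic path $e^{{\bm i}St}$, $S = {\rm diag}(\tau,-\tau)$, which is $0$, giving ${\rm ind}(D_0) = 1$; for ${\rm II}_2$ it identifies $D_0$ with the complex $\ov\partial$ with a point constraint, giving ${\rm ind}(D_0) = 0$. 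You instead stay inside the Lockhart--McOwen weighted-Sobolev framework throughout: Fredholmness comes from the spectral gap of each asymptotic operator at the weight $\tau$, the index is locally constant in $\tau$ and under admissible homotopies, and the count is assembled as the closed-surface Riemann--Roch baseline $2(1-g)+2\lfloor{\mc L}\rfloor$ corrected by $-2$ at each ${\rm II}_2$ end (the weight $\tau>0$ forces decay past the double zero eigenvalue) and by $-1$ at each ${\rm II}_1$ end (the two $k=0$ eigenvalues at $0$ split into $\pm\tau$ and only one branch crosses the fixed weight). The two approaches are complementary: the paper's is shorter granted the gluing formula as a black box, while yours avoids cutting the surface and makes the Fredholm statement and the dependence on the weight explicit. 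One item worth spelling out in your write-up is the sign of the spectral-flow jump at a ${\rm II}_1$ end: as the deformation $t\mapsto \ov\partial f + t\tau\ov f$ pushes the positive $k=0$ eigenvalue of the asymptotic operator upward through the fixed weight $\mu\in(0,\tau)$, the index at fixed weight increases by one (fewer eigenvalues lie below $\mu$, and for a puncture at $+\infty$ the index is decreasing in the weight), so the jump is $+1$ and not $-1$, matching the convention in $b({\mc L},D)$ that each ${\rm II}_1$ puncture is counted once.
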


\begin{proof}
We can use the index gluing formula (about Cauchy-Riemann operators with totally real boundary conditions, see \cite[Appendix C]{McDuff_Salamon_2004}) to reduce the proof to a simple case. More precisely, we can cut the Riemann surface $\Sigma$ into the union of pair-of-pants, disks and cylinders, glued along common boundaries. Then the index of $D$ is the sum of the indices of Cauchy-Riemann operators $D_i$ on the $i$-th component, with totally real boundary conditions. If the component doesn't contains an original puncture, then its index formula is known. The only unknown case can be deduced from the case of an operator $D_0$ on the trivial line bundle on the sphere with only one puncture, where the puncture is either of type ${\rm II}_1$ or ${\rm II}_2$ (type I case is well-known).

In such a case $\lfloor {\mc L} \rfloor = 0$. If the puncture is of type ${\rm II}_1$, then using the cylindrical coordinates near the puncture, $D_0$ can be written as (up to a compact operator)
\begin{align*}
D_0 = {1\over 2} {\partial \over \partial s} +  {1\over 2} {\bm i}  {\partial\over \partial t} + \left( \begin{array}{cc} \tau & 0 \\ 0 & -\tau \end{array} \right).
\end{align*}
If we denote $S = \left( \begin{array}{cc} \tau & 0 \\ 0 & -\tau \end{array}\right)$, then the symplectic path $\left\{ e^{{\bm i} S t}\right\}_{t\geq 0}$ has eigenvalues $e^{\tau t}$ and $e^{-\tau t}$ which are not on the unit circle for $t >0$. Therefore, the Conley-Zehnder index of this path is zero. By the index formula for Cauchy-Riemann operators of this type, for $\delta_0>0$ small enough, $D_0$ is Fredholm and
\begin{align*}
{\rm ind} (D_0) = 1 = 2 - 2g(S^2) - 1 = 2 - 2g - b({\mc L}, D_0).
\end{align*}

If the puncture is of type ${\rm II}_2$, then $D_0$ is the same as a complex Cauchy-Riemann operator (up to a compact operator) with one point constrain. Therefore
\begin{align*}
{\rm ind} (D_0) = 2 ( 1- g(S^2) ) - 2 = 2 - 2 g(S^2) - b({\mc L}, D_0).
\end{align*}
\end{proof}

\subsubsection*{A splitting of $u^* T^\bot Y$}

For the fixed solution ${\mz X}= (A, u)\in {\mz B}$, denote $E := u^* T^\bot Y \to \Sigma^*$. Remember that the principal $G$-bundle extends to an orbifold $G$-bundle ${\mc P} \to {\mc C}$. Moreover, the section $u$ extends to an orbifold section ${\mc U}: {\mc C} \to {\mc Y}$. Similarly, we can show that $E$ extends to an orbifold vector bundle ${\mc E}\to {\mc C}$.

Now we consider the linearization ${\mz D}_{\mz X}$. The idea of computing ${\rm ind}\left( {\mz D}_{\mz X}\right)$ is that near each puncture, we can split $E$ as direct sums of line bundles, and, up to compact operators, the restriction of ${\mz D}_{\mz X}$ to each cylindrical end is the direct sums of admissible operators. Moreover, we can extend the splittings over $\Sigma^*$, i.e., we have a decomposition
\begin{align*}
E = \bigoplus_{i=1}^n L^{(i)}.
\end{align*}
Then we can show that, on each $L^{(i)}$, there is an operator $D^{(i)}$ which is an admissible Cauchy-Riemann operator on $L^{(i)}$ such that ${\mz D}_{\mz X}- \displaystyle \oplus_{i=1}^n D^{(i)}$ is compact. We carry out this idea in the following steps. Similar procedures are used in \cite{Mundet_Tian_Draft}.

{\bf Step 1.} First we examine the operator ${\mz D}_{\mz X}$ around each puncture $z_j$, with monodromy $\upgamma_j \in {\mb Z}_r$. With respect to the trivialization $\phi_j$, $u$ is identified with a map $u_j: \Theta_+ \to X$ and the connection is identified with a 1-form $\phi ds + \psi dt + \lambda_j dt$ for $\phi, \psi: \Theta_+ \to \mf g$. Since $\displaystyle \lim_{s \to +\infty} u_j(s, t) = v_j(t):=e^{ - \lambda_j t} \upkappa_j$. For the purpose of studying Fredholm properties of ${\mz D}_{\mz X}$, we can deform ${\mz X} = (A, u)$ such that over $\Theta_+$, $u_j(s, t) = e^{ - \lambda_j t} \upkappa_j$, and $A = d + \lambda_j dt$. Then, after this modification, we have
\begin{align*}
{\mz D}_{\mz X} \xi = \ov\partial \xi + {1\over 2} \nabla_\xi {\mc X}_{\lambda_j}(u_j) +  \sum_{l=0}^s e^{- \rho_l(\lambda_j t)}\nabla_\xi \nabla F_l^{(\delta_{j, A})}(u_j).
\end{align*}
Denote $W_j = \sum_{l=0}^s F_{j; l}^{(\delta_{j, A})}$.

{\bf Step 2.} Now we see that on $U_j$ we have an $S^1$-equivariant splitting $v_j^* T \wt{X} \simeq v_j^* T \wt{X}_j \oplus v_j^* \wt{N}_j$. Moreover, since $dF_l$ vanishes along the normal bundle $\wt{N}_j$, the operator ${\mz D}_{\mz X}$ splits over $U_j$ as the direct sum of two operators
\begin{align*}
\begin{array}{cccc}
{\mz D}_j^T: & \Gamma \big( \Theta_+, v_j^* T\wt{X}_j  \big) &\to & \Omega^{0,1} \big( \Theta_+, v_j^* T\wt{X}_j \big),\\
{\mz D}_j^N: & \Gamma \big( \Theta_+, v_j^* \wt{N}_j \big)  &  \to & \Omega^{0,1} \big( \Theta_+, v_j^* \wt{N}_j \big).
\end{array}
\end{align*}

{\bf Step 3.} We consider the tangential part ${\mz D}_j^T$. If $z_j$ is narrow, then $W_j|_{\wt{X}_j} \equiv 0$. In this case ${\mz D}_j^T$ is the same as a usual homogeneous Cauchy-Riemann operator. We trivialize $v_j^* T\wt{X}_j$ over $\Theta_+$ so that we can write
\begin{align*}
v_j^* T\wt{X}_j \simeq \bigoplus_{\nu=1}^{b_j} L^{(\nu)}
\end{align*}
and the restriction of ${\mz D}_j^T$ to $U_j$ is the direct sum of $D_j^{(\nu)}: \Omega^0( U_j, L^{(\nu)}) \to \Omega^{0,1}(U_j, L^{(\nu)})$. Here each $D_j^{(\nu)}$ is of type ${\rm II}_2$ in the sense of Definition \ref{defn58}.

If $z_j$ is broad, then $W_j|_{\wt{X}_j}$ is a holomorphic Morse function. The Hessian of $\wt{W}_j$ at $\upkappa_{j, A}$ is a real quadratic form $H_j$ on $T_{\upkappa_j} \wt{X}_j$ satisfying $H_j( \cdot, \cdot) = -H_j(J \cdot, J \cdot)$. Then we have decomposition of $T_{\upkappa_j} \wt{X}_j$ into complex lines
\begin{align*}
T_{\upkappa_j} \wt{X}_j \simeq \bigoplus_{\nu =1}^{b_j} Z^{(\nu)}
\end{align*}
with respect to which the Hessian is diagonalized. On each $Z^{\nu}$, $H_j$ has eigenvalues $\pm b_\nu$ for some $b_\nu > 0$. The path of diffeomorphisms $e^{\lambda_j t}$ induces a trivialization of $v_j^* T\wt{X}_j$ along $S^1$. Therefore we have a trivialization $U_j \times T_{\upkappa_j} \wt{X}_j  \to v_j^* T\wt{X}_j$, which is well-defined since $\wt{X}_j$ is fixed by $\upgamma_j$. With respect to this trivialization, ${\mz D}_j^T$ splits as the direct sum of operators
\begin{align*}
D_j^{(\nu)}: \Omega^0 \big( \Theta_+, L_j^{(\nu)} \big)  \to \Omega^{0,1} \big( \Theta_+, L_j^{(\nu)} \big),\ \nu =1, \ldots, b_j.
\end{align*}
Each $D_j^{(\nu)}$ is of type ${\rm II}_1$ in the sense of Definition \ref{defn58}.

{\bf Step 4.} Now we consider the normal component ${\mz D}_j^N$. By ({\bf P2}) of Hypothesis \ref{hyp25} and ({\bf Q2}) of Hypothesis \ref{hyp28}, the Hessian of $W_j$ vanishes in the normal direction. Therefore, 
\begin{align*}
{\mz D}_j^N \xi = \ov\partial \xi + {1\over 2} \nabla_\xi {\mc X}_{\lambda_j}(u_j).
\end{align*}
On the other hand, we have the splitting of normal bundles 
\begin{align*}
\wt{N}_j \simeq \bigoplus_{i=b_j+1}^n \wt{N}^{(i)}_j,
\end{align*}
where each $\wt{N}^{(i)}$ is an $S^1$-equivariant line bundle over $\wt{X}_j$. If we denote $L^{(i)}_j= v_j^* \wt{N}^{(i)}_j$, then ${\mz D}_j^N$ splits as the direct sum of Cauchy-Riemann operators $D_j^{(i)}: \Omega^0(\Theta_+, L^{(i)}_j)\to \Omega^{0,1}(\Theta_+, L^{(i)}_j)$. Each $D_j^{(i)}$ is of type I in the sense of Definition \ref{defn58}.

{\bf Step 5.} So far, for each cylindrical end, we have constructed a splitting
\begin{align}\label{equation510}
E|_{U_j} = v_j^* T \wt{X} \simeq \bigoplus_{i=1}^n L^{(i)}_j
\end{align}
and differential operators
\begin{align*}
{\mz D}^{(i)}_j : \Omega^0 \big( \Theta_+, L^{(i)}_j \big) \to \Omega^{0,1} \big( \Theta_+,  L^{(i)}_j \big)
\end{align*}
such that ${\mz D}_{\mz X} - \bigoplus_{i=1}^n {\mz D}^{(i)}_j$ is a compact operator. We claim that the union of the splittings over $\cup_{j=1}^k U_j$ can be extended to whole $\Sigma^*$. 

Indeed, over $\Sigma^* \setminus \cup_{j=1}^k U_j$ the bundle $E$ is trivial. Choosing a trivialization, the splitting (\ref{equation510}) induces a smooth map from $\partial \left( \Sigma^* \setminus \cup_{j=1}^l U_j \right)$ to the flag manifold ${\rm Flag}({\mb C}^n)$. Since ${\rm Flag}({\mb C}^n)$ is simply-connected, this map can be smoothly extended to $\Sigma^* \setminus \cup_{j=1}^l U_j$, which means we extend the splitting (\ref{equation510}) to the interior. 

Then we obtained a splitting of $E$ as direct sum of line bundles $L^{(i)} \to \Sigma^*$ for $i = 1, \ldots, n$. The differential operators ${\mz D}^{(i)}_j$ on $L^{(i)}|_{U_j} = L_j^{(i)}$ can be extended smoothly to ${\mz D}^{(i)}: \Omega^0 (L^{(i)}) \to \Omega^{0,1}(L^{(i)})$, while the ambiguities of the extensions are compact operators. By our construction in previous steps, ${\mz D}^{(i)}$ is admissible in the sense of Definition \ref{defn58}. Apply Proposition \ref{prop57} to each ${\mz D}^{(i)}$, we see that there exists $\delta_0>0$ such that for all $\delta \in (0, \delta)$, each ${\mz D}^{(i)}$ induces a Fredholm operator
\begin{align*}
{\mz D}^{(i)}: W_\tau^{1, p} \big( L^{(i)} \big) \to L_\tau^p \big( \Lambda^{0,1}\otimes L^{(i)} \big).
\end{align*}
Moreover, each $L^{(i)}$ extends to an orbi-bundle ${\mc L}^{(i)} \to {\mc C}$ and 
\begin{align}\label{equation511}
\begin{split}
{\rm ind} \big( {\mz D}_{\mz X} \big) = &\ \sum_{i=1}^n {\rm ind} \big( {\mz D}^{(i)} \big) \\
= &\ \sum_{i=1}^n \big( 2- 2g - b ( {\mc L}^{(i)}, {\mz D}^{(i)} )  + 2 \big\lfloor {\mc L}^{(i)} \big\rfloor \big) \\
= &\ (2- 2g) {\rm dim}_{\mb C} \wt{X} - b(\vec{\mc C}) + 2 \sum_{i=1}^n \big\lfloor  {\mc L}^{(i)} \big\rfloor.
\end{split}
\end{align}
Proposition \ref{prop55} follows by noticing that the sum of all $\lfloor {\mc L}^{(i)} \rfloor$ is equal to 
\begin{align*}
c_1^G \cdot \big[ {\mz X} \big] - \sum_{z_j\ {\rm narrow}} \big( n_j(\vec{\mc C}) + 1 \big).
\end{align*}

\section{Stable solutions and the compactness theorem}\label{section6}

From this section on we start to consider the compactification of the moduli space of the perturbed gauged Witten equation. 

\subsection{Solitons}

\begin{defn}
Let $\delta \in (0, 1]$, $\lambda \in {\bm i}[0, 1) \cap ({\bm i} {\mb Z}/r)$ and $\upgamma = \exp (2\pi \lambda)$. We have the function $\wt{W}_\upgamma^{(\delta)}: \wt{X} \to {\mb C}$ introduced in (\ref{equation25}) and $\wt{W}_{\lambda}^{(\delta)}: \Theta \times \wt{X} \to {\mb C}$ given by
\begin{align}
 \wt{W}_{\lambda}^{(\delta)} (s, t, x) =  \sum_{l=0}^s e^{\rho_l(\lambda t)} F_{\upgamma; l}^{(\delta)} (x).
\end{align}
Consider the equation for a map $u: \Theta \to \wt{X}$
\begin{align}\label{equation62}
\partial_s u + J \big( \partial_t u + {\mc X}_\lambda(u) \big) + 2 \nabla \wt{W}_{\lambda}^{(\delta)} (u) = 0.
\end{align}
The energy of a solution $u$ is defined as 
\begin{align*}
E(u) = {1\over 2} \big\| \partial_s u \big\|_{L^2(\Theta)}^2 + {1\over 2} \big\| \partial_t u  + {\mc X}_{\lambda}(u) \big\|_{L^2(\Theta)}^2 + \big\| \nabla \wt{W}_\lambda^{(\delta)} (u) \big\|_{L^2(\Theta)}^2.
\end{align*}
A solution $u$ to (\ref{equation62}) whose energy is finite and whose image has compact closure is called a $(\lambda, \delta)$-soliton, or simply a soliton. A soliton having nonzero energy is called nontrivial, otherwise it is called trivial. 
\end{defn}

We see that the data $(\lambda, \delta)$ naturally gives a $\pm\lambda$-cylindrical model of the perturbed gauged Witten equation with parameters $(\sigma = 0, \delta)$ (on the positive part $\Theta_+$ and the negative part $\Theta_-$, respectively). If $u$ is a soliton, then the restriction of $(u, 0)$ to $\Theta_\pm$ is a bounded solution to the corresponding cylindrical model. Then by Theorem \ref{thm42}, for any $(\lambda, \delta)$-soliton, there exist $\upkappa_\pm \in \wt{X}_\upgamma$ such that
\begin{align*}
\lim_{s\to \pm \infty} e^{\lambda t} u(s, t) = \upkappa_\pm.
\end{align*}
We define the {\bf evaluation} of the soliton by $u_\pm = \upkappa_\pm$.

\begin{lemma}\label{lemma62}
If $\upgamma = \exp(2\pi \lambda)$ is narrow, then every $(\lambda, \delta)$-soliton is trivial.
\end{lemma}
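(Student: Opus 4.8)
The plan is to show that any $(\lambda,\delta)$-soliton $u$ with $\upgamma$ narrow has $E(u)=0$, which by definition is what it means to be trivial. The argument proceeds in two stages: an energy identity forces $u$ to be ``gauged holomorphic'' (i.e.\ $\ov\partial_A u\equiv 0$, $\nabla\wt W_\lambda^{(\delta)}(u)\equiv 0$), and then symplectic asphericity forces such a map to be constant. First I would record what narrowness buys: since $X_\upgamma=\{\star\}$ we have $\wt X_\upgamma=\{\star\}\times\mb C$, and by the convention around (\ref{equation25}) the perturbation $W_\upgamma'$ vanishes, so $\wt W_\lambda^{(\delta)}(s,t,x)=e^{\rho_0(\lambda t)}W(x)$ carries no $\delta$ and is the genuine superpotential; moreover $W$ vanishes identically on $\wt X_\upgamma$, because $\star$ is the unique critical point of the homogeneous $Q$, so Euler's relation gives $Q(\star)=0$ and hence $W(\star,p)=pQ(\star)=0$. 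By the remark preceding the lemma and Theorem \ref{thm42}, the restrictions of $(u,0)$ to $\Theta_\pm$ are bounded solutions of the associated cylindrical models, so there are points $\upkappa_\pm=(\star,p_\pm)\in\wt X_\upgamma$ with $e^{\lambda t}u(s,t)\to\upkappa_\pm$ uniformly as $s\to\pm\infty$, and Theorem \ref{thm43} gives exponential decay of the energy density at the two ends.

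Next I would run the energy identity. Writing $A=\lambda\,dt$, so that $\ov\partial_A u=\tfrac12(v_s+Jv_t)$ and the soliton equation reads $\ov\partial_A u+\nabla\wt W_\lambda^{(\delta)}(u)=0$, the computation behind (\ref{equation437})--(\ref{equation438}), performed on the cylinder $\Theta$ (no cut-off $\beta$ occurs, but $\Theta$ has \emph{two} ends), yields
\begin{align*}
\big\|\ov\partial_A u\big\|_{L^2(\Theta)}^2+\big\|\nabla\wt W_\lambda^{(\delta)}(u)\big\|_{L^2(\Theta)}^2=-4\pi\,\mathrm{Re}\Big[\wt W_\upgamma^{(\delta)}(\upkappa_+)-\wt W_\upgamma^{(\delta)}(\upkappa_-)\Big].
\end{align*}
Here the boundary term at each end is, exactly as in the proof of Theorem \ref{thm44}, the value of $\wt W_\upgamma^{(\delta)}$ at the asymptotic critical point — the explicit $(s,t)$-dependence of $\wt W_\lambda^{(\delta)}$ along the limit loop $t\mapsto e^{-\lambda t}\upkappa_\pm$ cancels by the homogeneity of $W$ under $G_0^{\mb C}$. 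Since $\wt W_\upgamma^{(\delta)}=W$ for narrow $\upgamma$ and $W$ vanishes on $\wt X_\upgamma\ni\upkappa_\pm$, the right-hand side is $0$; hence $\ov\partial_A u\equiv 0$ and $\nabla\wt W_\lambda^{(\delta)}(u)\equiv 0$.

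Finally I would upgrade ``gauged holomorphic'' to ``constant''. Because $G_0$ acts by isometries preserving $J$, the map $\wt u(s,\wt t):=e^{\lambda\wt t}u(s,\wt t)$ on $\mb R^2$ satisfies $\partial_s\wt u+J\partial_{\wt t}\wt u=(e^{\lambda\wt t})_*(v_s+Jv_t)=0$, i.e.\ it is genuinely $J$-holomorphic; it is $2\pi\,\mathrm{ord}(\upgamma)$-periodic in $\wt t$ (because $e^{\lambda\wt t}$ has that period while $\wt u(s,\wt t+2\pi)=\upgamma\,\wt u(s,\wt t)$ and $\upgamma$ has finite order), so it descends to a bounded holomorphic map on $\mb C^*$ in the coordinate $\zeta=e^{-(s+{\bm i}\wt t)/\mathrm{ord}(\upgamma)}$, with limits $\upkappa_+,\upkappa_-$ as $\zeta\to 0,\infty$. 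By removable singularity it extends to a holomorphic $\wh u:\mb P^1\to\wt X$; its $\mb C$-component is a holomorphic map $\mb P^1\to\mb C$, hence constant, and its $X$-component $\wh x:S^2\to X$ satisfies $\wh x^*\omega\ge 0$ pointwise while $\int_{S^2}\wh x^*\omega=0$ by ({\bf X1}) of Hypothesis \ref{hyp21}, so $d\wh x\equiv 0$. Thus $\wh u$, hence $\wt u$, is constant, so $v_s=\partial_s u=0$ and $v_t=\partial_t u+\mc X_\lambda(u)=0$; together with $\nabla\wt W_\lambda^{(\delta)}(u)\equiv 0$ this gives $E(u)=0$, i.e.\ $u$ is trivial.

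The one step that will need genuine care is the energy identity: one must re-derive (\ref{equation437})--(\ref{equation438}) on the finite cylinder with \emph{both} of its ends and \emph{without} the $\beta$-term, keeping track of orientations so that the boundary contributions come out as $\pm4\pi\,\mathrm{Re}\,\wt W_\upgamma^{(\delta)}(\upkappa_\pm)$, and then observe the vanishing from $\wt W_\upgamma^{(\delta)}|_{\wt X_\upgamma}\equiv 0$. Everything after that is a routine removable-singularity/Liouville argument together with symplectic asphericity.
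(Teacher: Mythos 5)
Your argument is correct and follows essentially the same route as the paper: remove the explicit $t$-dependence of $\wt{W}_\lambda^{(\delta)}$ by twisting with the $G_0$-action, apply Stokes/the energy identity over $\Theta$ to see that the vanishing of $\wt{W}_\upgamma^{(\delta)}$ at the asymptotic limits $\upkappa_\pm\in\wt{X}_\upgamma$ kills the potential energy, and then cap off the resulting $J$-holomorphic cylinder and use removable singularity together with symplectic asphericity ({\bf X1}). The only organizational difference is that the paper reparametrizes \emph{first}, setting $v(s,t)=e^{r\lambda t}u(rs,rt)$ so that $v$ satisfies the autonomous gradient-flow equation $\partial_s v + J\partial_t v = -2r\nabla\wt{W}(v)$ and the Stokes computation can be done directly on $v$, whereas you apply the energy identity to $u$ itself and only twist afterward to $\wt{u}(s,\wt t)=e^{\lambda\wt t}u(s,\wt t)$; the two are equivalent (and your justification of $\wt{W}_\upgamma^{(\delta)}(\upkappa_\pm)=0$ via the Euler relation $Q(\star)=0$ is a welcome piece of bookkeeping the paper leaves implicit).
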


\begin{proof}
Abbreviate $\wt{W}_\lambda = \wt{W}_{\lambda}^{(\delta)}$ and $\wt{W} = \wt{W}_\upgamma^{(\delta)}$. Define $v: \Theta \to X$ by $v(s, t) = e^{r \lambda t} u( rs, rt)$. Then
\begin{align*}
\begin{split}
\partial_s v(s, t) + J \partial_t v (s, t) = &\ m ( e^{r \lambda t} )_* \big( \partial_s u (rs, rt) + J (\partial_t u (rs, rt) + {\mc X}_\lambda(u(rs, rt)) ) \big) \\
= &\ -2  m (e^{r\lambda t})_* \nabla \wt{W}_\lambda(rs, rt, u(rs, rt)) = -2m \nabla \wt{W}( v(s, t)) .
\end{split}
\end{align*}
Moreover, $v(s, \cdot)$ converges uniformly to $\upkappa_\pm$ as $s\to \pm\infty$. Therefore
\begin{align}\label{equation63}
\begin{split}
2 m \big\| \nabla \wt{W} (v) \big\|_{L^2(\Theta)}^2 = &\ - \int_\Theta \big\langle \partial_s v + J \big( \partial_t v \big) , \nabla \wt{W} (v) \big\rangle dsdt\\
= &\ -  \int_\Theta d \wt{W} \cdot \big( \partial_s v + J \big( \partial_t v  \big) \big) ds dt \\
= &\ -2  \int_\Theta {\partial \wt{W} (v)  \over \partial \ov{z}} ds dt \\
= &\  2\pi \big( \wt{W}(\upkappa_-) - \wt{W} (\upkappa_+) \big).
\end{split}
\end{align}
Since $\upgamma$ is narrow, $\wt{W} (\upkappa_-) =  \wt{W}(\upkappa_+) = 0$. Then $v$ is holomorphic. Since $v$ is a multiple cover of $u$, $v$ has finite energy. By removal of singularity, $v$ extends to a holomorphic sphere. By ({\bf X1}) of Hypothesis \ref{hyp21}, $(\wt{X}, \omega)$ is aspherical, $v \equiv \upkappa_\pm$ is a constant. So $u$ has zero energy. \end{proof}

On the other hand, if $\upgamma$ is broad, then similar to (\ref{equation63}),
\begin{align*}
\big\| \nabla \wt{W}_{\lambda}^{(\delta)} (u) \big\|_{L^2(\Theta)}^2 = 2\pi \big( \wt{W}_\upgamma^{(\delta)}(\upkappa_-) - \wt{W}_\upgamma^{(\delta)} (\upkappa_+) \big).
\end{align*}
In particular, this implies that ${\rm Im} \wt{W}_\upgamma^{(\delta)} (\upkappa_-) = {\rm Im}\wt{W}_\upgamma^{(\delta)} (\upkappa_+)$.

A stable $(\lambda, \delta)$-soliton is a finite sequence
\begin{align*}
{\bm u} = ( u_1, \ldots, u_\nu )
\end{align*}
where for each $\alpha = 1, \ldots, \nu$, $u_\alpha$ is a {\it nontrivial} $(\lambda, \delta)$-soliton such that 
\begin{align*}
( u_\alpha )_+ = ( u_{\alpha+1})_- \in \wt{X}_{\upgamma},\  \alpha = 1, \ldots, \nu-1.
\end{align*}

\subsection{Stable solutions and convergence}

Let $\vec{\mc C}$ be a rigidified $r$-spin curve with punctures $z_1, \ldots, z_k$.

\begin{defn}
A {\bf stable solution} to the perturbed gauged Witten equation over $\vec{\mc C}$ is a triple
\begin{align*}
\big( A, u, \{ {\bm u}_{j}\}_{z_j\ {\rm broad}} \big)
\end{align*}
where
\begin{enumerate}
\item $(A, u)$ is a bounded solution to the perturbed gauged Witten equation on  $\vec{{\mc C}}$.

\item For each broad puncture $z_j$, ${\bm u}_j = ( u_{j;1}, \ldots, u_{j; \nu_j})$ is a stable $(\lambda_j, \delta_j)$-soliton, where $\lambda_j$ is the residue of the $r$-spin structure at $z_j$ and $\delta_j = \delta_{j, A} \in (0, 1]$ introduced in (\ref{equation220}).

\item If $\nu_j \geq 1$, then  
\begin{align*}
ev_j (A, u) = ( u_{j; 1} )_- \in {\rm Crit} \Big( \wt{W}^{(\delta_j)}_{\upgamma_j} |_{\wt{X}_{\upgamma_j}} \Big).
\end{align*}
\end{enumerate}
\end{defn}

Now we can define the topology in the space of stable solutions. At the ``tails'', the convergence of the stable solitons are just an $A$-parametrized version of convergence of stable solutions to the corresponding Floer type equation (\ref{equation62}), because in $\nabla \wt{W}_{\lambda_i}^{(\delta_j)}$, the parameter $\delta_j$ depends on $A$. Therefore it suffices to define the convergence of a sequence of usual solutions over $\vec{\mc C}$ to a stable solution. The definition in the rest of cases can be easily written down and we omit it.
\begin{defn}\label{defn64}
Suppose $( A^{(i)}, u^{(i)} )$ is a sequence of solutions to the gauged Witten equation over a fixed rigidified $r$-spin curve $\vec{{\mc C}}$ with underlying punctured Riemann surface $\Sigma^*$. We say that the sequence converges to a stable solution $( (A, u), \{{\bm u}_j\}_{z_j\ {\rm broad}} )$ if the following conditions are satisfied. 

\begin{enumerate}
\item $( A^{(i)}, u^{(i)} )$ converges to $(A, u)$ in $W^{1, p}_{loc}$-topology.

\item For each broad puncture $z_j$, If ${\bm u}_j = (u_{j;1}, \ldots, u_{j; \nu_j} )$ and $\nu_j \geq 1$, then the following conditions are satisfied.
\begin{itemize}
\item For $\alpha = 1, \ldots, \nu_j$, there are sequences $s_{\alpha}^{(i)}>0$ such that  
\begin{align*}
\lim_{i \to +\infty} s_{\alpha}^{(i)} = +\infty,\ \alpha> \alpha' \Longrightarrow \lim_{i \to +\infty} s_{\alpha}^{(i)} - s_{\alpha'}^{(i)}= +\infty.
\end{align*}
\item Let $(u^{(i)}, h^{(i)})$ be the sequence of solutions to the cylindrical model obtained by restricting $(A^{(i)}, u^{(i)})$ to $U_j(2)$. Then for each $\alpha$, the sequence $u^{(i)} ( s_{\alpha}^{(i)} + \cdot, \cdot )$ converges to $u_{j;\alpha}$ uniformly on any compact subset of $\Theta$.

\item We have
\begin{align*}
\lim_{s \to +\infty} \limsup_{i \to \infty} E \big( A^{(i)}, u^{(i)}; U_j (s_{\nu_j}^{(i)} + s) \big) = 0.
\end{align*}
\end{itemize}

\item For each broad puncture $z_j$, if $\nu_j = 0$, then 
\begin{align*}
\lim_{s \to +\infty} \limsup_{i \to \infty} E \big( A^{(i)}, u^{(i)}; U_j (s) \big) = 0.
\end{align*}
\end{enumerate}
\end{defn}

Now we state the compactness theorem.
\begin{thm}\label{thm65}
If $(A^{(i)}, u^{(i)} ) \in {\mz A} \times \Gamma(Y)$ is a sequence of smooth bounded solutions to the gauged Witten equation (\ref{equation221}) with
\begin{align*}
\sup_i E ( A^{(i)}, u^{(i)} ) < \infty,
\end{align*}
then there is a subsequence (still indexed by $i$), a stable solution $( (A, u), \{{\bm u}_j\}_{z_j\ {\rm broad}})$, and a sequence of smooth gauge transformations $g^{(i)}\in {\mz G}$ such that $( g^{(i)} )^* ( A^{(i)}, u^{(i)} )$ converges to $( (A, u), \{{\bm u}_j\}_{z_j\ {\rm broad}} )$ in the sense of Definition \ref{defn64}.
\end{thm}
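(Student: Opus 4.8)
The plan is to run the standard Gromov-type compactness machinery, but organized so that the two genuinely nonstandard issues — the noncompactness of the target $\wt{X}$ and the $A$-dependence of the perturbation parameters $\delta_{j,A}$ — are isolated and handled first. First I would fix the uniform energy bound $\sup_i E(A^{(i)},u^{(i)}) < \infty$ and invoke Theorem \ref{thm44} to see that, after passing to a subsequence, the homology classes $[A^{(i)},u^{(i)}]$ stabilize, hence the $\delta_{j,A^{(i)}}$ are uniformly bounded below by some $\ud\delta \in (0,1]$; this is what makes the perturbed superpotentials stay in a compact family of holomorphic Morse functions near the broad punctures, so all the asymptotic estimates of Section \ref{section3} (Theorems \ref{thm42} and \ref{thm43}) apply with uniform constants. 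Next I would establish the $C^0$-bound: by Corollary \ref{cor72} (the energy-quantization statement announced in the introduction), if $u^{(i)}$ were to blow up near a point of $\Sigma^*$ then a definite quantum of energy would concentrate there, so the uniform energy bound forces blow-up only at finitely many points $p_1,\dots,p_m \in \Sigma^*$; away from these, combined with the local maximum-principle argument (using the convexity built into ({\bf X3}), Lemma \ref{lemma24}, and (\ref{equation24})), one gets $u^{(i)}(\Sigma^* \setminus \bigcup B_\epsilon(p_l)) \subset \wt K$ for a fixed compact $\wt K$; then by the same argument the bubbling points carry sphere or disk bubbles with image in $\wt K$, which by asphericity ({\bf X1}) of $\wt{X}$ must be constant, so in fact $u^{(i)}(\Sigma^*) \subset \wt K$ uniformly.

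With the uniform $C^0$-bound in hand, the interior convergence is routine: choose gauge transformations $g^{(i)} \in {\mz G}$ putting $(g^{(i)})^* A^{(i)}$ in Coulomb gauge relative to a fixed smooth reference (Proposition \ref{prop216}), apply elliptic bootstrapping to (\ref{equation221}) on compact subsets of $\Sigma^*$, and extract a $W^{1,p}_{loc}$-convergent (indeed $C^\infty_{loc}$ after further bootstrap) subsequence to a limit $(A,u)$, which solves the perturbed gauged Witten equation; the uniform energy bound and Fatou give $E(A,u) \le \sup_i E(A^{(i)},u^{(i)}) < \infty$, and Theorem \ref{thm43} then gives exponential decay at each puncture, so $(A,u)$ is a bounded solution with a well-defined homology class and residues. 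The energy lost at each broad cylindrical end is then analyzed by the usual rescaling/soliton-extraction: on $U_j \simeq \Theta_+$ write the restriction as a solution $(u^{(i)},h^{(i)})$ to a $(\lambda_j,\delta_j^{(i)})$-cylindrical model with $\delta_j^{(i)} \to \delta_j$; where energy density stays bounded below along a sequence $s_\alpha^{(i)} \to \infty$ of escaping times (with pairwise diverging differences, selected by a standard "center of mass of energy" / minimal-slice argument), translate by $s_\alpha^{(i)}$ and pass to the limit to obtain a nontrivial $(\lambda_j,\delta_j)$-soliton $u_{j;\alpha}$ on $\Theta$; Lemma \ref{lemma62} guarantees that in the narrow case there are none, consistent with the definition of stable solution. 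The matching conditions $(u_{j;\alpha})_+ = (u_{j;\alpha+1})_-$ and $ev_j(A,u) = (u_{j;1})_-$ follow from continuity of the evaluation maps (Theorem \ref{thm42}) together with a "no energy escapes to the neck between consecutive solitons" argument, and finiteness of the number of solitons $\nu_j$ follows from the uniform energy bound since each nontrivial soliton carries energy bounded below by a positive constant depending only on $\wt K$ and $\ud\delta$.

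The main obstacle, as the introduction already flags, is the energy-quantization step feeding into the $C^0$-bound — i.e. showing that $u^{(i)}$ blowing up forces a definite energy concentration. Near points where the limit stays away from ${\rm Crit}\,\wt{\mc W}_A$ this is the classical $\epsilon$-regularity/rescaling argument, and near Bott-type components of ${\rm Crit}\,W$ it can be handled by a local normal-form analysis; but for the degenerate component of ${\rm Crit}(pQ)$ — which is unavoidably present since $Q$ is homogeneous — one must exploit the Lagrange-multiplier structure $W = pQ$ explicitly, using properness of $\mu_0$ and the quadratic bounds ({\bf X3})--({\bf X4}) to trade blow-up of the $p$-coordinate against moment-map energy. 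This local, delicate argument is exactly the content of Section \ref{section7}, and everything downstream (the soliton extraction, the matching, the final assembly of the stable solution) is then a bookkeeping exercise over the finite set of punctures and bubbling points, so I would present those parts more briefly and refer to the corresponding lemmas of Sections \ref{section7}--\ref{section8}.
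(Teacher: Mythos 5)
Your outline follows the paper's own route (energy quantization via Corollary \ref{cor72}, the local maximum principle, Coulomb gauge fixing, local convergence and then soliton extraction on the ends), so the overall architecture is right. But the step where you close the $C^0$-bound is not correct as stated, and this is exactly the place where the argument has to be delicate.

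You write that at the finitely many potential blow-up points the sequence forms ``sphere or disk bubbles with image in $\wt K$, which by asphericity ({\bf X1}) must be constant.'' This cannot work: the blow-up at those points is blow-up of $|\mu(u^{(i)})|$, i.e. escape to infinity in the noncompact target. At such a point you cannot rescale to obtain a bubble with image in a fixed compact set $\wt K$ --- the image is precisely leaving every compact set. The paper instead concludes $m=0$ directly: once Corollary \ref{cor72} isolates the candidate escape points, item (4) of that corollary gives a uniform bound for $\mathcal F(u^{(i)})$ on the small circles $\partial B_r(z^{(i)}_\beta)$; the differential inequality $\Delta_c \mathcal F(u) \geq \frac{\sigma}{2c_0}|\mu(u)|^2 - c(E)$ from Proposition \ref{prop82}, fed into a mean value estimate, then bounds $\mathcal F(u^{(i)})$ at the center by the boundary values plus $O(r)$, contradicting the assumed divergence. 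So the maximum principle kills the escape points outright --- asphericity plays no role in Theorem \ref{thm83}. Asphericity is used later, in Proposition \ref{prop84}, to obtain the uniform gradient bound $\|d_A u^{(i)}\|_{L^\infty} \le K(E)$: there the inhomogeneous term is already uniformly bounded (thanks to the $C^0$-bound), so standard rescaling at an alleged gradient singularity produces a nonconstant holomorphic sphere, which is what ({\bf X1}) forbids. You need that gradient bound, not just the $C^0$-bound, before ``interior convergence is routine,'' and your proposal conflates the two steps. A related (smaller) slip: the uniform lower bound on $\delta_{j,A^{(i)}}$ does not come from Theorem \ref{thm44} and stabilization of homology classes; it comes directly from the curvature equation $\Delta h_i'' = F_{A_i}$, elliptic estimates, and the energy bound, which give $\sup_i \|h_i\|_{L^\infty(U_j)} \leq H(E)$ and hence $\delta_{j,i} \ge \underline\delta(E) > 0$ (as is noted inside the proof of Corollary \ref{cor72}). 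The remainder of your proposal --- Coulomb gauge relative to a fixed reference, $W^{1,p}_{loc}$ and then $C^\infty_{loc}$ convergence, $\delta^{(i)}_j \to \delta_j$ because $A^{(i)} \to A$, and the $\epsilon_2$-bubbling-list soliton extraction on the broad ends with the matching conditions and finiteness of $\nu_j$ --- matches the paper and is sound.
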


\section{Energy quantization in blowing up}\label{section7}

Now we start to prove the compactness theorem of the moduli space of gauged Witten equation. The first main concern is about the uniform $C^0$-bound on the solutions, which is the prerequisite of applying all the bubbling analysis. 

We first summarize the methods of achieving $C^0$-bound in relevant situations when the target space is noncompact. In gauged Gromov-Witten theory, one can achieve the $C^0$-bound by imposing the equivariant convexity assumption on the target space (see \cite[Page 555]{Cieliebak_Gaio_Mundet_Salamon_2002}). This is an assumption generalizing the convexity condition in \cite{Eliashberg_Gromov}. In Fan-Jarvis-Ruan's LG A-model theory, for a quasi-homogeneous superpotential $f$ on ${\mb C}^N$, the crucial condition for $C^0$-control is a growth estimate of $df$ (\cite[Theorem 5.8]{FJR1}), deduced from the nondegeneracy of the singularity.

In this paper we also imposed a convexity condition at infinity (({\bf X4}) of Hypothesis \ref{hyp21}), a more concrete form of the assumption used in \cite{Cieliebak_Gaio_Mundet_Salamon_2002}. However, since we have to perturb the equation, the solutions are no longer holomorphic and there are error terms in the estimate (see Proposition \ref{prop82}). So {\it a priori}, there could exist a sequence of bounded solutions which escape to infinity in the limit near broad punctures. One way to overcome this trouble is to establish an energy quantization property for the {\it a priori} blow-up of $C^0$-norm (Theorem \ref{thm71}). Then a $C^0$-bound follows from a local maximal principle argument.

One of the difficulty in establishing the energy quantization comes from the fact that the inhomogeneous term of the Witten equation is not bounded and not proper. The $\epsilon$-regularity argument only applies in a scale comparable to $\big| \nabla \wt{\mc W}_A \big|^{-1}$. Moreover, ${\rm Crit}W$ is the union of two parts, 
\begin{align*}
\wt{X}_B:= \big\{ (x, p)\ |\ Q(x) = 0 \big\},\ \wt{X}_S : = \big\{ (\star, p) \ |\  p \in {\mb C} \big\}.
\end{align*}
If the blow-up happens in the region away from $\wt{X}_B$ and $\wt{X}_S$, then it is easy to establish the energy quantization (Proposition \ref{prop75}); if the blow-up happens near $\wt{X}_B$ and $\wt{X}_S$, then the magnitude of $\nabla \wt{\mc W}_A$ can change dramatically and we have to use different arguments (Proposition \ref{prop74} and Proposition \ref{prop76}). 

We remark that one should be able to generalize the results of this section to the case of complete intersections, i.e., the superpotential is of the form $p_1 Q_1 + p_2 Q_2 + \ldots + p_k Q_k$ on a manifold $X \times {\mb C}^k$, where $Q_i: X \to {\mb C}$ are homogeneous functions and $p_1, \ldots, p_k$ are the complex variables of the ${\mb C}^k$-factor.

The main technical result of this section is stated in terms of local models.
\begin{thm}\label{thm71}
For each $H>0$, there exists $\epsilon_0 = \epsilon_0(H) >0$ satisfying the following condition. Suppose we have a sequence $(\beta_i, \sigma_i, \delta_i)$ of parameters of local models over $B_{r^*}$ and a corresponding sequence of solutions $(u_i, h_i)$. Suppose 
\begin{align}\label{equation71}
\lim_{i \to \infty} \big| \mu(u_i(0)) \big| = +\infty,\ \big\|h_i \big\|_{L^\infty(B_{r^*})} \leq H.
\end{align}
Then there exists a subsequence (still indexed by $i$) such that one of the following conditions holds.
\begin{enumerate}
\item We have $\displaystyle \lim_{r \to 0}\lim_{i \to \infty} E (u_i, h_i; B_r) \geq \epsilon_0$.

\item We have $\displaystyle \lim_{i \to \infty} \sigma_i = 0$ (uniformly on $B_{r^*}$) and there exists $r_0 >0$ (depending on the subsequence) such that
\begin{align}\label{equation72}
\lim_{i \to \infty} \inf_{B_{r_0} } \big| \mu(u_i) \big| = +\infty.
\end{align}
\end{enumerate}
\end{thm}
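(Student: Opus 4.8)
\textbf{Proof proposal for Theorem \ref{thm71}.} The plan is to run a rescaling (bubbling) argument at the point where $|\mu(u_i)|$ blows up, and use the properness of $\mu_0$ (({\bf X3}) of Hypothesis \ref{hyp21}) together with the area-form decay to force the dichotomy. First I would set $M_i := |\mu(u_i(0))| \to +\infty$ and introduce the scaling parameter $c_i$ measuring the size of the first derivatives of $(u_i,h_i)$ near $0$; more precisely, using the local energy density $e_i := e((u_i,h_i))$, I would apply a standard point-picking lemma (Hofer's lemma) to replace $0$ by nearby points $z_i$ and radii $r_i\to 0$ so that $R_i := \sup_{B_{r_i}(z_i)} e_i^{1/2}$ is achieved at $z_i$, satisfies $R_i r_i \to \infty$, and $|\mu(u_i(z_i))|\to\infty$ still holds (the latter because $|\mu|$ is $1$-Lipschitz in the metric sense along $u_i$ only up to the factor $|d_Au_i|$, so one has to be a little careful — if the energy density were bounded near $0$ there is nothing to rescale and alternative (2) should be in force; this case-splitting is exactly where the two alternatives originate).

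Next I would rescale: set $\tilde u_i(z) = u_i(z_i + z/R_i)$, and correspondingly rescale $h_i$, $\beta_i$ and $\sigma_i$. By the choice of $R_i$ the rescaled solutions have energy density bounded by $1$ on balls of radius $R_i r_i\to\infty$, so by the local elliptic estimates of Section \ref{section3} (Lemma \ref{lemma46} and its proof, applied to the rescaled local model, using the uniform bound $\|h_i\|_\infty\le H$ which is scale-invariant for the relevant norms) a subsequence converges in $C^\infty_{loc}$ on all of ${\mb C}$. Here there are two sub-cases according to the behavior of the rescaled $\sigma_i$: if $R_i^{-2}\sigma_i(z_i)$ stays bounded away from $0$ along a subsequence, the limit is a (possibly vortex-type) finite-energy solution on ${\mb C}$ of the rescaled equation, and by removal of singularity plus asphericity ({\bf X1}) it carries a definite quantum of energy $\epsilon_0$, giving alternative (1). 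If instead $R_i^{-2}\sigma_i(z_i)\to 0$ — which, since $\sigma_i$ varies by at most a factor $2$ on $B_{r^*}$, happens iff $\sigma_i\to 0$ uniformly — then the rescaled curvature term disappears in the limit and the limit is a finite-energy solution of the Floer-type equation \eqref{equation16} (or its perturbed cousin $\eqref{equation33}$ with $\sigma=0$), again nonconstant by the blow-up of derivatives, hence again carrying energy $\ge\epsilon_0$: alternative (1) once more. So in every case where genuine rescaling occurs we land in (1).

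It therefore remains to treat the case in which no rescaling is possible, i.e.\ $e_i$ stays bounded on some fixed small ball $B_{r_0}$. Then $|d_{A_i}u_i|$ is uniformly bounded there, so $|\mu(u_i(z)) - \mu(u_i(0))|$ is controlled by $\int |d_{A_i}u_i|$ over a path, hence $\inf_{B_{r_0}}|\mu(u_i)| \ge M_i - C(r_0) \to +\infty$, which is \eqref{equation72}; and the second equation $\Delta h_i'' + \sigma_i\mu^*(u_i) = 0$ together with $|\mu(u_i)|\to\infty$ on $B_{r_0}$ and the uniform bound on $h_i$ (hence on $\Delta h_i''$ by elliptic estimates) forces $\sigma_i \to 0$ on $B_{r_0}$, and then by the $\sigma^+\le 2\sigma^-$ comparison on $B_{r^*}$, uniformly on all of $B_{r^*}$. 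This is alternative (2). The threshold $\epsilon_0(H)$ produced by the $\epsilon$-regularity/mean-value estimates genuinely depends on $H$ through the bound \eqref{equation46} on the inhomogeneous term of the rescaled Floer equation, so the statement is sharp in that respect.

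\textbf{Main obstacle.} I expect the delicate point to be the point-picking/rescaling step when the blow-up of $|\mu|$ is \emph{not} accompanied by blow-up of $|d_Au_i|$ but rather interacts with the degeneration $\sigma_i\to 0$: one must correctly identify the rescaling rate (derivative scale vs.\ $\sigma$-scale), and verify that after rescaling the perturbation terms $\nabla\wt W_h^{(\delta)}$ — whose size is governed by $e^{\rho_l(h)}$ and hence by $H$, not by $\delta_i$ alone — do not destroy the $C^\infty_{loc}$ convergence or the lower energy bound. Controlling these perturbation contributions uniformly, so that the limiting object is a genuine (un-obstructed) finite-energy solution on ${\mb C}$ to which asphericity and removal of singularity apply, is the crux; everything else is a routine packaging of the elliptic estimates already established in Section \ref{section3}.
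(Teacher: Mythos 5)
Your proposal takes a genuinely different route from the paper, and unfortunately the central step does not work, for a reason the paper flags explicitly at the start of Section~\ref{section7}: ``the inhomogeneous term of the Witten equation is not bounded and not proper. The $\epsilon$-regularity argument only applies in a scale comparable to $|\nabla\wt{\mc W}_A|^{-1}$.''

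Concretely, you rescale by $R_i = \sup e_i^{1/2}$, set $\tilde u_i(z) = u_i(z_i + z/R_i)$, and hope to pass to a $C^\infty_{loc}$ limit to which asphericity and removal of singularity apply. But $\nabla\wt W_h^{(\delta)}$ is a \emph{zeroth-order} term in $u$: under your rescaling the local-model equation becomes
\begin{align*}
\partial_s\tilde u_i + {\mc X}_{\tilde\phi_i}(\tilde u_i) + J\big(\partial_t\tilde u_i + {\mc X}_{\tilde\psi_i}(\tilde u_i)\big) + \tfrac{2}{R_i}\,\nabla\wt W_{k_i}^{(\delta_i)}(\tilde u_i) = 0,
\end{align*}
and the coefficient $R_i^{-1}\nabla\wt W$ does \emph{not} go to zero in the regime you must treat, because $e_i$ is a sum of $|d_{A_i}u_i|^2$, $|\nabla\wt W_{A_i}(u_i)|^2$ and $\sigma_i|\mu(u_i)|^2$, and it is precisely the potential part that is blowing up when $|\mu(u_i)|\to\infty$. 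Worse, even if you rescale by the potential density (as the paper does, writing ${\mf p}_i(0)^{-1}$ for the radius), you cannot then extract a $C^\infty_{loc}$-convergent bubble: the sequence $u_i(z_i)$ escapes to infinity in the noncompact target $\wt X$, so there is no compactness of the target to anchor the limit, and no ``removal of singularity plus asphericity'' to invoke. You identify exactly this as the crux at the end of your writeup, but then describe it as ``a routine packaging of the elliptic estimates already established in Section~\ref{section3}''. It is not; it is the entire content of Propositions~\ref{prop74}, \ref{prop75}, \ref{prop76}, each of which exploits the specific Lagrange-multiplier structure $W = pQ$ (via the growth hypotheses ({\bf Q1}), ({\bf P3}), ({\bf P5})) to get a lower energy bound \emph{without} passing to a bubble at all: Proposition~\ref{prop75} shows that away from ${\rm Crit}\,W$ the potential density is slowly varying (because $|\nabla^2 W|\lesssim|\nabla W|$ there), Proposition~\ref{prop74} runs a local maximum-principle argument on the normal component $|\pi_N(\ov\partial_A u)|^2$ near $\wt X_B = \{Q=0\}$, and Proposition~\ref{prop76} projects onto the $p$-factor near $\wt X_S = \{\star\}\times{\mb C}$ to reduce to the other two cases. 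None of this is elliptic bootstrapping; it is the place where ``gauged Witten'' differs from ``gauged Gromov--Witten,'' and where the restriction to Lagrange-multiplier superpotentials becomes essential.

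Your treatment of alternative (2) is closer in spirit to the paper but still has a gap. You argue that bounded $e_i$ on $B_{r_0}$ gives $|\mu(u_i(z)) - \mu(u_i(0))|$ bounded; but $|\nabla\mu|$ is not uniformly bounded on $\wt X$ (by ({\bf X3}), $|{\mc X}_0|^2\sim|\mu_0|$ grows at infinity), so a bound on $|d_{A_i}u_i|$ alone does not give a pointwise Lipschitz bound on $\mu\circ u_i$. The paper instead uses Lemma~\ref{lemma73} (which requires small energy and Coulomb gauge, via the $\epsilon$-regularity Lemma~\ref{lemmaa4}) to bound the \emph{intrinsic diameter} of $u_i(B_{\lambda_0\tau})$ by a fixed constant $C$, and the Hofer-style point-picking is applied to the \emph{potential} density ${\mf p}_i$, not the energy density $e_i$; the dichotomy in Theorem~\ref{thm71} is precisely ``does ${\mf p}_i$ blow up near $0$ or not.'' Using ${\mf p}_i$ rather than $e_i$ matters because the perturbed gauged Witten equation gives $|\ov\partial_A u| = |\nabla\wt W_A(u)| = {\mf p}'$, so the antiholomorphic part of $d_Au$ is \emph{defined} by the potential; whereas the full $|d_Au|$ (hence $e_i$) contains $|\partial_A u|$, which is controlled only a posteriori via the Lemma~\ref{lemma73} diameter estimate. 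So the decomposition you want to make (``does the derivative blow up or not'') is not the one the argument needs (``does the potential density blow up or not'').
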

The proof is given in Subsection \ref{subsection71}

\begin{cor}\label{cor72}
For every $E>0$, there exists $\epsilon_E >0$ satisfying the following conditions. Suppose $(A_i, u_i)$ is a sequence of solutions to the perturbed gauged Witten equation over $\vec{\mc C}$ such that $E (A_i, u_i ) \leq E$. Then there exist a subsequence (still indexed by $i$), and sequences of points
\begin{align}
\{z_i^\alpha\}_{1 \leq \alpha \leq l},\ \{ z_i^{j, \beta}\}_{z_j\ {\rm broad},\ 1\leq \beta \leq l_j}
\end{align}
satisfying the following conditions (here $d$ (resp. $\wt{d}$) is the distance of the cylindrical metric on $\Sigma^*$ (resp. smooth metric on $\Sigma$)).
\begin{enumerate}
\item For each $\alpha = 1, \ldots, l$, $\lim_{i \to \infty} z_i^\alpha = z^\alpha \in \Sigma^*$ and all $z^\alpha$'s are distinct.

\item For each $\alpha = 1, \ldots, l$, we have
\begin{align*}
\lim_{i \to \infty} \big| \mu(u_i(z_i^\alpha)) \big| = +\infty,\ \lim_{r \to 0} \lim_{i \to \infty} E ( A_i, u_i; B_r(z_i^\alpha)) \geq \epsilon_E .
\end{align*}

\item For each broad puncture $z_j$ and $\beta = 1, \ldots, l_j$, $z_i^{j, \beta} \in U_j$ and 
\begin{itemize}
\item For each $\beta$, $\displaystyle \lim_{i \to \infty} \wt{d} \big( z_i^{j, \beta}, z_j\big) = 0$;

\item For any $\beta_1 \neq \beta_2$, we have $\displaystyle \liminf_{i \to \infty} d \big( z_i^{j,\beta} , z_i^{j,\beta'} \big) >0$.

\item For any $\beta$, $\displaystyle \lim_{i \to \infty} \big| \mu \big(u_i \big( z_i^{j,\beta} \big) \big) \big| = \infty$ and $\displaystyle \lim_{r \to \infty} \lim_{i \to \infty} E \big( A_i, u_i ; B_r(z_i^{j,\beta}) \big) \geq \epsilon_E$.
\end{itemize}

\item For any sequence $z_i$ of points in $\Sigma^*$, if $\displaystyle \liminf_{i \to \infty} d(z_i, z_i^\alpha) > 0$ for all $\alpha = 1, \ldots, l$, $\displaystyle \liminf_{i \to \infty} d( z_i, z_i^{j, \beta})>0$ for all broad punctures $z_j$ and $\beta = 1, \ldots, l_j$, $\displaystyle \liminf_{i \to \infty} \wt{d}(z_i, z_j)>0$ for all narrow punctures $z_j$, then 
\begin{align*}
\limsup_{i \to \infty} \big| \mu(u_i(z_i)) \big| < \infty.
\end{align*}
\end{enumerate}
\end{cor}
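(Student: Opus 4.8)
\emph{Strategy.} The plan is to extract blow-up points greedily and to bound their number using the uniform energy bound, with Theorem \ref{thm71} as the quantization input. Before the extraction I would record two uniform bounds. Since $|\mu(u_i(z))|$ and $E(A_i,u_i)$ are gauge invariant, I may apply gauge transformations freely; using local Coulomb gauge in the interior and temporal gauge on the cylindrical ends, together with the curvature bound $\|F_{A_i}\|_{L^2}^2\le 2E$ and the identity $\Delta h''=-\sigma\mu^*(u)$ (with $\sqrt\sigma=e^{-s}\in L^2(U_j)$ so that $\|\sqrt\sigma\,\mu(u)\|_{L^2}^2\le 2E$ controls the inhomogeneous term), one gets a uniform $C^0$-bound $\|h_i\|_{L^\infty(B_{r^*})}\le H$ on the functions $h_i$ of every local model obtained by restricting $(A_i,u_i)$ to a coordinate ball. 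Set $\epsilon_E:=\epsilon_0(H)$ with $\epsilon_0$ as in Theorem \ref{thm71}. The same bound on $h_{A_i}$ forces $m_{j,A_i}$ in (\ref{equation220}) to be uniformly bounded, hence $\delta_{j,A_i}\ge \ud\delta>0$ for all $i$ and all broad $z_j$; then by (\ref{equation26}) and the fact that $\wt{W}_{\upgamma_j}|_{\wt{X}_{\upgamma_j}}$ is Morse with finitely many critical points, all critical points of $\wt{W}_{\upgamma_j}^{(\delta_{j,A_i})}|_{\wt{X}_{\upgamma_j}}$, and in particular all the limits $ev_j(A_i,u_i)$, lie in a fixed compact set, so $|\mu(ev_j(A_i,u_i))|$ is uniformly bounded.

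\emph{Iterative extraction.} I would argue by induction. Suppose a subsequence and finite families $\{z_i^\alpha\}$, $\{z_i^{j,\beta}\}$ satisfying (1)--(3) have been produced. If conclusion (4) holds we stop. Otherwise there is a sequence $z_i\in\Sigma^*$, bounded away in the cylindrical metric from all $z_i^\alpha$ and $z_i^{j,\beta}$ and bounded away in the smooth metric from the narrow punctures, with $\limsup_i|\mu(u_i(z_i))|=\infty$. Pass to a subsequence so that $|\mu(u_i(z_i))|\to\infty$ and $z_i\to z_*\in\Sigma$. Because $z_i$ avoids the narrow punctures, $z_*$ is either an interior point of $\Sigma^*$ or a broad puncture $z_j$; in either case, for $i$ large $B_{r^*}(z_i)$ is a well-defined coordinate ball and the restriction of $(A_i,u_i)$ gives a sequence of local-model solutions with $\|h_i\|_{L^\infty}\le H$, to which Theorem \ref{thm71} applies. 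If $z_*$ is interior, $\sigma_i$ is bounded below near $z_*$, so alternative (2) of Theorem \ref{thm71} is impossible and alternative (1) holds: $\lim_{r\to0}\lim_i E(A_i,u_i;B_r(z_i))\ge\epsilon_E$. I would adjoin $z_i$ to the interior family; its limit is automatically distinct from the earlier $z^\alpha$ since $z_i$ was chosen bounded away from the $z_i^\alpha$.

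\emph{Blow-up at broad punctures.} If $z_*=z_j$ is broad, Theorem \ref{thm71} gives alternative (1) or (2), and I would adjoin $z_i$ to $\{z_i^{j,\beta}\}$; by construction $\wt{d}(z_i,z_j)\to 0$ and $z_i$ is cylindrically bounded away from the previously chosen points near $z_j$. The point that needs work is that this step, too, consumes at least $\epsilon_E$ of energy on balls $B_R(z_i)$ for all large $R$ --- in other words, that the ``persistent blow-up'' alternative (2) near a broad puncture can always be upgraded to genuine energy concentration. Here I would use the second uniform bound: $|\mu(u_i)|\to\infty$ on $B_{r_0}(z_i)$ while $|\mu(ev_j(A_i,u_i))|$ stays bounded, so along the cylindrical end $|\mu(u_i)|$ must descend from an unbounded value near $z_i$ to a bounded one; feeding the growth estimates of Hypothesis \ref{hyp21} ((X3)--(X4)), which bound $|\nabla({\bm i}\mu_0)|$ and $\nabla^2({\bm i}\mu_0)$ in terms of $|d_{A_i}u_i|$, into the descent of $|\mu(u_i)|$ (and using Theorem \ref{thm42}--\ref{thm43} to control the already-bounded parts) one shows this descent forces $E(A_i,u_i;B_R(z_i))\ge\epsilon_E$, possibly after replacing $z_i$ by a nearby point in the same connected component of a superlevel set $\{|\mu(u_i)|\ge N\}$. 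I expect this energy-consumption estimate to be the main obstacle, since it is the only place forcing one to exploit both the boundedness of the perturbation parameters $\delta_{j,A_i}$ and the convexity of $(X,\mu_0)$ at infinity.

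\emph{Termination.} Each extraction step places at least $\epsilon_E$ units of energy into a region which, for $i$ large, is disjoint from those used at all earlier steps: the new point is cylindrically bounded away from the old ones, and the concentration radii shrink to $0$ as $i\to\infty$. Since $E(A_i,u_i)\le E$ is uniform, the procedure terminates after at most $\lfloor E/\epsilon_E\rfloor$ steps, producing a subsequence and finite families for which (4) holds; properties (1)--(3) are built into the construction (distinctness of interior limits, smooth-convergence of the near-puncture points to $z_j$, cylindrical separation, and energy concentration), which completes the proof.
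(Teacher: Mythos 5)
Your setup, the interior case, and the termination argument all match the paper: the uniform bound $\|h_i\|_{L^\infty(B_{r^*}(q))}\le H(E)$ from the curvature equation, setting $\epsilon_E=\epsilon_0(H(E))$, and the observation that each extraction step consumes at least $\epsilon_E$ energy so the process stops after $\le E/\epsilon_E$ steps --- these are exactly what the paper does. You also correctly identify the one genuinely hard step: showing that near a broad puncture the ``persistent blow-up without concentration'' alternative (2) of Theorem~\ref{thm71} cannot occur.

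However, your proposed mechanism for that step does not work, and it is different from the paper's. You suggest a ``descent'' argument: $|\mu(u_i)|$ is huge on $B_{r_0}(z_i)$ but bounded at the puncture (since $ev_j(A_i,u_i)$ lands in a compact critical set), and you want to feed the convexity estimates (X3)--(X4) for $\mu_0$ into the descent to extract $\epsilon_E$ of energy. Two things go wrong. First, the descent takes place over the part of the cylinder between $z_i$ and the puncture, a region of unbounded cylindrical area; there, the potential density is $\sigma|\mu(u_i)|^2$ with $\sigma\sim e^{-2s}\to 0$, so a large $|\mu(u_i)|$ need not cost potential energy, and (X3) only bounds $|\nabla\mu_0|\sim\sqrt{|\mu_0|}$, which gives an $L^1$ control of the variation of $\sqrt{|\mu_0(u_i)|}$ along rays, not an $L^2$ (i.e.\ energy) lower bound; the descent could occur slowly over a long cylinder without producing a definite amount of energy. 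Second, to control the ``already-bounded parts'' you invoke Theorem~\ref{thm43}, but its constants depend on a $G$-invariant compact set containing $u_i(\Theta_+)$ --- precisely the kind of uniform $C^0$-bound that one is still in the middle of establishing, so there is a circularity.

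The paper's argument avoids $\mu$ altogether and uses the superpotential instead. If alternative (2) of Theorem~\ref{thm71} holds, the set $\Theta^*\subset\mathbb R\times S^1$ of $z$ with $\limsup_i|\mu(u_i(z_i+z))|=\infty$ has nonempty interior. If $\partial\Theta^*$ is finite then $\Theta^*$ has infinite area, and on any compact $L\subset\Theta^*$ one can show, using Lemma~\ref{lemma23} and the lower bound $\delta_{j,A_i}\ge\ud\delta>0$, that $e^{h_i(z)}\delta_{j,i}^{-1}u_i(z)$ eventually escapes the compact set $\wt K_{\upgamma_j}$ of ({\bf P5}); this hypothesis then gives $|\nabla\wt{\mc W}_{A_i}(u_i)|\ge\ud\delta^r M(E)^{-1}c_{\upgamma_j}>0$ on $z_i+L$, and integrating $|\nabla\wt{\mc W}_{A_i}(u_i)|^2$ over $L$ of arbitrarily large area contradicts the energy bound. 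If $\partial\Theta^*$ is infinite, one picks $m>E/\epsilon_0$ boundary points; at each of them Theorem~\ref{thm71}'s escape alternative is excluded (the point is on the boundary of $\Theta^*$), so the concentration alternative must hold, again contradicting the energy bound. Thus the crucial ingredient is the properness hypothesis ({\bf P5}) on $\nabla\wt{W}_\gamma$ (no critical points at infinity for the perturbed superpotential), not the convexity of $\mu_0$; you would need to replace your descent argument with something of this form to close the gap.
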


\begin{proof} 
For each $q \in \Sigma^*$, we can restrict $(A_i, u_i)$ to $B_{r^*} (q)$ to obtain a solution $(u_i, h_i)$ to a local model. By the equation $\Delta h_i ds dt  = F_{A_i}$, the elliptic estimate and Sobolev inequality, we see that there exists $H(E)>0$ such that for all $i$ and all $q\in \Sigma^*$,
\begin{align}\label{equation73}
\sup_q \sup_i \big\| h_i \big\|_{B_{r^*}(q)} \leq H(E).
\end{align}
Abbreviate $\epsilon_0 = \epsilon_0(H(E))$.

We construct the sequences $z_i^\alpha$ by an induction argument. We take an exhausting sequence of compact subsets of $\Sigma^*$, denoted by $K^{(l)}$, $l = 1, 2, \ldots$. We consider
\begin{align*}
 \limsup_{i \to \infty} \big\| \mu (u_i) \big\|_{L^\infty(K^{(l)})}.
\end{align*}
If it is finite, then we move on to $K^{(l+1)}$. If it is infinite, then there exist a subsequence (still indexed by $i$) and a sequence of points $q_i \in K^{(l)}$ which converges to some $q \in K^{(l)}$, such that $\displaystyle \lim_{i \to \infty} \big| \mu (u_i(q_i)) \big| = + \infty$. Then we can apply Theorem \ref{thm71} to the sequence $(u_i, h_i)$ which is the solution to the local model obtain by restricting $(A_i, u_i)$ to $B_{r^*}(q_i)$. Then there is a subsequence (still indexed by $i$) such that
\begin{align*}
\lim_{r \to 0} \lim_{i \to \infty} E ( A_i,  u_i; B_r(q_i) ) \geq \epsilon_0.
\end{align*}
(Here the second case of Theorem \ref{thm71} doesn't happen because the area form is uniformly bounded from below near $q$.)

Now we replace $\Sigma^*$ by $\Sigma^* \setminus \{q\}$, and retake an exhausting sequence of compact subsets $\{K^{(l)} \}$ of $\Sigma^* \setminus \{q\}$. We restart the induction process. It is easy to see that the induction process stops until we find a finite subset $Z = \{ z_1, \ldots, z_l\}$, a subsequence (still indexed by $i$) and sequences $z_\alpha^i$ for which (1) and (2) are satisfied for $\epsilon_E = \epsilon_0$, because the total energy of $(A_i, u_i)$ is uniformly bounded. 

Now we consider the possible blowing up at a broad puncture $z_j$. Take $S>0$ sufficiently large so that $U_j(S)\cap Z = \emptyset$. Therefore for each $K>0$, we have
\begin{align*}
\limsup_{i \to \infty} \big\| \mu (u_i) \big\|_{L^\infty([S, S+K]\times S^1)} < \infty.
\end{align*}
Now suppose there exist a subsequence (still indexed by $i$) and a sequence of points $z_i= (s_i, t_i)\in U_j(S)$ such that with 
\begin{align*}
\lim_{i \to \infty} s_i = +\infty,\ \lim_{i \to \infty} \big| \mu (u_i(z_i)) \big| = +\infty.
\end{align*}
We claim that 
\begin{align}\label{equation74}
\lim_{r \to 0} \limsup_{i \to \infty} E \left( A_i, u_i; B_r(z_i) \right) \geq \epsilon_0.
\end{align}
Suppose it is not true, then consider the subset
\begin{align*}
 \Theta^*:= \Big\{ z\in \Theta = {\mb R} \times S^1 \ |\  \limsup_{i \to \infty} \left| \mu ( z_i + z ) \right| = \infty \Big\}.
\end{align*}
Theorem \ref{thm71} implies that $\Theta^*$ has nonzero measure. There are two possibilities.

{\bf (I)} Suppose the boundary of $\Theta^*$ is a finite set, then $\Theta^*$ has infinite area. On the other hand, let $h_i = h_{A_i}: U_j \to {\mf g}^{\mb C}$ be the function defined by (\ref{equation216}). In the same way as deriving (\ref{equation73}), we may assume that 
\begin{align*}
\sup_i \big\| h_i \big\|_{L^\infty(U_j)} \leq H(E). 
\end{align*}
By the definition of $\delta_{j, i} = \delta_{j, A_i}$ (see (\ref{equation220})), we have $\inf_i \delta_{j,i} =: \underline\delta > 0$. Then take
\begin{align}
M(E):= \sup_{|h|\leq H(E),\ \ud\delta \leq \delta \leq 1} |h - \log \delta |_{\wt{X}},
\end{align}
which, by Lemma \ref{lemma23}, is finite. Let $L$ of $\Theta_+$ be a compact subset. Then for $i$ sufficiently large, for any $z \in z_i +  L$, $e^{h_i(z)} \delta_{j, i}^{-1} u_i(z) \notin \wt{K}_{\upgamma_j}$ where $\wt{K}_{\upgamma_j} \subset \wt{X}$ is the compact subset in ({\bf P5}) of Hypothesis \ref{hyp28}. Then by the definition of $|h|_{\wt{X}}$, we have
\begin{align*}
\begin{split}
\Big| \nabla \wt{\mc W}_{A_i} (u_i (z)) \Big| = &\ \Big| \sum_{l=0}^s e^{\ov{\rho_l(h_i(z))}} \nabla F_l^{(\delta_{j,i})}(u_i(z)) \Big|\\
 = &\ \Big| ( e^{h_i(z)})^* \Big( d \wt{W}_{\upgamma_j}^{(\delta_{j, i})} ( e^{h_i(z)} u_i(z))\Big) \Big|\\
= &\ \big(\delta_{j, i}\big)^r \Big| ( e^{h_i(z)} \delta_{j, i}^{-1}  )^*  \Big( d \wt{W}_{\upgamma_j} ( e^{h_i(z)} \delta_{j, i}^{-1} u_i(z) ) \Big) \Big| \\
\geq &\ \ud\delta^r M(E)^{-1} c_{\upgamma_j}.
\end{split}
\end{align*}
Here $c_{\upgamma_j} > 0$ is the one in ({\bf P5}) of Hypothesis \ref{hyp28}. Therefore  
\begin{align*}
E( A_i, u_i ) \geq E ( A_i, u_i;  z_i + L )  \geq \int_{z_i + L} \Big| \nabla \wt{\mc W}_{A_i}(u_i) \Big|^2 ds dt  \geq \ud\delta^r M(E)^{-1} c_{\upgamma_j} {\rm Area}(L).
\end{align*}
This contradicts with the energy bound because ${\rm Area} (L)$ can be arbitrarily large.

{\bf (II)} Suppose the boundary of $\Theta^*$ is an infinite set, then choose an integer $m > E/ \epsilon_0$ and $m$ distinct points $w_1, \ldots, w_m$ of $z \in \partial \Theta^*$. Then by the definition of $\Theta^*$, there exists a subsequence (still indexed by $i$) and sequences of points $w_{l, i}$, $l = 1, \ldots, m$ such that 
\begin{align*}
\lim_{i \to \infty} w_{l, i} = w_l,\ \lim_{i \to \infty} \big| \mu(u_i( z_i + w_{l, i})) \big| = +\infty.
\end{align*}
On the other hand, use the trivialization of the $G$-bundle over $U_j$, the restriction of $(A_i, u_i)$ to the disk $B_{r^*}(z_i + w_{l, i})$ is a sequence of solutions to some sequence of local models over $B_{r^*}$, which satisfy the hypothesis of Theorem \ref{thm71}. However, the second implication of Theorem \ref{thm71} doesn't holds because $w_{l, i}$ converges to a point on the boundary of $\Theta^*$. Therefore
\begin{align*}
\lim_{r \to 0} \lim_{i \to \infty} E(A_i, u_i; B_r(z_i + w_{l, i})) \geq \epsilon_0.
\end{align*}
By the choice of $m$, this contradicts with the energy bound of $(A_i, u_i)$. 

Therefore (\ref{equation74}) is true. Moreover, the set $\Theta^*$ must be finite because for any $z\in \Theta^*$, we can prove (\ref{equation74}) is true with $z_i$ replaced by $z_i + z$. The energy bound implies that such points are only of finitely many. Then we do an induction to construct a subsequence (still indexed by $i$) and sequences $z_i^{j,\beta}= (s_i^{j,\beta}, t_i^{j,\beta})$, $\beta = 1,2, \ldots$ such that 
\begin{align*}
 \lim_{i \to \infty} s_i^{j,\beta} = +\infty,\ \forall \beta \neq \beta',\ \lim_{i \to \infty} d \big( z_i^{j,\beta} , z_i^{j,\beta'} \big) > 0
\end{align*}
and 
\begin{align*}
 \lim_{i \to \infty} \big| \mu(u_i (z_i^{j,\beta})) \big| = +\infty,\ \lim_{r \to \infty} \lim_{i \to \infty} E (  A_i, u_i; B_r (z_i^{j,\beta}) ) \geq \epsilon_0.
\end{align*}
Since the energy is uniformly bounded, the induction process stops at finite time. Therefore, whenever the induction stops, the sequences $z_i^{j,\beta}$ satisfy the conditions listed in (3) of this corollary \ref{cor72} for $\epsilon_E = \epsilon_0$. Item (4) of this corollary is obvious from our construction.
\end{proof}

\subsection{Proof of Theorem \ref{thm71}}\label{subsection71}

The proof of Theorem \ref{thm71} follows from Lemma \ref{lemma73}, Proposition \ref{prop74}--\ref{prop76} below. First we introduce some notations. For any solution ${\bm u} = (u, h)$ to a local model over $B_r$, parametrized by $(\sigma, \beta, \delta)$, we have the following density functions of $(u, h)$ which are comparable to the square root of the potential density functions.
\begin{align*}
\begin{array}{lcll}
{\mf p}'(z) & := & {\mf p}'({\bm u})(z) & =\ \big| \nabla \wt{W}_h^{(\delta)} (u(z)) \big|,\\
{\mf p}''(z) & := & {\mf p}''({\bm u})(z) & =\ \sqrt{\sigma(z)} \big|\mu(u(z)) \big|,\\
{\mf p}(z) & := & {\mf p}( {\bm u} )(z)  & =\ {\mf p}'(z) + {\mf p}''(z).
\end{array}
\end{align*}

\begin{lemma}\label{lemma73}
There exist $\epsilon_1>0$, $C_1>0$, $r_1 \in (0, r^*]$, $\lambda_1 \in (0,{1\over 2}]$ and for $p>2$, $c_{(p)}>0$, satisfying the following conditions. Suppose $r\in (0, r_1]$ and $( u, h)$ is a solution to the local model on $B_r$ parametrized by $(\beta, \sigma, \delta)$. If
\begin{align}\label{equation76}
r\sup_{z\in B_r} {\mf p}(z) \leq  1,\ E(u, h; B_r) \leq \epsilon_1,
\end{align}
then there exists a gauge transformation $g: B_r \to G$ such that if we denote by $(u', h') = g^* (u, h)$ and $\phi' + {\bm i} \psi' = 2( \partial h'/ \partial \ov{z})$, then for every $\lambda \in (0, \lambda_1]$, we have
\begin{align}\label{equation77}
\begin{split}
{\rm diam} \big( u' (B_{\lambda r}) \big) \leq &\  C_1 \Big( \big\| d_A u \big\|_{L^2(B_r)} + \sqrt{r} +  \lambda \Big), \\
\big\| d_A u \big\|_{L^p(B_{\lambda r})} \leq &\ c_{(p)} (\lambda r)^{{2\over p} - 1}  \Big( \big\| d_A u \big\|_{L^2(B_r)} + \sqrt{r} + \lambda \Big).
\end{split}
\end{align}
\end{lemma}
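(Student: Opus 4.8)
\textbf{Proof proposal for Lemma \ref{lemma73}.}

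The plan is to run the standard $\epsilon$-regularity / small-energy bootstrap for the Cauchy-Riemann type equation in the first line of \eqref{equation31}, but carefully tracking the three error sources that distinguish this setting: the curvature term coming from $\Delta h'' = -\sigma\mu^*(u)$, the inhomogeneous potential term $\nabla\wt W_h^{(\delta)}(u)$, and the cut-off derivative $\partial\beta$. First I would normalize the gauge: using the elliptic estimate for $\Delta h'' = -\sigma\mu^*(u)$ together with $\sigma^+ \le 2\sigma^-$ and $r\,{\mf p}''\le 1$, I can choose $g: B_r \to G$ bringing $A$ into (say) Coulomb gauge relative to the trivial connection with $h'(z_0)=0$ at the center, so that $\|\phi'\|_{L^\infty}+\|\psi'\|_{L^\infty}$ and hence $\|h'\|_{C^0(B_r)}$ are controlled by $\|d_Au\|_{L^2(B_r)} + \sqrt r\,\|{\mf p}''\|_{L^\infty}$; the hypothesis $\|h\|_{L^\infty}\le H$ in Theorem~\ref{thm71} is what will ultimately feed into the constants $C_1, c_{(p)}$ via $e^{\ov{\rho_l(h)}}$. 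In this gauge the equation for $u'$ reads $\partial_s u' + J\partial_t u' = -{\mc X}_{\phi'}(u') - J{\mc X}_{\psi'}(u') - 2\nabla\wt W_{h'}^{(\delta)}(u')$, an inhomogeneous $\bar\partial$-equation whose right-hand side is pointwise bounded by $C(\,|d_Au'| + {\mf p}'(z)\,)$, with $\sup_{B_r}{\mf p}' \le 1/r$ by \eqref{equation76}.

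Next I would apply the mean-value inequality (Lemma~\ref{lemmaa5}, as in the proof of Proposition~\ref{prop45}) to $|d_Au|^2$: differentiating the equation gives $\Delta|v_s|^2 + \Delta|v_t|^2 \ge -C(1 + (|v_s|^2+|v_t|^2)^2) - (\text{lower order from }\sigma,\partial\beta)$, exactly the Bochner-type inequality already derived in \eqref{equation41}--\eqref{equation44}; since $E(u,h;B_r)\le\epsilon_1$, for $\epsilon_1$ small the mean-value estimate yields a pointwise bound $\sup_{B_{r/2}}|d_Au|^2 \le C\,r^{-2}(\,\|d_Au\|_{L^2(B_r)}^2 + r^2\,)$, hence a uniform $C^0$ gradient bound on $B_{r/2}$ scaling like $r^{-1}(\|d_Au\|_{L^2(B_r)}+r)$. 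Feeding this back, the right-hand side of the $\bar\partial$-equation is now bounded in $L^\infty(B_{r/2})$, and interior $L^p$-elliptic estimates for $\bar\partial$ on the rescaled disk (i.e. working on $B_1$ with $z\mapsto rz$ and using $\|\cdot\|_{L^p(B_{\lambda})} \lesssim \lambda^{2/p}\|\cdot\|_{L^\infty}$ for the scale-$\lambda r$ sub-disk) give the second line of \eqref{equation77}; the diameter bound in the first line follows by integrating $|d_Au'|$ along paths in $B_{\lambda r}$ and estimating $\int_{B_{\lambda r}}|d_Au'| \le C(\|d_Au\|_{L^2(B_r)} + \sqrt r + \lambda)$ after using the $L^2$ bound on the annulus $B_r\setminus B_{r/2}$ plus the pointwise bound on $B_{\lambda r}$ — the $\sqrt r$ accounts for $\|\sqrt\sigma\mu(u)\|_{L^2}$-type terms and $\lambda$ for the diameter growth of the base disk.

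The main obstacle I expect is controlling the potential term $\nabla\wt W_h^{(\delta)}(u)$ in the Bochner inequality and in the elliptic bootstrap without a properness or boundedness assumption on $u$ itself: a priori $u$ ranges over a noncompact part of $\wt X$, so the Hessian and third derivative of $\wt W$ (the tensors $G_{F_l^{(\delta)}}$ of \eqref{equation319}) are not uniformly bounded. The resolution must be the hypothesis $r\,{\mf p}\le 1$: it says that on the scale $r$ the ``potential frequency'' is at most $1$, which combined with $\|h\|_{L^\infty}\le H$ and Hypotheses~\ref{hyp25}(\textbf{Q1}) and \ref{hyp28}(\textbf{P3}) lets one bound $|\nabla^2\wt W_h^{(\delta)}|$ and $|\nabla^3\wt W_h^{(\delta)}|$ along $u$ by constants times ${\mf p}'$ and hence by $1/r$, so that all the error terms in \eqref{equation41}--\eqref{equation44} carry exactly the right powers of $r$ to survive the mean-value and scaling arguments. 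Making this comparison precise — essentially, that $(\textbf{Q1})$-type estimates turn $|\nabla^2\wt W|$ into something dominated by $|\nabla\wt W| = {\mf p}'$ when $u$ is far from ${\rm Crit}\,W$, while near ${\rm Crit}\,W$ one uses $(\textbf{P2})$/$(\textbf{Q2})$ vanishing of Hessians along the normal bundle — is the technical heart of the lemma and is where I would spend the most care.
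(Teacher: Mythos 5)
Your proposal takes a genuinely different (and ultimately problematic) route from the paper's. The paper does \emph{not} run a Bochner/mean-value argument on $|d_Au|^2$; instead it rescales $z\mapsto rw$ to pull the solution back to $B_1$, passes to Coulomb gauge on $B_1$ (solving a Neumann problem for $f$, which bounds $\|\phi_r'\|_{L^\infty}+\|\psi_r'\|_{L^\infty}\lesssim r\sqrt{\sigma^+}$ from the curvature estimate $\|F_{A_r}\|_{L^\infty}\le r\sqrt{\sigma^+}$), and then applies Lemma~\ref{lemmaa4} directly to the rescaled equation
$\partial_s u_r'+J\partial_t u_r' = -{\mc X}_{\phi_r'}(u_r')-J{\mc X}_{\psi_r'}(u_r')-2r\nabla\wt W_{h_r'}^{(\delta)}(u_r')$.
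Lemma~\ref{lemmaa4} is the Ivashkovich--Shevchishin-type $\epsilon$-regularity for inhomogeneous $\bar\partial$-equations: it needs only uniform continuity of $J$ (Hypothesis~({\bf X2})) and a smallness bound on $\|du_r'\|_{L^2(B_1)}$ together with an $L^\infty$ bound on the inhomogeneous term — and the latter is exactly what $r\,\mf p\le 1$ supplies, with no need to touch $\nabla^2\wt W$ or $\nabla^3\wt W$ at all. The diameter and $L^p$ bounds in \eqref{equation77} then come for free from \eqref{equationa2} and \eqref{equationa3}, plus the rescaling relation for $L^p$-norms.

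This matters because the gap you flagged as the ``technical heart'' is a genuine obstruction to your approach, and your proposed fix does not close it. The Bochner inequality \eqref{equation41}--\eqref{equation44} requires pointwise control of $\nabla_V\nabla\wt W_h^{(\delta)}$ and $G_{F_l^{(\delta)}}$ along $u$; but Lemma~\ref{lemma73} makes no assumption on where $u$ lands in $\wt X$. Hypothesis~({\bf Q1}) gives $|\nabla^2 Q|\le c_Q|\nabla Q|$ outside $K_Q$, yet the Hessian $\nabla^2 W$ has the off-diagonal block $\nabla Q$ (see \eqref{equation737}), and there is no estimate of the form $|\nabla Q|\lesssim|\nabla W|$ valid over all of $\wt X$: it fails precisely near $\wt X_S$ where $|p|$ is small and $|Q|$ can be small. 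The paper's Lemma~\ref{lemma711} establishes $|\nabla^2\wt W_h|\lesssim 1+|\nabla\wt W_h|$ only after restricting $u$ to $\wt X_B^{D_1}\setminus\wt K_1$, and Proposition~\ref{prop75} handles the complementary region by an entirely different argument; neither of these localizations is available inside Lemma~\ref{lemma73}. Similarly, invoking ({\bf P2})/({\bf Q2}) to kill the Hessian in the normal directions near ${\rm Crit}\,W$ controls only part of the Hessian and says nothing about the tangential blocks, so the mean-value inequality you want for $\Delta|d_Au|^2$ cannot be established from the stated hypotheses. The lesson is that at this stage of the bubbling analysis one must use an $\epsilon$-regularity result whose input is only zeroth-order information about the inhomogeneity — precisely what $r\,\mf p\le 1$ provides and what Lemma~\ref{lemmaa4} consumes.
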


Let $K_Q \subset X$ be the subset in ({\bf Q1}) of Hypothesis \ref{hyp25} and $\wt{K}_0 = \big\{ (x, p) \in \wt{X} \ |\ x \in K_Q,\ |p| \leq 1 \big\}$. For any $D>0$, denote 
\begin{align*}
\wt{X}_B^D = \big\{ x \in \wt{X} \setminus \wt{K}_0 \ |\  d(x, \wt{X}_B) \leq D \big\},\ \wt{X}_S^D = \big\{ x \in \wt{X} \setminus \wt{K}_0\ |\ d(x, \wt{X}_S) \leq D \big\}.
\end{align*}

\begin{prop}\label{prop74}
For each $H>0$ there exist $\epsilon_2 = \epsilon_2(H)>0$, $M_2 = M_2(H)>0$ and $D = D(H)>0$ satisfying the following condition. 

Suppose $r \in (0, r^*]$ and $(u, h)$ is a solution to the local model with parameter $(\beta, \sigma, \delta)$ on $B_r$ with finite energy. If $u(0) \in \wt{X}_B^D$ and 
\begin{align}\label{equation79}
\big\| h \big\|_{L^\infty(B_r)} \leq H,\ {\mf p}(0) \geq {1\over 2} \sup_{B_r} {\mf p} \geq M_2,\ {\mf p}'(0) \geq {\mf p}''(0),\ r {\mf p}(0) = \lambda \in (0, 1],
\end{align}
then we have
\begin{align*}
E( u, h; B_r ) \geq \epsilon_2 \lambda^2.
\end{align*}
\end{prop}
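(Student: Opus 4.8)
\textbf{Proof proposal for Proposition \ref{prop74}.}

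The plan is to run a rescaling/bubbling argument tailored to the region $\wt{X}_B^D$ near the non-singular part of $\{Q=0\}$, and to extract a nontrivial limit object which carries a definite amount of energy; the claimed quadratic lower bound $E(u,h;B_r)\geq \epsilon_2\lambda^2$ will then come by undoing the rescaling. First I would argue by contradiction: suppose there is a sequence of solutions $(u_i,h_i)$ to local models over $B_{r_i}$ with $u_i(0)\in\wt{X}_B^{D_i}$ (where $D_i$ is to be chosen, shrinking if needed, but at least bounded), with $\|h_i\|_{L^\infty}\leq H$, with ${\mf p}(0)\geq\frac12\sup_{B_{r_i}}{\mf p}\geq M_i\to\infty$, with ${\mf p}'(0)\geq{\mf p}''(0)$, with $r_i{\mf p}(0)=\lambda_i\in(0,1]$, but $E(u_i,h_i;B_{r_i})\leq \varepsilon_i\lambda_i^2$ with $\varepsilon_i\to 0$. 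Rescale: set $R_i={\mf p}(0)\to\infty$ (note $R_i$ is roughly $|\nabla\wt W_{h_i}^{(\delta_i)}(u_i(0))|$ since ${\mf p}'$ dominates) and consider $\hat u_i(z)=u_i(z/R_i)$ on $B_{\lambda_i}\subset B_{R_i r_i}=B_{\lambda_i}$; actually one should rescale to $B_{\lambda_i}$ and, since $\lambda_i$ may itself be small, I would further distinguish whether $\lambda_i$ stays bounded below or tends to $0$. The rescaled connection part $h_i(\cdot/R_i)$ converges (after passing to a subsequence and using $|h_i|\leq H$ plus the equation $\Delta h''=-\sigma\mu^*(u)$, which makes $\sigma$ negligible after rescaling since $\sigma$ scales by $R_i^{-2}$) to a constant $h_\infty\in{\mf g}^{\mb C}$; the rescaled $\sigma$ tends to $0$, so the vortex part of the equation drops out in the limit.

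The key structural input is that in the region $\wt{X}_B^D\setminus\wt{K}_0$, Hypothesis \ref{hyp25}(\textbf{Q1}) gives the bounds $|\nabla^2 Q|\leq c_Q|\nabla Q|$ and $|\nabla^3 Q|\leq c_Q^2|\nabla Q|$, and Hypothesis \ref{hyp28}(\textbf{P3}) gives $|\nabla^j F_{\upgamma;l}|\leq c^{(j)}$ for $j\geq 1$ together with the growth bound on $F_{\upgamma;l}$. Combined with the normalization that $|\nabla\wt W_{h_i}^{(\delta_i)}(u_i(0))|\approx R_i$ and $\|h_i\|_\infty\leq H$, these let me control $|\nabla^2\wt W_{h_i}^{(\delta_i)}|$ and $|\nabla^3\wt W_{h_i}^{(\delta_i)}|$ relative to $|\nabla\wt W_{h_i}^{(\delta_i)}|$ throughout $B_{r_i}$ (this uses the ${\mf p}(0)\geq\frac12\sup{\mf p}$ hypothesis to keep ${\mf p}$ of comparable size on the whole ball, via the mean-value/gradient estimate of the type in Proposition \ref{prop45} and Lemma \ref{lemma46}). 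After dividing the first equation of (\ref{equation31}) by $R_i$ one gets, for the rescaled maps, a Floer-type equation
\begin{align*}
\partial_s\hat u_i+{\mc X}_{\hat\phi_i}(\hat u_i)+J\big(\partial_t\hat u_i+{\mc X}_{\hat\psi_i}(\hat u_i)\big)+2\,\widehat{\nabla\wt W}_i(\hat u_i)=0,
\end{align*}
where $\widehat{\nabla\wt W}_i$ has $C^0$-norm $\approx 1$ at the origin and uniformly bounded first derivatives (by the above), so after a subsequence it converges locally to a nonzero limit vector field $V_\infty$, and $\hat u_i$ converges in $C^1_{loc}$ (Arzelà–Ascoli plus elliptic bootstrap as in Lemma \ref{lemma46}) to a limit $\hat u_\infty$ solving $\bar\partial_{A_\infty}\hat u_\infty+V_\infty(\hat u_\infty)=0$ with $|V_\infty(\hat u_\infty(0))|\neq 0$. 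The point of $\wt{X}_B^D$ is precisely that here $\nabla\wt W$ does \emph{not} degenerate (unlike near $\wt{X}_S$), so the limit is a genuine inhomogeneous Cauchy–Riemann solution with nonzero inhomogeneous term at $0$, hence nonconstant with a definite energy $\geq 2\epsilon_0$ on a unit ball; pulling back through the rescaling $z\mapsto z/R_i$ and recalling $R_ir_i=\lambda_i$ converts a unit-ball energy for $\hat u_i$ into $E(u_i,h_i;B_{r_i})\gtrsim\epsilon_0\cdot(\text{something})$. The factor $\lambda_i^2$ appears because the rescaled domain has radius $\lambda_i$: if $\lambda_i\to 0$ one can only capture energy on $B_{\lambda_i}$ of the rescaled solution, and a scaling of the energy integrand (the kinetic term $|d_A u|^2$ is scale-invariant in 2D, but the relevant lower bound uses the sub-ball $B_{\lambda_i}$ of the rescaled picture where the gradient is bounded below, giving area $\pi\lambda_i^2$ times a constant). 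I would make this precise by using the $\epsilon$-regularity/no-small-energy statement for the limiting Floer equation together with Lemma \ref{lemma73} to conclude that either $E(\hat u_i;B_{\lambda_i})\geq\epsilon_1$ (impossible, contradicts $\varepsilon_i\lambda_i^2\to 0$ after rescaling when $\lambda_i\not\to 0$), or $\lambda_i\to 0$ and then a direct lower bound $\int_{B_{\lambda_i}}|\widehat{\nabla\wt W}_i(\hat u_i)|^2\geq c\lambda_i^2$ holds because the integrand is bounded below by a constant on $B_{\lambda_i}$ (continuity of $V_\infty$ and $\lambda_i\to0$), which again contradicts $E(u_i,h_i;B_{r_i})=R_i^{-0}\cdot(\text{rescaled energy})\leq\varepsilon_i\lambda_i^2$ once $\varepsilon_i<c$.

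The main obstacle I anticipate is the bookkeeping around the degenerating parameter $\lambda_i$ and ensuring the rescaled sequence does not itself escape to infinity in $\wt X$: a priori $u_i(0)$ is going to infinity (that is why $M_2$ must be large), and one must check that after rescaling the image $\hat u_i(B_{\lambda_i})$ stays in a fixed compact set so that $C^1_{loc}$-convergence applies. This is where I would use the distance-to-$\wt{X}_B$ bound: the hypothesis $u_i(0)\in\wt{X}_B^D$ plus the gradient bound $\mathrm{diam}(u_i(B_{\lambda_i/R_i}))\lesssim \|d_Au_i\|_{L^2}+\sqrt{r_i}+\lambda_i$ from Lemma \ref{lemma73} (which applies once $r_i\sup{\mf p}\leq 1$, i.e. once $\lambda_i\leq1$ and ${\mf p}(0)=\sup{\mf p}$ up to a factor $2$) keeps the whole picture within a $D$-neighborhood of $\wt{X}_B$, where \textbf{Q1} is in force; and along $\wt{X}_B$ the second part of \textbf{Q1}, $|\nabla Q|\leq c_Q(\delta)|Q|$ away from $\wt{X}_B$, together with the fact that $\nabla\wt W=\nabla(pQ)+\ldots$ has $|Q|$-type lower bound off $\wt X_B$, forces the limit inhomogeneous term to be nondegenerate. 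Choosing $D=D(H)$ small enough that these estimates close up, $M_2=M_2(H)$ large enough that $\wt{K}_0$ and the transition region are avoided, and $\epsilon_2=\epsilon_2(H)$ the resulting constant, completes the argument. I would remark that the analogous statement near $\wt{X}_S$ (Proposition \ref{prop76}) needs a genuinely different scaling because there $\nabla\wt W$ \emph{does} degenerate, which is why these two cases are separated.
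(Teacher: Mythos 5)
Your proposal takes a genuinely different route from the paper — a rescaling/contradiction (bubbling-off-a-soliton) argument as opposed to the paper's direct maximum-principle argument — but I believe it has a real gap, and the gap is precisely the one the paper's apparatus (the projection $\pi_N$ and Lemma \ref{lemma77}) is designed to circumvent.

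The crux of your argument is the claim that, after rescaling by $R_i={\mf p}_i(0)$, the inhomogeneous term $\widehat{\nabla\wt W}_i=R_i^{-1}\nabla\wt W_{h_i}^{(\delta_i)}$ has ``uniformly bounded first derivatives,'' so that its norm cannot drop below, say, $\tfrac18$ on a small rescaled ball, whence the energy lower bound $\gtrsim\lambda_i^2$. This requires a Hessian bound of the form $\big|\nabla^2\wt W\big|\lesssim\big|\nabla\wt W\big|$ on a neighborhood of $u_i(B_{\tilde\lambda r_i})$ in $\wt X_B^D$. You cite ({\bf Q1}) as supplying it, but ({\bf Q1}) only controls the $X$-to-$X$ block $\bar p\,\nabla^2 Q$ of
\begin{align*}
\nabla^2 W(x,p)=\left(\begin{array}{cc}\bar p\,\nabla^2Q(x)&\nabla Q(x)\\ \langle\cdot,\nabla Q(x)\rangle&0\end{array}\right).
\end{align*}
The off-diagonal block $\nabla Q\cdot dp$ is \emph{not} controlled. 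Near $\wt X_B$ one has $Q\approx 0$, so $\big|\nabla W\big|\approx|p|\,\big|\nabla Q\big|$; the ratio $\big|\nabla^2 W\big|/\big|\nabla W\big|\gtrsim\big|\nabla Q\big|/(|p||\nabla Q|)=|p|^{-1}$ is unbounded as $|p|\to 0$. The hypothesis of Proposition \ref{prop74} does \emph{not} rule out $|p|$ small: $u(0)\in\wt X_B^D$ only forces $u(0)\notin\wt K_0$, and one of the admissible branches is $x\notin K_Q$ with $|p|\le 1$. In that regime, ${\mf p}'(0)\ge M_2/2$ forces $|\nabla Q(u(0))|\gtrsim M_2/|p|$, so the Hessian–gradient ratio is $\gtrsim 1/|p|$ and your continuity (or Gr\"onwall) step fails. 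Contrast this with the case of Proposition \ref{prop75}(I), where the extra hypothesis $u(0)\notin\wt X_B^D\cup\wt X_S^D$ lets the second estimate of ({\bf Q1}), $|\nabla Q|\le c_Q(\delta)|Q|$, enter and salvage the needed bound; that estimate is unavailable in the present proposition.

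The paper's proof sidesteps exactly this issue. Instead of the full $\ov\partial_A u$, it works with the projection $V_N=\pi_N(\ov\partial_A u)$ onto the distribution $NB$ spanned by $\partial/\partial p$ and $\nabla Q$, and establishes the differential inequality $\Delta|V_N|^2\ge -c|V_N(0)|^2\big(1+|d_Au|^2+|dh''|^2+|F_A|\big)$ of Lemma \ref{lemma77}. The key structural fact (Lemma \ref{lemma711}) is that while the $NB\to NB$ block $\wt E_1$ of $\nabla^2\wt W_h$ is uncontrollable by $|\nabla\wt W_h|$, it \emph{dominates} the Hessian, and the remaining blocks $\wt E_2,\wt E_3,\wt E_4$ \emph{are} bounded by $c_H(1+|\nabla\wt W_h|)$; the dominating block then appears as a manifestly positive term $\tfrac{1}{16}|\wt E|^2|V_N|^2$ in the identity (\ref{equation321}) and need never be bounded above, while only the controllable blocks contribute to the error. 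Applying the local maximal principle of Gilbarg--Trudinger together with the error control of Lemma \ref{lemma78} then delivers $\int_{B_{\tilde\lambda r}}|V_N|^2\gtrsim(\tilde\lambda r)^2|V_N(0)|^2\gtrsim\lambda^2$, with $|V_N(0)|\geq{\mf p}(0)/4$ coming from Lemma \ref{lemma712}. In short: your approach would only close after importing an upper bound on $\wt E_1$ that is simply false in the regime $|p|\to 0$, whereas the paper's maximum-principle argument is engineered so that $\wt E_1$ only appears with the favorable sign. A secondary issue with your write-up is that $u_i(0)$ escapes to infinity in $\wt X$ (since $|\mu(u_i(0))|\to\infty$), so the claimed $C^1_{loc}$-convergence of $\hat u_i$ to a limit solution $\hat u_\infty$ is not straightforward; and there is no off-the-shelf ``no-small-energy'' lemma for the limiting inhomogeneous Cauchy--Riemann equation you invoke in the case $\lambda_i\not\to 0$ — but these concerns are subordinate to the Hessian-bound gap above.
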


The next proposition considers the case that the image of $u$ is away from $\wt{X}_S \cup \wt{X}_B$. 
\begin{prop}\label{prop75}
There exist $\epsilon_3>0$ and for each $H>0$, a constant $M_3 = M_3(H) > 0$ satisfying the following condition. Suppose $(u, h)$ is a solution the local model with parameter $(\beta, \sigma, \delta )$ on $B_r$ with $r \in (0, r^*]$, such that 
\begin{align}\label{equation710}
\big\| h\big\|_{L^\infty(B_r)} \leq H,\ {\mf p}(0) \geq {1\over 2} \sup_{ z\in B_r } {\mf p}(z)\geq M_3,\ r {\mf p}(0) = \lambda  \in (0, 1];
\end{align}
and such that either of the following two conditions are satisfied:
\begin{align}\label{equation711} 
{\rm (I)}\ u(0) \notin \wt{X}_B^{D} \cup \wt{X}_S^{D}, \hspace{1cm} {\rm (II)}\ {\mf p}''(0) \geq {\mf p}'(0).
\end{align}
Here $D = D(H) >0$ is the one of Proposition \ref{prop74}. Then
\begin{align*}
E ( u, h; B_r ) \geq \epsilon_3 \lambda^2. 
\end{align*}
\end{prop}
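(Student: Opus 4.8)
\textbf{Proof strategy for Proposition \ref{prop75}.} The plan is to run an $\epsilon$-regularity argument at the scale dictated by the normalization $r\,{\mf p}(0)=\lambda$, and then exploit a Pohozaev-type / monotonicity lower bound on the energy. Rescale the solution: set $\tilde r = r/{\mf p}(0)$ so that on the rescaled disk $B_{\tilde r}$ (now with unit upper bound on the rescaled density) the hypotheses of Lemma \ref{lemma73} are in force once the energy is below $\epsilon_1$. There is a dichotomy: either already $E(u,h;B_r)\geq \epsilon_1 \geq \epsilon_1 \lambda^2$ and we are done with $\epsilon_3 \leq \epsilon_1$, or $E(u,h;B_r)<\epsilon_1$ and Lemma \ref{lemma73} applies, giving us (after a gauge transformation) the diameter and $L^p$-gradient bounds (\ref{equation77}) on $B_{\lambda_1 r}$.

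First I would use these bounds to show that $u$ stays, on a definite fraction of the disk, in the region where $\big|\nabla\wt W_h^{(\delta)}(u)\big|$ is comparable to ${\mf p}(0)$. This is exactly where the two cases (I) and (II) of (\ref{equation711}) enter. In Case (I), $u(0)$ is bounded away from $\wt X_B\cup\wt X_S$, and by Hypothesis \ref{hyp25} ({\bf Q1}) together with ({\bf P3}) and ({\bf P5}) of Hypothesis \ref{hyp28}, $|\nabla\wt W_h^{(\delta)}|$ cannot oscillate too fast away from that locus — its logarithmic derivative is controlled — so once $M_3$ is large enough the diameter estimate (with the $\sqrt r$ and $\lambda$ terms small) forces ${\mf p}'\geq \tfrac12{\mf p}'(0)\geq \tfrac14{\mf p}(0)$ on $B_{\lambda_1 r}$ (shrinking $\lambda_1$ if necessary). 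In Case (II), $\sqrt\sigma|\mu(u)|={\mf p}''(0)\geq {\mf p}'(0)$ dominates; since $\sigma$ has bounded oscillation by the local-model axioms ($\sigma^+\leq 2\sigma^-$) and $\mu$ varies slowly when $u$ has small diameter (again via Lemma \ref{lemma23} / ({\bf X3})), we get ${\mf p}''\geq \tfrac14{\mf p}(0)$ on a definite subdisk. In either case we conclude
\begin{align*}
E(u,h;B_r)\ \geq\ \int_{B_{\lambda_1 r}} {\mf p}(z)^2\,ds\,dt\ \geq\ \frac{1}{16}\,{\mf p}(0)^2\cdot \pi(\lambda_1 r)^2\ =\ \frac{\pi\lambda_1^2}{16}\,\lambda^2,
\end{align*}
using $r\,{\mf p}(0)=\lambda$. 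Setting $\epsilon_3=\min\{\epsilon_1,\pi\lambda_1^2/16\}$ (with $M_3=M_3(H)$ chosen large enough to make the diameter term $C_1(\|d_Au\|_{L^2(B_r)}+\sqrt r+\lambda)$ smaller than the oscillation tolerance of $\nabla\wt W_h^{(\delta)}$ resp. $\sigma\mu$ on the relevant region) finishes the argument.

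The main obstacle is the "slow oscillation" step: turning the qualitative statements ({\bf Q1}), ({\bf P3}), ({\bf P5}), ({\bf X3}) into a quantitative bound saying that $\log|\nabla\wt W_h^{(\delta)}|$ (or $\log(\sqrt\sigma|\mu|)$) changes by at most a fixed factor over a set of diameter $\lesssim \mathrm{diam}(u'(B_{\lambda_1 r}))+ \text{(scale)}$. Here one must be careful that ({\bf Q1}) controls $|\nabla^2 Q|/|\nabla Q|$ only \emph{away} from $\wt X_B$ (and $\wt K_Q$), which is precisely why Case (I) excludes a $D$-neighborhood of $\wt X_B\cup\wt X_S$; the perturbation terms $F_{\upgamma;l}^{(\delta)}$ are handled by ({\bf P3}), whose bounds $|\nabla^j F_{\upgamma;l}|\leq c^{(j)}$ and $|F_{\upgamma;l}|\lesssim (1+|\mu_0|)^{1/2}$ are tailored for exactly this. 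One also needs the uniform bound $\|h\|_{L^\infty}\leq H$ (hypothesis) to control the $e^{\rho_l(h)}$ weights and to invoke Lemma \ref{lemma23}. Modulo this bookkeeping — and the routine rescaling/gauge-fixing from Lemma \ref{lemma73} — the energy lower bound is then a direct integration, and the case analysis in (\ref{equation711}) is designed precisely so that at least one of ${\mf p}',{\mf p}''$ is both large and slowly varying near $0$.
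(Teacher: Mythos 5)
Your proposal is correct and follows essentially the same route as the paper's proof: apply Lemma \ref{lemma73} to get a diameter bound for $u$ on $B_{\wt\lambda r}$ after gauge fixing, show in Case (II) via properness of $\mu$ and ({\bf X3}) that $|\mu(u)|$ oscillates by at most a fixed factor on the subdisk, show in Case (I) that $|\nabla W(u)|$ oscillates slowly via the quantitative consequence of ({\bf Q1}) that $\alpha|\nabla^2 W|\leq|\nabla W|$ off $\wt K\cup\wt X_B^{D/2}\cup\wt X_S^{D/2}$ (and that ${\mf p}_0'(0)$ dominates the perturbation contribution ${\mf p}_+'(0)$ once $M_3$ is large, using ({\bf P3})), and finally integrate ${\mf p}^2$ over $B_{\wt\lambda r}$ using $r{\mf p}(0)=\lambda$. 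The "slow oscillation" step you flag as the main obstacle is exactly what the paper carries out in the claim following (\ref{equation723}); your sketch points at the right hypotheses for it.
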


The remaining case is that the blow up happens near $\wt{X}_S= \{ (\star, p)\ |\ p \in {\mb C}\}$. The function $W$ is degenerate along the normal direction of $\wt{X}_S$ and we couldn't find a straightforward argument to deal with this situation. Instead we have the following proposition, from whose proof one can see that it is a corollary to the above two propositions.	

\begin{prop}\label{prop76}
For each $H>0$ there exists $\epsilon_4 = \epsilon_4(H) >0$ satisfying the following condition.

Suppose $(\beta_i, \sigma_i, \delta_i)$ is a sequence of parameters of local models of gauged Witten equation over $B_r$ with $r \in (0, r^*]$ and suppose $(u_i, h_i)$ are a sequence of corresponding solutions. Suppose
\begin{align*}
\lim_{i \to \infty} {\mf p}_i(0) = \infty,\ \big\| h_i \big\|_{L^\infty(B_r)} \leq H
\end{align*}
and 
\begin{align*}
{\mf p}_i'(0) \geq {\mf p}_i''(0),\ {\mf p}_i(0) \geq {1\over 2} \sup_{B_{r_i}} {\mf p}_i, u_i(0) \in \wt{X}_S^D.
\end{align*}
Here $r_i = {\mf p}_i(0)^{-1}$ and $D = D(H)$ is the one of Proposition \ref{prop74}. Then there exists a subsequence (still indexed by $i$) such that one of the following conditions holds
\begin{enumerate}
\item We have
\begin{align}\label{equation712}
\lim_{r \to \infty} \lim_{i \to \infty} E ( u_i, h_i; B_r ) \geq \epsilon_4.
\end{align}

\item $\displaystyle \lim_{i \to \infty} \sigma_i = 0$ uniformly on $B_{r^*}$ and there exists $\tau>0$ (which may depend on the subsequence) such that
\begin{align}\label{equation713}
\lim_{i \to \infty} \inf_{B_\tau} \big| \mu( u_i) \big| = + \infty.
\end{align}
\end{enumerate}
\end{prop}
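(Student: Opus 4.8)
The plan is to deduce Proposition~\ref{prop76} from Proposition~\ref{prop74}, by observing that a blow-up near the ``singular'' critical stratum $\wt{X}_S$ is in particular a blow-up near the hypersurface stratum $\wt{X}_B$. Since $\star$ is a critical point of the positively homogeneous function $Q$ of degree $r>1$, Euler's identity gives $Q(\star)=0$, hence $\{\star\}\subset X_Q=Q^{-1}(0)$ and therefore $\wt{X}_S\subset\wt{X}_B$. Consequently, for the constant $D=D(H)$ furnished by Proposition~\ref{prop74} (shrinking it if necessary so that the $D$-ball about $\star$ lies inside $K_Q$), and since both $\wt{X}_B^D$ and $\wt{X}_S^D$ exclude $\wt{K}_0$, we get $\wt{X}_S^D\subseteq\wt{X}_B^D$; in particular the hypothesis $u_i(0)\in\wt{X}_S^D$ already yields $u_i(0)\in\wt{X}_B^D$. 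So the whole point of the proposition is to repackage the blow-up near $\wt{X}_S$ as one near $\wt{X}_B$, where Proposition~\ref{prop74} — which is exactly where the Lagrange-multiplier form $W=pQ$ is used — does the work. (Proposition~\ref{prop75} is the companion estimate for the complementary regimes, $u(0)$ away from $\wt{X}_B\cup\wt{X}_S$ or ${\mf p}''(0)\ge{\mf p}'(0)$; under the present hypotheses those do not occur, but together with Proposition~\ref{prop74} they exhaust the cases needed in Theorem~\ref{thm71}.)

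Concretely, put $r_i={\mf p}_i(0)^{-1}$. Since ${\mf p}_i(0)\to\infty$ we have $r_i\to0$, so for $i$ large $B_{r_i}\subset B_r$ and the restriction $(u_i,h_i)|_{B_{r_i}}$ is again a solution of a local model (the parameters $(\beta_i,\sigma_i,\delta_i)$ restrict correctly, e.g.\ $\sup_{B_{r_i}}\sigma_i\le\sup_{B_r}\sigma_i\le 2\inf_{B_r}\sigma_i\le 2\inf_{B_{r_i}}\sigma_i$). I claim all hypotheses of Proposition~\ref{prop74} hold on $B_{r_i}$ with the normalisation $\lambda:=r_i{\mf p}_i(0)=1$: the radius $r_i\in(0,r^*]$; $u_i(0)\in\wt{X}_B^D$ as noted; $\|h_i\|_{L^\infty(B_{r_i})}\le\|h_i\|_{L^\infty(B_r)}\le H$; the assumed ${\mf p}_i(0)\ge\tfrac12\sup_{B_{r_i}}{\mf p}_i$ together with $\sup_{B_{r_i}}{\mf p}_i\ge{\mf p}_i(0)\to\infty$ also gives $\tfrac12\sup_{B_{r_i}}{\mf p}_i\ge M_2(H)$ for $i$ large; ${\mf p}_i'(0)\ge{\mf p}_i''(0)$ is assumed; and $r_i{\mf p}_i(0)=1\in(0,1]$. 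Proposition~\ref{prop74} then yields $E(u_i,h_i;B_{r_i})\ge\epsilon_2(H)\cdot 1^2=\epsilon_2(H)$ for all large $i$. Rescaling $z\mapsto r_i z$ (after the standard coordinate change on $\wt{X}$ absorbing the factor $r_i$, which produces a sequence of local models over $B_{r^*/r_i}\uparrow{\mb C}$ with $\tilde\sigma_i=r_i^2\sigma_i$), the energy in $B_1$ of the rescaled solution equals $E(u_i,h_i;B_{r_i})\ge\epsilon_2(H)$, and by monotonicity of $E$ in the domain the same lower bound holds on $B_R$ for every $R\ge1$; hence $\lim_{R\to\infty}\lim_{i\to\infty}E(\,\cdot\,;B_R)\ge\epsilon_2(H)$, which is alternative~(1) with $\epsilon_4(H):=\epsilon_2(H)$. (Thus alternative~(2) is never forced here; it is retained because in the application inside Theorem~\ref{thm71} one only knows $|\mu(u_i(0))|\to\infty$, which does not by itself imply ${\mf p}_i(0)\to\infty$ when $\sigma_i$ collapses.)

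The reason this is not a routine $\varepsilon$-regularity/monotonicity argument — and the content of the remark preceding the proposition — is the degeneracy of $\wt{W}$ in the directions normal to $\wt{X}_S$: moving off $\wt{X}_S$ in the $p$-direction stays inside $\wt{X}_B$, where the Hessian of $pQ$ again vanishes near $x=\star$, so $\big|\nabla\wt{\mc W}_A\big|$ is not comparable to any proper function of $u$ at the natural scale ${\mf p}_i(0)^{-1}$ and a self-contained quantization estimate is unavailable there. The device above sidesteps this by never analysing $\wt{X}_S$ on its own terms: it uses only $\wt{X}_S^D\subseteq\wt{X}_B^D$ and outsources the estimate to Proposition~\ref{prop74}. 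The only points that require care, all elementary, are the rescaling bookkeeping (that ${\mf p}_i'$, ${\mf p}_i''$ and $\sigma_i$ transform by the expected powers of $r_i$, so that $\lambda=1$ is realised) and the verification that the restriction to $B_{r_i}$ is still an admissible local model.
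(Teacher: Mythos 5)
Your inclusion $\wt{X}_S^D \subseteq \wt{X}_B^D$ is correct as a statement about sets: $\star$ is the critical point of the homogeneous function $Q$ of degree $r>1$, so $Q(\star)=0$, $\wt{X}_S\subset\wt{X}_B$, and the distance inequality follows. But the shortcut you draw from it --- invoking Proposition~\ref{prop74} directly at a point of $\wt{X}_S^D$ --- is precisely what cannot be done, and that impossibility is the content of this proposition.

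The obstruction is buried in Lemma~\ref{lemma711}, the Hessian comparison that drives the proof of Proposition~\ref{prop74}. That lemma produces a \emph{compact} set $\wt{K}_1(H)$ outside of which $\big|e^{\ov{\rho_0(h)}}\nabla Q(x)\big|$ is bounded below by $50\,c_P(H)$ and the block $\wt{E}_1$ dominates the remaining blocks $\wt{E}_i$. Near $\wt{X}_S$ the $X$-component of the point lies within distance $D$ of $\star$, where $\nabla Q$ vanishes, so $|\nabla Q|$ is \emph{small}; at the same time $(x,p)$ escapes every compact subset of $\wt{X}$, because $|p|\to\infty$ (indeed, $u_i(0)\in\wt{X}_S^D$ together with ${\mf p}_i(0)\to\infty$ and ${\mf p}_i'(0)\geq{\mf p}_i''(0)$ forces $|p_i(0)|\to\infty$, as the paper notes at the start of its own proof). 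Hence no compact $\wt{K}_1(H)$ can make Lemma~\ref{lemma711} usable on $\wt{X}_S^D$, and the chain Lemma~\ref{lemma711} $\Rightarrow$ Lemma~\ref{lemma77} $\Rightarrow$ Proposition~\ref{prop74} does not apply there. This is the degeneracy of $W$ normal to $\wt{X}_S$ that the remark preceding the proposition and the Introduction single out; passing through $\wt{X}_B^D$ does not ``outsource'' the difficulty, because the estimate you want to outsource to is exactly the one that fails.

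You can also see that something must be wrong from the conclusion itself: you assert that alternative~(2) is never forced. The paper's proof shows otherwise. It projects the Witten equation onto the ${\mb C}$-factor and dichotomizes on whether the resulting inhomogeneous term ${\mf s}_i$ stays bounded near the origin. When it does (case~(I)), a diameter estimate for ${\mb C}$-valued Cauchy--Riemann maps forces $p_i$ to diverge uniformly on a fixed disk $B_{\rho/2}$; the uniform energy bound then forces $\sigma_i\to 0$ uniformly, which is alternative~(2), with no energy quantization at the base point. Your direct application of Proposition~\ref{prop74} would give a fixed lower bound on $E(u_i,h_i;B_{r_i})$ in this regime, which the paper does not claim --- precisely because Proposition~\ref{prop74} is not available there.

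The paper's actual argument is thus genuinely different from yours. In the complementary case (${\mf s}_i(y_i)\to\infty$ for some $y_i\to 0$), it applies Hofer's lemma twice --- first to ${\mf s}_i$, then to ${\mf p}_i$ --- to manufacture a nearby point $w_i$ with $\kappa_i\,{\mf p}_i(w_i)\geq\lambda_0/16$ and ${\mf p}_i(w_i)\geq\tfrac12\sup_{B_{\kappa_i}(w_i)}{\mf p}_i$, and then proves a small claim: for $i$ large, either $u_i(w_i)\notin\wt{X}_S^D$ or ${\mf p}_i''(w_i)\geq{\mf p}_i'(w_i)$. Only after this relocation are Propositions~\ref{prop74} and~\ref{prop75} invoked, at $w_i$ rather than at $0$ --- so, contrary to your parenthetical, the ${\mf p}''\geq{\mf p}'$ regime handled by Proposition~\ref{prop75} does occur inside this proof. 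The idea you are missing is that the point where the blow-up is first detected may itself sit in the degenerate region, and one must first relocate to a nearby point where the Bott-type structure of the Hessian of $\wt{W}_h$ is restored.
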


\begin{proof}[Proof of Theorem \ref{thm71}]
First, if there exists a subsequence (still indexed by $i$) and a sequence $z_i \to 0$ such that $\lim_{i \to \infty} {\mf p}_i(z_i) = \infty$, then the conclusion holds according to Proposition \ref{prop74}, \ref{prop75} and \ref{prop76}. Indeed, let $r_i:= {\mf p}_i(0)^{-1}$ which converges to zero. Apply Hofer's lemma (Lemma \ref{lemmaa7}) to the function ${\mf p}_i: B_{r_i}(z_i) \to {\mb R}$. Then there exist a point $y_i \in B_{r_i}(z_i)$ and $\delta_i \in (0, r_i/ 2]$ such that
\begin{align*}
{\mf p}_i(y_i) \geq {1\over 2} \sup_{B_{\delta_i}(y_i)}{\mf p}_i,\ \delta_i {\mf p}_i(y_i) \geq {r_i \over 2} {\mf p}_i(z_i) = {1\over 2}.
\end{align*}
By taking a subsequence, we may assume that either ${\mf p}_i''(y_i) \geq  {\mf p}_i'(y_i)$, or ${\mf p}_i'(y_i) \geq {\mf p}_i''(y_i)$. In the former case, for $i$ large enough, (\ref{equation710}) and (II) of (\ref{equation711}) are satisfied by $(u_i, h_i)$ over $B_{\delta_i}(y_i)$. Then by Proposition \ref{prop75}, for $i$ large enough, we have
\begin{align*}
E(u_i, h_i; B_{\delta_i}(y_i)) \geq \epsilon_3 \big( \delta_i {\mf p}_i(y_i) \big)^2 \geq {\epsilon_3  \over 4}.
\end{align*}
In the latter case, by taking a subsequence, we have three distinct possibilities. 
\begin{enumerate}
\item For all $i$, $u_i(y_i) \in \wt{X}_B^D$. Then by Proposition \ref{prop74}, for $i$ large enough (so that (\ref{equation79}) is satisfied by $(u_i, h_i)$ over $B_{\delta_i}(y_i)$), we have
\begin{align*}
E(u_i, h_i; B_{\delta_i}(y_i)) \geq \epsilon_2 \big( \delta_i {\mf p}_i(y_i) \big) \geq {\epsilon_2 \over 4}.
\end{align*}

\item For all $i$, $u_i(y_i) \notin \wt{X}_B^D \cup \wt{X}_S^D$, Then by Proposition \ref{prop75}, for $i$ large enough (so that (\ref{equation710}) and (I) of (\ref{equation711}) are satisfied by $(u_i, h_i)$ over $B_{\delta_i}(y_i)$), we have
\begin{align*}
E(u_i, h_i; B_{\delta_i}(y_i)) \geq \epsilon_3 \big( \delta_i {\mf p}_i(y_i) \big)^2 \geq {\epsilon_3 \over 4}.
\end{align*}

\item For all $i$, $u_i(y_i) \in \wt{X}_S^D$. Then since $\lim_{i \to \infty} y_i = 0$, we can choose $r'$ small enough so that $B_{r'}(y_i) \subset B_r$. Then we can apply Proposition \ref{prop76} to $(u_i, h_i)$ restricted to $B_{r'}(y_i)$. It implies that by taking a further subsequence, we have either
\begin{align*}
\lim_{r \to 0}\lim_{i \to \infty} E(u_i, h_i; B_r(y_i)) \geq \epsilon_4,
\end{align*}
or $\displaystyle \lim_{i \to \infty} \sigma_i = 0$ uniformly and there is $\tau >0$ (depending on the subsequence) such that 
\begin{align*}
\lim_{i \to \infty} \inf_{B_{\tau} (y_i)} \big| \mu(u_i) \big| = \infty.
\end{align*}
\end{enumerate}
Since $y_i \to 0$, this implies the conclusion for $\epsilon_0 = \min\{ \epsilon_2/4, \epsilon_3/4, \epsilon_4 \}$ and $\tau_0 = \tau$. 

It remains to consider the case that ${\mf p}_i$ doesn't blow up at $0$. Then we can assume that there exist a subsequence (still indexed by $i$) and $\tau >0$ (which depends on the subsequence) such that
\begin{align*}
\limsup_{i \to \infty} \big\|{\mf p}_i \big\|_{L^\infty(B_\tau)} = M < \infty. 
\end{align*}
Then we can take $\tau$ smaller than both the $r_1$ of Lemma \ref{lemma73} and ${1\over M}$. By taking a subsequence, we can assume for all $i$, either $E(u_i, h_i; B_\tau ) > \epsilon_0$ or $E(u_i, h_i;  B_\tau ) \leq \epsilon_0$. In the former case the current lemma is proven; in the latter case, (\ref{equation76}) is satisfied and by Lemma \ref{lemma73} and (\ref{equation77}), $(u_i, h_i)$ is gauged equivalent to some $(u_i', h_i')$ such that 
\begin{align*}
{\rm diam} (u' (B_{\lambda_0 \tau })) \leq C,
\end{align*}
where $\lambda_0 \in (0, {1 \over 2}]$ is the one in Lemma \ref{lemma73} and $C$ is a constant, independent of the sequence. Thus this implies $u_i(B_{\lambda_0 \tau })$ escape to infinity uniformly. Thus (\ref{equation72}) is true and necessarily $\sigma_i$ should converges to zero uniformly.
\end{proof}

\subsection{Proof of Lemma \ref{lemma73}}

Let $\phi, \psi: B_r \to {\mf g}$ be the functions defined by $\phi + {\bm i} \psi = 2(\partial h / \partial \ov{z})$. We pull back $(u, h)$ and $\phi ds + \psi dt$ via the rescaling $B_1 \to B_r$ given by $w \mapsto z= rw$, which is denoted by $( u_r, h_r)$,and $\phi_r ds + \psi_r dt$.  Denote $A_r= d + \phi_r ds + \psi_r dt$. Then
\begin{align*}
 \partial \psi_r - \partial_t \phi_r +  r^2 \sigma \mu^* (u_r) = 0.
\end{align*}
Hence by (\ref{equation76}) we have
\begin{align*}
\big\| F_{A_r} 	\big\|_{L^\infty(B_1)} = r^2 \big\| \sigma \mu (u_r) \big\|_{L^\infty(B_r)} \leq r^2 \Big( \sup_{B_r} \sqrt{\sigma} \Big) \Big( \sup_{B_r} {\mf p}'' \Big)\leq r \sqrt{\sigma^+}.
\end{align*}
There exists $f: B_1 \to {\mf g}$ solving the Neumann boundary value problem
\begin{align*}
\Delta f = d^* (\phi_r ds + \psi_r dt),\ -{\partial f \over \partial {\bm n}}d \theta = (\phi dt - \psi ds)|_{\partial B_1},\ f(0) = 0.
\end{align*}
It means that the gauge transformation $g = e^f$ will turn $A_r$ into Coulomb gauge on $B_1$. Denote $g^* A_r = d + \phi_r' ds + \psi_r' dt$, then there is a universal constant $c >0$ such that 
\begin{align*}
\big\| \phi_r' \big\|_{L^\infty(B_1)} + \big\| \psi_r' \big\|_{L^\infty(B_1)} \leq c \big\| F_{A_r} \big\|_{L^\infty(B_1)} \leq c r \sqrt{\sigma^+}.
\end{align*}
Denote $u_r' = g^{-1} u_r$, $h_r' = h_r + f$, $u' (z) = u_r'(z/r)$. Then in this new gauge, 
\begin{align}\label{equation714}
\partial_s u_r'  + {\mc X}_{\phi_r'} (u_r') +  J ( \partial_t u_r'  +  {\mc X}_{\psi_r'} (u_r') ) + 2 r \nabla \wt{W}_{h_r'}^{(\delta)} (u_r')= 0.
\end{align}
By ({\bf X3}) of Hypothesis \ref{hyp21}, there exists a constant $C_1>0$ (which is abusively used in this proof) such that
\begin{align}\label{equation715}
\begin{split}
&\ \big\| {\mc X}_{\phi_r'}( u_r') \big\|_{L^2(B_1)} + \big\| {\mc X}_{\psi_r'}( u_r') \big\|_{L^2(B_1)} \\[0.1cm]
\leq &\ \sqrt{\pi} \Big( \big\| {\mc X}_{\phi_r'}(u_r') \big\|_{L^\infty(B_1)} + \big\| {\mc X}_{\psi_r'}(u_r') \big\|_{L^\infty(B_1)} \Big) \\
\leq &\ C_1 r \sqrt{\sigma^+}  \Big( 1  + \sup_{B_r} \sqrt{|\mu(u)|}  \Big)\\
\leq &\ C_1 r \sqrt{\sigma^+}  \Big( 1 +  (\sigma^-)^{-{1\over 4}} \sup_{B_r} \sqrt{{\mf p}} \Big)\\
\leq &\ C_1 (\sigma^+)^{1\over 4} \sqrt{r}.
\end{split}
\end{align}
Since $\sigma^+$ is bounded from above, we can take $r_1$ sufficiently small so that  
\begin{align}\label{equation716}
 r \leq r_1 \Longrightarrow  C_1 (\sigma^+)^{1\over 4} \sqrt{r} \leq {1\over 2} {\bm \epsilon_2}.
\end{align}
Here ${\bm \epsilon_2}$ is the one in Lemma \ref{lemmaa4}. We can also assume that $E ( u, h; B_r ) \leq \epsilon_1 \leq \big( {1\over 4} {\bm \epsilon_2} \big)^2$. Then by (\ref{equation715}) and (\ref{equation716}), we have
\begin{align*}
\begin{split}
\big\| d u_r' \big\|_{L^2(B_1)} \leq &\  \big\| \partial_s  u_r'  + {\mc X}_{\phi_r'}(u_r' ) \big\|_{L^2(B_1)} + \big\| \partial_t u_r' +  {\mc X}_{\psi_r'}  (u_r' )  \big\|_{L^2(B_1)} \\
&\ + \big\|  {\mc X}_{\phi_r' }( u_r' ) \big\|_{L^2(B_1)} + \big\| {\mc X}_{\psi_r'} ( u_r' ) \big\|_{L^2(B_1)} \\
\leq &\ 2 \sqrt{ E ( u, h; B_r ) } + C_1 (\sigma^+)^{1\over 4}  \sqrt{r} \leq {\bm \epsilon}_2.
\end{split}
\end{align*}
On the other hand, by (\ref{equation715}) and (\ref{equation716}), we have
\begin{multline*}
\Big\| {\mc X}_{\phi_r'} ( u_r' ) + J {\mc X}_{\psi_r' } ( u_r' ) + 2 r \nabla \wt{W}_{h_r'}^{(\delta)} (u_r') \Big\|_{L^\infty(B_1)} \\
 \leq \big\| {\mc X}_{\phi_r'}( u_r' ) \big\|_{L^\infty(B_1)} + \big\|  {\mc X}_{\psi_r'}( u_r' ) \big\|_{L^\infty(B_1)} + 2 r \big\|  \nabla \wt{W}_{h_r'}^{(\delta)} ( u_r') \big\|_{L^\infty(B_1)} \leq 3.
\end{multline*}

Take $\lambda_1 =  {1\over 6} {\bm \epsilon_2}{\bm \epsilon_p}$. For $\lambda \in (0, \lambda_1]$, the restriction of $u_r'$ to $B_{2\lambda}$ satisfies the assumptions of Lemma \ref{lemmaa4}. Thus there exists $c_{(p)}>0$ (which is abusively used below) such that
\begin{align*}
\begin{split}
&\ \big\| d u' \big\|_{L^p(B_{\lambda r})} \\
= &\ r^{{2\over p} - 1} \big\| d u_r' \big\|_{L^p(B_\lambda)} \\
\leq &\  c_{(p)} (\lambda r)^{{2\over p }-1} \Big(  \big\| d u_r' \big\|_{L^2(B_{2\lambda_1 })} + 6 \lambda \Big) \\
\leq &\ c_{(p)} (\lambda r)^{{2\over p} - 1} \Big(  \big\| d_A u \big\|_{L^2(B_r)} + \big\| {\mc X}_{\phi_r'}(u_r') \big\|_{L^2(B_{2\lambda_1})} + \big\| {\mc X}_{\psi_r'}( u_r' ) \big\|_{L^2(B_{2\lambda_1})} + 6\lambda      \Big)\\
\leq &\ c_{(p)} (\lambda r)^{{2\over p}- 1}  \Big( \big\| d_A u \big\|_{L^2(B_r)} + \sqrt{r} + \lambda \Big).
\end{split}
\end{align*}
We used (\ref{equation710}) to derive the last inequality. Moreover, there exists $C_1 >0$ such that
\begin{align*}
{\rm diam} \big( u' (B_{\lambda r}) \big)= {\rm diam} \big( u_r' (B_{\lambda}) \big) \leq C_1 \Big( \big\| d_A u \big\|_{L^2(B_r)} + \sqrt{r} + \lambda \Big). 
\end{align*}
Lastly, we may assume that for the same $c_{(p)}>0$, we have
\begin{align*}
\begin{split}
\big\| d_A u \big\|_{L^p(B_{\lambda r})} \leq &\  \big\| d u' \big\|_{L^p(B_{\lambda r})} + \big\| {\mc X}_{\phi'}(u' ) \big\|_{L^p(B_{\lambda r})} + \big\| {\mc X}_{\psi'}(u' ) \big\|_{L^p(B_{\lambda r})} \\
\leq &\  \big\| d u' \big\|_{L^p(B_{\lambda r})} + r^{-1} \Big( \big\| {\mc X}_{\phi_r'}(u_r') \big\|_{L^\infty} + \big\| {\mc X}_{\psi_r'}(u_r' ) \big\|_{L^\infty} \Big)  \big( \pi \lambda^2 r^2 \big)^{1\over p}\\
\leq &\  c_{(p)} (\lambda r)^{{2\over p} -1}  \Big( \big\| d_A u \big\|_{L^2(B_r)} +  \sqrt{r}+ \lambda \Big).
\end{split}
\end{align*}
Thus Lemma \ref{lemma73} is proved.

\subsection{Proof of Proposition \ref{prop74}}

We assume first that $D \leq 1/2$. By Lemma \ref{lemma73}, we know that there exist $\wt\epsilon_1 >0$, $\wt{r}_1>0$ and $\wt\lambda_1 \in (0, {1\over 2}]$ such that if $E ( u, h; B_{\wt{r}} ) \leq \wt\epsilon_1$, $\wt{r} \leq \wt{r}_1$, then up to gauge transformation, ${\rm diam} \big( u(B_{ \wt\lambda_1 \wt{r}}) \big) \leq 1/2$. Then we can assume that $M_2$ is big enough such that $r \leq \lambda M_2^{-1} \leq \wt{r}_1$. Then $u\big( B_{\wt\lambda_1 r} \big) \subset \wt{X}_B^1$. It suffices to prove that there exists $\epsilon_2 > 0$ such that
\begin{align}\label{equation718}
E (u, h; B_{\wt\lambda_1 r} ) \geq \epsilon_2 \lambda^2.
\end{align}

Denote $\ov\partial_A u = {1\over 2} \big( \partial_s u + {\mc X}_\phi(u) + J \partial_t u + J {\mc X}_\psi(u) \big)$. Let $\pi_N: T\wt{X}_B^1 \to T\wt{X}_B^1$ be the orthogonal projection onto the distribution spanned over ${\mb C}$ by $\partial / \partial p$ and $\nabla Q$ and abbreviate $\pi_N(\ov\partial_A u(z)) = V_N(z)$. To prove (\ref{equation718}), we need the following estimate, which follows from straightforward but technical calculations and estimates.

\begin{lemma}\label{lemma77} For each $H>0$, there exist $c > 0$, $\wt\lambda_2 \in (0, {1\over 2}]$, $M_2 >0$, $\wt\epsilon_2 > 0$ and $D > 0$ such that if $(u, h)$ satisfies (\ref{equation79}) with this $H$, this $M_2$ and $u(0) \in \wt{X}_B^D$, and $E(u, h) \leq \wt\epsilon_2$, then $u \big( B_{\wt\lambda_2 r} \big) \subset \wt{X}_B^{2 D}$ and over $B_{\wt\lambda_2 r}$ we have
\begin{align}\label{equation719}
\Delta \big| V_N \big|^2  \geq - c \big| V_N(0) \big) \big|^2 \Big( 1+ \big| d_A u \big|^2 + \big| d h'' \big|^2 + \big| F_A \big|   \Big).
\end{align}
\end{lemma}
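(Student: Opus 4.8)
The assertion is a Bochner-type inequality for the "normal component'' $V_N = \pi_N(\ov\partial_A u)$ of the antiholomorphic part of $d_A u$, valid on a small disk $B_{\wt\lambda_2 r}$ provided the solution is low-energy and the base point sits near $\wt X_B$. The plan is to derive it by combining three ingredients: (i) a $C^0$-localization step which puts $u(B_{\wt\lambda_2 r})$ into a controlled neighborhood $\wt X_B^{2D}$ of $\wt X_B$ and simultaneously gives a Harnack-type comparison $\sup_{B_{\wt\lambda_2 r}}|V_N| \le C |V_N(0)|$ among values of the density; (ii) the commutator formulas from Subsection \ref{subsection32}, in particular \eqref{equation315}, \eqref{equation317}, \eqref{equation318} and \eqref{equation321}, to compute $D_A^{0,1}$ of the relevant quantities; and (iii) the special structure of $W = pQ$ near $\wt X_B$ (the critical set $\{Q=0\}$), which forces $d\wt{\mc W}_A$ to be "transverse'' in the normal directions spanned by $\partial/\partial p$ and $\nabla Q$, so that the degenerate directions don't contribute to the leading term.

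First I would set up the localization. Since $E(u,h;B_r)\le\wt\epsilon_2$ is small and $r\le\wt r_1$, Lemma \ref{lemma73} (after gauge transformation, which is harmless for \eqref{equation719} since both sides are gauge invariant) yields $\mathrm{diam}(u(B_{\wt\lambda_1 r}))\le C_1(\|d_Au\|_{L^2(B_r)}+\sqrt r+\lambda)$. Choosing $M_2$ large (so $r\le\lambda M_2^{-1}$ is tiny) and $\wt\epsilon_2$ small, the diameter is $\le D$, hence $u(0)\in\wt X_B^D\Rightarrow u(B_{\wt\lambda_1 r})\subset\wt X_B^{2D}$; shrinking the radius once more to $\wt\lambda_2 r$ if needed keeps this. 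On $\wt X_B^{2D}$, with $D=D(H)$ small, the normal distribution $N$ (spanned over $\mathbb C$ by $\partial/\partial p$ and $\nabla Q$) is a genuine $J$-invariant subbundle and the projection $\pi_N$ is smooth with uniformly bounded covariant derivatives; moreover by (\textbf{Q1}) of Hypothesis \ref{hyp25}, $|\nabla Q|$ and $|\nabla^2 Q|$ are comparable there, which is what lets $\nabla\wt{\mc W}_A$ control $V_N$. This is also where I would establish the Harnack comparison $|V_N(z)|\le C|V_N(0)|$ for $z\in B_{\wt\lambda_2 r}$: it follows from a mean-value / interior gradient estimate for $|V_N|^2$ once we have a preliminary one-sided bound $\Delta|V_N|^2\ge -c(1+|d_Au|^2+|dh''|^2+|F_A|)|V_N|^2$ with $|V_N|$ on the right replaced by its $L^\infty$ bound — so in practice (i) and the final inequality are proved together, bootstrapping the comparison.

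The heart is the computation of $\Delta|V_N|^2 = 2\langle\Delta V_N, V_N\rangle + 2|\nabla V_N|^2 \ge 2\langle\Delta V_N, V_N\rangle$, where $\Delta = D_A^{0,1}D_A^{1,0}+\text{(curvature, lower order)}$ acting on sections of $u^*T^\bot Y$, expressed via $D_A^{1,0},D_A^{0,1}$ as in the proof of Proposition \ref{prop45}. Using the equation $\ov\partial_A u = -\nabla\wt{\mc W}_A(u)$, one rewrites $V_N = -\pi_N(\nabla\wt{\mc W}_A(u))$. Differentiating and using \eqref{equation317}, \eqref{equation318}, \eqref{equation321}, together with the fact that $F_l^{(\delta)}$ has vanishing Hessian along the normal bundle (by (\textbf{Q2}) and (\textbf{P2})) so that $\nabla^2\wt{\mc W}_A$ restricted to $N$ is "small'' — controlled by $d(u,\wt X_B)$ — the second-order term in $\langle\Delta V_N,V_N\rangle$ is either an exact commutator producing curvature $R(v_s,v_t)$ and $\nabla{\mc X}_{F_A}$ terms, or is absorbed into $|V_N|^2$ times the geometric quantities on the right of \eqref{equation719}. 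Every remaining term is estimated using: the uniform bound $\|h\|_{L^\infty}\le H$ (hence $e^{\ov{\rho_l(h)}}$ bounded), $|dh''|$ bounded via $\Delta h''=-\sigma\mu^*(u)$, $|F_A|$ controlled, the bounded-geometry hypothesis (\textbf{X2}), and the Harnack comparison to replace pointwise $|V_N(z)|$ by $|V_N(0)|$.

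\emph{Main obstacle.} The genuinely delicate point is that $\nabla\wt{\mc W}_A$ is unbounded and not proper near infinity of $\wt X$, so when we differentiate $V_N=-\pi_N\nabla\wt{\mc W}_A(u)$ we a priori pick up terms of the size $|\nabla^2\wt{\mc W}_A|\cdot|d_Au|$ which is \emph{not} obviously $\lesssim |V_N|\cdot(\cdots)$. Resolving this requires exploiting precisely the Lagrange-multiplier structure: near $\wt X_B$ one has $W=pQ$ with $\nabla W \approx (Q, p\nabla Q)$, and (\textbf{Q1}) gives $|\nabla^2 Q|\le c_Q|\nabla Q|$ away from $K_Q$ plus, when $d(x,X_Q)\ge\delta$, $|\nabla Q|\le c_Q(\delta)|Q|$; combining these, the "dangerous'' part of $\nabla^2\wt{\mc W}_A$ acting in normal directions is genuinely comparable to $|\nabla\wt{\mc W}_A| = $ (a multiple of) $|V_N|$ up to the reference point, while the part acting in tangential (degenerate) directions has a Hessian that vanishes on $\wt X_B$ and hence contributes only $O(d(u,\wt X_B))\,|d_Au|\,|V_N| = O(|d_Au|^2)\,|V_N(0)|$ type terms after using $d(u,\cdot)\le 2D$ and Cauchy–Schwarz. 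Carrying these comparisons out carefully, with all constants tracked to depend only on $H$ (through the $C^0$-bound on $h$ and the resulting $\delta$-gap controlling $c_Q(\delta)$), is the "straightforward but technical'' computation the statement alludes to; the inclusion $u(B_{\wt\lambda_2 r})\subset\wt X_B^{2D}$ is then simply recorded along the way from the localization step.
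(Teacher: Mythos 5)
Your high-level plan is right: localize via Lemma \ref{lemma73}, compute $\Delta|V_N|^2$ through the commutator identities of Subsection \ref{subsection32}, and use the Lagrange-multiplier structure near $\wt X_B$. But there are three concrete gaps, and together they change the character of the argument.

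\textbf{(1) The pointwise comparison $|V_N(z)|\lesssim|V_N(0)|$ is not a bootstrap.} You propose to establish $\sup_{B_{\wt\lambda_2 r}}|V_N|\le C|V_N(0)|$ by a mean-value argument fed back from the Bochner inequality itself, ``proving (i) and the final inequality together.'' This is not needed and would be circular as written. Hypothesis \eqref{equation79} already contains $\mathfrak p(0)\ge\frac12\sup_{B_r}\mathfrak p$, so $|\ov\partial_A u(z)|\le 4|\ov\partial_A u(0)|$ directly, and then $|\ov\partial_A u(0)|\le 2|V_N(0)|$ follows (for $M_2$ large) because the tangential part of $\ov\partial_A u$ is bounded by the perturbation $c_P(H)$ while $|\ov\partial_A u(0)|\ge M_2/2\to\infty$. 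This is Lemma \ref{lemma712}, a one-line consequence of the hypotheses, and it is used pointwise throughout to replace $|V_N(z)|^2$ by $|V_N(0)|^2$ on the right side without any bootstrapping.

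\textbf{(2) You invoked the wrong clause of ({\bf Q1}).} Near $\wt X_B$ one has $d(x,X_Q)$ \emph{small}, so the second estimate in ({\bf Q1}) (which requires $d(x,X_Q)\ge\delta$) is not available, and the degenerate directions are not controlled by $|Q|$. The mechanism the paper actually uses (Lemma \ref{lemma711}) is a comparison among the blocks $\wt E_1,\dots,\wt E_4$ of the Hessian of $\wt W_h$ with respect to the splitting $TB\oplus NB$: on $\wt X_B^{D_1}\setminus\wt K_1(H)$ one has $|\wt E_i|\le\frac16|\wt E_1|$ for $i=2,3,4$, hence $|\wt E_1|\ge\frac12|\wt E|$. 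It is not that the tangential Hessian ``vanishes on $\wt X_B$''; what matters is the relative domination of the normal-normal block, achieved by choosing $D$ small so that $c_Q D\cdot|e^{\ov{\rho_0(h)}}\nabla Q|$ is a small fraction of $|e^{\ov{\rho_0(h)}}\nabla Q|$, and then pushing the off-diagonal blocks into that small fraction.

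\textbf{(3) You never produce the compensating positive term.} After expanding $D_A^{0,1}D_A^{1,0}V_N=-D_A^{0,1}D_A^{1,0}\pi_N(\nabla\wt W_h)$, several of the resulting terms are of size $-\epsilon|\wt E|^2|V_N|^2$ (where $\wt E=\nabla^2\wt W_h$); since $|\wt E|$ is unbounded, such terms cannot simply be ``absorbed into $|V_N(0)|^2\,(1+|d_Au|^2+\dots)$'' — they would spoil the inequality. The paper's proof works because the ``good'' summand $\nabla_{D_A^{1,0}\nabla\wt W_h}\nabla\wt W_h$ (estimated in Lemma \ref{lemma713}) produces a \emph{positive} term $\frac1{16}|\wt E|^2|V_N|^2$, which dominates the aggregate of the $-\frac1{64}|\wt E|^2|V_N|^2$ and $-\frac1{32}|\wt E|^2|V_N|^2$ losses from the other summands; Lemma \ref{lemma711} is exactly what forces $\langle\nabla_{V_N}\nabla\wt W_h,\nabla_{\ov\partial_A u}\nabla\wt W_h\rangle$ to yield $|\wt E_1(V_N)|^2\gtrsim|\wt E|^2|V_N|^2$ after subtracting cross-terms. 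Your write-up treats all contributions as error terms, which cannot close the estimate. This positivity, together with the block-comparison in (2), is the mathematical core of the lemma and must appear explicitly.
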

The proof is given in Subsection \ref{subsection76}. 

Now take any $\wt\lambda \in (0, \wt\lambda_2]$. We apply the local maximal principle \cite[Theorem 9.20]{Gilbarg_Trudinger} for $p =1$, $n=2$, $R = \wt\lambda r$, $u = \left| \ov\partial_A u \right|^2$ and $f$ equal to the right hand side of (\ref{equation719}), we see that there exists a constant $c' > 0$ (independent of $\wt\lambda$ and $r$) such that
\begin{multline*}
{1\over c'} \big| V_N(0) \big|^2 \leq  {1\over (\wt\lambda r)^2 } \int_{B_{\wt\lambda r}} \big| V_N \big|^2 dsdt \\
+ c \wt\lambda r \big| V_N(0) \big|^2 \Big( \wt\lambda r+  \big\| d_A u \big\|_{L^4(B_{\wt\lambda r})}^2 + \big\| dh''  \big\|_{L^4(B_{\wt\lambda r})}^2 + \big\| F_A \big\|_{L^2(B_{\wt\lambda r})}\Big).
\end{multline*}
Therefore, we see
\begin{multline}\label{equation720}
E ( u, h; B_{\wt\lambda r} ) \geq  {1\over 2} \int_{B_{\wt\lambda r}} \big| V_N \big|^2 ds dt \geq { (\wt\lambda r)^2 \over 2 c'} \big| V_N(0) \big|^2 \\
- {  ( \wt\lambda r )^3 c \over 2} \big| V_N(0) \big|^2  \Big(  \wt\lambda r + \big\| d_A u \big\|_{L^4(B_{\wt\lambda r})}^2 + \big\| dh'' \big\|_{L^4(B_{\wt\lambda r})}^2 + \big\| F_A \big\|_{L^2(B_{\wt\lambda r})} \Big).
\end{multline}
To proceed, we need
\begin{lemma}\label{lemma78}
There exist $\wt\epsilon_2' >0$, $\wt\lambda_2' \in (0, {1\over 2}\wt\lambda_2]$ such that if $E (u, h; B_{\wt\lambda_2 r} ) \leq \wt\epsilon_2'$, then 
\begin{align}\label{equation721}
\wt\lambda_2' r \Big( \wt\lambda_2' r + \big\| d_A u \big\|_{L^4(B_{\wt\lambda_2' r})}^2 + \big\| dh'' \big\|_{L^4(B_{\wt\lambda_2' r})}^2 + \big\| F_A \big\|_{L^2(B_{\wt\lambda_2' r})} \Big) \leq {1\over 2 c c'}.
\end{align}
\end{lemma}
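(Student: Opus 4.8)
\textbf{Proof proposal for Lemma \ref{lemma78}.}

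The plan is to estimate each of the three norms appearing on the left-hand side of (\ref{equation721}) in terms of the local energy $E(u, h; B_{\wt\lambda_2 r})$ plus error terms that are lower order in $r$, and then simply choose $\wt\lambda_2'$ small and $\wt\epsilon_2'$ small so that the product is bounded by $1/(2cc')$. The point is that on $B_{\wt\lambda_2 r}$ the pair $(u,h)$ already satisfies the smallness hypothesis (\ref{equation76}) of Lemma \ref{lemma73}: indeed from (\ref{equation79}) we have $r\,\sup_{B_r}{\mf p}\le 2\lambda\le 2$, so after rescaling $B_r$ to $B_1$ the density ${\mf p}$ is uniformly bounded, and $E(u,h;B_{\wt\lambda_2 r})\le\wt\epsilon_2'$ is as small as we want. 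Hence, passing to the Coulomb gauge produced by Lemma \ref{lemma73}, the interior $L^p$-estimate (\ref{equation77}) applies on a slightly smaller ball $B_{\wt\lambda_2' r}$ and gives
\begin{align*}
\big\| d_A u \big\|_{L^4(B_{\wt\lambda_2' r})} \le c_{(4)}\, (\wt\lambda_2' r)^{-1/2}\Big( \big\| d_A u \big\|_{L^2(B_{\wt\lambda_2 r})} + \sqrt{r} + \wt\lambda_2' \Big),
\end{align*}
so that $(\wt\lambda_2' r)\,\big\| d_A u \big\|_{L^4(B_{\wt\lambda_2' r})}^2 \le C\big( E(u,h;B_{\wt\lambda_2 r}) + r + (\wt\lambda_2')^2 \big)$ for a constant $C$ depending only on the background geometry.

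Next I would control $\big\| dh'' \big\|_{L^4}$ and $\big\| F_A \big\|_{L^2}$. For $dh''$ one uses that $\Delta h'' = -\sigma\mu^*(u)$, so that $dh''$ solves a Poisson equation with right-hand side $\sigma\mu^*(u)$; by the Calderón–Zygmund estimate on the rescaled ball, $\big\| dh'' \big\|_{L^4(B_{\wt\lambda_2' r})}$ is bounded (after rescaling) by $\big\| \sqrt{\sigma}\mu(u) \big\|_{L^2}$ — which is $\le\sqrt{2}\,\sqrt{E}$ by definition of ${\mf p}''$ and the energy — plus boundary contributions which, because $h''$ itself is uniformly bounded by $H$ (from (\ref{equation79})), contribute at most a multiple of $r$. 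The same bound $\big| F_A \big| = \sigma|\mu^*(u)|$ that appeared in the proof of Lemma \ref{lemma73} (giving $\big\| F_{A_r} \big\|_{L^\infty(B_1)}\le r\sqrt{\sigma^+}$) shows $\big\| F_A \big\|_{L^2(B_{\wt\lambda_2' r})}\le \big\| \sqrt{\sigma}\mu(u) \big\|_{L^2}\cdot\big\| \sqrt{\sigma} \big\|_{L^\infty}^{?}$; more directly, $\big\| F_A \big\|_{L^2(B_{\wt\lambda_2' r})}^2 = \int \sigma^2|\mu(u)|^2 \le \sigma^+\int\sigma|\mu(u)|^2 \le 2\sigma^+ E$, and since $\sigma^+$ is uniformly bounded this gives $\big\| F_A \big\|_{L^2}\le C\sqrt{E}$. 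Collecting everything, the bracket in (\ref{equation721}) is bounded by $C\big( \wt\lambda_2' r + E(u,h;B_{\wt\lambda_2 r}) + r \big)$, and the prefactor $\wt\lambda_2' r$ makes the whole left-hand side at most $C\wt\lambda_2' r\big( \wt\lambda_2' r + \wt\epsilon_2' + r\big)$.

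Finally I would first fix $\wt\lambda_2'\in(0,\tfrac12\wt\lambda_2]$ small enough, and then $r\le r_1$ and $\wt\epsilon_2'>0$ small enough (and possibly shrink $r_1$ further, using $r\le\lambda M_2^{-1}$ and enlarging $M_2$ if necessary), so that $C\wt\lambda_2' r\big( \wt\lambda_2' r + \wt\epsilon_2' + r\big)\le \tfrac{1}{2cc'}$; this is exactly (\ref{equation721}). The main obstacle I anticipate is purely bookkeeping rather than conceptual: one must be careful that all the constants $C$, $c_{(4)}$, etc., genuinely depend only on $H$ and the fixed background data (the metric on $\Sigma$, the geometry of $\wt{X}$ near infinity via Hypothesis \ref{hyp21}, the cutoffs), and not on the particular solution or on $\wt\lambda_2'$; in particular the interior elliptic estimates must be applied on a fixed-size rescaled ball $B_1$ (not on $B_{\wt\lambda_2' r}$ directly) so that the constants do not degenerate as $r\to 0$. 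Once that scaling discipline is maintained, the choice of parameters is routine.
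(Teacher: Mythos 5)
Your approach matches the paper's essentially exactly: apply the interior $L^p$-estimate (\ref{equation77}) of Lemma~\ref{lemma73} with $p=4$ to control $\|d_A u\|_{L^4(B_{\wt\lambda_2' r})}$, use the elliptic estimate for the rescaled $h_r''$ to control $\|dh''\|_{L^4}$ in terms of $\|F_A\|_{L^2}$, bound $\|F_A\|_{L^2}$ directly from the energy, and then choose $\wt\lambda_2'$, $\wt\epsilon_2'$, and (implicitly) $M_2$ so that every term is small. This is precisely the paper's proof.

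One bookkeeping slip, though: in your final ``collecting everything'' sentence you claim the entire bracket in (\ref{equation721}) is bounded by $C(\wt\lambda_2' r + E + r)$ and then multiply by the prefactor $\wt\lambda_2' r$, giving $C\wt\lambda_2' r(\wt\lambda_2' r + \wt\epsilon_2' + r)$. That can't be right, because the bracket itself is \emph{not} small: your own (correct) intermediate estimate gives $\|d_A u\|_{L^4(B_{\wt\lambda_2' r})}^2 \lesssim (\wt\lambda_2' r)^{-1}\big(\|d_A u\|_{L^2} + \sqrt r + \wt\lambda_2'\big)^2$, which blows up as $\wt\lambda_2' r \to 0$. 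Only the \emph{product} $\wt\lambda_2' r \cdot \|d_A u\|_{L^4}^2$ is controlled, by the absolute quantity $C\big(E + r + (\wt\lambda_2')^2\big)$ — not by something carrying an extra $\wt\lambda_2' r$ factor. So the correct final bound is a sum of absolute contributions, the largest being $C\big(\wt\epsilon_2' + r + (\wt\lambda_2')^2\big)$ from the $d_A u$ term plus $C\wt\lambda_2'$-type contributions from the $dh''$ term (your ``$O(r)$'' for the low-order part of the $dh''$ elliptic estimate should really be $O(\wt\lambda_2')$ once you rescale back, using $\|h''\|_{L^\infty}\le H$). This is all still small for the advertised choice of parameters, so the conclusion survives, but write the accounting term-by-term — as the paper does — rather than as ``(small prefactor)$\times$(bounded bracket)''.
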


\begin{proof}
We can ignore the terms $\wt\lambda_2' r$ and $\|F_A\|_{L^2(B_{\wt\lambda_2' r})}$ in (\ref{equation721}) because they are easily bounded by the radius and the energy on $B_{\wt\lambda_2' r}$. On the other hand, by Lemma \ref{lemma73} and (\ref{equation77}), if $E ( u,h; B_r )$ is sufficiently small, then for $p = 4$, we have
\begin{align*}
\wt\lambda_2' r \big\| d_A u \big\|_{L^4(B_{\wt\lambda_2' r})}^2 \leq c_{(4)}^2 \Big( \big\| d_A u \big\|_{L^2(B_r)} + \sqrt{r} + \wt\lambda_2' \Big)^2.
\end{align*}
On the other hand, define $h_r''(z) = h'' (rz)$ for $z \in B_1$. Then by the elliptic estimate over $B_1$, for some universal constant $c'' > 0$, 
\begin{multline*}
\wt\lambda_2' r \big\| dh'' \big\|_{L^4(B_{\wt\lambda_2' r})}^2 =  \wt\lambda_2' \big\| d h_r'' \big\|_{L^4(B_{\wt\lambda_2'})}^2  \leq \wt\lambda_2' c'' \big\| \Delta h_r'' \big\|_{L^2(B_1)}^2 \\
= c'' \wt\lambda_2' r^2 \big\| \Delta h'' \big\|_{L^2(B_r)}^2 = c'' \wt\lambda_2' r^2 \big\| F_A \big\|_{L^2(B_r)}^2.
\end{multline*}
Therefore it is easy to see Lemma \ref{lemma78} is true.
\end{proof}

Then by (\ref{equation720}) and Lemma \ref{lemma78}, we see that if $E( u, h; B_r ) \leq \wt\epsilon_2'$, then
\begin{align*}
E(u, h; B_r) \geq E ( u, h; B_{\wt\lambda_2' r} ) \geq { ( \wt\lambda_2' r )^2 \over 4 c_2} \big| V_N(0) \big|^2 \geq {(\wt\lambda_2' r)^2 \over 4c_2} {{\mf p}(0)^2 \over 16} = { (\wt\lambda_2')^2 \over 64 c_2} \lambda^2.
\end{align*}
Here the third inequality uses Lemma \ref{lemma712}. Therefore Proposition \ref{prop74} holds for  
\begin{align*}
\epsilon_2 = \min	\Big\{ \wt\epsilon_1, \wt\epsilon_2, \wt\epsilon_2', {( \wt\lambda_2')^2 \over 64 c_2 } \Big\}
\end{align*}
and the $M_2$ and $D$ which we already specified.

\subsection{Proof of Proposition \ref{prop75}}

Suppose $(u, h)$ satisfies (\ref{equation710}) with $M_3$ undetermined. By Lemma \ref{lemma73}, there exist $\wt\epsilon>0$, $\wt\lambda \in (0, 1/2]$ and $\wt{M}>0$ such that if $E( u, h; B_r ) \leq \wt\epsilon$ and ${\mf p} (0) \geq \wt{M}$ (so $r$ is small enough), then $( u,h)$ is gauge equivalent to a triple $( u', h')$ such that ${\rm diam} \big( u'(B_{\wt\lambda r}) \big) \leq 1/4$. Take $M_3 \geq \wt{M}$. So without loss of generality we may assume that ${\rm diam} \big( u ( B_{ \wt\lambda r} ) \big) \leq 1/4$. Moreover, by the equation $\Delta h'' + \sigma \mu^*(u) = 0$, we can take $\wt\epsilon>0$ small enough such that 
\begin{align}\label{equation722}
\sup_{B_{\wt\lambda r}} \big| \rho_0(h'') \big| - \inf_{B_{\wt\lambda r}} \big| \rho_0( h'') \big| \leq \log 2.
\end{align}

(I) Assume that ${\mf p}''(0) \geq {\mf p}'(0)$. Then 
\begin{align*}
\big| \mu(0) \big| \geq \big(\sqrt{ \sigma^+}\big)^{-1} {\mf p}''(0) \geq {M_3 \over 2 \sqrt{\sigma^+}}.
\end{align*}
By the properness of $\mu$ and ({\bf X3}) of Hypothesis \ref{hyp21}, we may take $M_3$ big enough so that for any $z \in B_{\wt\lambda r}$ ($u(z)$ is at most $1/4$ away from $u(0)$), we have
\begin{align*}
\big| \nabla \mu(u(z))) \big| \leq \big| \mu(u(z)) \big|.
\end{align*}
Therefore
\begin{align*}
\big| \mu(u(z)) - \mu(u(0)) \big| \leq  \sup_{t \in [0,1]} \big| \nabla \mu(u(tz))\big| \cdot d (u(z), u(0)) \leq {1\over 4} \sup_{B_{\wt\lambda r}} \big| \mu(u) \big|.
\end{align*}
Therefore by the triangle inequality, we have
\begin{align*}
\sup_{B_{\wt\lambda r}} \big| \mu(u) \big| \leq {4 \over 3}\big| \mu(u(0)) \big|.
\end{align*}
Then we have
\begin{align*}
\big| \mu(u(z)) \big| \geq \big| \mu(u(0)) \big| - \big| \mu(u(0)) - \mu(u(z)) \big| \geq {1\over 2} \big| \mu(u(0)) \big|.
\end{align*}
Therefore by the property of $\sigma$, we have
\begin{align*}
{\mf p}''(z)^2 = \sigma(z) \big| \mu (u(z)) \big|^2 \geq {1\over 8} \sigma(0) \big| \mu (u(0)) \big|^2 = {1\over 8} {\mf p}''(0)^2.
\end{align*}
Therefore
\begin{align*} 
E (u,h; B_r ) \geq  \int_{B_{ \wt\lambda r}} {\mf p}''(z)^2 \geq  {\pi \over 8} \big( \wt\lambda r  {\mf p}''(0) \big)^2 \geq \Big( {\pi \over 32} (\wt\lambda)^2 \Big) \lambda^2.
\end{align*}

(II) Assume that ${\mf p}'(0) \geq {\mf p}''(0)$ and $u(0) \notin  \wt{X}_B^D \cup \wt{X}_S^D$. For convenience we introduce 
\begin{align*}
{\mf p}_0'(z) = \Big| e^{\ov{\rho_0(h)}} \nabla F_0(u(z)) \Big|,\ {\mf p}_+'(z) = \Big| \beta(z) \sum_{l=1}^s e^{\ov{\rho_l(h)}} \nabla F_l^{(\delta)}(u(z)) \Big|.
\end{align*}

Let $K_Q\subset X$ be the compact subset of ({\bf Q1)} of Hypothesis \ref{hyp25} and $\wt{K} = K_Q \times B_1\subset \wt{X}$. We claim that there exists a constant $\alpha= \alpha(D) \in (0, 1)$ such that if 
\begin{align}\label{equation723}
(x, p) \notin \wt{K} \cup \wt{X}_B^{D\over 2} \cup \wt{X}_S^{D\over 2} \Longrightarrow \alpha \big| \nabla^2 W(x, p) \big| \leq \big| \nabla W(x, p) \big|.
\end{align}
To justify our claim, with respect to the product structure $\wt{X} = X \times {\mb C}$, we write  
\begin{align*}
\nabla W(x, p) = \big( \ov{p} \nabla Q(x),\ \ov{Q(x)} \big),\ \nabla^2 W(x, p) = \left( \begin{array}{cc} \ov{p} \nabla^2 Q(x) & \nabla Q(x)\\
\langle \cdot, \nabla Q(x)\rangle & 0
\end{array} \right).
\end{align*}
It suffices to bound each component of $\nabla^2 W$ by components of $\nabla W$. Indeed, if $(x, p) \notin \wt{K} \cup \wt{X}_B^{D\over 2} \cup \wt{X}_S^{D\over 2}$, then either $|p| \geq 1$ and $d(x, \star) \geq D/2$ or $|p| \leq 1$, $x \notin K_Q$ and $d(x, X_Q) \geq D/2$. In the former case, by ({\bf Q1}) of Hypothesis \ref{hyp25}, there exists $\alpha_1(D)>0$ such that $\alpha_1(D) |\nabla^2 Q(x)| \leq |\nabla Q(x)|$; so $\ov{p}\nabla^2 Q$ and $\nabla Q$ can be controlled by $\ov{p}\nabla Q$. In the latter case, there exists $\alpha_2(D)>0$ such that $\alpha_2(D) |\nabla Q(x)| \leq |Q(x)|$. Then $\nabla Q$ and $\ov{p} \nabla^2 Q$ can be controlled by $Q$ (by ({\bf Q1}) of Hypothesis \ref{hyp25}). The claim is proved. 

On the other hand, by Lemma \ref{lemma710} below, we have ${\mf p}_+'(z) \leq c_P(H)$. Therefore
\begin{align*}
M_3 \leq {\mf p}(0) \leq 2 {\mf p}'(0) \leq 2 {\mf p}_0'(0) + 2 {\mf p}_+'(0) \leq 2 {\mf p}_0'(0) + 2 c_P (H).
\end{align*}
Taking $M_3 = M_3(H) \geq 4 c_P (H)$, we have 
\begin{align}\label{equation724}
{\mf p}_0'(0) \geq M_3/4,\ {\mf p}_0'(0) \geq {\mf p}_+'(0).
\end{align}
By the expression of ${\mf p}_0'$ and the fact that $|h| \leq H$, we may take $M_3$ big enough such that 
\begin{align*}
u(B_{\wt\lambda r}) \cap \wt{K} = \emptyset. 
\end{align*}
On the other hand, by Lemma \ref{lemma73} we may take $M_3$ big enough and $\wt\lambda$ small enough such that
\begin{align*}
{\rm diam}\big( u( B_{\wt\lambda r})\big) \leq \min \big\{ \alpha/4, D/2\big\}.
\end{align*}
Then $u(B_{\wt\lambda r}) \subset \wt{X} \setminus ( \wt{K} \cup \wt{X}_B^{D \over 2} \cup \wt{X}_S^{D\over 2})$. Then by (\ref{equation723}), for any $z \in B_{\wt\lambda r}$, we have
\begin{align*}
\Big| \big| \nabla W(u(z)) \big| - \big| \nabla W(u(0))\big|\Big| \leq \sup_{t \in [0,1]} \big| \nabla^2 W(u(tz)) \big| {\alpha \over 4} \leq {1 \over 4} \sup_{B_{\wt\lambda r}} \big| \nabla W(u)\big|. 
\end{align*}
Then by triangle inequality,
\begin{align*}
\sup_{B_{\wt\lambda r}} \big|\nabla W(u) \big| \leq {4 \over 3} \big| \nabla W(u(0)) \big|.
\end{align*}
Therefore
\begin{align*}
\big| \nabla W(u(z)) \big| = \big| \nabla W(u(0)) \big| - \Big( \big| \nabla W(u(0)) \big| - \big| \nabla W(u(z)) \big| \Big) \geq {1\over 2} \big| \nabla W(u(0)) \big|.
\end{align*}
Then by (\ref{equation722}), we have ${\mf p}_0'(z) \geq {1\over 4} {\mf p}_0'(0)$; then by (\ref{equation724}) we have ${\mf p}'(0) \geq {\mf p}'(0)/8$ and 
\begin{align*}
E ( u, h; B_r ) \geq \int_{B_{ \wt\lambda r}} {\mf p}'(z)^2 \geq  { \pi \over 64} \big( \wt\lambda r {\mf p}'(0)\big)^2 \geq {\pi (\wt\lambda)^2 \over 256}\lambda^2.
\end{align*}

Therefore we see that $\epsilon_3 = \min \big\{ \wt\epsilon,  2^{-8} \pi (\wt\lambda)^2 \big\}$ and $M_3$ big enough satisfy the condition stated in this proposition.

\subsection{Proof of Proposition \ref{prop76}}

Suppose that the sequence $( u_i, h_i)$ satisfies the hypothesis of Proposition \ref{prop76}. Since $\| h_i\|_{L^\infty(B_r)}$ is uniformly bounded, ${\mf p}_i(0) \to +\infty$ implies that $\big| \mu (u_i(0)) \big|\to +\infty$. We write the map as $u_i (z) = ( \ov{u}_i(z), p_i(z))$ with respect to the decomposition $\wt{X} = X \times {\mb C}$. Then the condition $u_i(0) \in \wt{X}_S^D$ implies that 
\begin{align*}
\lim_{i \to \infty} \big| p_i(0) \big| =  +\infty.
\end{align*}

Projecting the Witten equation onto the ${\mb C}$-factor, we have
\begin{align}\label{equation725}
{\partial \over \partial \ov{z}} \left( e^{\rho_s (h_i)} p_i \right) = - e^{\rho_s(h_i)} \left( e^{\ov{\rho_0(h_i)}} \ov{Q}(\underline{u}_i) + \beta e^{\ov{\rho_s(h_i)}}\delta_i^r \ov{a}   \right).
\end{align}
Denote ${\mf s}_i(z)$ the norm of the inhomogeneous term of (\ref{equation725}) at $z$. By taking a subsequence, there are two possibilities.

(I) There exist $\rho>0$ and $M>0$ (which may depend on the subsequence) such that for every $i$,
\begin{align*}
\sup_{z\in B_\rho} {\mf s}_i(z) \leq M.
\end{align*}
Then by the standard diameter estimate for Cauchy-Riemann equations (with target ${\mb C}$), we see that $p_i$ diverges to infinity uniformly on $B_{\rho \over 2}$. Therefore (\ref{equation713}) holds and $\displaystyle \lim_{i \to \infty} \sigma_i = 0$. Otherwise
\begin{align*}
E ( u_i, h_i ; B_r ) \geq \int_{B_{\rho \over 2}} \sigma_i \big| \mu (u_i)  \big|^2 dsdt \to \infty.
\end{align*}
This contradicts with the uniform bound on energies of $(u_i, h_i)$. 

(II) There exists a sequence $y_i \in B_{r^*}$, $y_i \to 0$ such that
\begin{align}\label{equation726}
\lim_{i \to \infty} {\mf s}_i(y_i) = \infty. 
\end{align}
Denote $\tau_i = {\mf s}_i(y_i)^{-1}$. Applying Hofer's lemma (Lemma \ref{lemmaa7}) to the function ${\mf s}_i$ on $B_{{\tau_i \over 2}}(y_i)$, we see that there exist $z_i \in B_{\tau_i \over 2}(y_i)$ and $\rho_i \in (0, {\tau_i \over 4}]$ such that
\begin{align}\label{equation727}
{\mf s}_i(z_i) \geq {1\over 2} \sup_{B_{\rho_i}(z_i)} {\mf s}_i,\  {\mf s}_i(z_i)  \rho_i = {1\over 4}.
\end{align}
Now, equation (\ref{equation725}) and Lemma \ref{lemmaa4} imply that for some $\lambda_0$ small enough up to gauge transformation,
\begin{align}\label{equation728}
{\rm diam}  \left( p_i \left(　 u_i (B_{\lambda_0 \rho_i}(z_i)) \right) \right) \leq 1.
\end{align}
(Here the target is ${\mb C}$, so we don't need the condition that the energy of $u_i|_{B_{\rho_i}(z_i)}$ is small.) Then applying Hofer's lemma to the function ${\mf p}_i$ on $B_{\lambda_0 \rho_i}(z_i)$, we obtain $w_i \in B_{{1\over 2} \lambda_0 \rho_i}(z_i)$, $\kappa_i \in (0, {1\over 4} \lambda_0 \rho_i]$ such that
\begin{align}\label{equation729}
{\mf p}_i(w_i) \geq {1\over 2} \sup_{B_{\kappa_i}(w_i)} {\mf p}_i,\ \kappa_i {\mf p}_i(w_i) \geq { \lambda_0 \rho_i {\mf p}_i (z_i) \over 4 } \geq  {\lambda_0 \over 16}.
\end{align}
Notices that 
\begin{align}\label{equation730}
{\mf p}_i(w_i) \geq {\mf p}_i(z_i) \geq {\mf s}_i(z_i) \geq {\mf s}_i(y_i) \to +\infty.
\end{align}

{\it Claim.} For sufficiently large $i$, we have that either $u_i(w_i) \notin \wt{X}_S^{D}$ or ${\mf p}_i''(w_i) \geq {\mf p}_i'(w_i)$.

\begin{proof}[Proof of the claim] Suppose that there is a subsequence (still indexed by $i$) such that $u_i(w_i) \in \wt{X}_S^D$ and ${\mf p}_i'(w_i) \geq {\mf p}_i''(w_i)$. We write $\nabla \wt{\mc W}_{h_i}^{(\delta_i)}(u_i)$ as 
\begin{align*}
\nabla \wt{\mc W}_{h_i}^{(\delta_i)} (u_i) = e^{\ov{\rho_0(h_i)}}  \left(\begin{array}{c} \ov{Q}(u_i) \\ \ov{p_i} \nabla Q(u_i) \end{array} \right) + \beta \nabla {\mc W}_{h_i}^{(\delta_i)'}(u_i).
\end{align*}

Then $u_i(w_i) \in \wt{X}_S^D$ implies that $\ov{Q}(u_i(w_i))$ and $\nabla Q(u_i(w_i))$ are both bounded. Then by (\ref{equation730}) and the uniformly bound on $h_i$, we have
\begin{align*}
{\mf p}_i'(w_i) \geq {\mf p}_i''(w_i) \Longrightarrow {\mf p}_i'(w_i) \to +\infty \Longrightarrow \big| p_i(w_i) \big| \to +\infty
\end{align*}
Then (\ref{equation728}) implies that $\big| p_i (u_i(z_i)) \big| \geq \big| p_i (u_i(w_i)) \big|/2$. However, (\ref{equation726}) and (\ref{equation727}) imply that $\big| Q(u_i(z_i)) \big| \to +\infty$ and hence $\big| \nabla Q (u_i(z_i)) \big| + \infty$. Therefore 
\begin{align*}
\Big| \ov{p}_i(z_i) \nabla Q(u_i(z_i)) \Big| >> \Big| \ov{p}_i(w_i) \nabla Q(u_i(w_i)) \Big|,
\end{align*}
which contradicts with (\ref{equation729}). Therefore the claim holds.
\end{proof}

Now applying Proposition \ref{prop74} (if $u_i(w_i) \in \wt{X}_B^D$ and ${\mf p}_i''(w_i) < {\mf p}_i'(w_i)$) or Proposition \ref{prop75} (if $u_i(w_i) \notin \wt{X}_B^D$ or ${\mf p}_i''(w_i) \geq {\mf p}_i'(w_i)$) to the disk $B_{\kappa_i}(w_i)$, with the condition (\ref{equation729}), we see that for any $r>0$ and sufficiently large $i$,
\begin{align*}
E ( u_i, h_i; B_r ) \geq E ( u_i, h_i; B_{\kappa_i} (w_i) ) \geq 2^{-8} \min\{\epsilon_2, \epsilon_3 \} \lambda_0^2.
\end{align*}
(\ref{equation712}) holds for $\epsilon_4 = 2^{-8} \min \{ \epsilon_2, \epsilon_3 \} \lambda_0^2$.

\subsection{Proof of Lemma \ref{lemma77}}\label{subsection76}

Let $NB\subset T\wt{X}_B^1$ be the distribution spanned over ${\mb C}$ by $\partial / \partial p$ and $\nabla Q$ and let $TB$ be its orthogonal complement. Let $\pi_T = {\rm Id} - \pi_N$. 
\begin{lemma}\label{lemma79}
$\pi_T$ and $\pi_N$ are $G$-invariant tensor fields and 
\begin{enumerate}
\item For any $Z \in T\wt{X}_B^1$, 
\begin{align}\label{equation731}
\nabla_{JZ} \pi_T = -J \nabla_Z \pi_T,\ \nabla_{JZ} \pi_N = - J \nabla_Z \pi_N.
\end{align}

\item There exists $c_Q>0$ (which we can assume to coincide with the one of ({\bf Q1}) of Hypothesis \ref{hyp28}) such that in $\wt{X}_B^1$, 
\begin{align}\label{equation732}
\big| \nabla \pi_T \big| \leq c_Q,\ \big| \nabla \pi_N \big| \leq c_Q,\ \big| \nabla^2 \pi_T \big| \leq c_Q,\ \big| \nabla^2 \pi_N \big| \leq c_Q.
\end{align}
\end{enumerate}
\end{lemma}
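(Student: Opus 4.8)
\textbf{Proof proposal for Lemma \ref{lemma79}.}

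The plan is to reduce everything to two ingredients: first, the fact that the distribution $NB$ is spanned by the metric duals of the holomorphic one-forms $dp$ and $dQ$, so that its behaviour under $\nabla$ is governed by Lemma \ref{lemma34} and Lemma \ref{lemma33}; second, the curvature/bounded-geometry hypotheses (\textbf{X2}) together with the derivative estimates on $Q$ in (\textbf{Q1}) of Hypothesis \ref{hyp25}. The $G$-invariance of $\pi_N$ and $\pi_T$ is immediate from the $G$-invariance of $J$, of the metric, and of the functions $p$ and $Q$ (the latter being homogeneous, hence its zero locus and the field $\nabla Q$ transform equivariantly); since $\pi_T = \mathrm{Id} - \pi_N$ it suffices to treat $\pi_N$.

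For part (1), I would first write $\pi_N$ explicitly in terms of the two (generically independent, on $\wt{X}_B^1 \setminus \wt{K}_0$) vector fields $\xi_1 = (\partial/\partial p)^*$ and $\xi_2 = (\nabla Q)^* = (\overline{\partial Q})^*$, i.e. as a Gram-matrix expression $\pi_N Z = \sum_{a,b} g^{ab}\langle Z, \xi_b\rangle \xi_a$ where $g_{ab} = \langle \xi_a, \xi_b\rangle$. Then, applying the Leibniz rule to this expression, $\nabla_Z \pi_N$ is a universal algebraic combination of the $\xi_a$, the $\langle \cdot,\cdot\rangle$-pairings, and the covariant derivatives $\nabla_Z \xi_a$. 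Since $\xi_1, \xi_2$ are metric duals of holomorphic one-forms, Lemma \ref{lemma34} gives $\nabla_{JZ}\xi_a = -J\nabla_Z \xi_a$; combined with the Kähler identity $\nabla J = 0$ and the fact that the scalar pairings $\langle Z,\xi_a\rangle$ satisfy $\langle JZ, \xi_a\rangle = \langle Z, -J\xi_a\rangle$ (so the ``$JZ$'' substitution is compatible with the $J$-conjugate-linearity on both legs), one gets that the whole expression for $\nabla_{\bullet}\pi_N$ is conjugate-linear in the differentiating direction, which is exactly $\nabla_{JZ}\pi_N = -J\nabla_Z \pi_N$. This is the same bookkeeping as in the proof of Lemma \ref{lemma34}, just carried through one more layer; I would present it by noting that $\pi_N$ is built functorially out of $\xi_1,\xi_2$ and that each building block obeys the $J$-anti-commutation rule.

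For part (2), the point is to bound $\nabla \pi_N$ and $\nabla^2 \pi_N$ uniformly on $\wt{X}_B^1$. Using the Gram-matrix formula above, $\nabla \pi_N$ and $\nabla^2\pi_N$ are rational expressions in the entries $g_{ab}$, the duals $\xi_a$, and their first and second covariant derivatives, with denominators a power of $\det(g_{ab})$. On $\wt{X}_B^1$ we have $|p|$ controlled (staying within the tube of radius $1$ around $\wt{X}_B$, away from $\wt{K}_0$ forces, via the geometry of $Q$, a lower bound on the relevant determinant, hence no small-denominator blow-up), and $\nabla \xi_1$ is bounded by the curvature of $\wt{X}$ (hypothesis (\textbf{X2})), while $\nabla \xi_2, \nabla^2 \xi_2$ are controlled by $\nabla^2 Q, \nabla^3 Q$, which by (\textbf{Q1}) of Hypothesis \ref{hyp25} are bounded in terms of $|\nabla Q|$ — and on a fixed-width tube around $\wt{X}_B$ away from the compact set $K_Q$, the estimate $|\nabla^2 Q| \le c_Q |\nabla Q|$ and $|\nabla^3 Q| \le c_Q |\nabla^2 Q|$, together with the fact that $|\nabla Q|$ itself is comparable (via homogeneity and properness of $\mu_0$) across the tube, yields the uniform bounds. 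Shrinking constants so they match the $c_Q$ already fixed in (\textbf{Q1}) is a cosmetic final step. The main obstacle I anticipate is the bookkeeping in part (1): making precise that ``$\pi_N$ is a functorial construction from holomorphic-form duals'' so that the $J$-anti-commutation propagates cleanly to $\nabla\pi_N$ without sign errors, since the Gram matrix and its inverse also depend on the base point and must be differentiated. If that turns out awkward, the fallback is to verify $\nabla_{JZ}\pi_N + J\nabla_Z \pi_N = 0$ directly by contracting with arbitrary $V, W$ and expanding, exactly mimicking the chain of equalities in the proof of Lemma \ref{lemma34}, using holomorphicity of $dp$ and $dQ$ at each step.
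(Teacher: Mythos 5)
Your plan for part (1) is viable and essentially parallel to the paper's, just with more bookkeeping. The paper writes $\nabla_Z \pi_T$ in the $(TB, NB)$-block form $\left(\begin{smallmatrix} 0 & F_Z \\ F_Z^* & 0\end{smallmatrix}\right)$ with $F_Z: NB \to TB$, observes $F_Z(\partial/\partial p) = 0$ (since $\partial/\partial p$ is parallel) and $F_Z(\nabla Q) = -\pi_T \nabla_Z \nabla Q$, and then applies Lemma \ref{lemma34} to the single 1-form $dQ$ to get $F_{JZ} = -J F_Z$ and hence $\nabla_{JZ}\pi_T = -J\nabla_Z\pi_T$. Your Gram-matrix version reaches the same conclusion, but the abstract block form avoids ever introducing the $\langle \xi_a,\xi_b\rangle$ scalars and their inverses, which matters in part (2).

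For part (2) there is a genuine gap: your stated justification for the uniform bound — that ``$|p|$ is controlled'' on $\wt{X}_B^1$ and that there is a lower bound on $\det(g_{ab})$ there — is false. The set $\wt{X}_B^1 = \{(x,p) \in \wt{X}\setminus\wt{K}_0 : d((x,p), \wt{X}_B) \leq 1\}$ places no upper bound on $|p|$; in particular the points $(\star, p)$ with $|p|>1$ all lie in $\wt{X}_B^1$ (since $Q(\star)=0$ and they escape $\wt{K}_0$ once $|p|>1$), and there $\nabla Q(\star) = 0$, so $\det(g_{ab}) = |\nabla Q|^2 \to 0$ along any sequence approaching $\wt{X}_S$. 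So your denominator does blow up, and the reason the bound nonetheless holds is not the one you give. The correct mechanism is a cancellation: every appearance of $|\nabla Q|^{-2}$ in the Gram-inverse is paired with numerator factors that, by the (\textbf{Q1}) inequalities $|\nabla^2 Q| \leq c_Q |\nabla Q|$ and $|\nabla^3 Q| \leq c_Q |\nabla^2 Q|$, supply exactly compensating powers of $|\nabla Q|$. The paper's proof makes this cancellation transparent: it never inverts the Gram matrix, but instead bounds $|F_Z(\nabla Q)| = |\pi_T\nabla_Z\nabla Q| \leq c_Q|\nabla Q||Z|$ and then reads off the operator norm $\|F_Z\| \leq c_Q|Z|$ because $F_Z$ kills the orthogonal complement $\partial/\partial p$ inside $NB$; for $\nabla^2\pi_T$ it writes the connection itself in block form and applies the same idea one more time. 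If you keep the Gram-matrix formalism you must carry out the $|\nabla Q|$-power bookkeeping explicitly and drop the incorrect lower-bound claim; otherwise, switching to the paper's $F_Z$ presentation is cleaner and removes the denominator from the start.

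A smaller remark: you invoke (\textbf{X2}) to bound $\nabla \xi_1$ for $\xi_1 = \partial/\partial p$, but on $\wt{X} = X\times{\mb C}$ with the product Kähler metric, $\partial/\partial p$ is parallel, so $\nabla\xi_1 = 0$ and no curvature input is needed there; curvature of $\wt{X}$ is used elsewhere in Section \ref{section7} but not for this lemma.
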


\begin{proof}
The distribution $NB$ and the metric are both $G$-invariant so $\pi_N$ and $\pi_T$ are $G$-invariant. 

It is easy to see that with respect to the decomposition $T\wt{X}_B^1 \simeq TB \oplus NB$, for any tangent vector $Z$, we can write
It is easy to see that with respect to the decomposition $T\wt{X}_B^1 \simeq TB \oplus NB$, for any tangent vector $Z$, we can write
\begin{align*}
\nabla_Z \pi_T = - \nabla_Z \pi_N  = \left( \begin{array}{cc} 0 & F_Z \\
F^*_Z & 0
\end{array} \right),\ F_Z: NB \to TB.
\end{align*}
Moreover, the restriction of $F_Z$ to the ${\partial \over \partial p}$-direction is zero. Now we have
\begin{align*}
F_{JZ} \left(\nabla Q\right) = - \pi_T \nabla_{JZ} \nabla Q = \pi_T \left( J \nabla_Z \nabla Q \right) = J \pi_T \nabla_Z \nabla Q = - J F_Z (\nabla Q).
\end{align*}
Here the second equality follows from Lemma \ref{lemma34}. Since $T\wt{X}_B^1 \simeq TB \oplus TN$ is $J$-linear, we see $F^*_{JZ} = - J F^*_Z$. Therefore (\ref{equation731}) is proven.

To estimate $\nabla \pi_T$, we see that by ({\bf Q1)} of Hypothesis \ref{hyp25}, for any $Z$, we have
\begin{align}\label{equation733}
\left| F_Z (\nabla Q ) \right|  = \left| \pi_T \nabla_Z \nabla Q \right| \leq c_Q \left| \nabla Q\right||Z|.
\end{align}

Now we consider the second derivative of $\pi_T$. In $\wt{X}_B^1$, we can write the Levi-Civita connection as
\begin{align*}
\nabla = \left( \begin{array}{cc} \nabla^T & - F \\
F^* & \nabla^N
\end{array}  \right).
\end{align*}
Then
\begin{align}\label{equation734}
\nabla^2 \pi_T = \left[ \nabla, \nabla \pi_T \right] = \left( \begin{array}{cc} - 2 FF^* & \nabla^T F - F \nabla^N\\
                                                                              \nabla^N F^* - F^* \nabla^T  & 2 F^* F 
\end{array} \right).
\end{align}
Therefore it suffices to consider the two off-diagonal terms, which are adjoint to each other. Consider the upper-right one. Take tangent vectors $Z_1, Z_2$ with $\nabla_{Z_1} Z_2$ vanishes at a point. Then at that point, using ({\bf Q1}) of Hypothesis \ref{hyp25}, we see
\begin{align*}
\begin{split}
&\ \big| 	\big( \nabla^T_{Z_1} F_{Z_2} - F_{Z_2} \nabla_{Z_1}^N \big) \nabla Q  \big| \\
\leq &\ \big| \nabla_{Z_1}^T \pi_T \nabla_{Z_2}\nabla Q \big| + \big| F_{Z_2} \pi_N \nabla_{Z_1} \nabla Q\big| \\
\leq &\ \big| \nabla_{Z_1} \pi_T \nabla_{Z_2} \nabla Q \big| + \big| F_{Z_2} \pi_N \nabla_{Z_1} \nabla Q \big| \\
\leq &\ \big| F_{Z_1} \nabla_{Z_2} \nabla Q \big| + \big| \nabla_{Z_1} \nabla_{Z_2} \nabla Q \big| + \big| F_{Z_2} \pi_N \nabla_{Z_1} \nabla Q \big| \\
\leq &\ c_Q |Z_1||Z_2| \big|\nabla Q \big|.
\end{split}
\end{align*}
By (\ref{equation733}), (\ref{equation734}) and above we see (\ref{equation732}) holds.
\end{proof}

\begin{lemma}\label{lemma710}
For any $H>0$, there exists $c_P = c_P(H)>0$ such that if $|h| \leq H$ and $\delta \leq 1$, then for any $x \in \wt{X}$,
\begin{align}\label{equation735}
\sum_{l=1}^s \big| e^{\ov{\rho_l(h)}} \nabla^{(j)} F_l^{(\delta)} (x) \big| \leq c_P\ (j=1, 2, 3).
\end{align}
Moreover, if $(u, h)$ is a solution to a local model over $B_r$ such that $\big\| h\big\|_{L^\infty(B_r)} \leq H$ and $u(B_r) \subset \wt{X}_B^1$, then for any $z \in B_r$, 
\begin{align}\label{equation736}
\big| \pi_T( \ov\partial_A u (z)) \big| \leq c_P.
\end{align}
\end{lemma}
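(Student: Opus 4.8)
The plan is to prove the two estimates \eqref{equation735} and \eqref{equation736} of Lemma \ref{lemma77} separately, since the first is a pointwise statement about the perturbation terms on the target and the second uses the equation together with the geometry of $\wt{X}_B$. For \eqref{equation735}, recall from Hypothesis \ref{hyp28}(\textbf{P1}) that each $F_l$ is homogeneous of degree $l\le s-1$ with respect to the $G_0^{\mb C}$-action, and $F_s=-a_\upgamma p$ is linear; hence $F_l^{(\delta)}(x)=\delta^{r-l}F_l(x)$ (for $l=1,\dots,s-1$) and $F_s^{(\delta)}=\delta^r F_s$, so each $F_l^{(\delta)}$ carries a factor $\delta^{r-l}\le 1$ whenever $l<r$, and in any case $\delta\le 1$ gives no growth. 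Combining this with the uniform bounds $\bigl|\nabla^{(j)}F_{l}\bigr|\le c^{(j)}$ from (\textbf{P3}) for $j\ge 1$, and with the bound $|e^{\ov{\rho_l(h)}}|\le e^{|l||h|}\le e^{(s-1)H}=:C(H)$ coming from $|h|\le H$ and the explicit characters $\rho_l$ defined at the end of Subsection \ref{subsection21}, one gets $\sum_{l=1}^s\bigl|e^{\ov{\rho_l(h)}}\nabla^{(j)}F_l^{(\delta)}(x)\bigr|\le s\,C(H)\max_j c^{(j)}=:c_P(H)$, uniformly in $x\in\wt{X}$. This step is routine.

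For \eqref{equation736}, the point is that $\pi_T$ annihilates the holomorphic part $\nabla{\mc W}_A$ of the inhomogeneous term. First I would recall that $\nabla\wt{\mc W}_A(u)=e^{\ov{\rho_0(h)}}\nabla W(u)+\beta\sum_{l=1}^s e^{\ov{\rho_l(h)}}\nabla F_l^{(\delta)}(u)$. With respect to the product structure $\wt{X}=X\times{\mb C}$ one has $\nabla W(x,p)=(\ov p\,\nabla Q(x),\ \ov{Q(x)})$; since $u(B_r)\subset\wt{X}_B^1$, by the definition of $\wt{X}_B^1$ the point $u(z)$ is within distance $1$ of $\wt{X}_B=Q^{-1}(0)\times{\mb C}$ and away from $\wt{K}_0$, so $\nabla W(u(z))$ lies in (or is close to) the distribution $NB$ spanned over ${\mb C}$ by $\partial/\partial p$ and $\nabla Q$; more precisely $\partial/\partial p$ and $\nabla Q$ span exactly the directions occupied by $\nabla W$, so $\pi_T\nabla W(u(z))$ is controlled by $d(u(z),\wt{X}_B)\le 1$ times $|\nabla^2 W(u(z))|$ — but this term is awkward because $\nabla^2 W$ is unbounded. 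The cleaner route, which I would use, is to read off $\pi_T(\ov\partial_A u)$ directly from the first equation of \eqref{equation31}: since $\ov\partial_A u=-\nabla\wt{\mc W}_A(u)$, we have $\pi_T(\ov\partial_A u(z))=-\pi_T\bigl(\nabla\wt{\mc W}_A(u(z))\bigr)=-\pi_T\bigl(e^{\ov{\rho_0(h)}}\nabla W(u)\bigr)-\beta\pi_T\bigl(\sum_{l\ge1}e^{\ov{\rho_l(h)}}\nabla F_l^{(\delta)}(u)\bigr)$, and the second term is bounded by $c_P(H)$ from \eqref{equation735}. So everything reduces to bounding $\pi_T(e^{\ov{\rho_0(h)}}\nabla W(u))$.

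The heart of the argument — and the step I expect to be the main obstacle — is therefore showing $\bigl|\pi_T\bigl(e^{\ov{\rho_0(h)}}\nabla W(u(z))\bigr)\bigr|\le c_P(H)$ on $\wt{X}_B^1$. Here $e^{\ov{\rho_0(h)}}\nabla W=(e^{\ov{\rho_0(h)}}\ov p\,\nabla Q,\ e^{\ov{\rho_0(h)}}\ov{Q})$; the ${\mb C}$-component $e^{\ov{\rho_0(h)}}\ov Q$ lies in the $\partial/\partial p$-direction, which is contained in $NB$, so $\pi_T$ kills it entirely. The $X$-component is $e^{\ov{\rho_0(h)}}\ov p\,\nabla Q(x)$, which points along $\nabla Q$ and hence also lies in $NB$ at the point where it is evaluated — so $\pi_T$ of it vanishes \emph{pointwise}, i.e. $\pi_T(\nabla W(y))=0$ for every $y\in\wt{X}_B^1$. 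Thus in fact $\pi_T(e^{\ov{\rho_0(h)}}\nabla W(u(z)))=0$ identically, and \eqref{equation736} follows with $c_P(H)$ from \eqref{equation735}. The only subtlety to check carefully is that $NB$, as defined just before Lemma \ref{lemma79} to be the ${\mb C}$-span of $\partial/\partial p$ and $\nabla Q$, genuinely contains the image of $\nabla W$ at every point of $\wt{X}_B^1$ — this is immediate from the formula $\nabla W(x,p)=(\ov p\,\nabla Q(x),\ \ov{Q(x)})$ since $(\ov p\,\nabla Q,0)\in{\mb C}\nabla Q$ and $(0,\ov Q)\in{\mb C}\,\partial/\partial p$ — together with the fact that $\nabla Q(x)\neq 0$ away from $\wt{K}_0$ so that $NB$ really is a rank-two complex distribution there. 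Once this is in place, $c_P(H)$ can be taken to be the constant produced in \eqref{equation735}, completing the proof of Lemma \ref{lemma77}'s auxiliary estimates \eqref{equation735}--\eqref{equation736}.
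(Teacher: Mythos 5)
Your final argument is the paper's, spelled out with more detail: \eqref{equation735} is a direct consequence of $\|h\|_{L^\infty}\le H$ and Hypothesis~({\bf P3}), and \eqref{equation736} follows from $\ov\partial_A u=-\nabla\wt{\mc W}_A(u)$ once one observes that $\pi_T$ annihilates the unperturbed term $e^{\ov{\rho_0(h)}}\nabla W$ pointwise, because $\nabla W(x,p)=(\ov{p}\,\nabla Q,\ \ov{Q})$ takes values in $NB={\mb C}\langle\partial/\partial p,\nabla Q\rangle$ at every point of $\wt{X}_B^1$. Two small slips worth noting: the detour via ``$d(u(z),\wt{X}_B)\cdot|\nabla^2 W|$'' is a red herring which you correctly abandon (the vanishing of $\pi_T\nabla W$ is exact, not approximate), and the exponent in your bound on $|e^{\ov{\rho_l(h)}}|$ should be $e^{rH}$ rather than $e^{(s-1)H}$ since $\rho_s(\xi_0,\xi_1)=\xi_1^{-r}$ gives $|\rho_s(h)|\le r|h|$; neither affects the conclusion.
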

\begin{proof}
The first estimate follows from the hypothesis $|h(z)|\leq H$ and ({\bf P3}) of Hypothesis \ref{hyp28}. By the equation $\ov\partial_A u + \wt{W}(u) = 0$ we have
\begin{align*}
\pi_T (\ov\partial_A u) = - \pi_T( \nabla \wt{W} (u)) = - \pi_T ( \nabla W' (u)) = - \beta \pi_T \big( \sum_{l=1}^s e^{\ov{\rho_l(h)}} \nabla F_l^{(\delta)}(u) \big).
\end{align*}
Then (\ref{equation736}) follows from (\ref{equation735}) with $j=1$.
\end{proof}

Now we consider the Hessian of $\wt{W}_{h}$ in $\wt{X}_B^1$. With respect to the decomposition $NB \oplus TB$, we write
\begin{align*}
\nabla^2 W_h = \left( \begin{array}{cc} E_1 & E_2 \\ E_3 & E_4 \end{array} \right),\ \nabla^2 \wt{W}_h = \wt{E} =  \left( \begin{array}{cc} \wt{E}_1 & \wt{E}_2 \\ \wt{E}_3 & \wt{E}_4 \end{array} \right).
\end{align*}
On the other hand, with respect to the splitting $T \wt{X} = {\mb C} \oplus TX$, we have
\begin{align}\label{equation737}
\nabla^2 W_h = e^{\ov{\rho_0(h)}} \left( \begin{array}{cc} 0 & \left(\nabla Q \right)^*\\
\nabla Q & \ov{p} \nabla^2 Q
\end{array} \right).
\end{align}

\begin{lemma}\label{lemma711}For any $H>0$, there exist $c_H>0$, $D_1 = D_1(H)>0$ and a compact subset $\wt{K}_1:= \wt{K}_1(H) \subset \wt{X}$ such that if $|h|\leq H$ and $x \in \wt{X}_B^{D_1} \setminus \wt{K}_1$, then 
\begin{align*}
\big| \wt{E}_i(z, x) \big| \leq c_H \big( 1 + \big| \nabla \wt{W}_h (z, x) \big| \big),\ \big| \wt{E}_i(z, x) \big| \leq {1\over 6} \big| \wt{E}_1(z, x) \big|.
\end{align*}
In particular,
\begin{align*}
\big| \wt{E}_1(z, x) \big| \geq {1\over 2} \big| \wt{E}(z, x) \big|.
\end{align*}
\end{lemma}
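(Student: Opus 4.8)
The plan is to make the geometry underlying Lemma~\ref{lemma711} completely explicit. We work in $\wt{X}_B^1$, where the normal distribution $NB$ is spanned over ${\mb C}$ by $\partial/\partial p$ and $\nabla Q$, and $TB$ is its orthogonal complement. The Hessian of $\wt{W}_h$ is $\wt{E} = \nabla^2 W_h + \beta \sum_{l=1}^s \nabla^2 \big( e^{\ov{\rho_l(h)}} F_l^{(\delta)} \big)$. By Lemma~\ref{lemma710}, the $\beta$-part of $\wt{E}$ is bounded by $c_P(H)$ uniformly (the derivatives of $e^{\ov{\rho_l(h)}}$ are controlled by $|h| \le H$, and the $\nabla^{(j)}F_l^{(\delta)}$ terms are bounded, $j = 1,2$), so it contributes only a bounded error to every block $\wt{E}_i$. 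Hence it suffices to control the blocks $E_i$ of $\nabla^2 W_h$ itself, plus harmless bounded terms.

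First I would use the explicit formula \eqref{equation737}: with respect to $T\wt{X} = {\mb C} \oplus TX$,
\begin{align*}
\nabla^2 W_h = e^{\ov{\rho_0(h)}} \left( \begin{array}{cc} 0 & (\nabla Q)^* \\ \nabla Q & \ov{p}\, \nabla^2 Q \end{array} \right).
\end{align*}
The key point is that in $\wt{X}_B^1 \setminus \wt{K}_0$, the definition of $NB$ forces the ``main'' block $\wt{E}_1$ (the $NB \to NB$ component) to dominate. Indeed, restricting $\nabla^2 W_h$ to the span of $\partial/\partial p$ and $\nabla Q$ and projecting onto $NB$, the $\nabla Q$-$\partial/\partial p$ entries are $e^{\ov{\rho_0(h)}} |\nabla Q|$ up to bounded factors, and $\ov{p}\,\nabla^2 Q(x)$ restricted to $NB$ is controlled by $c_Q |\ov{p}|\,|\nabla Q|$ via ({\bf Q1}) of Hypothesis~\ref{hyp25} (using $x \notin K_Q$, which holds after enlarging $\wt{K}_1$). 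So $|\wt{E}_1| \gtrsim e^{\ov{\rho_0(h)}} |\nabla Q|(1 + |\ov{p}|) \gtrsim |\nabla W_h|$ up to the bounded $\beta$-error and a bounded additive constant. For the off-diagonal and $TB \to TB$ blocks, I would again use ({\bf Q1}): $\pi_T \nabla^2 Q$ and $\pi_N \nabla^2 Q$ applied to vectors are bounded by $c_Q |\nabla Q|$ times the vector norm (this is exactly the content of Lemma~\ref{lemma79}'s estimates $|F_Z(\nabla Q)| \le c_Q|\nabla Q||Z|$), so $|\wt{E}_2|, |\wt{E}_3|, |\wt{E}_4| \le c_H(1 + |\nabla\wt{W}_h|)$ and, crucially, these are all of the form $(\text{const}) \cdot e^{\ov{\rho_0(h)}}|\ov{p}||\nabla Q| \cdot (\text{small})$ while $\wt{E}_1$ additionally carries the $e^{\ov{\rho_0(h)}}|\nabla Q|$ term coming from the $(\nabla Q)^*$/$\nabla Q$ entries, which has no small factor. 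Therefore by choosing $D_1 = D_1(H)$ small enough (so $d(x,\wt{X}_B) \le D_1$ makes the relevant $TB$-contributions genuinely small relative to the $NB$-term) and $\wt{K}_1 = \wt{K}_1(H)$ large enough (to absorb the bounded $\beta$-errors and the additive constant into $\tfrac16|\wt{E}_1|$, using that $|\wt{E}_1| \to \infty$ as $x \to \infty$ in $\wt{X}_B^{D_1}$ because $|\nabla Q| \to \infty$ away from $\wt{X}_B \cup K_Q$ by ({\bf Q1})), we get $|\wt{E}_i| \le \tfrac16|\wt{E}_1|$ for $i = 2,3,4$.

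The final assertion $|\wt{E}_1| \ge \tfrac12 |\wt{E}|$ then follows formally: writing $\wt{E}$ in block form, $|\wt{E}| \le |\wt{E}_1| + |\wt{E}_2| + |\wt{E}_3| + |\wt{E}_4| \le |\wt{E}_1| + 3 \cdot \tfrac16 |\wt{E}_1| = \tfrac32 |\wt{E}_1|$, hence $|\wt{E}_1| \ge \tfrac23 |\wt{E}| \ge \tfrac12 |\wt{E}|$. The main obstacle I anticipate is bookkeeping the precise dependence of the constants: one must verify that the ``smallness'' supplying the factor $\tfrac16$ genuinely comes from $D_1$ (the distance to $\wt{X}_B$ controlling how close $\pi_T$ is to annihilating $\nabla^2 W_h$ in the relevant directions) and from largeness of $\wt{K}_1$ (making $|\nabla Q|$ large so the bounded $\beta$-terms are negligible), and that these choices can be made consistently and depend only on $H$, not on the solution. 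This requires carefully tracking which terms scale with $|\ov{p}|$ and which do not, and confirming that the $e^{\ov{\rho_0(h)}}$ prefactor, bounded above and below by $|h| \le H$, never interferes with the comparison.
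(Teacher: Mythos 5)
Your proposal takes essentially the same route as the paper: split $\wt{E}$ into $\nabla^2 W_h$ plus the perturbation (bounded via Lemma~\ref{lemma710}), read off the $NB\oplus TB$ block structure from \eqref{equation737}, observe that $\wt{E}_2,\wt{E}_3,\wt{E}_4$ see only the $\ov{p}\nabla^2 Q$ block (controlled by $c_Q|\ov{p}||\nabla Q|$ via ({\bf Q1})) while $\wt{E}_1$ picks up the $e^{\ov{\rho_0(h)}}\nabla Q$ off-diagonal entries, then shrink $D_1$ and enlarge $\wt{K}_1$. Two spots merit tightening. First, the claimed lower bound $|\wt{E}_1|\gtrsim e^{\ov{\rho_0(h)}}|\nabla Q|(1+|\ov{p}|)$ does not follow: the $\ov{p}\nabla^2 Q$ contribution to $E_1$ is only an \emph{upper} bound and can cancel rather than reinforce the off-diagonal term, so what one actually has (and what the argument needs) is $|\wt{E}_1|\geq e^{\ov{\rho_0(h)}}|\nabla Q|-c_Q e^{\ov{\rho_0(h)}}|\ov{p}||\nabla Q|-c_P(H)$, which is only useful because $|\ov{p}|$ is small on $\wt{X}_B^{D_1}$. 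Relatedly, you should make explicit that the ``small'' factor on $\wt{E}_i$, $i\ge 2$, is exactly $|\ov{p}|\le D_1$, turning $c_Q|\ov{p}||\nabla Q|$ into $c_Q D_1|\nabla Q|$; it is not that $d(x,X_Q)\le D_1$ suppresses $\nabla^2 Q$ in some other way. Second, the phrase ``$|\nabla Q|\to\infty$ away from $\wt{X}_B\cup K_Q$'' is the wrong geometry: you are working \emph{near} $\wt{X}_B$, not away from it; what one uses is that $|\nabla Q(x)|\to\infty$ as $|x|\to\infty$ in $X$ (homogeneity of $Q$), so that once $\wt{K}_1$ is taken large (and since $|p|$ is already pinned down by $D_1$), leaving $\wt{K}_1$ forces $|x|$ large and hence $e^{\ov{\rho_0(h)}}|\nabla Q|\gg c_P(H)$, which is how the bounded errors are absorbed into $\tfrac16|\wt{E}_1|$.
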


\begin{proof}
By the definition of $\wt{E}_i$ and (\ref{equation737}), we see that for $i = 2, 3, 4$, 
\begin{align*}
\begin{split}
\big| \wt{E}_i \big| \leq &\ \big| e^{\ov{\rho_0(h)}} \ov{p} \nabla^2 Q \big| + \big| \nabla^2 W_h' \big| \\
\leq &\ c_Q \big| e^{\ov{\rho_0(h)}} \ov{p} \nabla Q \big| + c_P(H)\\
\leq &\ c_Q \big| \nabla \wt{W}_h \big| + c_Q c_P(H) + c_P(H).
\end{split}
\end{align*}
Therefore the first inequality holds by choosing $c_H$ properly. On the other hand, if $x \in \wt{X}_B^D$ and $D \leq  1/2 c_Q$, then by (\ref{equation737}),
\begin{align*}
\begin{split}
\big| \wt{E}_1 \big| \geq &\ \big| E_1 \big| - \big| \nabla^2 W_h' \big|\\
\geq &\ \big| e^{\ov{\rho_0(h)}} \nabla Q \big| - \big| e^{\ov{\rho_0(h)}} \ov{p} \nabla^2 Q \big| - c_P(H)\\
\geq &\ \big| e^{\ov{\rho_0(h)}} \nabla Q \big| - c_Q D \big| e^{\ov{\rho_0(h)}} \nabla Q \big| - c_P(H)\\
\geq &\ {1\over 2} \big| e^{\ov{\rho_0(h)}} \nabla Q \big| - c_P(H).
\end{split}
\end{align*}
Moreover, since $|h|\leq H$, we can take $\wt{K}(H)$ sufficiently big such that if $x \notin \wt{K}(H)$, then
\begin{align*}
\big| e^{\ov{\rho_0(h)}} \nabla Q(x) \big| \geq e^{-|\rho_0(H)|} \big| \nabla Q(x)\big| \geq 50 c_P(H).
\end{align*}
We take $D \leq 1/ 24c_Q$. Then for $i = 2, 3, 4$, we have
\begin{align*}
\begin{split}
\big| \wt{E}_i\big| \leq &\ c_Q \big| e^{\ov{\rho_0(h)}} \ov{p} \nabla Q \big| + c_P(H)\\
\leq &\ c_Q D \big| e^{\ov{\rho_0(h)}} \nabla Q \big| + c_P(H)\\
\leq &\ 2 c_Q D \big| \wt{E}_1 \big| + 2 c_Q c_P(H) D + c_P(H)\\
\leq &\ 2 c_Q D \big| \wt{E}_1 \big| + 2c_P(H)\\
\leq &\ {1\over 6} \big| \wt{E}_1 \big|.
\end{split}
\end{align*}
\end{proof}

\begin{lemma}\label{lemma712}
For any $H>0$, there exists $M_2(H)>0$ such that if $(u, h)$ is a solution to a local model over $B_r$ satisfying (\ref{equation79}) and $\big\| h\big\|_{L^\infty(B_r)} \leq H$, $u(B_\rho) \subset \wt{X}_B^1$. Then for any $z \in B_\rho$, we have
\begin{align}\label{equation738}
\big| \ov\partial_A u(z) \big| \leq  4 \big| \ov\partial_A u(0) \big| \leq 8 \big| \pi_N ( \ov\partial_A u (0) ) \big|
\end{align}
\end{lemma}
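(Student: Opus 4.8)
The statement is a local pointwise comparison for $|\ov\partial_A u|$ on a small ball $B_\rho$ on which $u$ takes values in $\wt{X}_B^1$, a quantization-type consequence of the fact that in the ``broad'' region the dominant part of the potential controls the nonlinearity. The natural route is exactly the one used for Proposition \ref{prop45}: derive a differential inequality for the scalar quantity $|\ov\partial_A u|^2$ and then invoke a mean value / local maximal principle to compare its value at a nearby point with its value at $0$, using the smallness of $\rho$ and of the energy that is built into hypothesis \eqref{equation79}. The first step is to recall, from the local calculations of Subsection \ref{subsection32}, that $\ov\partial_A u$ satisfies a Cauchy-Riemann type identity: differentiating the first equation of \eqref{equation31} and using \eqref{equation311}, \eqref{equation312}, \eqref{equation317} and \eqref{equation321}, one gets
\begin{align*}
D_A^{0,1} \ov\partial_A u = - D_A^{1,0}\nabla\wt{\mc W}_A(u) + (\text{curvature and }d\beta\text{ terms}),
\end{align*}
from which $\Delta |\ov\partial_A u|^2 \geq - C({\bm u})\big(1 + |d_A u|^2 + |dh''|^2 + |F_A|\big)|\ov\partial_A u|^2$ after absorbing the lower order contributions exactly as in the proof of Proposition \ref{prop45}; here $C({\bm u})$ depends only on $H$ and the compact set $\wt{X}_B^1$ because the Hessian of $\wt{W}_h$ and the third derivative tensor $G_{F_l^{(\delta)}}$ are uniformly bounded there and $|h|,|dh''|$ are controlled by \eqref{equation32} and elliptic estimates for $\Delta h'' = -\sigma\mu^*(u)$.

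Next I would combine the differential inequality with the local maximal principle \cite[Theorem 9.20]{Gilbarg_Trudinger} (applied with $p=1$, $n=2$, $u = |\ov\partial_A u|^2$), exactly as done in the proof of Proposition \ref{prop74} around \eqref{equation720}: this yields a constant $c'>0$ and, for every $\wt\lambda \in (0,\frac12]$,
\begin{align*}
|\ov\partial_A u(0)|^2 \leq \frac{c'}{(\wt\lambda r)^2}\int_{B_{\wt\lambda r}}|\ov\partial_A u|^2 + c' \wt\lambda r\, |\ov\partial_A u(0)|^2\Big( \wt\lambda r + \|d_A u\|_{L^4(B_{\wt\lambda r})}^2 + \|dh''\|_{L^4(B_{\wt\lambda r})}^2 + \|F_A\|_{L^2(B_{\wt\lambda r})}\Big),
\end{align*}
together with the analogous inequality centered at an arbitrary $z\in B_\rho$. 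The second term is made $\leq \frac12 |\ov\partial_A u(0)|^2$ by choosing $\rho$ small and the energy small, which is precisely the content of Lemma \ref{lemma78} (the same four quantities appear there, and they are bounded via Lemma \ref{lemma73} and elliptic estimates for $h''$). This gives $|\ov\partial_A u(z)|^2 \leq \tfrac{4c'}{(\wt\lambda r)^2}\int_{B_{\wt\lambda r}(z)}|\ov\partial_A u|^2$ and, since these balls all lie in $B_r$ where the total energy is at most $\wt\epsilon_2$, a sub-mean-value estimate relating $|\ov\partial_A u(z)|$ and $|\ov\partial_A u(0)|$ up to the factor $4$; shrinking $\rho=\wt\lambda_2 r$ if necessary absorbs the constants and produces the first inequality $|\ov\partial_A u(z)|\leq 4|\ov\partial_A u(0)|$.

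For the second inequality, $|\ov\partial_A u(0)| \leq 2|\pi_N(\ov\partial_A u(0))|$, I would use the splitting $T\wt{X}_B^1 \simeq NB\oplus TB$ and the equation: $\pi_T(\ov\partial_A u) = -\pi_T(\nabla\wt{\mc W}_A(u)) = -\beta\,\pi_T\big(\sum_{l=1}^s e^{\ov{\rho_l(h)}}\nabla F_l^{(\delta)}(u)\big)$ since $\nabla W$ itself lies in $NB$ (it has a $\nabla Q$ component and a $\partial/\partial p$ component). By Lemma \ref{lemma710} this tangential part is bounded by $c_P(H)$, whereas $|\ov\partial_A u(0)| = |\nabla\wt{\mc W}_A(u(0))| = {\mf p}'(0)$, and hypothesis \eqref{equation79} forces ${\mf p}'(0) = {\mf p}(0) - {\mf p}''(0)\geq \tfrac12{\mf p}(0) \geq \tfrac12 M_2$. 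Choosing $M_2 = M_2(H)$ large enough that $\tfrac12 M_2 \geq 4 c_P(H)$ makes $|\pi_T(\ov\partial_A u(0))| \leq \tfrac14 |\ov\partial_A u(0)|$, hence $|\pi_N(\ov\partial_A u(0))| \geq \tfrac34 |\ov\partial_A u(0)| \geq \tfrac12|\ov\partial_A u(0)|$, which is the claimed bound. The main obstacle is bookkeeping rather than conceptual: one must check that the constant $C({\bm u})$ in the differential inequality genuinely depends only on $H$ and the region $\wt{X}_B^1$ (not on the particular solution), which requires the uniform bounds on $h$, $dh''$, and all relevant derivatives of $W_h^{(\delta)}$ on that region — these are exactly the estimates assembled in Lemmas \ref{lemma46}, \ref{lemma73}, \ref{lemma710}, and \ref{lemma711} — and that the radius $\wt\lambda_2 r$ can be chosen uniformly so that $B_{\wt\lambda_2 r}(z)\subset B_r$ for all $z\in B_\rho$. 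Once these uniformities are in place, both inequalities follow by the two-step mean-value argument above.
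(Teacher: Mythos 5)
Your proof of the \emph{first} inequality in \eqref{equation738} is much heavier than needed and, as written, does not actually close. The hypothesis \eqref{equation79} already contains the two conditions ${\mf p}(0) \geq \tfrac12 \sup_{B_r}{\mf p}$ and ${\mf p}'(0) \geq {\mf p}''(0)$, and by the first equation of the local model $\big|\ov\partial_A u(z)\big| = \big|\nabla\wt{W}_h^{(\delta)}(u(z))\big| = {\mf p}'(z)$. So the whole first inequality is an immediate chain:
\[
\big|\ov\partial_A u(z)\big| = {\mf p}'(z) \leq {\mf p}(z) \leq 2{\mf p}(0) = 2\big({\mf p}'(0)+{\mf p}''(0)\big) \leq 4{\mf p}'(0) = 4\big|\ov\partial_A u(0)\big|.
\]
No differential inequality, no local maximal principle, no Lemma \ref{lemma78}. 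Worse, the maximal-principle route you propose does not establish the pointwise comparison: \cite[Theorem 9.20]{Gilbarg_Trudinger} gives one-sided sub-mean-value bounds $|\ov\partial_A u(z)|^2 \leq C\int_{B_{\wt\lambda r}(z)}|\ov\partial_A u|^2 + \cdots$ and $|\ov\partial_A u(0)|^2 \leq C\int_{B_{\wt\lambda r}}|\ov\partial_A u|^2 + \cdots$, and from two such upper bounds one cannot deduce $|\ov\partial_A u(z)| \leq 4|\ov\partial_A u(0)|$ without a matching lower bound on the integral by $|\ov\partial_A u(0)|^2$, which you have not supplied. So there is a genuine gap in that half of your argument, caused by not noticing that the needed comparison is already built into \eqref{equation79}.

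Your treatment of the \emph{second} inequality is correct and is essentially the paper's argument: $|\pi_T(\ov\partial_A u(0))| \leq c_P(H)$ by Lemma \ref{lemma710} since $\nabla W$ lies in $NB$; and since \eqref{equation79} gives $|\ov\partial_A u(0)| = {\mf p}'(0) \geq \tfrac12{\mf p}(0) \geq \tfrac12 M_2(H)$, one makes $M_2(H)$ large enough that $c_P(H) \leq \tfrac12|\ov\partial_A u(0)|$, from which $|\ov\partial_A u(0)| \leq |\pi_N(\ov\partial_A u(0))| + c_P(H)$ yields $|\ov\partial_A u(0)| \leq 2|\pi_N(\ov\partial_A u(0))|$. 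Your choice of constants differs cosmetically ($\tfrac14$ vs.\ $\tfrac12$) but the content is the same.
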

\begin{proof}
Indeed, by (\ref{equation79}), 
\begin{align*}
\big| \ov\partial_A u(z)\big| = {\mf p}'(z) \leq {\mf p}(z) \leq 2 {\mf p}(0) \leq 4 {\mf p}'(0) = 4 \big| \ov\partial_A u(0) \big|.
\end{align*}
On the other hand, by (\ref{equation79}) and (\ref{equation735}), we have
\begin{align*}
M_2 (H) /2 \leq \big| \ov\partial_A u(0) \big| \leq \big| \pi_N(\ov\partial_A u(0))\big| + c_P.
\end{align*}
So the second inequality of (\ref{equation738}) holds if $M_2 (H)$ is big enough. 
\end{proof}

\begin{proof}[Proof of Lemma \ref{lemma77}] Let $D = D_1 (H)/2$ where $D_1(H)$ comes from Lemma \ref{lemma711}. Then by Lemma \ref{lemma73}, for certain $M_2 > 0$ and $\wt{\lambda}_2>0$, if $(u, h)$ satisfies (\ref{equation79}) for this $M_2$ and $u(0) \in \wt{X}_B^D$, then ${\rm diam} \big( u(B_{\wt\lambda_2 r}) \big) \leq D$. Then we can take $M_2 = M_2(H)>0$ big enough such that $u(B_{\wt\lambda_2 r}) \subset \wt{X}_B^{2D} \setminus \wt{K}_1 (H)$ and (\ref{equation738}) is satisfied. Then we can apply the estimates obtained in Lemma \ref{lemma710}, \ref{lemma711}, \ref{lemma712}. 

Abbreviate $\ov\partial_A u = V$ and $\pi_N(\ov\partial_A u) = V_N$. Then we have
\begin{align}\label{equation739}
\begin{split}
{1\over 2} \Delta \big( \big| V_N \big|^2 \big) = &\  {1\over 2} \partial_s^2 \big\langle V_N, V_N \big\rangle + {1\over 2} \partial_t^2 \big\langle V_N , V_N \big\rangle \\
= &\ \partial_s \big\langle D_{A, s} V_N, V_N \big\rangle + \partial_t u \big\langle D_{A, t} V_N, V_N \big\rangle\\
= &\ \big| D_{A, s} V_N \big|^2 + \big| D_{A, t} V_N \big|^2 +  \big\langle \big( D_{A, s}^2 + D_{A, t}^2 \big) V_N, V_N\big\rangle.
\end{split}
\end{align}
Then we have
\begin{align}
\begin{split}
&\ \big( D_{A, s}^2 + D_{A,t}^2 \big) V_N \\
= &\ D_{A, s} \big( D_{A, s} - J D_{A, t} \big) V_N  + J D_{A, t} \big( D_{A, s} - J D_{A, t} \big) V_N + J \big[ D_{A, s}, D_{A, t} \big] V_N\\
= &\ 4 D_A^{0,1} D_A^{1, 0} V_N +  J \big[ D_{A, s}, D_{A, t} \big]  V_N \\
= &\ 4 D_A^{0,1} D_A^{1,0} V_N + J R(v_s, v_t)  V_N + J \nabla_{V_N} {\mc X}_{F_A}.
\end{split}
\end{align}
By ({\bf X2}) and ({\bf X4}) of Hypothesis \ref{hyp21}, there exist $c_R, c_\mu >0$ such that
\begin{align}
\Big| \big\langle J R(v_s, v_t) V_N, V_N \big\rangle \Big| \leq c_R \big| V_N\big|^2 \big|d_A u \big|^2.
\end{align}
\begin{align}
\Big| \big\langle J \nabla_{V_N} {\mc X}_{F_A}, V_N \big\rangle \Big| \leq \big| V_N \big|^2 \big| F_A \big| \big| \nabla^2 \mu \big| \leq c_\mu \big| F_A \big| \big| V_N \big|^2.
\end{align}

Abbreviate $\partial = \partial / \partial z$, $\ov\partial = \partial / \partial \ov{z}$, $\partial \ov\partial = \partial^2 / \partial z \partial \ov{z}$. Then We have
\begin{align}\label{equation743}
\begin{split}
D_A^{0,1} D_A^{1,0} V_N = &\ - D_A^{0,1} D_A^{1, 0} \pi_N \big( \nabla \wt{W}_h \big) \\
= &\ - D_A^{0,1} \Big( \big( \nabla_V \pi_N \big) \nabla \wt{W}_h +  \pi_N \big( \nabla_V \nabla \wt{W}_h + \partial \beta \nabla W_h' (u) \big)  \Big) \\
= &\   \big[ D_A^{0,1}, \nabla_{ \nabla \wt{W}_h} \pi_N  \big] \nabla \wt{W}_h  + \big( \nabla_{ \nabla \wt{W}_h} \pi_N \big) D_A^{0,1} \nabla \wt{W}_h \\
 &\ - \big( \nabla_{\partial_A u} \pi_N \big) \big( \nabla_V \nabla \wt{W}_h + \partial \beta \nabla W_h' (u) \big) \\
&\  + \pi_N \Big( D_A^{0,1} \nabla_{\nabla \wt{W}_h} \nabla \wt{W}_h -  \partial \ov\partial \beta  \nabla W_h'(u) - \partial \beta D_A^{0,1} \nabla W_h'(u) \Big). 
\end{split}
\end{align}
We estimate the above expression term by term. 

(I) By (1) of Lemma \ref{lemma79}, for any tangent vector field $Z_1$ and $Z_2$, we have
\begin{align}\label{equation744}
\begin{split}
\big[ D_{A, s}, \nabla_{Z_1} \pi_N \big] (Z_2) = &\ \big( \nabla^2_{v_s, Z_1} \pi_N \big)  (Z_2) + \big( \nabla_{D_{A, s} Z_1} \pi_N \big) (Z_2),\\
\big[ D_{A, t}, \nabla_{Z_1} \pi_N \big] (Z_2) = &\ \big( \nabla^2_{v_t, Z_1} \pi_N \big)  (Z_2) + \big( \nabla_{D_{A, t} Z_1} \pi_N \big) (Z_2).\\
\end{split}
\end{align}
Therefore, 
\begin{align*}
\begin{split}
&\ \Big| \big[ D_A^{0,1}, \nabla_{ \nabla \wt{W}_h} \pi_N  \big] \nabla \wt{W}_h \Big|\\
\leq &\ \Big( \big|  \nabla_{D_A^{1,0}  \nabla \wt{W}_h } \pi_N \big| + \big|\nabla^2_{v_s, \nabla \wt{W}_h} \pi_N \big| +  \big|\nabla^2_{v_t, \nabla \wt{W}_h} \pi_N \big| \Big) \big| 	V \big| \\
\leq &\ c_Q \Big( \big| D_A^{1,0} \nabla \wt{W}_h \big| + \big| d_A u \big| \big| \nabla \wt{W}_h \big|\Big) \big| V \big| \\
\leq &\  c_Q \Big( c_P  + \big| \nabla_V \nabla \wt{W}_h \big| + \big| \nabla \wt{W}_h \big|  \big|d_A u \big| \Big)  \big| V \big| \\
\leq &\  c_Q \Big( \big| \wt{E}_1 \big| \big| V_N \big| + \big| \wt{E}_2 \big| \big| \pi_T(V) \big| + c_P + \big| \nabla \wt{W}_h \big| \big|d_A u \big| \Big) \big| V \big|\\
\leq &\ c_Q \Big( \big| \wt{E} \big|  \big| V_N \big| + c_P \big| \wt{E} \big| + c_P + \big| d_A u \big|^2 \Big) \big| V\big|.
\end{split}
\end{align*}
We briefly explain how we obtain this estimate. To derive the first inequality, we used (\ref{equation744}) and (2) of Lemma \ref{lemma79}; to derive the second inequality we used (2) of Lemma \ref{lemma79}; to derive the third inequality we used the expression of $D_A^{1,0} \nabla \wt{W}_h$ in (\ref{equation317}). Then
\begin{align}
\begin{split}
&\ \big\langle \big[ D_A^{0,1}, \nabla_{ \nabla \wt{W}_h} \pi_N  \big] \nabla \wt{W}_h, V_N \big\rangle\\
\geq &\ - c_Q \big| \wt{E} \big| \big| V \big| \big| V_N \big|^2 - c_Q c_P \big| \wt{E} \big| \big| V \big| \big| V_N \big|  - c_Q \big( c_P + \big| d_A u \big|^2 \big) \big| V \big| \big| V_N \big|\\
\geq &\ - {1\over 64} \big| \wt{E} \big|^2 \big| V_N \big|^2 - (4 c_Q)^2 \big| \nabla \wt{W}_h \big|^2 \big| V_N \big|^2\\
&\ - {1\over 64} \big| \wt{E} \big|^2 \big| V_N \big|^2 - (4 c_Q c_P)^2 \big| V \big|^2 - c_Q \big( c_P + \big| d_A u\big|^2 \big) \big| V \big|^2\\
\geq &\ -{1\over 32} \big| \wt{E} \big|^2 \big| V_N \big|^2 - c_{P, Q}^{(1)} \big( 1 +  \big| d_A u \big|^2 \big) \big| V_N(0) \big|^2.
\end{split}
\end{align}
Here $c_{P, Q}^{(1)}>0$ depends on $c_P$ and $c_Q$ and the last inequality uses Lemma \ref{lemma712}.

(II) For the second summand of (\ref{equation743}), we have
\begin{align}
\begin{split}
&\ \big| \big( \nabla_{\nabla \wt{W}_h} \pi_N \big)\big( D_A^{0,1} \nabla \wt{W}_h \big) \big|\\
\leq &\ c_Q \big| V \big| \big| D_A^{0,1} \nabla \wt{W}_h \big|\\
\leq &\  c_Q \big| V \big| \Big( c_P + \big| \partial_A u \big| \big| \nabla^2 \wt{W}_h \big|  + \big| dh''\big|  \big( c_P + \big| \nabla W_h \big|  \big)  \Big)\\
\leq &\   c_Q \big| V \big| \big| \wt{E} \big| \big| \partial_A u \big| + c_Q \big| V \big| \big( c_P + 2 c_P \big| dh'' \big| + \big| dh'' \big| \big| V \big| \big)\\
\leq &\ c_Q \big| \wt{E} \big| \big| V \big|  \big| d_A u \big| + c_Q \big| V \big| \big( c_P + (c_P)^2 +  2 \big| dh'' \big|^2  +  \big| d_A u \big|^2 \big).
\end{split}
\end{align}
Here the first inequality uses (2) of Lemma \ref{lemma79}; the second one uses the expression of $D_A^{0,1} \nabla \wt{W}_h$ in (\ref{equation318}) and the bound on perturbation terms given by Lemma \ref{lemma710}. Then
\begin{align}
\begin{split}
&\ \big\langle \big( \nabla_{\nabla \wt{W}_h} \pi_N \big)\big( D_A^{0,1} \nabla \wt{W}_h \big), V_N \big\rangle\\
\geq & - c_Q \big| \wt{E} \big|  \big| V_N\big| \big| V\big|\big| \partial_A u\big| - c_Q \big( c_P + (c_P)^2 + 2 \big| dh''\big|^2 + \big| d_A u\big|^2 \big) \big| V \big|^2\\
\geq & - {1\over 64} \big| \wt{E} \big|^2 \big| V_N \big|^2  - (4 c_Q)^2 \big| d_A u \big|^2 \big| V\big|^2 - c_Q \big( c_P + (c_P)^2 + 2 \big| dh''\big|^2 + \big| d_A u\big|^2 \big) \big| V \big|^2\\
\geq & - {1\over 64} \big| \wt{E} \big|^2 \big| V_N \big|^2  - c_{P, Q}^{(2)} \big( 1 + \big| dh'' \big|^2 + \big| d_A u \big|^2 \big) \big| V_N(0)\big|^2.
\end{split}
\end{align}
Here $c_{P,Q}^{(2)}>0$ depends on $c_P$ and $c_Q$ and the last inequality uses Lemma \ref{lemma712}.

(III) For the third summand of (\ref{equation743}), we have 
\begin{align}
\begin{split}
&\ - \big\langle \big( \nabla_{\partial_A u } \pi_N \big) \big( \nabla_V \nabla \wt{W}_h  + (\partial \beta) \nabla W_h' \big), V_N \big\rangle\\
\geq &\ -c_Q \big| \partial_A u \big| \big| \nabla_V \nabla \wt{W}_h \big| \big| V_N\big|  - c_P c_Q \big| \partial_A u \big| \big| V_N\big|\\
\geq &\ - c_Q \big| \wt{E}\big| \big| V_N \big| \big| d_A u \big| \big| V \big| - c_P c_Q \big| d_A u \big| \big| V \big| \\
\geq &\ - {1 \over 64} \big| \wt{E} \big|^2 \big| V_N \big|^2 - ( 4 c_Q)^2 \big| d_A u \big|^2 \big| V \big|^2 - c_P c_Q \big| d_A u \big| \big| V \big|\\
\geq &\ - {1 \over 64} \big| \wt{E} \big|^2 \big| V_N \big|^2 - c_{P, Q}^{(3)} \big( 1 + \big| d_A u \big|^2 \big) \big| V_N(0) \big|^2.
\end{split}
\end{align}
Here the first inequality we used (2) of Lemma \ref{lemma79} and the bound on $\nabla W_h'$ given by Lemma \ref{lemma710}; $c_{P, Q}^{(3)}>0$ depends on $c_P$ and $c_Q$ and the last inequality uses Lemma \ref{lemma712}.

(IV) For the fourth summand of (\ref{equation743}), we have
\begin{align}
\begin{split}
 &\ - \big\langle \pi_N \big( (\partial \ov\partial \beta) \nabla W_h' + (\partial \beta) D_A^{0,1} \nabla W_h' \big), V_N \big\rangle\\
\geq &\  -c_P \big| V_N\big|  - \big| \partial_A u \big| \big| \nabla^2 W_h' \big|\big| V_N \big| - c_P \big| dh'' \big| \big| V_N \big|\\
\geq &\ - c_{P, Q}^{(4)} \big( 1 + \big| dh'' \big|^2 \big) \big| V_N(0) \big|^2.
\end{split}
\end{align}
Here we used the bounds on $\nabla W_h'$ and $\nabla^2 W_h'$ given by Lemma \ref{lemma710}, and an expression of $D_A^{0,1} \nabla W_h'$ similar to (\ref{equation318}). The last inequality uses Lemma \ref{lemma712}.

(V) Lastly, the dominating part of (\ref{equation743}) is estimated as follows.  
\begin{lemma}\label{lemma713}
There exist a constant $c_H^{(5)} > 0$, a compact subset $\wt{K}_2 =\wt{K}_2(H)\subset \wt{X}$ which depends on $H>0$ such that if a solution $(u, h)$ to a local model over $B_\rho$ (for some $\rho>0$) satisfies (\ref{equation738}) and
\begin{align*}
\big\| h \big\|_{L^\infty(B_\rho)} \leq H,\ u(B_\rho) \subset \wt{X}_B^{D_1} \setminus \wt{K}_2,
\end{align*}
then 
\begin{align}\label{equation750}
\big\langle  \pi_N \big( D_A^{0,1} \nabla_{\nabla \wt{W}_h} \nabla \wt{W}_h \big), V_N  \big\rangle \geq {1\over 16} \big| \wt{E} \big|^2 \big| V_N \big|^2 - c_H^{(5)} \big( 1 + \big| d_A u \big|^2 + \big| dh'' \big|^2 \big)\big| V_N(0)\big|^2.
\end{align}
\end{lemma}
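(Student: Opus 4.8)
\textbf{Proof proposal for Lemma \ref{lemma713}.}

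The plan is to compute the dominating term $D_A^{0,1} \nabla_{\nabla \wt{W}_h} \nabla \wt{W}_h$ explicitly, project onto the normal direction $NB$, and show that the leading contribution comes from the ``holomorphic Hessian squared'' term, which is positive semi-definite after taking the inner product with $V_N$, while all remainders are lower order and can be absorbed. First I would expand, using the general identity (\ref{equation321}) with $Z = \nabla \wt{W}_h$ (or directly (\ref{equation313})--(\ref{equation314}) term by term) together with the formula (\ref{equation318}) for $D_A^{0,1}\nabla\wt{W}_h$, to write
\begin{align*}
D_A^{0,1} \nabla_{\nabla \wt{W}_h} \nabla \wt{W}_h = \nabla_{\nabla \wt{W}_h} D_A^{0,1} \nabla \wt{W}_h + \wt{H}_A^{0,1}(u, d_A u, \nabla\wt{W}_h) + (\text{lower order in } d\beta).
\end{align*}
By (\ref{equation318}), $D_A^{0,1}\nabla\wt{W}_h = \nabla_{\partial_A u}\nabla\wt{W}_h + (\text{terms in } dh'') + (\text{terms in } d\beta)$, so the principal piece of $\nabla_{\nabla\wt{W}_h}D_A^{0,1}\nabla\wt{W}_h$ contains $\nabla_{\nabla\wt{W}_h}\nabla_{\partial_A u}\nabla\wt{W}_h$, which involves the tensor $G_{\wt{W}_h}$ (third derivative) and the composition $\nabla^2\wt{W}_h \circ \nabla^2\wt{W}_h = \wt{E}^2$. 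Here I would use the Kähler identity (Lemma \ref{lemma35} / the holomorphicity of the relevant $1$-forms) showing $\nabla_{JZ}(\nabla F)^* = -J\nabla_Z(\nabla F)^*$: this is what forces the relevant quadratic form to be $\pm$-symmetric and makes $\langle \pi_N(\wt{E}^2 \cdot \text{something}), V_N\rangle$ nonnegative up to error. More precisely, the point is that $\pi_N \wt{E}^2 V_N$ has the form $\wt{E}_1^{\,*}\wt{E}_1 V_N + (\text{cross terms with } \wt{E}_2,\wt{E}_3)$, and by Lemma \ref{lemma711}, $|\wt{E}_2|,|\wt{E}_3|,|\wt{E}_4| \le \frac16|\wt{E}_1|$ and $|\wt{E}_1|\ge\frac12|\wt{E}|$ on $\wt{X}_B^{D_1}\setminus\wt{K}_1$, so $\langle \wt{E}_1^{\,*}\wt{E}_1 V_N, V_N\rangle = |\wt{E}_1 V_N|^2 \ge (\text{const})|\wt{E}|^2|V_N|^2$ while the cross terms are controlled by $\frac{1}{16}|\wt{E}|^2|V_N|^2$ plus lower order.

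Next I would bound every error term. The terms involving $G_{\wt{W}_h}$, $\nabla\pi_N$, $\nabla^2\pi_N$ are handled exactly as in parts (I)--(IV) above: use (2) of Lemma \ref{lemma79} for $|\nabla^{(j)}\pi_N|\le c_Q$, use Lemma \ref{lemma710} to bound the perturbation pieces $\nabla^{(j)}W_h'$, use Lemma \ref{lemma711} to replace $|\nabla\wt{W}_h|$ by $|\wt{E}|\,|V_N|$ (since $\nabla\wt{W}_h = \ov\partial_A u$-related quantity equals $V$ up to the tangential part which is bounded by $c_P$), and finally use Lemma \ref{lemma712} to replace $|V|^2 \le 16|V_N(0)|^2$ and $|V_N|^2 \le 64|V_N(0)|^2$. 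Each error term, after a Cauchy--Schwarz split $ab \le \frac{1}{64}|\wt{E}|^2|V_N|^2 + C(\ldots)|V_N(0)|^2$, contributes at most $\frac{1}{64}|\wt{E}|^2|V_N|^2$ to the bad direction and a multiple of $(1 + |d_A u|^2 + |dh''|^2)|V_N(0)|^2$ to the harmless direction; summing the finitely many such contributions against the main term's coefficient (which I will arrange to be $\ge \frac{1}{8}|\wt{E}|^2|V_N|^2$ by enlarging $\wt{K}_2$ so that $|\wt{E}_1|$ genuinely dominates, using that $|e^{\ov{\rho_0(h)}}\nabla Q|\to\infty$ off large compact sets while $c_P$ stays fixed) leaves $\frac{1}{16}|\wt{E}|^2|V_N|^2$, which is (\ref{equation750}).

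The main obstacle I expect is the bookkeeping in the explicit computation of $\pi_N\big(D_A^{0,1}\nabla_{\nabla\wt{W}_h}\nabla\wt{W}_h\big)$: one must commute $D_A^{0,1}$ past $\nabla_{(\cdot)}\pi_N$ and past $\nabla_{(\cdot)}\nabla\wt{W}_h$, keeping track of which curvature-type remainders $[D_A^{0,1},\nabla_Z\pi_N]$ and $G_{\wt{W}_h}$ appear, and verifying that the genuinely positive term really is $|\wt{E}_1 V_N|^2$ (not something with an unfavorable sign) — this is where the holomorphicity identity of Lemma \ref{lemma34}/\ref{lemma35} is essential, because without $J$-antilinearity of $\nabla(\nabla F)^*$ the symmetrization that produces $\wt{E}_1^{\,*}\wt{E}_1$ would fail. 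A secondary subtlety is ensuring the constant $c_H^{(5)}$ and the compact set $\wt{K}_2(H)$ can be chosen uniformly in $\delta\in(0,1]$; this follows because, by ({\bf P3}) of Hypothesis \ref{hyp28} and the bound $|h|\le H$, all the perturbation-dependent constants ($c_P$, the size of $\wt{K}_1$) are $\delta$-independent, and the factor $(\delta)^{r-l}$ in $F_l^{(\delta)}$ only improves the estimates.
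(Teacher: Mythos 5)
Your high-level strategy matches the paper's: identify a positive ``Hessian-squared'' main term of order $|\wt{E}|^2|V_N|^2$, then absorb all remainders into $(1+|d_Au|^2+|dh''|^2)|V_N(0)|^2$ using Lemma \ref{lemma79}--\ref{lemma712} and Cauchy--Schwarz. But the decomposition identity you start from is not what (\ref{equation321}) gives, and the error is not cosmetic. Applying (\ref{equation321}) with $Z = \nabla\wt{W}_h$ yields as its last summand $\nabla_{D_A^{1,0}Z}\nabla\wt{\mc W}_A = \nabla_{D_A^{1,0}\nabla\wt{W}_h}\nabla\wt{W}_h$, \emph{not} $\nabla_{\nabla\wt{W}_h}D_A^{0,1}\nabla\wt{W}_h$. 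The $D_A^{1,0}$ in the inner slot is forced precisely by the $J$-antilinearity of $\nabla_{(\cdot)}\nabla F$ (Lemma \ref{lemma34}) — which converts the $\tfrac12(\nabla_{D_{A,s}Z}+J\nabla_{D_{A,t}Z})$ appearing in (\ref{equation313})--(\ref{equation314}) into $\nabla_{D_A^{1,0}Z}$ — so this is exactly where the holomorphicity you allude to actually enters, rather than in a ``symmetrization'' step afterward. Feeding in (\ref{equation317}) then gives cleanly
\begin{align*}
\nabla_{D_A^{1,0}\nabla\wt{W}_h}\nabla\wt{W}_h \;=\; \nabla_{\nabla_{\ov\partial_A u}\nabla\wt{W}_h}\nabla\wt{W}_h + (\partial\beta/\partial z)\,\nabla_{\nabla{\mc W}_A'}\nabla\wt{W}_h \;=\; \wt{E}\bigl(\wt{E}(V)\bigr) + (d\beta\text{-term}),
\end{align*}
with $V=\ov\partial_A u$; the $\wt{E}^2$ term appears directly, with no third derivatives of $u$.

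Your variant $\nabla_{\nabla\wt{W}_h}D_A^{0,1}\nabla\wt{W}_h$ does not produce $\wt{E}^2$. Expanding it via (\ref{equation318}) and Leibniz gives, schematically, $G_{\wt{W}_h}(\nabla\wt{W}_h,\partial_A u) + \wt{E}\bigl(\nabla_{\nabla\wt{W}_h}\partial_A u\bigr)$, and the factor $\nabla_{\nabla\wt{W}_h}\partial_A u$ is a second-order derivative of $u$ with no a priori bound (indeed, as written it is a derivative of $\partial_A u$ in a vertical direction, which is not even well-defined without extending $\partial_A u$ off the graph of $u$). So the quantity you want to dominate would enter an uncontrolled remainder. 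A secondary inaccuracy: after the symmetry of the Hessian, the object to estimate is $\langle\wt{E}(V),\wt{E}(V_N)\rangle$ with the full $V$ (not $V_N$) in the first slot; the terms involving $\pi_T(V)$ are controlled by $|\pi_T(V)|\le c_P$ from Lemma \ref{lemma710}, a step your sketch did not make explicit. Once the decomposition is corrected to the paper's (\ref{equation754}), the rest of your error analysis — bounding the $d\beta$-terms by $c_P$, bounding $\wt{H}_A^{0,1}$ by Lemma \ref{lemma714}, and the Cauchy--Schwarz split against $\tfrac{1}{16}|\wt{E}|^2|V_N|^2$ with the bad part converted to $|V_N(0)|^2$ via Lemma \ref{lemma712} — is sound and parallels the paper.
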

This lemma is proved at the very end of this section. 

Without loss of generality, we may take $M_2(H)$ big enough so that $u(B_{\wt\lambda_2 r}) \subset \wt{X}_B^{2D} \setminus \wt{K}_2$. Then by (\ref{equation739})--(\ref{equation750}), we see that there is a constant $c(H)>0$
\begin{align*}
\begin{split}
{1\over 2} \Delta \big| V_N \big|^2 \geq &\ - \big| V_N \big|^2 \big( c_R \big| d_A u \big|^2 + c_\mu \big| F_A \big| \big) \\
  - & {1\over 32} \big| \wt{E} \big|^2 \big| V_N\big|^2 - c_{P, Q}^{(1)} \big| V_N(0) \big|^2 \big( 1+ \big| d_A u \big|^2 \big)\\
  - & {1 \over 64} \big| \wt{E} \big|^2 \big| V_N \big|^2 - c_{P, Q}^{(2)} \big| V_N(0) \big|^2  \big( 1 + \big| dh'' \big|^2 + \big| d_A u \big|^2 \big)\\
  - & {1\over 64} \big| \wt{E} \big|^2 \big| V_N \big|^2 - c_{P, Q}^{(3)} \big| V_N(0) \big|^2 \big( 1+  \big| d_A u \big| \big)^2 \\
  - & c_{P, Q}^{(4)} \big| V_N(0) \big|^2 \big( 1 + \big| dh'' \big|^2 \big)\\
  + & {1\over 16} \big| \wt{E}\big|^2 \big| V_N \big|^2 - c_{P, Q}^{(5)} \big| V_N(0) \big|^2 \big( 1 + \big| dh'' \big|^2 + \big| d_A u \big|^2 \big)\\
\geq &\ - c(H) \big| V_N(0) \big|^2 \big( 1 + \big| dh'' \big|^2 + \big| d_A u \big|^2 + \big| F_A \big| \big).
\end{split}
\end{align*}
\end{proof}

\subsubsection*{Proof of Lemma \ref{lemma713}}

We need the following estimate on the tensor field $\wt{H}_A$ defined locally by (\ref{equation319})--(\ref{equation320}).
\begin{lemma}\label{lemma714}
For each $H>0$, there exist $c_H>0$ and a compact subset $\wt{K}_2(H)\subset \wt{X}$ satisfying the following conditions. 

Let $(\beta, \sigma, \delta)$ be a parameter of a local model on $B_\rho$ and $(u, h) \in C^\infty( B_\rho, \wt{X}\times {\mf g} \times {\mf g})$. Suppose 
\begin{align*}
\big\| h \big\|_{L^\infty(B_r)} \leq H,\ u(B_\rho) \subset \wt{X} \setminus \wt{K}_2(H).
\end{align*}
Then for any smooth vector field $Z$ along $u$, we have
\begin{align*}
\big| \wt{H}_A (u, d_A u, Z) \big| \leq c_H \big( \big| d_A u \big| + \big| d h'' \big| \big) \big| \nabla^2 \wt{W}_h \big| \big| Z \big|.
\end{align*}
\end{lemma}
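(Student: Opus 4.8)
The plan is to prove Lemma \ref{lemma714} by expanding $\wt H_A$ using its defining formulas (\ref{equation319})--(\ref{equation320}) and estimating each summand $H_{A,s}^{(l)}$ and $H_{A,t}^{(l)}$. Recall that
\begin{align*}
H_{A, s}^{(l)}(u, d_A u, Z) = e^{\ov{\rho_l (h)}} \Big( 2 \rho_l ( {\bm i} \partial h''/\partial \ov{z}) \nabla_Z \nabla F_l^{(\delta)} + G_{F_l^{(\delta)}} (v_s, Z) \Big),
\end{align*}
and similarly for $H_{A,t}^{(l)}$, where $G_{F}(V,Z) = \nabla_V(\nabla_Z \nabla F) - \nabla_{\nabla_V Z}\nabla F$ is the third-order tensor. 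So the terms naturally group into (a) the $l=0$ term involving $F_0 = W$, which contributes the factors $|dh''|\,|\nabla^2 W_h|$ and $|d_A u|\,|\nabla^3 W|$; (b) the perturbation terms $l = 1, \dots, s$, which come with at least one factor of $\delta$ and whose relevant derivatives are uniformly bounded by Lemma \ref{lemma710}.

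First I would dispose of the perturbation terms. For $l \geq 1$, by Hypothesis \ref{hyp28}(\textbf{P3}) together with the bound $|h|\le H$, the quantities $|e^{\ov{\rho_l(h)}}\nabla^{(j)}F_l^{(\delta)}|$ are uniformly bounded by $c_P(H)$ for $j=1,2,3$ (this is exactly (\ref{equation735}) in Lemma \ref{lemma710}); hence $|\rho_l({\bm i}\partial h''/\partial\ov z)\, e^{\ov{\rho_l(h)}}\nabla_Z\nabla F_l^{(\delta)}| \le c\,|dh''|\,|Z|$ and $|e^{\ov{\rho_l(h)}}G_{F_l^{(\delta)}}(v_s,Z)| \le c\,|d_A u|\,|Z|$, both of which are dominated by $c_H(|d_A u| + |dh''|)|Z|$ provided we can produce a compensating factor. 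The point is that on the right-hand side of the claimed inequality there is an extra factor $|\nabla^2\wt W_h|$; this is where I would choose the compact set $\wt K_2(H)$: outside $\wt K_2(H)$, one arranges (as in the proof of Lemma \ref{lemma711}, using Hypothesis \ref{hyp25}(\textbf{Q1}) and $|h|\le H$) that $|\nabla^2\wt W_h| = |\wt E| \ge$ some fixed positive constant, e.g. $|\nabla^2 \wt W_h| \ge 1$. Then every uniformly bounded perturbation contribution is automatically $\le c_H\,|\nabla^2\wt W_h|\,(\cdots)$, and the perturbation terms are handled.

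Next I would handle the $l=0$ term, $F_0 = W$. Here $\rho_0({\bm i}\partial h''/\partial\ov z)\,e^{\ov{\rho_0(h)}}\nabla_Z\nabla W(u)$ has norm exactly controlled by $c\,|dh''|\,|e^{\ov{\rho_0(h)}}\nabla^2 W(u)|\,|Z| = c\,|dh''|\,|\nabla^2 W_h|\,|Z|$, which is of the desired form (here I use that $\nabla^2 W_h$ differs from $\nabla^2 \wt W_h$ only by the bounded perturbation Hessians, and then absorb that difference again using $|\nabla^2\wt W_h|\ge 1$ outside $\wt K_2$, or more directly observe $|\nabla^2 W_h|\le |\nabla^2\wt W_h| + c_P(H) \le 2|\nabla^2\wt W_h|$ outside a suitable compact set as in Lemma \ref{lemma711}). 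The genuinely structural term is $e^{\ov{\rho_0(h)}}G_W(v_s,Z)$, involving the third derivative $\nabla^3 W$. Using the product structure $\wt X = X\times{\mb C}$ and the explicit form $\nabla W = (\ov p\,\nabla Q,\ \ov Q)$, one has $\nabla^3 W$ built from $\nabla^3 Q$ and $\nabla^2 Q$; by Hypothesis \ref{hyp25}(\textbf{Q1}) one has $|\nabla^3 Q| \le c_Q |\nabla^2 Q|$ outside $K_Q$, so that $|e^{\ov{\rho_0(h)}}\nabla^3 W| \le c_Q\, e^{\ov{\rho_0(h)}}|\ov p\,\nabla^2 Q| + \text{(lower order)} \le c_H |\nabla^2 W_h| \le 2c_H|\nabla^2\wt W_h|$ outside a compact set depending on $H$. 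Since $|G_W(v_s,Z)| \le |\nabla^3 W|\,|v_s|\,|Z| + |\nabla^2 W|\,|\nabla_{v_s}Z\text{-type terms}|$... — more carefully, $G_W(v_s,Z)$ is genuinely a contraction of $\nabla^3 W$ with $v_s$ and $Z$ once one writes it in a coordinate frame — we get $|e^{\ov{\rho_0(h)}}G_W(v_s,Z)| \le c_H |d_A u|\,|\nabla^2\wt W_h|\,|Z|$.

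Combining, summing over $l=0,\dots,s$ and over the $s$- and $t$-components, and using $|\beta|\le 1$ together with $|J|=1$ for the $\wt H_A^{0,1} = \tfrac12(\wt H_{A,s} + J\wt H_{A,t})$ combination, yields
\begin{align*}
\big| \wt{H}_A (u, d_A u, Z) \big| \leq c_H \big( \big| d_A u \big| + \big| d h'' \big| \big) \big| \nabla^2 \wt{W}_h \big| \big| Z \big|
\end{align*}
outside the compact set $\wt K_2(H)$, which is the assertion. The main obstacle I anticipate is the third-derivative term $G_W(v_s,Z)$: one must be careful that Hypothesis \ref{hyp25}(\textbf{Q1}) only bounds $|\nabla^3 Q|$ by $|\nabla^2 Q|$ outside $K_Q$, so the compact set $\wt K_2(H)$ must be chosen large enough both to stay outside $K_Q$ (rescaled appropriately, since the relevant object is $e^{\ov{\rho_0(h)}}\nabla^k W$, i.e. derivatives measured at the rescaled point $e^{h}u$ — one uses $|h|\le H$ to convert) and to make $|\nabla^2\wt W_h|$ bounded below. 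Keeping track of these rescalings (the passage between $\nabla^k W(u)$ weighted by $e^{\ov{\rho_0(h)}}$ and $\nabla^k W$ evaluated at $e^h u$, as in Lemma \ref{lemma35} and its consequences) is the bookkeeping-heavy part, but it is entirely parallel to what was already done in Lemma \ref{lemma711} and Lemma \ref{lemma712}.
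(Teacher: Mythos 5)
Your proposal is correct and follows essentially the same route as the paper's proof: dispose of the $l\geq 1$ perturbation terms via Lemma \ref{lemma710} and absorb them using a lower bound $|\nabla^2 \wt W_h|\gtrsim 1$ outside a compact set; bound the $l=0$ Hessian term by $|dh''|\,|\nabla^2 W_h|\,|Z|$; use ({\bf Q1}) to get $|\nabla^3 F_0|\leq c_Q|\nabla^2 F_0|$ outside $K_Q$ and hence control the $G_{F_0}$ term by $|\nabla^2 W_h|\,|d_A u|\,|Z|$; finally choose $\wt K_2(H)$ so that $1\leq |\nabla^2 W_h|\leq 2|\nabla^2 \wt W_h|$. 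This is precisely the paper's argument.
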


\begin{proof}
By (\ref{equation319}) and Lemma \ref{lemma710}, it is easy to see that  
\begin{align}\label{equation751}
\big| \beta \sum_{l =1}^s H_A^{(l)}(u, d_A u, Z) \big| \leq c_P  \big( \big| d_A u \big| + \big| dh'' \big| \big) \big| Z \big|.
\end{align}
Therefore we only have to consider $H_A^{(0)}$. By the expression of $H_A^{(0)}$, we see
\begin{align}
\begin{split}
\big| e^{\ov{\rho_0(h)}} \rho_0( {\bm i }\partial_s h'' - \partial_t h'' ) \nabla_Z \nabla F_0 \big| \leq &\ r e^{rH} \big|d h''\big| \big| \nabla^2 W_h \big| \big| Z \big|;\\
\big| e^{\ov{\rho_0(h)}} \rho_0(\partial_s h'' + {\bm i} \partial_t h'' ) \nabla_Z \nabla F_0 \big| \leq &\ r e^{rH} \big|d h'' \big| \big| \nabla^2 W_h \big| \big| Z \big|.
\end{split}
\end{align}
On the other hand, the term $e^{\ov{\rho_0(h)}} G_{F_0}(v_s, Z)$ (resp. $e^{\ov{\rho_0 (h)}} G_{F_0} (v_t, Z)$) is bounded by $|v_s|$ (resp. $|v_t|$) times the third order derivative of $W_h$ in the vertical direction. By ({\bf Q1}) in Hypothesis \ref{hyp25}, we have (up to certain universal or dimensional constants)
\begin{align*}
\big| \nabla^3 F_0 \big| \leq \big| p \nabla^3 Q \big| + \big| \nabla^2 Q \big| \leq c_Q \big| p \nabla^2 Q \big| + c_Q \big| \nabla Q \big| \leq c_Q \big| \nabla^2 F_0 \big|.
\end{align*}
Then we have
\begin{align}\label{equation753}
\big| e^{\ov{\rho_0 (h)}} G_{F_0} (v_s, Z) \big| \leq c_Q \big| \nabla^2 W_h \big| \big| v_s \big| \big| Z \big|,\ \big| e^{\ov{\rho_0 (h)}} G_{F_0} (v_t, Z) \big| \leq c_Q \big| \nabla^2 W_h \big| \big| v_t \big| \big| Z\big|.
\end{align}
Since $\big| \nabla^2 W_h \big| \geq \big| e^{\ov{\rho_0(h)}} \nabla Q \big|$ and $\big| \nabla^2 W_h' \big| \leq c_P(H)$, there exists $\wt{K}_2(H)$ such that outside $\wt{K}_2(H)$, $1 \leq \big| \nabla^2 W_h \big| \leq  2 \big| \nabla^2 \wt{W}_h \big|$. Then the lemma follows from (\ref{equation751})--(\ref{equation753}).
\end{proof}

Now we can prove Lemma \ref{lemma713}. By the definition of $\wt{H}_A$ and (\ref{equation321}) we have
\begin{align}\label{equation754}
\begin{split}
&\ D_A^{0,1} \nabla_{\nabla \wt{W}_h} \nabla \wt{W}_h \\
= &\ \nabla_{D_A^{1,0} \nabla \wt{W}_h} \nabla \wt{\mc  W}_A  +  \ov\partial \beta \nabla_{\nabla \wt{W}_h} \nabla W_h' + \wt{H}_A^{0,1} \big( u, d_A u, \nabla \wt{W}_h \big)\\
= &\ \nabla_{\nabla_{\ov\partial_A u} \nabla \wt{W}_h} \nabla \wt{W}_h + \ov\partial \beta \nabla_{\nabla \wt{W}_h} \nabla W_h'+ \nabla_{ \partial \beta \nabla {\mc W}'_A } \nabla \wt{W}_h + \wt{H}^{0,1}_A \big( u, d_A u, \nabla \wt{W}_h \big).
\end{split}
\end{align}
The summands of (\ref{equation754}) can be estimated as follows.

(I) We have
\begin{align}
\begin{split}
&\ \big\langle \nabla_{\nabla_{\ov\partial_A u} \nabla \wt{W}_h} \nabla \wt{W}_h, V_N \big\rangle \\
= &\ \big\langle \nabla_{V_N} \nabla \wt{W}_h, \nabla_{\ov\partial_A u} \nabla \wt{W}_h \big\rangle  \\
  = &\  \big| \wt{E}_1 (V_N) \big|^2 + \big| \wt{E}_2 (V_N)\big|^2  + \big\langle \wt{E}_1 (V_N), \wt{E}_3 (\pi_T (V)) \big\rangle +  \big\langle \wt{E}_2 (V_N), \wt{E}_4 (\pi_T (V)) \big\rangle\\
	\geq &\ {3\over 4} \big| \wt{E}_1 (V_N) \big|^2 - \big| \wt{E}_3 (\pi_T (V)) \big|^2 - {1\over 4} \big| \wt{E}_4 ( \pi_T (V)) \big|^2\\
	\geq &\ {3\over 16} \big| \wt{E} \big| \big|V_N \big|^2 - \big(  c_P c_H \big( 1 + \big| V \big| \big)\big)^2.
	\end{split}
\end{align}
To derive the last inequality we used Lemma \ref{lemma711}.

(II) The terms in (\ref{equation754}) containing the cut-off function $\beta$ can be controlled as follows. 
\begin{align}
\begin{split}
\big\langle \ov\partial \beta \nabla_{\nabla \wt{W}_h} \nabla W_h', V_N \big\rangle \geq &\ - c_P \big| \nabla \wt{W}_h \big| \big| V_N \big| \geq - c_P \big| V_N \big| \big| d_A u \big|;\\
\big\langle \nabla_{\partial \beta \nabla W_h'} \nabla \wt{W}_h, V_N \big\rangle = &\ \big\langle \nabla_{V_N} \nabla \wt{W}_h, (\partial \beta) \nabla W_h' \big\rangle\\
\geq &\ - c_P \big| \wt{E}_1 (V_N) \big|  - c_P \big| \wt{E}_2 (V_N) \big| \\
\geq &\ - 2 c_P \big| \wt{E} \big| \big| V_N \big|\\
\geq &\ - {1\over 16} \big| \wt{E} \big|^2 \big| V_N \big|^2 - (4c_P)^2.
\end{split}
\end{align}
Here the first estimate follows from the uniform bound on $\nabla^2 W_h'$ and the second estimate follows from Lemma \ref{lemma710} and \ref{lemma711}.

(III) By Lemma \ref{lemma714}, we have 
\begin{align}\label{equation757}
\begin{split}
&\  \big\langle  \wt{H}_A^{0,1} \big( u, d_A u, \nabla \wt{W}_h \big) , V_N \big\rangle \\
\geq &\ - c_{P, Q}^{(6)} \big( \big|d_A u \big| + \big| d h'' \big| \big) \big| \wt{E} \big| \big| \nabla \wt{W}_h \big| \big| V_N \big|\\
\geq &\ -{1 \over 16} \big| \wt{E} \big|^2 \big| V_N \big|^2 - (4 c_{P, Q}^{(6)} )^2  \big| V \big|^2 \big( \big| d_A u \big|^2 + \big|dh''\big|^2 \big). 
\end{split}
\end{align}

Then by (\ref{equation754})--(\ref{equation757}) and redefining $c_{P, Q}^{(6)} > 0$ properly, we have
\begin{align*}
\big\langle  D_A^{0,1} \big( \nabla_{\nabla \wt{W}_h} \nabla \wt{W}_h \big), V_N \big\rangle \geq {1\over 16} \big| \wt{E} \big|^2 \big| V_N \big|^2 - c_{P, Q}^{(6)} \big| V_N(0) \big|^2 \big(  1 + \big| d_A u \big|^2 + \big| dh'' \big|^2 \big).
\end{align*}
So Lemma \ref{lemma713} is proved.

\section{Proof of the compactness theorem}\label{section8}

\subsection{The uniform $C^0$-bound}

In this subsection, we show that the ``bubbling at infinity'' won't happen and solutions to the gauged Witten equation are uniformly bounded everywhere. The argument is based on a maximal principle near the point where the bubbling may happen {\it a priori}. Similar estimates appear \cite[Page 859]{Cieliebak_Gaio_Salamon_2000}, \cite[556]{Cieliebak_Gaio_Mundet_Salamon_2002} and \cite[Page 780]{FJR1}.

Let ${\mc F}= {\mc F}_{b_0}: \wt{X} \to {\mb R}$ be the $G$-invariant function in Lemma \ref{lemma24}.
\begin{lemma}\label{lemma81}
 Let ${\mc H}$ be the Hessian of ${\mc F}$. Then we have
\begin{align}\label{equation81}
{\mc H}(J\cdot, J\cdot) = {\mc H}(\cdot, \cdot).
\end{align}
Moreover, as a quadratic form on $TX$, we have
\begin{align}\label{equation82}
0 \leq {\mc H} \leq 1.
\end{align}
\end{lemma}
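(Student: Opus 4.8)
Recall that $\mathcal{F} = \mathcal{F}_{b_0}(x,p) = \mu_0(x)\cdot\big(\tfrac{\mathbf{i}(1-b_0)}{r}\big) + \tfrac{b_0}{2}|p|^2$, so $\mathcal{F}$ splits as a sum of a function of $x$ alone and a function of $p$ alone, and the Hessian $\mathcal{H}$ is block-diagonal with respect to $T\widetilde{X} = TX \oplus T\mathbb{C}$. The plan is to verify both claims block by block. For the $\mathbb{C}$-factor, the function $\tfrac{b_0}{2}|p|^2$ has Hessian $b_0\,\mathrm{Id}$ on $T\mathbb{C}$, which manifestly commutes with the standard complex structure and satisfies $0 \leq b_0\,\mathrm{Id} \leq 1$ since $b_0 \in (0,1)$. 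So everything reduces to the corresponding statements for the function $\tfrac{1-b_0}{r}(\mathbf{i}\mu_0)$ on $X$, i.e.\ to showing that the Hessian $\nabla^2(\mathbf{i}\mu_0)$ is $J$-invariant and satisfies $0 \leq \tfrac{1-b_0}{r}\nabla^2(\mathbf{i}\mu_0) \leq 1$.

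For the bound \eqref{equation82} on the $X$-block: the upper bound $\nabla^2(\mathbf{i}\mu_0) \leq r$ and the lower bound $\nabla^2(\mathbf{i}\mu_0) \geq 0$ are exactly hypothesis (\textbf{X4}) of Hypothesis \ref{hyp21}. Multiplying by $\tfrac{1-b_0}{r} \in (0,1)$ gives $0 \leq \tfrac{1-b_0}{r}\nabla^2(\mathbf{i}\mu_0) \leq 1-b_0 < 1$, so \eqref{equation82} holds (with room to spare). For the $J$-invariance \eqref{equation81} of the $X$-block: this is the standard fact that on a Kähler manifold the Hessian of a moment map Hamiltonian is $J$-invariant. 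Concretely, $\mathbf{i}\mu_0$ is (a constant multiple of) the Hamiltonian generating the Killing field $\mathcal{X}_0$, and $J\mathcal{X}_0 = \nabla(\mathbf{i}\mu_0)$ (up to the normalization fixed in Subsection \ref{subsection21}); then for vector fields $Y,Z$ one computes $\nabla^2(\mathbf{i}\mu_0)(Y,Z) = \langle \nabla_Y \nabla(\mathbf{i}\mu_0), Z\rangle = \langle \nabla_Y(J\mathcal{X}_0), Z\rangle = \langle J\nabla_Y \mathcal{X}_0, Z\rangle$, using that $\nabla J = 0$. Since $\mathcal{X}_0$ is Killing, $\nabla\mathcal{X}_0$ is skew-adjoint, and combining skew-adjointness of $\nabla\mathcal{X}_0$ with $\nabla_{JY}\mathcal{X}_0 = J\nabla_Y\mathcal{X}_0$ (which follows because $\mathcal{X}_0$ is a holomorphic Killing field) yields $\mathcal{H}(JY,JZ) = \mathcal{H}(Y,Z)$. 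This is precisely the computation already carried out in the proof of Lemma \ref{lemma23} (the identity $\mathcal{L}_{J\mathcal{X}}\omega(Y,Z) = 2\langle Y,\nabla_Z\mathcal{X}\rangle$) together with Lemma \ref{lemma34} applied to the holomorphic $1$-form $d\mu_0$, so I would simply cite those.

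Assembling: since both the $TX$-block and the $T\mathbb{C}$-block of $\mathcal{H}$ are $J$-invariant and lie in $[0,1]$ as quadratic forms, and since $\mathcal{H}$ is block-diagonal for the decomposition $T\widetilde{X} = TX \oplus T\mathbb{C}$ which is itself $J$-invariant, the full Hessian $\mathcal{H}$ satisfies \eqref{equation81} and \eqref{equation82}. I do not anticipate a genuine obstacle here; the only point requiring a little care is the normalization constants relating $\mathbf{i}\mu_0$, the Hamiltonian, and $\mathcal{X}_0$ (to be sure the factor $\tfrac{1-b_0}{r}$ combined with the bound in (\textbf{X4}) really lands in $[0,1]$ rather than merely being bounded), and making sure the $J$-invariance argument is quoted from the right place in Subsection \ref{subsection31}. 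Everything else is bookkeeping.
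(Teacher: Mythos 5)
Your proof is correct, and it is genuinely a different argument from the one in the paper. The paper treats $\mathcal{F}$ directly as a Hamiltonian function on the whole $\widetilde{X}$: writing $\mathcal{F}=\mu\cdot\upxi$ for a fixed $\upxi\in{\mathfrak g}$, it computes ${\mc H}(JV,JV)$ from the definition of the Hessian, substitutes $d\mathcal{F}=\omega(\mathcal{X}_\upxi,\cdot)$, and reduces to $\langle\nabla_{JV}\mathcal{X}_\upxi,V\rangle$; the Killing property of $\mathcal{X}_\upxi$ then yields (\ref{equation81}), and (\ref{equation82}) is deduced with no further comment from ({\bf X4}). You instead exploit the product structure $\widetilde X=X\times\mathbb{C}$: since $\mathcal{F}$ splits as a sum of a function of $x$ and a function of $p$, the Hessian is block-diagonal with respect to the $J$-invariant splitting $T\widetilde X=TX\oplus T\mathbb{C}$, and you check each block. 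What your route buys: it makes explicit that ({\bf X4}) only bounds the $TX$-block (the $T\mathbb{C}$-block is handled by $b_0\in(0,1)$), which is a step the paper silently suppresses; this also clarifies that (\ref{equation82}) should really be read as a bound on $T\widetilde X$, which is what is actually used in Proposition \ref{prop82}. What the paper's route buys: it is shorter and handles $\mathcal{F}$ in one stroke without needing the splitting.

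One small inaccuracy in your write-up: Lemma \ref{lemma34} is a statement about holomorphic $1$-forms, and $d(\mathbf{i}\mu_0)$ is not one ($\mathbf{i}\mu_0$ is real-valued, not holomorphic), so that citation does not literally apply. This does not affect your proof, because the explicit computation you give just before it is correct and self-contained: $\nabla J=0$ gives $\nabla_Y(J\mathcal{X}_0)=J\nabla_Y\mathcal{X}_0$, the holomorphicity of $\mathcal{X}_0$ (its flow preserves $J$) combined with $\nabla J=0$ gives $\nabla_{JY}\mathcal{X}_0=J\nabla_Y\mathcal{X}_0$, and then the Killing property supplies the symmetry needed to conclude ${\mc H}(JY,JZ)={\mc H}(Y,Z)$. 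I would simply drop the reference to Lemma \ref{lemma34} and keep your direct computation.
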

\begin{proof}
Since $J$ is integrable, for any tangent vector $V$, we have
\begin{multline*}
{\mc H} (JV, JV) = JV (JV {\mc F}) - \left(\nabla_{JV} JV\right) {\mc F} = JV \omega( {\mc X}_\upxi, JV) - \omega({\mc X}_\xi, J \nabla_{JV} V ) \\
=JV \langle {\mc X}_\upxi, V \rangle - \langle {\mc X}_\upxi, \nabla_{JV} V \rangle = \langle \nabla_{JV} {\mc X}_\upxi, V \rangle.
\end{multline*}
Replacing $JV$ by $V$, we have
\begin{align*}
{\mc H} (V, V) = \left\langle \nabla_{ J(-JV)} {\mc X}_\upxi, -JV\right\rangle = \left\langle \nabla_V {\mc X}_\upxi, -JV \right\rangle = \left\langle \nabla_{JV} {\mc X}_\upxi, V \right\rangle.
\end{align*}
The last equality is true because ${\mc X}_\upxi$ is Killing. Therefore (\ref{equation81}) holds. On the other hand, (\ref{equation82}) follows from ({\bf X4}) of Hypothesis \ref{hyp21} and the definition of ${\mc F}$.
\end{proof}

Since ${\mc F}$ is $G$-invariant, it lifts to a function ${\mc F}: Y \to {\mb R}$. We have
\begin{prop}\label{prop82}
For each $E>0$, there exist $c(E)>0$ such that for any solution $(A, u)$ to the perturbed gauged Witten equation with $E(A, u)\leq E$, we have
\begin{align*}
\Delta_c {\mc F}(u) \geq  {\sigma \over 2c_0} \big| \mu(u)\big|^2 - c(E).
\end{align*}
Here $\sigma: \Sigma^* \to {\mb R}_+$ is the ratio of the smooth metric over the cylindrical metric of $\Sigma^*$ and $c_0>0$ is the constant in Lemma \ref{lemma24}.
\end{prop}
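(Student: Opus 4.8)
\textbf{Proof strategy for Proposition \ref{prop82}.} The plan is to compute $\Delta_c {\mc F}(u)$ directly using the structure equations for the covariant derivatives $D_{A,s}, D_{A,t}$ from Subsection \ref{subsection32}, then separate the contribution coming from the moment map (which will be the positive term) from everything else (which will be bounded by the energy). First I would express $\Delta_c {\mc F}(u)$ in a local patch $B_{r^*}(q)$ where we have coordinates $z = s+{\bm i} t$ and the cylindrical Laplacian $\Delta = \partial_s^2 + \partial_t^2$; recall $\Delta_c = \Delta$ with respect to the cylindrical metric, and the factor $\sigma$ relates it to the smooth metric. Writing $v_s = \partial_s u + {\mc X}_\phi(u)$, $v_t = \partial_t u + {\mc X}_\psi(u)$ and using that ${\mc F}$ is $G$-invariant, one gets
\begin{align*}
\Delta_c {\mc F}(u) = \big\langle \nabla^2 {\mc F}(v_s), v_s\big\rangle + \big\langle \nabla^2{\mc F}(v_t), v_t\big\rangle + \big\langle \nabla {\mc F}(u), D_{A,s} v_s + D_{A,t} v_t\big\rangle + (\text{first-order terms in } d_A u).
\end{align*}
By Lemma \ref{lemma81}, the Hessian terms are nonnegative and bounded by $|d_A u|^2$, so they are harmless (controlled pointwise, hence integrally, by the energy density); the genuinely delicate piece is $\langle \nabla{\mc F}(u), D_{A,s}v_s + D_{A,t}v_t\rangle$.

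The key computational step is to substitute the perturbed gauged Witten equation. From $\ov\partial_A u + \nabla\wt{\mc W}_A(u) = 0$ we have $v_s + J v_t = -2\nabla\wt{\mc W}_A(u)$ (in the local model normalization), and from $*F_A + \mu^*(u) = 0$ we have, in the local patch, $F_A = \partial_s\psi - \partial_t\phi = -\sigma\mu^*(u)$. Applying $D_{A,s}$ and $D_{A,t}$ to the first equation and using the commutator identity (\ref{equation311}), $D_{A,s}v_t - D_{A,t}v_s = {\mc X}_{F_A}$, I would rewrite $D_{A,s}v_s + D_{A,t}v_t$ in terms of ${\mc X}_{F_A} = -\sigma{\mc X}_{\mu^*}(u)$ and covariant derivatives of $\nabla\wt{\mc W}_A(u)$. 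The term involving ${\mc X}_{F_A}$ produces exactly $\langle\nabla{\mc F}(u), -\sigma J{\mc X}_{\mu^*}(u)\rangle = \sigma\langle\nabla{\mc F}, J{\mc X}_{\mu^*}\rangle$ (up to sign bookkeeping and the $\lambda_0$-weighted metric on $\mf g$), which by the crucial estimate (\ref{equation24}) of Lemma \ref{lemma24} is bounded below by $\frac{\sigma}{c_0}|\mu(u)|^2 - \sigma c_0$. Since $\sigma \le \sigma^+$ is bounded on $\Sigma$, the $-\sigma c_0$ piece is an absolute constant.

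It remains to control the terms involving $\nabla\wt{\mc W}_A(u)$: namely $\langle\nabla{\mc F}(u), D_{A,\bullet}(\nabla\wt{\mc W}_A(u))\rangle$-type expressions. Using Lemma \ref{lemma35} (and the $D_A^{1,0}, D_A^{0,1}$ formulas (\ref{equation317})--(\ref{equation318})), each such term is a combination of $\nabla_{v_\bullet}\nabla\wt{\mc W}_A(u)$, the Hessian $\nabla^2\wt{\mc W}_A$ contracted against $\nabla{\mc F}$ and $v_\bullet$, plus terms with $\partial h''$, $\partial\beta$. Here is where I expect the main obstacle: $\nabla\wt{\mc W}_A(u)$ is not pointwise bounded (the superpotential is not proper), so one cannot simply say ``$|\nabla\wt{\mc W}_A(u)|$ is bounded.'' Instead one must pair it with $|\nabla{\mc F}(u)|$, which grows comparably (like $|\mu_0(u)|^{1/2}$, cf. the structure of ${\mc F}_b$ and the $|p|^2$ growth), and crucially against $\nabla^2{\mc F}$ which is \emph{bounded} by Lemma \ref{lemma81}; combined with the Cauchy--Schwarz/Young's inequality splitting off a small multiple of $\sigma|\mu|^2$ to be absorbed into the positive term, and using that $|\nabla\wt{\mc W}_A(u)|^2$ integrates to at most $E$, one gets the remaining terms bounded by $c(E)$. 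The bookkeeping of which pieces are absorbed into $\frac{1}{2c_0}\sigma|\mu|^2$ versus which become the constant $c(E)$ is the only real subtlety; after that the proposition follows by rearranging. I would also note that outside a fixed compact set in $\Sigma^*$ (near the punctures) the argument is unchanged since the cylindrical models are just local models with $\beta$ possibly nonconstant only on the fixed annuli $C_j$, whose contributions are uniformly bounded by Hypothesis \ref{hyp28}(\textbf{P3}) and the energy bound, exactly as in the estimate (\ref{equation441}).
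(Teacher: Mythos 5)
Your overall skeleton --- compute $\tau\Delta_c {\mc F}(u) = \partial_s\langle\nabla{\mc F},v_s\rangle + \partial_t\langle\nabla{\mc F},v_t\rangle$, substitute the gauged Witten equation, extract $\sigma\langle\nabla{\mc F}, J{\mc X}_{\mu^*}\rangle$ from the curvature term via (\ref{equation311}) and the vortex equation, then invoke Lemma \ref{lemma24} --- coincides with the paper's proof. But your handling of the superpotential contribution has a genuine gap. After substituting the Witten equation you are left with $-4\langle\nabla{\mc F}, D_A^{1,0}\nabla\wt{\mc W}_A(u)\rangle$, and by (\ref{equation317}) the main piece of $D_A^{1,0}\nabla\wt{\mc W}_A$ is $\nabla_{\ov\partial_A u}\nabla\wt{\mc W}_A$, i.e.\ the \emph{Hessian of $\wt{\mc W}_A$} applied to $\ov\partial_A u$. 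That Hessian is not bounded: it contains $e^{\ov{\rho_0(h)}}\big(\ov{p}\nabla^2 Q + \ldots\big)$, which grows polynomially in $|\mu(u)|$. The boundedness of $\nabla^2{\mc F} = {\mc H}$ from Lemma \ref{lemma81} is irrelevant to this term, since $\nabla^2{\mc F}$ never appears in it; your proposal implicitly swaps one Hessian for the other. Moreover the attempted rescue fails on two counts: absorbing into $\frac{\sigma}{2c_0}|\mu|^2$ cannot work uniformly because $\sigma\to 0$ at the punctures while the superpotential contribution does not, and ``$|\nabla\wt{\mc W}_A(u)|^2$ integrates to $\leq E$'' is an $L^2$ bound, which gives you nothing pointwise --- Proposition \ref{prop82} is a pointwise differential inequality.

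The paper avoids the Hessian of $\wt{\mc W}_A$ entirely. The trick is to keep the superpotential contribution as a total derivative: $-2\partial_s\langle\nabla{\mc F},\nabla\wt{\mc W}_A\rangle + 2\partial_t\langle\nabla{\mc F},J\nabla\wt{\mc W}_A\rangle = -4\,\mathrm{Re}\,\big[\partial_{\ov z}\langle\!\langle\nabla{\mc F},\nabla\wt{\mc W}_A(u)\rangle\!\rangle\big]$. Since $\nabla{\mc F}$ is a linear combination of $J$-rotated infinitesimal $G$-actions (by the construction of ${\mc F}_b$), Euler's identity for the $\rho_0$-homogeneous ${\mc W}_A$ gives $\langle\!\langle\nabla{\mc F},\nabla{\mc W}_A\rangle\!\rangle = {\mc W}_A(u)$; the perturbation part is recorded as a new bounded function ${\mc W}'_{A,b} := d{\mc W}'_A\cdot\nabla{\mc F}$. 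Applying the chain rule and the equation $\ov\partial_A u = -\nabla\wt{\mc W}_A$ then produces $4|\ov\partial_A u|^2$ plus terms controlled by Hypothesis \ref{hyp28}(\textbf{P3}) and Lemma \ref{lemma710}. This $4|\ov\partial_A u|^2$ is precisely what is needed: after Cauchy--Schwarz on the perturbation terms one is left with $2{\mc H}(\partial_A u,\partial_A u) - 2{\mc H}(\ov\partial_A u,\ov\partial_A u) + 2|\ov\partial_A u|^2 \geq 0$ by $0\leq{\mc H}\leq 1$ of Lemma \ref{lemma81}. So the role of Lemma \ref{lemma81} is not to bound a cross term with $\nabla^2\wt{\mc W}_A$, but to absorb the negative Hessian term $-2{\mc H}(\ov\partial_A u,\ov\partial_A u)$ into the $4|\ov\partial_A u|^2$ produced by the chain-rule/Euler identity step that your proposal is missing.
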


\begin{proof}
Near any $q \in \Sigma^*$, we use the local model of the perturbed gauged Witten equation so $(A, u)$ gives a solution $(u, h): B_{r^*} \to \wt{X} \times {\mf g} \times {\mf g}$ to a local model parametrized by $(\beta, \sigma,\delta)$. Moreover, there exists $H = H(E)$ such that $\big\| h \big\|_{L^\infty(B_{r^*})} \leq H(E)$. 

Over $B_{r^*}(q)$, the cylindrical area form can be expressed as $\tau ds dt$ where $\tau: B_{r^*} \to {\mb R}$, which is uniformly bounded from above and uniformly bounded away from zero. Then by the equation, we have
\begin{align}\label{equation83}
\begin{split}
\tau \Delta_c {\mc F}(u)  = &\ \partial_s \big\langle \nabla {\mc F}, v_s  \big\rangle + \partial_t \big\langle \nabla{\mc F} ,  v_t \big\rangle \\
= &\ \partial_s \big\langle \nabla {\mc F}, - J v_t -2 \nabla \wt{\mc W}_A(u) \big\rangle + \partial_t \big\langle \nabla {\mc F}, J v_s+ 2 J \nabla \wt{\mc W}_A (u) \big\rangle\\
= &\ - 2 \partial_s \big\langle \nabla {\mc F},  \nabla \wt{\mc W}_A(u) \big\rangle +  2 \partial_t \big\langle \nabla {\mc F}, J \nabla \wt{\mc W}_A(u) \big\rangle\\
 &\ + \partial_s \big\langle \nabla {\mc F}, - J v_t \big\rangle + \partial_t \big\langle \nabla F , J v_s \big\rangle.
\end{split}
\end{align}
Now by Lemma \ref{lemma81} and (\ref{equation311}) we have
\begin{align}\label{equation84}
\begin{split}
 &\ \partial_s \big\langle \nabla {\mc F}, - J v_t \big\rangle + \partial_t \big\langle \nabla{\mc F}, J v_s  \big\rangle \\
= &\ {\mc H} \big(  v_s , - J v_t \big) + {\mc H} \big( v_t , J v_s \big) + \big\langle \nabla {\mc F}, - J D_{A, s} v_t + J D_{A, t} v_s \big\rangle\\
= &\ 2 {\mc H} \big( \partial_A u, \partial_A u \big) - 2 {\mc H} \big( J \ov\partial_A u, J  \ov\partial_A u \big) + \big\langle \nabla {\mc F}, - J {\mc X}_{F_A} \big\rangle\\ 
= &\ 2 {\mc H} \big( \partial_A u , \partial_A u \big) - 2 {\mc H} \big(   \ov\partial_A u, \ov\partial_A u \big) +	\sigma \big\langle \nabla {\mc F}, J {\mc X}_{\mu^*} \big\rangle.
\end{split}
\end{align}
In the second identity we used (\ref{equation311}) and in the third equality we used the vortex equation $F_A + \sigma \mu^*(u) ds dt = 0$.

On the other hand, denote ${\mc W}_{A, b}' = d {\mc W}_A' \cdot \nabla {\mc F} $ where $d$ denotes the differential in the vertical direction. Then
\begin{align}\label{equation85}
\begin{split}
&\ \partial_s \big\langle \nabla {\mc F}, -2\nabla \wt{\mc W}_A \big\rangle + \partial_t \big\langle \nabla {\mc F}, + 2 J \nabla \wt{\mc W}_A \big\rangle \\
= &\ -4 {\rm Re} \Big[ {\partial \over \partial \ov{z}} \big\langle\big\langle \nabla {\mc F},  \nabla \wt{\mc W}_A(u) \big\rangle\big\rangle \Big] \\
= &\ -4 {\rm Re} \Big[ {\partial \over \partial \ov{z}} {\mc W}_A (u) + {\partial \over \partial \ov{z}} (\beta {\mc W}_{A, b}'(u)) \Big]  \\
= &\ -4 {\rm Re} \Big[ d {\mc W}_{A} \cdot \ov\partial_A u + \beta d {\mc  W}_{A,b}' \cdot \ov\partial_A u + {\partial \beta \over \partial \ov{z}} {\mc W}_{A,b}'(u) \Big] \\
= &\ - 4 {\rm Re} \Big[ d \wt{\mc W}_A \cdot \ov\partial_A u + \beta d \big( {\mc W}_{A, b}' - {\mc W}_A' \big) \cdot \ov\partial_A u + {\partial \beta\over \partial \ov{z}} {\mc W}_{A, b}'(u)  \Big] \\
= &\ 4 \big| \ov\partial_A u \big|^2 - 4 {\rm Re} \Big[ \beta d \big( {\mc W}_{A, b}' - {\mc W}_A' \big) \cdot \ov\partial_A u + {\partial \beta \over \partial \ov{z}} {\mc W}_{A,b}'(u) \Big].
\end{split}
\end{align}
Then (\ref{equation83})--(\ref{equation85}) imply that 
\begin{align}\label{equation86}
\begin{split}
\tau \Delta_c {\mc F}(u) = &\ 2{\mc H} \big( \partial_A u, \partial_A u \big) - 2{\mc H}\big( \ov\partial_A u , \ov\partial_A u \big) + \sigma \big\langle \nabla {\mc F}, J{\mc X}_{\mu^*} \big\rangle + 4 \big| \ov\partial_A u \big|^2\\
&\  - 4 {\rm Re} \Big[ \beta d \big( {\mc W}_{A, b}' - {\mc W}_A' \big) \cdot \ov\partial_A u + {\partial \beta \over \partial \ov{z}} {\mc W}_{A, b}'(u) \Big]\\
 \geq &\  2 {\mc H} \big( \partial_A u, \partial_A u \big) - 2{\mc H} \big( \ov\partial_A u , \ov\partial_A u \big) + \sigma \big\langle \nabla {\mc F}, J {\mc X}_{\mu^*} \big\rangle + 2 \left| \ov\partial_A u \right|^2  \\
&\ - 2 |\beta|^2 \big| d {\mc W}_A'(u) - d {\mc W}_{A, b}'(u) \big|^2 - 4 {\rm Re} \Big[ { \partial \beta \over \partial \ov{z}} {\mc W}_{A, b}'(u) \Big]\\
\geq &\ - 2 |\beta|^2 \big| d {\mc W}_A'(u) - d {\mc W}_{A, b}'(u) \big|^2+ \sigma \big\langle \nabla {\mc F}, J {\mc X}_{\mu^*} \big\rangle - 4 {\rm Re} \Big[ {\partial \beta \over \partial \ov{z}} W_{A, b}'(u) \Big].
\end{split}
\end{align}
Here the second inequality follows from (\ref{equation82}). Moreover, by Lemma \ref{lemma710} and ({\bf P3}) of Hypothesis \ref{hyp28} there exists $c(E)>0$ depending on $E$ such that 
\begin{align*}
\big| d {\mc W}_A'(u) - d {\mc W}_{A, b}'(u) \big|^2 \leq c(E),\ \big| {\mc W}_{A, b}'(u) \big| \leq c(E) \sqrt{ 1+ \big|\mu(u) \big|}.
\end{align*}
Then with the symbol $c(E)$ abusively used, we have
\begin{align}
\begin{split}
\tau \Delta_c {\mc F}(u) \geq &\ \sigma \big\langle \nabla {\mc F}, J{\mc X}_{\mu^*} \big\rangle - c(E) |\beta|^2 - c(E) \big| d\beta \big| \sqrt{ 1+ \big| \mu(u) \big| }\\
\geq &\ {\sigma \over c_0} \big| \mu(u) \big|^2 - c(E)|\beta|^2 - c(E) \big|d\beta\big| \big| \mu(u) \big|\\
\geq &\ {\sigma \over 2c_0} \big| \mu(u) \big|^2 - c(E)|\beta|^2 - { c(E) |d\beta|^2 \over 2 \sigma}\\
\geq &\ {\sigma \over 2c_0} \big| \mu(u) \big|^2 - c(E).
\end{split}
\end{align}
Here the last inequality follows from the fact that $d\beta$ is controlled by $\sigma$. 
\end{proof}

Now we can prove the uniform bound on the section.
\begin{thm}\label{thm83}
For every $E>0$, there exists $K(E) > 0$ such that for every solution $(A, u)$ to the perturbed gauged Witten equation with $E(A, u) \leq E$, we have
\begin{align*}
\big\| {\mc F}(u) \big\|_{L^\infty(\Sigma^*)} \leq K(E).
\end{align*}
\end{thm}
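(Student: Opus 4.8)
\textbf{Proof proposal for Theorem \ref{thm83}.}

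The plan is to argue by contradiction using a rescaling (bubbling) argument at the point where $\mc F(u)$ attains a large value, combined with a maximal-principle estimate. First I would suppose that there is a sequence $(A_i, u_i)$ of solutions with $E(A_i, u_i)\leq E$ but $\|\mc F(u_i)\|_{L^\infty(\Sigma^*)}\to +\infty$. By properness of $\mc F$ (Lemma \ref{lemma24}), this forces $|\mu(u_i)|$ to blow up somewhere. Since $\mc F$ is bounded below, and since on each cylindrical end the asymptotic limits $\upkappa_j$ lie in a fixed compact set and contribute bounded residues (Theorem \ref{thm42}, Theorem \ref{thm44}), the supremum of $\mc F(u_i)$ must be attained at interior points $q_i\in\Sigma^*$ staying in a compact region (away from the punctures, using that near a broad puncture blow-up would already violate the energy bound via the argument in the proof of Corollary \ref{cor72}, case (I)). So I may assume $q_i\to q\in\Sigma^*$ with $\mc F(u_i(q_i))\to+\infty$, hence $|\mu(u_i(q_i))|\to +\infty$.

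Next I would invoke Corollary \ref{cor72}: passing to a subsequence, the points where $|\mu(u_i)|\to\infty$ accumulate only at finitely many $z^\alpha\in\Sigma^*$ (plus finitely many points on cylindrical ends near broad punctures), at each of which at least $\epsilon_E$ of energy concentrates. In particular $q = z^\alpha$ for some $\alpha$, and there is a small closed disk $\ov B = \ov B_\rho(q)$, not containing any puncture, such that for all large $i$, $\ov B$ contains exactly one such concentration point and $|\mu(u_i)|\to\infty$ uniformly on a neighborhood of $q$ while on $\partial B_{\rho}(q)$ the value $\mc F(u_i)$ is bounded — wait, this last claim is not automatic, so instead I would choose $\rho$ so that $q$ is the only $z^\alpha$ in $\ov B_\rho(q)$, and then on the smaller annulus near $\partial B_\rho(q)$ I still cannot bound $|\mu(u_i)|$. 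The correct route, following \cite[Page 556]{Cieliebak_Gaio_Mundet_Salamon_2002}, is: pick $q_i$ realizing (close to) $\sup_{\ov B} \mc F(u_i)$, and apply the local maximal principle to the subharmonic-type inequality for $\mc F(u_i)$. By Proposition \ref{prop82}, $\Delta_c \mc F(u_i) \geq \tfrac{\sigma}{2c_0}|\mu(u_i)|^2 - c(E)$; since $\sigma$ is bounded below on $\ov B$ and $|\mu(u_i(q_i))|\to\infty$, the function $\mc F(u_i)$ satisfies $\Delta_c \mc F(u_i)\geq 0$ on the region where $|\mu(u_i)|$ is large (say $|\mu(u_i)|\geq \sqrt{2 c_0 c(E)/\inf_{\ov B}\sigma}$), so $\mc F(u_i)$ is subharmonic there. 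Then the mean value inequality (Lemma \ref{lemmaa5}, or the standard sub-mean-value property) bounds $\mc F(u_i)(q_i)$ by the average of $\mc F(u_i)$ over a small disk $B_{r}(q_i)$, which is in turn controlled by $\int_{B_r(q_i)} |\mc F(u_i)|$. But $\mc F(u)$ grows at most like a constant times $|\mu(u)| + $ const (since $\mc F_{b_0} = \mu_0\cdot(\ldots) + \tfrac{b_0}{2}|p|^2$ and, by \textbf{(X3)} of Hypothesis \ref{hyp21}, $|p|^2$ and $\mu_0$ are both controlled by $|\mu|$ up to constants), and $\int_{B_r(q_i)} |\mu(u_i)|$ is bounded by $\sqrt{\mathrm{Area}}\cdot\|\mu(u_i)\|_{L^2} \leq c\sqrt{E}$ from the energy bound. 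This contradicts $\mc F(u_i)(q_i)\to+\infty$ once $r$ is fixed.

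The step I expect to be the main obstacle is making the subharmonicity region argument clean: $\mc F(u_i)$ is only subharmonic where $|\mu(u_i)|$ is large, so one cannot directly apply a global maximal principle on $\ov B$. The fix is the standard one (as in \cite{Cieliebak_Gaio_Mundet_Salamon_2002}): let $K_0 = K_0(E)$ be a constant with $\mc F\geq c \Rightarrow |\mu|\geq$ (the threshold above) — possible by properness — and suppose $\sup_{\ov B}\mc F(u_i) > K_0 + 1$. Consider the open set $\Omega_i = \{z\in \ov B : \mc F(u_i(z)) > K_0\}$; on $\Omega_i$, $\Delta_c \mc F(u_i)\geq 0$, and on $\partial\Omega_i \cap \mathrm{int}\,\ov B$ we have $\mc F(u_i) = K_0$. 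If $\Omega_i$ meets $\partial B$ we need a separate bound on $\mc F(u_i)|_{\partial B}$ — this is why I would instead take $\ov B$ centered at a point realizing the sup, shrink $\rho$ so the only concentration point is the center, and use a mean-value inequality on a disk whose closure lies in $\ov B$, so that only the $L^1$-energy bound on $\mu$ (not boundary values) is used. Carrying this out requires care that the radius $r$ can be chosen uniformly in $i$, which holds because $\sigma$ and the conformal factors are uniformly comparable on the fixed compact region containing all the $q_i$ (using \eqref{equation29}). Once this is in place, the contradiction closes the proof, and I would remark that the same argument on cylindrical ends near broad punctures is already handled by Corollary \ref{cor72}'s construction, so no new estimate is needed there. $\square$
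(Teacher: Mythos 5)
There is a genuine gap: you never address the possibility that $\mc F(u_i)$ blows up in a neighborhood of a \emph{narrow} puncture. Your opening assertion that the supremum of $\mc F(u_i)$ is attained at interior points "staying in a compact region away from the punctures" is only justified near \emph{broad} punctures (via case (I) in the proof of Corollary \ref{cor72}, which hinges on condition (\textbf{P5}) of Hypothesis \ref{hyp28} — the no-critical-points-at-infinity condition for the broad perturbation). That argument fails at narrow punctures because the perturbation vanishes there and $\nabla W$ is \emph{not} bounded away from zero near infinity (its critical locus extends to infinity). Item (4) of Corollary \ref{cor72} is explicitly silent about sequences approaching a narrow puncture, so one cannot conclude from it that $|\mu(u_i)|$ stays bounded there. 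The paper handles exactly this case by a separate maximum-principle argument on the punctured disk: it shows $\sup_{\wt B_r(z_j)}\mc F(u)\leq\max\{\,\sup_{\partial\wt B_r(z_j)}\mc F(u),\ K'\,\}$ with $K'$ depending only on $c_0$, where the possibility that the maximum sits at the puncture itself must be ruled out by a divergence-theorem computation using the exponential decay of Theorem \ref{thm43}. Your proposal omits this entirely.

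A secondary issue with your $L^1$-energy route: the energy bound controls $\int|\mu(u_i)|^2\,\nu$ where $\nu=\sigma\,ds\,dt$ is the \emph{smooth} area form, and $\sigma\sim e^{-2s}$ degenerates on cylindrical ends. So $\int_{B_r(q_i)}|\mu(u_i)|\,ds\,dt$ is \emph{not} controlled by $\sqrt{E}$ uniformly as $q_i$ drifts toward a puncture. You do note (correctly) that this is fine on a fixed compact region, but it means your mean-value-with-$L^1$-bound approach is confined to the interior and cannot by itself close the argument near any puncture, broad or narrow. The paper sidesteps this by using the \emph{boundary-max} form of the mean value estimate, with the boundary control supplied by item (4) of Corollary \ref{cor72}; that argument covers interior points and concentration points near broad punctures in one stroke, and is uniform in $i$. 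In short: your strategy agrees with the paper's for the interior and broad-puncture concentration points (and your $L^1$-average variant is a workable alternative there, modulo the measure issue), but the missing narrow-puncture maximum-principle argument is a real and necessary piece of the proof.
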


\begin{proof}
For any bounded solution $(A, u)$ to the perturbed gauged Witten equation over $\vec{\mc C}$, the function ${\mc F}(u)$ extends continuously to $\Sigma$, thanks to Theorem \ref{thm42}. Moreover, the value of ${\mc F}(u)$ at every broad punctures is uniformly bounded because the limit at each broad puncture $z_j$ lies in a uniformly bounded subset of $\wt{X}_{\upgamma_j}$.

Suppose the statement is not true, then there exists a sequence $(A^{(i)}, u^{(i)})$ of solutions to the perturbed gauged Witten equation over $\vec{\mc C}$ with $E(A^{(i)}, u^{(i)})\leq E$ such that 
\begin{align}\label{equation88}
\lim_{i \to \infty} \big\| {\mc F}(u^{(i)}) \big\|_{L^\infty(\Sigma)} = +\infty.
\end{align}
By Corollary \ref{cor72}, there is a subsequene of the sequence $(A^{(i)}, u^{(i)})$ (still indexed by $i$), and sequences of points $\{z^{(i)}_\beta\}_{1\leq \beta\leq m}$ contained in $\Sigma^*$ which satisfy (3) of Corollary \ref{cor72}. In particular, for each $\beta$ and each small $r>0$, the restriction of ${\mc F}(u^{(i)})$ to $\partial B_r(z_i)$ is uniformly bounded. Then, apply the mean value estimate to ${\mc F}(u^{(i)} )$ restricted to $B_r(z_i)$, with the first differential inequality in Proposition \ref{prop82}, we see that
\begin{align*}
{\mc F}\big( u^{(i)}(z_\beta^{(i)}) \big) \leq  \max_{ \partial B_r(z_\beta^{(i)} )} {\mc F}\big( u^{(i)} \big) + { C(E) \over 8 \pi}r.
\end{align*}
This contradicts with the divergence of ${\mc F}(u^{(i)} (z_\beta^{(i)}))$. Therefore, $m=0$; by Corollary \ref{cor72}, it means on any compact subset of the complement of narrow punctures, ${\mc F}(u^{(i)} )$ is uniformly bounded. 

On the other hand, for any bounded solution $(A, u)$, any $r>0$ sufficiently small and any narrow puncture $z_j$, we define
\begin{align*}
K_j(A, u) = \sup_{\partial \wt{B}_r(z_j)} {\mc F}(u).
\end{align*}
Here $\wt{B}_r(z_j)$ is the radius $r$ disk around $z_j$ with respect to the smooth metric. We also take
\begin{align*}
K' = \sup \Big\{ {\mc F}(x)\ |\ x\in \wt{X},\ \big| \mu(x) \big| \leq c_0 \Big\}
\end{align*}
where $c_0$ is the one in (\ref{equation24}). Since $\mu$ is proper, $K'(E)$ is finite. We claim that for each narrow puncture $z_j$,
\begin{align}\label{equation89}
\sup_{B_r(z_j)} {\mc F}(u) \leq \max \big\{ K_j(A, u), K' \big\}. 
\end{align}
Then it leads to a contradiction with (\ref{equation88}).

Indeed, take $w \in B_r(z_j)$ with ${\mc F}(u(w)) = \sup_{B_r(z_j)} {\mc F}(u) > \max \{ K_j(A, u), K'\}$. If $w\neq z_j$, then near $w$ we have
\begin{align}
\Delta_c {\mc F}(u) \geq \sigma \big\langle \nabla {\mc F}(u), J{\mc X}_{\mu^*(u)} \big\rangle \geq {\sigma \over c_0} \big( \big|\mu(u) \big|^2 - c_0^2 \big) > 0.
\end{align}
So ${\mc F}(u)$ is subharmonic near $w$ and hence ${\mc F}(u)$ is constant on $B_r(z_j)$. It contradicts with the definition of $K_j(A, u)$. On the other hand, if $w = z_j$, choose $\tau\in (0, r)$ such that $\inf_{B_\tau(z_j)} {\mc F}(u) \geq K'$. Let $\wt\Delta$ be the Laplacian with respect to the smooth metric on $\wt{B}_r(z_j)$. Then  for any $\kappa \in (0, \tau)$, by the divergence formula, we have
\begin{align}\label{equation810}
\int_{\wt{B}_r(z_j) \setminus \wt{B}_\kappa(z_j)} \wt\Delta {\mc F}(u) = \int_{\partial \wt{B}_r(z_j)} {\partial \over \partial \rho} {\mc F}(u) - \int_{\partial \wt{B}_\kappa (z_j)} {\partial \over \partial \rho} {\mc F}(u).
\end{align}
Here $\rho$ is the radial coordinate on $\wt{B}_r(z_j)$. By the exponential convergence of $u$ near each puncture (Theorem \ref{thm43}), there is an $\alpha>0$ such that $| \partial_\rho {\mc F}(u) | \leq \rho^{\alpha -1}$. Therefore let $\kappa$ go to zero in (\ref{equation810}), we see 
\begin{align*}
\int_{\partial \wt{B}_\tau (z_j)} {\partial \over \partial \rho} {\mc F}(u) = \int_{\wt{B}_\tau (z_j)} \wt\Delta {\mc F}(u) \geq \int_{\wt{B}_r(z_j)} \big\langle \nabla {\mc F}, J{\mc X}_{\mu^*} \big\rangle \geq {1\over c_0} \int_{\wt{B}_\tau(z_j)} \big( \big| \mu(u)\big|^2 -c_0^2 \big) \geq 0.
\end{align*}
Then since ${\mc F}(u)$ attains maximum at $z_j$, ${\mc F}(u)$ is a constant on $B_r(z_j)$. This constant is bigger than $K_j(A, u)$, which is a contradiction. Therefore (\ref{equation89}) holds.
\end{proof}

\subsection{Proof of Theorem \ref{thm65}}

\begin{prop}\label{prop84}
For every $E>0$, with abuse of notation, there exists $K(E) > 0$ such that for every solution $(A, u)$ to the perturbed gauged Witten equation (\ref{equation221}) with $E(A, u) \leq E$, we have
\begin{align*}
\big\| d_A u \big\|_{L^\infty(\Sigma^*)} \leq K(E).
\end{align*}
\end{prop}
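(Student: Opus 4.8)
The plan is to upgrade the $C^0$-bound from Theorem \ref{thm83} to a uniform gradient bound by a standard bubbling/rescaling argument, using Theorem \ref{thm83} to confine the target and the energy quantization in blowing up (Theorem \ref{thm71}, Corollary \ref{cor72}) to rule out concentration. First I would argue by contradiction: suppose there is a sequence $(A^{(i)}, u^{(i)})$ of solutions with $E(A^{(i)}, u^{(i)}) \leq E$ but $\big\| d_{A^{(i)}} u^{(i)} \big\|_{L^\infty(\Sigma^*)} \to \infty$. By Theorem \ref{thm83} the images of $u^{(i)}$ lie in a fixed compact subset $\wt{K}_E \subset \wt{X}$; in particular the relevant curvature, complex structure, and all the perturbation data $\wt{\mc W}_{A^{(i)}}$ and their derivatives are uniformly bounded along the solutions (using also the uniform bound on $h_{A^{(i)}}$ from the equation $\Delta h'' = -\sigma\mu^*(u)$ and elliptic estimates as in \eqref{equation73}). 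So the norm $|d_{A^{(i)}} u^{(i)}|$, which coincides locally with the quantity estimated in Proposition \ref{prop45}, blows up at some sequence of points $z_i \to z_\infty$; rescaling at rate $\big(|d_{A^{(i)}} u^{(i)}|(z_i)\big)^{-1} \to 0$ via Hofer's lemma (Lemma \ref{lemmaa7} applied to the energy density $e({\bm u}^{(i)})$ in a local model centered at $z_i$), I would extract in the limit a nonconstant entire $J$-holomorphic map: the rescaled connection terms $\phi, \psi$ and the perturbation $\nabla\wt{\mc W}_A$ carry the scaling factor and vanish, and the curvature $F_A = \Delta h'' ds\,dt$ is controlled by $\sigma\mu^*(u)$ which is also bounded and rescales away. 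The limit $v: {\mb C} \to \wt{X}$ is holomorphic, nonconstant, with $E(v) \leq E < \infty$; by removal of singularity it extends to a holomorphic sphere, contradicting (\textbf{X1}) of Hypothesis \ref{hyp21} that $(\wt{X},\omega)$ is symplectically aspherical.

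The subtlety I would need to handle carefully is that a priori the blow-up point $z_\infty$ could be a puncture, or $|d_A u|$ could be large near infinity of $\wt X$ where the target geometry degenerates --- but the first is excluded by the exponential decay of $e({\bm u})$ near punctures (Theorem \ref{thm43}, with the uniform energy bound giving a uniform $\epsilon$-threshold), and the second is excluded precisely by Theorem \ref{thm83}: the images stay in a fixed compact set, so bounded-geometry estimates (\textbf{X2}) apply uniformly. One also has to confirm that the mean value / $\epsilon$-regularity estimate for $e({\bm u})$ from Proposition \ref{prop45} has constants depending only on the compact set $\wt K_E$ (i.e.\ only on $E$), which is the case since all the tensors appearing in the estimate for $\Delta(|v_s|^2 + |v_t|^2)$ are controlled on $\wt K_E$. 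Then: if no energy concentrates, a covering argument with the $\epsilon$-regularity of Proposition \ref{prop45} gives the uniform $L^\infty$-bound on $d_A u$ directly; if energy concentrates at some point, the rescaling produces the holomorphic sphere above, which is impossible.

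The main obstacle will be making the rescaling argument clean in the presence of both the gauge field and the non-homogeneous perturbation term simultaneously: one must check that in the rescaled local model (parametrized by $(\beta_i, \sigma_i, \delta_i)$ with the spatial variable rescaled by $r_i \to 0$) the perturbation $2 r_i \nabla\wt{W}^{(\delta_i)}_{h_i}$ and the rescaled connection $1$-forms $\phi_i', \psi_i'$ both tend to zero in $C^0_{loc}$ --- this is essentially the content of the estimates already carried out in the proof of Lemma \ref{lemma73} (see \eqref{equation715}, \eqref{equation716}), so I would invoke that machinery rather than redo it. Given that, the limit equation is genuinely $\ov\partial v = 0$ and the asphericity contradiction closes the argument; the constant $K(E)$ is then extracted from the $\epsilon$-regularity threshold and the total energy bound $E$ by the standard finiteness-of-concentration-points counting.
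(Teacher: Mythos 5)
Your argument is correct and takes essentially the same route as the paper: once Theorem \ref{thm83} confines the image of $u$ to a fixed compact subset of $\wt{X}$, the inhomogeneous term is uniformly bounded, and a standard rescaling/bubbling argument produces a nonconstant holomorphic sphere in the limit, contradicting the symplectic asphericity of $(\wt{X},\omega)$ from Hypothesis ({\bf X1}). The paper states this in two sentences; you have simply filled in the rescaling details (Hofer's lemma, vanishing of the rescaled connection and perturbation terms, exponential decay near punctures) that the paper leaves implicit.
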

\begin{proof}
The uniform bound on the section $u$ implies that the inhomogeneous term of the Witten equation is uniformly bounded. Then it is a standard argument to extract a subsequence from any sequence of solutions with energy uniform bound, such that the subsequence bubbles off a non-constant holomorphic sphere. However, since the target space $\wt{X}$ is symplectically aspherical, this is impossible. 
\end{proof}

Now we can prove Theorem \ref{thm65}. Suppose $(A^{(i)}, u^{(i)})$ is a sequence of smooth solutions to the perturbed gauged Witten equation over $\vec{\mc C}$ with 
\begin{align*}
\sup_i E\big( A^{(i)}, u^{(i)} \big) = E  < \infty.
\end{align*}
As we did in the proof of Proposition \ref{prop216}, there is a sequence of (smooth) gauge transformations $g^{(i)}\in {\mz G}$ such that $(g^{(i)})^* A^{(i)}$ is in Coulomb gauge, relative to some reference connection $A_0$. We replace $A^{(i)}$ by $(g^{(i)})^* A^{(i)}$. On the other hand, by Theorem \ref{thm83}, there is a $G$-invariant compact subset $\wt{K} \subset \wt{X}$ such that the images of $u^{(i)}$ are contained in $ P\times_G \wt{K}$. So by the equation $* F_{A_i} + \mu^* (u_i) = 0$, the curvature form has uniformly bounded $L^\infty$-norm. Therefore elliptic estimate shows that $A^{(i)}$ converges to some $A \in {\mz A}$ in weak $W^{1, p}$-topology. In particular, the monodromy and residue of $A$ at each $z_j$ is the same as that of each $A^{(i)}$. The weak convergence implies that $A$ is also in Coulomb gauge relative to $A_0$.

Therefore, by the continuous dependence of $\wt{\mc W}_A$ on $A \in {\mz A}$, for any compact subset $\Sigma_c \subset \Sigma^*$ and any $G$-invariant compact subset $\wt{K}$, $ \lim_{i \to \infty} \wt{\mc W}_{A^{(i)}} = \wt{\mc W}_A$ uniformly on $(P|_{\Sigma_c}) \times_G \wt{K}$. By the basic compactness about inhomogeneous Cauchy-Riemann equation with Proposition \ref{prop84}, there is a subsequence (still indexed by $i$) and a section $u \in \Gamma_{loc}^{1, p}(Y)$ such that $u_i$ converges to $u$ in $W_{loc}^{1, p}$-topology. Moreover, the pair $(A, u)$ satisfies the perturbed gauged Witten equation on $\Sigma^*$. By Proposition \ref{prop216} and its proof, the Coulomb gauge condition on $A$ implies that $(A, u)$ is smooth. Bootstrapping shows that the convergence of $(A^{(i)}, u^{(i)})$ to $(A, u)$ is uniform on any compact subset of $\Sigma^*$, together with all derivatives. 

Now near each puncture $z_j$, we consider the corresponding cylindrical models on a cylindrical end. By Theorem \ref{thm42}, for each $z_j$, there exists $\upkappa_j \in \wt{X}_{\upgamma_j}$, such that 
\begin{align*}
\lim_{z \to z_j} e^{\lambda_j t} u(z) = \upkappa_j.
\end{align*}


It is possible that a sequence of solutions degenerate to a stable solution with a sequence of solitons ``attached'' at the broad punctures. The situation is almost the same as the situation in Floer theory where a sequence of connecting orbits degenerate to a stable connecting orbits in the limit. 

For each broad puncture $z_j$, identify $U_j \simeq \Theta_+$, we express $(A^{(i)}, u^{(i)})$ as a solution ${\bm u}^{(i)} = (u^{(i)}, h^{(i)})$ to a cylindrical model where the connection form is $\phi^{(i)} ds + \psi^{(i)} dt$. We also write $A = d + \phi ds + \psi dt$. Since $A^{(i)}$ converges to $A$ uniformly on any compact subset of $\Sigma^*$, $(\phi^{(i)}, \psi^{(i)})$ converges to $(\phi, \psi)$ uniformly on any compact subset. We define 
\begin{align}
f^{(i)}(s, t) = \int_s^{+\infty} \phi^{(i)}(v, t) dv,\ f(s, t) = \int_s^{+\infty} \phi(v, t) dv. 
\end{align}
By the symplectic vortex equation we see that $f^{(i)}, f \in W_\delta^{2, p}(\Theta_+)$ for some $\delta>0$. Denote $\cancel{g}^{(i)} = \exp f^{(i)}$, $\cancel{g} = \exp f$. Therefore we can extend $\cancel{g}^{(i)}$ and $\cancel{g}$ to gauge transformations in ${\mz G}$ so that $\cancel{g}^{(i)}$ converges to $\cancel{g}$ uniformly on any compact subset of $\Sigma^*$. Therefore, we can absorb $\cancel{g}^{(i)}$ into $g^{(i)}$, and without loss of generality, we may assume that ${\bm u}^{(i)}$ is in temporal gauge. 

Now if there is a subsequence (still indexed by $i$) and $S_0>0$ such that 
\begin{align*}
\sup_{[S_0, +\infty)\times S^1} e \big( {\bm u}^{(i)} \big) \leq \epsilon_2,
\end{align*}
where $\epsilon_2 = \epsilon_2 ( \wt{K}, \ud\delta)$ is the one in Theorem \ref{thm43}. Then by Theorem \ref{thm43} we see that 
\begin{align*}
\lim_{S \to +\infty} \lim_{i \to \infty} E \big( {\bm u}^{(i)}; \Theta_+(S) \big) = 0.
\end{align*}
Therefore in this situation, Theorem \ref{thm65} holds. 

On the other hand, suppose there is a sequence $(s^{(i)}, t^{(i)})\in \Theta_+$ such that 
\begin{align}\label{equation813}
\lim_{i \to \infty} s^{(i)} \to +\infty,\ e \big( {\bm u}^{(i)}\big) \big( s^{(i)}, t^{(i)} \big) > \epsilon_2.
\end{align}
Then we can extract a subsequence (still indexed by $i$) such that the sequence 
\begin{align*}
\big( u^{(i)} \big( s + s^{(i)}, t \big), h^{(i)} \big( s + s^{(i)}, t \big) \big)
\end{align*}
converges uniformly on compact subsets of $\Theta$ with all derivatives to $(u, 0)$, where $u: \Theta \to \wt{X}_{\upgamma_j}$ is a soliton with nonzero energy. Indeed, $(u^{(i)}, h^{(i)})$ satisfies the equation
\begin{align*}
\partial_s u^{(i)} + J\big( \partial_t u^{(i)} + {\mc X}_{\psi^{(i)}}(u^{(i)}) \big) + \nabla \wt{W}_{h^{(i)}, \lambda_j}^{(\delta_j^{(i)})}(u^{(i)}) = 0,\ \Delta (h^{(i)})'' + \sigma \mu^* (u^{(i)}) = 0.
\end{align*}
Here $\delta_j^{(i)} = \delta_{j, A^{(i)}} \in (0,1]$. Since all derivatives of $u^{(i)}$ are uniformly bounded, the second equation implies that a subsequence of $h^{(i)}(s + s^{(i)}, t)$ (still indexed by $i$) converges uniformly on any compact subset of $\Theta$ to $0$, together all derivatives. Then $\psi^{(i)}(s + s^{(i)}, t)$ converges uniformly on any compact subset of $\Theta$ to the constant $\lambda$ together will all derivatives. Moreover, $\delta_j^{(i)}$ converges to $\delta_j := \delta_{j, A}$. Therefore the standard argument shows that $u^{(i)}$ converges uniformly with all derivatives to a $(\lambda_j, \delta_j)$-soliton. (\ref{equation813}) implies that this soliton must be nontrivial. 

More generally, using the same trick as in \cite[Section 8.5]{Mundet_Tian_2009}, we define an {\bf $\epsilon_2 $-bubbling list} to be a sequence of lists $\big\{ z_1^{(i)}, \ldots, z_\alpha^{(i)} \big\}$, satisfying
\begin{itemize}
\item $z_l^{(i)} = \big( s^{(i)}_l, t^{(i)}_l \big) \in U_j\simeq \Theta_=$ and $\lim_{i \to \infty} s^{(i)}_l \to +\infty$;

\item for $l_1 \neq l_2$, $ d \big( z_{l_1}^{(i)}, z_{l_2}^{(i)} \big) \to +\infty$;

\item for each $l$ we have $\displaystyle \liminf_{i \to \infty} e \big( {\bm u}^{(i)} \big)\big( z_l^{(i)} \big) \geq \epsilon_2$.
\end{itemize}
We call $\alpha\geq 1$ the length of an $\epsilon_2$-bubbling list. Then if we have an $\epsilon_2$-bubbling list, we can extract a subsequence (still indexed by $i$) for which locally near $z_l^{(i)}$ the sequence converges to a nontrivial soliton. It is easy to see, because there are only finitely many critical points of $\wt{W}_{\upgamma_j}|_{\wt{X}_{\upgamma_j}}$, the length of any $\epsilon$-bubbling list is uniformly bounded from above. So we take an $\epsilon_2$-bubbling list $\big\{ z_1^{(i)}, \ldots, z_{\alpha_j}^{(i)} \big\}$ of maximal length $\alpha_j$. By taking a subsequence, we may assume that 
\begin{align*}
l_1 < l_2 \Longrightarrow s_{l_1}^{(i)} < s_{l_2}^{(i)}.
\end{align*}
Then we define 
\begin{align*}
\big( u^{(i)}_l (s, t) , h^{(i)}_l (s, t) \big) = \big( u^{(i)} \big( s + s^{(i)}_l, t \big), h^{(i)} \big( s + s^{(i)}_l, t \big) \big),\ l=1, \ldots, \alpha.
\end{align*}
By choosing a further subsequence, we may assume that each $u^{(i)}_l$ converges to a nontrivial soliton $u_l: \Theta \to \wt{X}_{\upgamma_j}$. Denote ${\bm u}_j =(u_1, \ldots, u_{\alpha_j})$. Then for any sequence of points $z^{(i)} = \big( s^{(i)}, t^{(i)} \big)$, we have
\begin{align*}
\lim_{i \to \infty} \min_l d \big( z^{(i)}, z_l^{(i)} \big) = \infty \Longrightarrow \limsup_{i \to \infty} e \big( u^{(i)}, h^{(i)} \big) \big( z^{(i)} \big) < \epsilon_2.
\end{align*}
Otherwise it will contradicts with the fact that the $\epsilon_2$-bubbling list is of maximal length. Then it is easy to see that by Theorem \ref{thm43}, $(u_l)_+ = (u_{l+1})_-$ for $l = 1, \ldots, \alpha_j-1$ and $ev_j(A, u) = (u_1)_-$. 

Therefore, the collection $\big( A, u, \{ {\bm u}_j\}_{z_j\ {\rm broad}}\big)$ forms a stable solution to the perturbed gauged Witten equation over $\vec{\mc C}$. Moreover, the subsequence constructed above and $\big(A, u , \{ {\bm u}_j\}_{z_j\ {\rm broad}} \big)$ satisfy Definition \ref{defn64}. Hence the proof of Theorem \ref{thm65} is complete.

\appendix

\section{Epsilon-regularity, etc.}\label{appendixa}

\subsection{Epsilon-regularity for Cauchy-Riemann equations}

The Witten equation is an inhomogeneous Cauchy-Riemann equation. In this appendix we recall some basic estimates about Cauchy-Riemann equations. 

We first recall the $\epsilon$-regularity result of \cite{IS_compactness} in the case of $J$-holomorphic curves with a continuous $J$. Let $Y$ be a manifold of dimension $2N$ and $Y' \subset Y$ be a subset. Let $h_0$ be a smooth Riemannian metric on $Y$ which we used as a reference to define the norms on function spaces on $Y$. For any $x\in Y$ and $\delta>0$, we use $B_\delta(x)$ to denote the open geodesic ball centered at $x$ with radius $\delta$.

\begin{defn}\label{defna1}(\cite[Definition 1.1]{IS_compactness})
A continuous almost complex structure $J$ on $Y$ is said to be {\bf uniformly continuous} on $Y'$ (with respect to $h_0$), if the following is true. 1) $\left\| J \right\|_{L^\infty(Y')} < \infty$; 2) For any $\epsilon>0$, there is a number $\delta>0$ such that for any $x \in Y'$, there exists a $C^1$-diffeomorphism $\phi: B(x, \delta) \to B(0, \delta) \subset {\mb C}^N$ such that
\begin{align}\label{equationa1}
\big\| J - \phi^* J_{st} \big\|_{C^0( B_\delta(x)) \cap Y')} + \big\| h_0 - \phi^* h_{st} \big\|_{C^0(B_\delta(x) \cap Y'} < \epsilon,
\end{align}
where $J_{st}$ is the standard complex structure and $h_{st}$ is the standard metric on ${\mb C}^N$.
\end{defn}
For each $\epsilon>0$, the largest $\delta$ for which (\ref{equationa1}) is true is called the modulus of uniform continuity, and is denoted by a function $\mu_J(\epsilon)$.

\begin{lemma}\label{lemmaa2}\cite[Lemma 1.1]{IS_compactness} Let $J^*$ be a continuous almost complex structure on $Y$ which is uniformly continuous on $A\subset Y$. For every $p \in (2, +\infty)$, there exist constants ${\bm \epsilon_1} = {\bm \epsilon_1} (\mu_{J*}, A, h_0 )>0$, ${\bm \epsilon_p}>0$, ${\bm C_p} = C(p, \mu_{J^*}, A, h_0 )<\infty$ with the following property.

For any continuous almost complex structure $J$ on $Y$ with $\left\| J- J^* \right\|_{L^\infty(A)}< {\bm \epsilon_p}$ and for any $J$-holomorphic map $u \in C^0 \cap L_1^2(B_1, Y)$ such that $u(B_1) \subset A$ and $\left\| du \right\|_{L^2(B_1)} < {\bm \epsilon_1}$, we have
\begin{align}\label{equationa2}
\big\| du \big\|_{L^p(B_{1\over 2})} \leq {\bm C_p}  \big\| du \big\|_{L^2(B_1)}.
\end{align}
\end{lemma}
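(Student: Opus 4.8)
\textbf{Proof proposal for Lemma \ref{lemmaa2}.}

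The plan is to follow the standard bootstrapping route for $\epsilon$-regularity of pseudoholomorphic maps, but keeping careful track of the fact that $J$ is only continuous and uniformly continuous on $A$ rather than smooth. First I would fix $p \in (2,\infty)$ and use the uniform continuity of $J^*$ on $A$ to choose a small radius $\delta = \mu_{J^*}(\epsilon)$ so that on each geodesic ball $B_\delta(x)$ with $x \in A$ there is a $C^1$-diffeomorphism $\phi_x$ pulling back the standard complex structure and metric to within $\epsilon$ of $J^*$ and $h_0$ in $C^0$. Pulling the map $u$ back by $\phi_x$ (after rescaling the disk $B_1$ to a small enough subdisk so that $u$ maps it into a single such ball, which is legitimate once $\|du\|_{L^2(B_1)}$ is small by a continuity/mean-value argument), we reduce to the model situation of a map into a ball in $\mathbb{C}^N$ which is holomorphic for an almost complex structure $\widetilde J$ that is $C^0$-close to $J_{st}$; the closeness is controlled by $\epsilon$ plus $\|J - J^*\|_{L^\infty(A)} < \bm\epsilon_p$.

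The core estimate is then a Calder\'on--Zygmund / elliptic-regularity argument for the equation $\partial_s v + \widetilde J(v)\,\partial_t v = 0$, rewritten as $\bar\partial v = (J_{st} - \widetilde J(v))\,\partial_t v/2 =: g$. The key point is that $\|J_{st} - \widetilde J(v)\|_{C^0}$ is small, so that the inhomogeneous term $g$ satisfies $\|g\|_{L^p} \leq C\epsilon\|dv\|_{L^p}$ on the relevant subdisk; combined with the interior $L^p$-estimate for $\bar\partial$ (of the form $\|dv\|_{L^p(B_{r/2})} \leq C(\|dv\|_{L^p(B_r)}^{?} \ldots)$) one absorbs the $\epsilon\|dv\|_{L^p}$ term into the left-hand side for $\epsilon$ small. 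To start the iteration one needs an initial $L^p$-bound, which comes from the Sobolev-type embedding after first upgrading $L^2$ to a slightly better space: here one uses that $v \in L_1^2$ is $\widetilde J$-holomorphic with small energy, applies the standard mean-value inequality for the (sub)harmonic-type quantity $|dv|^2$ to get a pointwise $L^\infty$-bound on $|dv|$ on a smaller disk in terms of $\|dv\|_{L^2}$, and then trades that for the $L^p$-norm. Undoing the rescaling and the diffeomorphism $\phi_x$ (whose $C^1$-norm and that of its inverse are uniformly controlled on $A$, by uniform continuity) then yields \eqref{equationa2} with a constant ${\bm C_p}$ depending only on $p$, the modulus $\mu_{J^*}$, $A$, and $h_0$.

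I expect the main obstacle to be the low regularity of $J$: since $J$ is merely continuous, one cannot differentiate the equation, so all the elliptic estimates must be set up at the level of $\bar\partial$ with an inhomogeneous term treated perturbatively, and one must be scrupulous that the smallness of $\|J_{st}-\widetilde J\|_{C^0}$ (controlled by the choice of $\delta$ via $\mu_{J^*}$, not by any derivative of $J$) really does suffice to absorb terms. The secondary technical nuisance is the bookkeeping of the two scales — the geodesic radius $\delta$ on which the coordinate chart is good, and the rescaled unit disk on which the Calder\'on--Zygmund constants live — and ensuring the energy smallness threshold ${\bm\epsilon_1}$ is chosen after $\delta$ so that $u(B_1)$ really lands in one chart. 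I would not carry out the routine Calder\'on--Zygmund and mean-value computations in detail, citing \cite{IS_compactness} and the standard references for them, and would only spell out the perturbative absorption step and the scale reduction carefully.
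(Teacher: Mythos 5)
The paper offers no proof of this lemma at all: it is quoted verbatim from \cite[Lemma 1.1]{IS_compactness} and used as a black box, so the only comparison available is between your sketch and the argument in that reference. Your overall strategy --- localize via the uniform continuity of $J^*$ into charts where the structure is $C^0$-close to $J_{st}$, rewrite the equation as $\bar\partial v = \tfrac12\bigl(J_{st}-\widetilde J(v)\bigr)\partial_t v$, and absorb the right-hand side using the Calder\'on--Zygmund estimate --- is indeed the route taken there, and the scale bookkeeping you describe is the right concern.

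However, there is a genuine gap in the step you use to start the iteration. You propose to obtain an initial $L^p$-bound by ``applying the standard mean-value inequality for the (sub)harmonic-type quantity $|dv|^2$'' to get an $L^\infty$-bound on $|dv|$. That inequality, $\Delta|dv|^2\geq -A-B|dv|^4$, comes from differentiating the equation and therefore requires $J$ to be at least Lipschitz; for merely continuous $J$ it is unavailable, and in fact $J$-holomorphic maps for continuous $J$ need not be Lipschitz at all --- which is precisely why the lemma is stated only for finite $p$, with a threshold ${\bm \epsilon_p}$ and constant ${\bm C_p}$ that degenerate as $p\to\infty$. (If your $L^\infty$ gradient bound were available, the conclusion would hold with $p=\infty$ and a $p$-independent threshold, contradicting the shape of the statement.) The correct way to start, as in \cite{IS_compactness}, is to exploit that the $L^p\to L^p$ norm $A_p$ of the Calder\'on--Zygmund operator is continuous in $p$ with $A_2=1$: for $p$ slightly above $2$ one has $A_p\,\|J_{st}-\widetilde J\|_{C^0}<1$ for a fixed smallness of the $C^0$-distance, so the absorption closes at the level of an a priori $W^{1,p}$ estimate (justified by mollification), yielding $dv\in L^{p_1}_{loc}$ for some $p_1>2$; Sobolev embedding then gives H\"older continuity of $v$, the image localizes into better charts, and one bootstraps in $p$, shrinking ${\bm\epsilon_p}$ as $A_p$ grows. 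With that replacement your sketch becomes the standard proof; as written, the starter step would fail exactly because of the low regularity of $J$ that you correctly identify as the main obstacle.
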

By Sobolev embedding $L_1^p \to C^{0, {2\over p} - 1}$, (\ref{equationa2}) implies (using the same constant ${\bm C_p}$)
\begin{align}\label{equationa3}
{\rm diam} \big( u(B_{1\over 2}) \big) \leq {\bm C_p} \big\| du \big\|_{L^2(B_1)}.
\end{align}

A consequence of Lemma \ref{lemmaa2} is the following.
\begin{lemma}\label{lemmaa4}
Let $(X, h_0)$ be a Riemannian manifold of dimension $2n$ and $J$ be a continuous almost complex structure on $X$ which is uniformly continuous on the whole (noncompact) manifold $X$ with respect to $h_0$. Then there exists ${\bm \epsilon_2} = {\bm \epsilon_2}( \mu_J, X, h_0 )>0$ satisfying the following condition.

Suppose $\rho \in (0, 1]$, $\nu\in C^0 ( B_\rho \times X, TX)$ and $u: B_\rho \to X$ satisfies the inhomogeneous equation
\begin{align}\label{equationa4}
{\partial u \over \partial \ov{z}} + \nu(u) = 0.
\end{align}
If
\begin{align*}
\big\| du \big\|_{L^2(B_\rho)} \leq {\bm \epsilon_2},\ \rho \big\| \nu(u) \big\|_{L^\infty(B_\rho)} \leq {\bm \epsilon_p} {\bm \epsilon_2},
\end{align*}
then 
\begin{align*}
{\rm diam} \big( u ( B_{\rho \over 2}) \big)  \leq {\bm C_p} \Big( \big\| du \big\|_{L^2(B_\rho)} + \rho \big\| \nu(u) \big\|_{L^\infty(B_\rho)} \Big).
\end{align*}
\end{lemma}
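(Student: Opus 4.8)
\textbf{Proof proposal for Lemma \ref{lemmaa4}.}

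The plan is to reduce the inhomogeneous equation \eqref{equationa4} to a genuine $J$-holomorphic curve equation on the total space of the trivial bundle $B_\rho \times X$, after which Lemma \ref{lemmaa2} applies directly. First I would rescale to the unit disk: set $u_\rho(w) = u(\rho w)$ for $w \in B_1$, so that $u_\rho$ solves $\partial u_\rho / \partial \ov{w} + \rho\, \nu(\rho w, u_\rho(w)) = 0$, and note $\|du_\rho\|_{L^2(B_1)} = \|du\|_{L^2(B_\rho)}$ while the $L^\infty$-norm of the inhomogeneous term picks up exactly the factor $\rho$. Thus it suffices to treat $\rho = 1$ with the hypotheses $\|du\|_{L^2(B_1)} \le {\bm \epsilon_2}$ and $\|\nu(u)\|_{L^\infty(B_1)} \le {\bm \epsilon_p}{\bm \epsilon_2}$, and then unwind the scaling at the end (the diameter and the $L^2$-norm scale the same way, and the $\rho\|\nu(u)\|_{L^\infty}$ term reappears as claimed).

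Next I would introduce the graph trick. On $Y := B_1 \times X$ (with reference metric $h_0^{Y} = h_{st} \oplus h_0$) define an almost complex structure $\wt{J}$ that is ``upper triangular'': it restricts to $J_{st}$ on the $B_1$-factor, to $J$ on the $X$-fibre, and its off-diagonal part is built from $\nu$ so that the graph $\Gamma_u : w \mapsto (w, u(w))$ is $\wt{J}$-holomorphic precisely when $u$ solves \eqref{equationa4}. Concretely one takes $\wt{J}(w,x)(\zeta, V) = (J_{st}\zeta,\ J(x)V + (J(x)\circ d\nu - d\nu \circ J_{st})(\zeta))$ or the standard variant thereof; the point is that $\wt{J}$ is continuous, agrees with a small perturbation of the product structure $\wt{J}^* = J_{st}\oplus J$ on the relevant region, and $\|\wt{J} - \wt{J}^*\|_{L^\infty}$ is controlled by $\|\nu\|_{C^0}$ — here is where the hypothesis $\|\nu(u)\|_{L^\infty}\le {\bm\epsilon_p}{\bm\epsilon_2}$ enters, forcing the perturbation below the threshold ${\bm\epsilon_p}$ of Lemma \ref{lemmaa2}. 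One checks that $\wt{J}^*$ is uniformly continuous on $B_1 \times X$ with respect to $h_0^Y$: this follows from the uniform continuity of $J$ on all of $X$ (hypothesis of the lemma) together with the smoothness and boundedness of $J_{st}$, so the modulus $\mu_{\wt{J}^*}$ depends only on $\mu_J$, $X$, $h_0$.

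Then I would apply Lemma \ref{lemmaa2} to the $\wt{J}$-holomorphic map $\Gamma_u : B_1 \to B_1\times X$, with the subset $A = B_1\times X$. We have $\|d\Gamma_u\|_{L^2(B_1)}^2 = \mathrm{Area}(B_1) + \|du\|_{L^2(B_1)}^2$, which is \emph{not} automatically small, so a direct application needs the sharper local form. The correct move is to apply the $\epsilon$-regularity on a disk of radius comparable to $1$ but with the energy measured appropriately, or better: cover $B_{1/2}$ by finitely many disks of a fixed small radius $r_0$ on each of which $\|d\Gamma_u\|_{L^2} < {\bm\epsilon_1}$ once ${\bm\epsilon_2}$ is small (the flat part contributes $\pi r_0^2$, chosen below ${\bm\epsilon_1}^2/2$, and $\|du\|_{L^2} \le {\bm\epsilon_2}$ handles the rest), then apply \eqref{equationa3} on each and chain the diameter bounds. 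This yields ${\rm diam}(\Gamma_u(B_{1/2})) \le {\bm C_p}\big(\|du\|_{L^2(B_1)} + \|\nu(u)\|_{L^\infty(B_1)}\big)$ after absorbing the finitely-many-disks constant into ${\bm C_p}$; projecting to the $X$-factor and undoing the rescaling gives the statement. The main obstacle is precisely this bookkeeping: Lemma \ref{lemmaa2} as stated wants the total $L^2$-energy small, whereas the graph has an irreducible area contribution, so one must either re-prove the $\epsilon$-regularity in the ``small energy on small disks'' form or chain local estimates carefully while tracking that all constants depend only on $p$, $\mu_J$, $X$, $h_0$ and not on $u$ or $\rho$.
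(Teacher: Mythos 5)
You have the right architecture --- rescale to the unit disk, run Gromov's graph trick to convert the inhomogeneous equation into a $\wt{J}$-holomorphic curve on a product, control $\|\wt{J}-\wt{J}_0\|_{L^\infty}$ by $\|\nu\|_{C^0}$, and invoke Lemma \ref{lemmaa2}. You also correctly flag the real obstacle: the graph map $\Gamma_u(w) = (w,u(w))$ has $\|d\Gamma_u\|_{L^2(B_1)}^2 = \pi + \|du\|_{L^2(B_1)}^2$, which is not small. But your proposed workaround --- cover $B_{1/2}$ by finitely many disks of a fixed small radius $r_0$ chosen with $\pi r_0^2 < {\bm\epsilon_1}^2/2$ and chain the diameter bounds --- does not actually fix it, and the conclusion you assert, $\mathrm{diam}(\Gamma_u(B_{1/2})) \leq {\bm C_p}(\|du\|_{L^2} + \|\nu(u)\|_{L^\infty})$, is false: take $\nu \equiv 0$ and $u$ constant; then the right side vanishes but $\Gamma_u(B_{1/2})$ is a flat half-disk of diameter $1$. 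The point is that on each small disk the estimate \eqref{equationa3} gives $\mathrm{diam}(\Gamma_u(B_{r_0/2})) \leq {\bm C_p}\|d\Gamma_u\|_{L^2(B_{r_0})} \geq {\bm C_p}\sqrt{\pi}\, r_0$, so every small disk contributes an irreducible constant $\sim r_0$, and passing to the $L^p$-gradient bound and then projecting to the $X$-factor does not help either, because the bound $\|du\|_{L^p} \leq \|d\Gamma_u\|_{L^p} \leq {\bm C_p}\|d\Gamma_u\|_{L^2}$ already carries the $\sqrt{\pi} r_0$ term. You end up with $\mathrm{diam}(u) \lesssim r_0 + \|du\|_{L^2} + \cdots$, which is the wrong shape: the conclusion must vanish as $\|du\|_{L^2}$ and $\rho\|\nu(u)\|_{L^\infty}$ both go to zero.

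The paper's fix is to shrink the base factor of the graph. Set $\kappa = \|\nu(u)\|_{L^\infty(B_\rho)}$ and map
\begin{align*}
\wt{v}\colon B_1 \to B_{\rho\kappa/{\bm\epsilon}_p}\times X, \qquad \wt{v}(w) = \Big(\tfrac{\rho\kappa}{{\bm\epsilon_p}}\,w,\ u(\rho w)\Big),
\end{align*}
with a correspondingly rescaled inhomogeneous term $\wt\nu(w,x) = \tfrac{{\bm\epsilon}_p}{\kappa}\nu\big(\tfrac{{\bm\epsilon}_p}{\kappa}w, x\big)$. Two things happen simultaneously. First, $\|\wt\nu\|_{L^\infty} = {\bm\epsilon}_p$ exactly, so $\|\wt{J}_{\wt\nu} - \wt{J}_0\|_{L^\infty}$ meets the threshold of Lemma \ref{lemmaa2}. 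Second, and this is the crucial point you are missing, the base component of $d\wt{v}$ is $(\rho\kappa/{\bm\epsilon_p})\,dw$, so its $L^2(B_1)$-norm is $\sqrt{\pi}\,\rho\kappa/{\bm\epsilon_p}$, which is \emph{proportional to} $\rho\|\nu(u)\|_{L^\infty}$ --- precisely one of the terms you are allowed in the conclusion --- rather than a fixed constant $r_0$. Thus $\|d\wt{v}\|_{L^2(B_1)} \leq \|du\|_{L^2(B_\rho)} + \sqrt{\pi}\,\rho\kappa/{\bm\epsilon_p} \leq (1+\sqrt{\pi}){\bm\epsilon_2} = {\bm\epsilon_1}$ with ${\bm\epsilon_2} := {\bm\epsilon_1}/(1+\sqrt{\pi})$, and a single application of Lemma \ref{lemmaa2} and \eqref{equationa3} to $\wt{v}$ on $B_1$ directly gives $\mathrm{diam}(u(B_{\rho/2})) \leq \mathrm{diam}(\wt{v}(B_{1/2})) \leq {\bm C_p}\big(\|du\|_{L^2(B_\rho)} + \rho\|\nu(u)\|_{L^\infty(B_\rho)}\big)$, with no residual constant. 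No covering argument is needed.
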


\begin{proof}
Denote $Y = {\mb C} \times X$. Let $\wt{J}_0 = (J_0, J_{st})$ be the product almost complex structure on $Y$. Then $\wt{J}_0$ is uniformly continuous on $Y$ with respect to the product metric $\wt{h}_0 = (h_0, h_{st})$. We take
\begin{align*}
{\bm \epsilon_2} = {1\over 1 + \sqrt{\pi}} {\bm \epsilon_1} \Big( \mu_{\wt{J}_0}, {\mb C}\times X, \wt{h}_0 \Big).
\end{align*}

Indeed, denote $\kappa = \left\| \nu(u) \right\|_{L^\infty(B_\rho)}$, we define
\begin{align*}
\begin{array}{cccc}
\wt{v}: & B_1 & \to & B_{\rho \kappa \over {\bm \epsilon}_p} \times X\\
       &  w & \mapsto &  \displaystyle  \big( { \rho \kappa \over {\bm \epsilon_p}} w, u(\rho w) \big);
\end{array} \hspace{0.3cm} 
\begin{array}{cccc}
 \wt{\nu}: & \displaystyle B_{{\rho \kappa \over {\bm \epsilon}_p}}  \times X & \to & TX \\
   & (w, x) & \mapsto & \displaystyle   {{\bm \epsilon}_p \over \kappa } \nu \big( {{\bm \epsilon}_p \over \kappa} w, x \big).
\end{array}
\end{align*}
Then define an almost complex structure $\wt{J}_{\wt{\nu}}$ on $B_{\rho\kappa \over {\bm \epsilon}_p} \times X$ by
\begin{align*}
\wt{J}_{\wt{\nu}} (\partial_s, X) = (\partial_t, J_0 X + \wt\nu ),\ \wt{J}_{\wt\nu}(\partial_t, X) = ( - \partial_s, J_0 X - J_0 \wt\nu).
\end{align*}
Then (\ref{equationa4}) implies that $\wt{u}$ is holomorphic with respect to $\wt{J}_{\wt\nu}$. On the other hand, we have
\begin{align*}
\big\| \wt{J}_{\wt\nu} - \wt{J}_0 \big\|_{L^\infty(\wt{v}(B_1))} \leq {\bm \epsilon_p},\ \big\| d \wt{v} \big\|_{L^2(B_1)} \leq \big\| du \big\|_{L^2(B_\rho)} + \sqrt{\pi} {\rho\kappa \over {\bm \epsilon}_p} \leq {\bm \epsilon_1}.
\end{align*}
Then Lemma \ref{lemmaa2} and (\ref{equationa3}) imply that
\begin{align*}
\big\| dv \big\|_{L^p(B_{1\over 2})} \leq {\bm C_p} \Big( \big\| d u \big\|_{L^2(B_\rho)}  +  \rho \big\| \nu(u) \big\|_{L^\infty(B_\rho)}\Big).
\end{align*}
\begin{align*}
{\rm diam} \big( u ( B_{\rho\over 2} ) \big) = {\rm diam} \big( v ( B_{1\over 2}) \big) \leq {\bm C_p} \Big( \big\| d u \big\|_{L^2(B_\rho)}  +  \rho \big\| \nu(u) \big\|_{L^\infty(B_\rho)}\Big).
\end{align*}
The rescaling relation of $L^p$-norms implies that
\begin{align*}
\big\| d u \big\|_{L^p(B_{\rho \over 2})} \leq {\bm C_p} \rho^{{2\over p} - 1} \Big( \big\| d u \big\|_{L^2(B_\rho)}  +  \rho \big\| \nu(u) \big\|_{L^\infty(B_\rho)}\Big).
\end{align*}
\end{proof}

\subsection{Mean value estimates}

We quote several important mean value estimates for differential inequalities of the Laplace operator on the plane. Let $B_r$ be the radius $r$ open disk in ${\mb C}$ centered at the origin, with the standard coordinates $(s, t)$. Let $\Delta = \partial_s^2 + \partial_t^2$. 

\begin{lemma}\label{lemmaa5}(\cite[Page 156]{Salamon_lecture})
Suppose $f: B_r \to {\mb R}$ with $f(z) \geq 0$ be a smooth function, satisfying
\begin{align*}
\Delta f\geq -A - B f^2
\end{align*}
where $A\geq 0$, $B >0$. Then
\begin{align*}
\int_{B_r} f \leq {\pi \over 16 B} \Longrightarrow f(0) \leq {8 \over \pi r^2} \int_{B_r} f + {A r^2 \over 4}.
\end{align*}
\end{lemma}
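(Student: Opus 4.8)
\textbf{Proof proposal for Lemma~\ref{lemmaa5}.} This is the classical Heinz-type mean value inequality, and the plan is the standard blow-up-of-the-radius argument via the subsolution form of the inequality. First I would introduce, for $0 \le \rho < r$, the function
\begin{align*}
g(\rho) = (r-\rho)^2 \sup_{B_\rho} f,
\end{align*}
which is continuous on $[0,r)$, vanishes at $\rho = 0$, and tends to $0$ as $\rho \to r$ (since $f$ is smooth on the closed ball $\overline{B_r}$ after shrinking $r$ slightly, or by an exhaustion argument). Hence $g$ attains a maximum at some $\rho^* \in [0,r)$; let $z^* \in \overline{B_{\rho^*}}$ be a point where $f(z^*) = \sup_{B_{\rho^*}} f =: c$. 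Set $\sigma = \tfrac12(r-\rho^*)$, so that $B_\sigma(z^*) \subset B_{(r+\rho^*)/2} \subset B_r$ and, by maximality of $g$,
\begin{align*}
\sup_{B_\sigma(z^*)} f \le \sup_{B_{(r+\rho^*)/2}} f \le \frac{(r-\rho^*)^2}{(r - (r+\rho^*)/2)^2}\, c = 4c.
\end{align*}

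The next step is a local mean value estimate on $B_\sigma(z^*)$. On that disk $f$ satisfies $\Delta f \ge -A - Bf^2 \ge -A - 16 B c^2$, i.e. $\Delta(f + \tfrac14(A + 16Bc^2)|z-z^*|^2) \ge 0$, so $f$ differs from a subharmonic function by a controlled quadratic. By the sub-mean-value property of subharmonic functions applied on $B_\tau(z^*)$ for a suitable radius $\tau \le \sigma$ (I would take $\tau$ proportional to $\sigma$, or optimize it), one gets
\begin{align*}
c = f(z^*) \le \frac{1}{\pi \tau^2}\int_{B_\tau(z^*)} f + C\,(A + Bc^2)\tau^2 \le \frac{1}{\pi\tau^2}\int_{B_r} f + C'(A\tau^2 + Bc^2\tau^2).
\end{align*}
The hypothesis $\int_{B_r} f \le \pi/(16B)$ is exactly what is needed to absorb the $Bc^2\tau^2$ term: choosing $\tau$ so that $C' B \tau^2 \le \tfrac12$ (equivalently $\tau^2$ comparable to $1/(B)$ but also $\le \sigma^2$), the quadratic term $C' B c^2 \tau^2$ can be bounded by $\tfrac12 c$ plus lower order, letting me solve for $c$. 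One must check the two regimes: if $\sigma^2 \le$ (the threshold for absorption) then $g(0) = r^2 f(0) \le g(\rho^*) = 4\sigma^2 c$ is already small enough to give the conclusion directly with the stated constants; otherwise the absorption goes through on $B_\tau(z^*)$ with $\tau$ of order $1/\sqrt B$ bounded by $\sigma$.

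Finally I would translate the bound on $c$ back through $g(0) \le g(\rho^*)$, i.e. $r^2 f(0) \le 4\sigma^2 c = (r-\rho^*)^2 c$, and plug in the estimate for $c$ to reach $f(0) \le \tfrac{8}{\pi r^2}\int_{B_r} f + \tfrac{A r^2}{4}$, tracking the numerical constants carefully to match the precise coefficients $8/\pi$ and $1/4$ in the statement (the coefficient $\pi/16$ in the smallness hypothesis is precisely calibrated to make these come out). The main obstacle is bookkeeping: getting the \emph{exact} constants $8/\pi$, $A r^2/4$, and $\pi/(16B)$ to be mutually consistent requires a careful, non-lossy choice of $\tau$ and a tight use of the sub-mean-value inequality rather than a crude one; the qualitative inequality (with some constants) is routine, but the sharp form demands attention. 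Alternatively, since this lemma is quoted from \cite[Page~156]{Salamon_lecture}, one may simply cite it, and I would note that as the fallback if the constant-chasing proves delicate.
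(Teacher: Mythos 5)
The paper does not actually prove this lemma: it is quoted from Salamon's Floer homology notes, so there is no internal proof to compare against, and your sketch is in effect a reconstruction of the argument in that reference. The overall plan is right: Heinz's rescaling function $g(\rho)=(r-\rho)^2\sup_{B_\rho}f$, the factor-of-four bound $\sup_{B_\sigma(z^*)}f\le 4c$ at the maximizing point, the subsolution mean value inequality $f(z^*)\le \tfrac{1}{\pi\tau^2}\int_{B_\tau(z^*)}f+\tfrac{\alpha\tau^2}{8}$ for $\Delta f\ge-\alpha$, and a dichotomy on the radius. Two points deserve tightening, since they are exactly where the stated constants come from. First, the dichotomy threshold and the auxiliary radius must scale with $c$, not merely with $B$: the correct split is $4Bc\sigma^2<1$ versus $4Bc\sigma^2\ge 1$, and in the second regime one takes $\tau^2=(4Bc)^{-1}\le\sigma^2$; your $\tau\sim B^{-1/2}$ is missing the $c^{-1/2}$ factor that makes the quadratic term $2Bc^2\tau^2=c/2$ absorb cleanly. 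Second, the large-$\sigma$ regime is not "direct" from $g(0)\le 4\sigma^2 c$: one must still run the mean value inequality at radius $\tau$, invoke the hypothesis $\int_{B_r}f\le\pi/(16B)$ to bound the averaged term by $c/4$ and thereby deduce $c^2\le A/(8B)$, and then observe that $4Bc\sigma^2\ge 1$ together with $\sigma\le r/2$ forces $c\ge(Br^2)^{-1}$, whence $c\le A/(8Bc)\le Ar^2/8\le Ar^2/4$. This is precisely how the $Ar^2/4$ term in the statement arises, and when $A=0$ this regime is vacuous. With these corrections the constants $8/\pi$, $\pi/(16B)$ and $Ar^2/4$ all check out (in the small-$\sigma$ regime one even gets the sharper $Ar^2/16$), confirming your instinct that the bookkeeping is delicate but manageable; simply citing Salamon, as the paper does, is of course also acceptable.
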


\subsection{Hofer's lemma}

In proving compactness we used the following lemma, which is due to Hofer.

\begin{lemma}\cite[Lemma 4.6.4]{McDuff_Salamon_2004}\label{lemmaa7} Let $(X, d)$ be a metric space, $f: X \to {\mb R}$ be a non-negative continuous function. Suppose $x \in X$, $\delta>0$ and the closed ball $\ov{B}_{2\delta}(x) \subset X$ is complete. Then there exists $\xi \in X$, $\epsilon \in (0, \delta]$ such that
\begin{align*}
d(x, \xi) < 2\delta,\ \sup_{B_\epsilon(\xi)} \leq 2 f(\xi),\ \epsilon f(\xi) \geq \delta f(x).
\end{align*}
\end{lemma}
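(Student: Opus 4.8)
The statement is Hofer's classical lemma (sometimes called Hofer's trick), and the plan is to prove it by a contradiction argument that builds a Cauchy sequence inside the complete closed ball $\ov{B}_{2\delta}(x)$. First I would reduce to the case $f(x) > 0$: if $f(x) = 0$ the conclusion holds trivially with $\xi = x$ and $\epsilon = \delta$, since then $\epsilon f(\xi) \geq \delta f(x) = 0$ (strictly speaking one then also needs to check the sup-condition, but one may simply take $\xi=x$ and note the inequality $\sup_{B_\delta(x)} f \le 2f(x)$ can be arranged or the trivial case handled separately; in the actual application $f(x)>0$ always). So assume $f(x)>0$.

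The main argument is an inductive construction. Set $x_0 = x$ and $\delta_0 = \delta$. Given $x_k$ with $d(x, x_k) < 2\delta(1 - 2^{-k})$ and $\delta_k = 2^{-k}\delta$, ask whether the desired conclusion holds at $(x_k, \delta_k)$, i.e. whether $\sup_{B_{\delta_k}(x_k)} f \leq 2 f(x_k)$. If yes, I would stop and output $\xi = x_k$, $\epsilon = \delta_k$; one checks $d(x,\xi) < 2\delta$, $\sup_{B_\epsilon(\xi)} f \le 2f(\xi)$ is exactly the stopping condition, and $\epsilon f(\xi) = 2^{-k}\delta f(x_k) \geq \delta f(x)$ follows because at each previous step the failure of the stopping condition gave a point with $f$-value more than doubled while the radius was halved (see next). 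If no, then there is a point $x_{k+1} \in B_{\delta_k}(x_k)$ with $f(x_{k+1}) > 2 f(x_k)$; set $\delta_{k+1} = \delta_k/2 = 2^{-(k+1)}\delta$. Then $d(x_k, x_{k+1}) < \delta_k = 2^{-k}\delta$, so $d(x, x_{k+1}) \leq d(x,x_k) + d(x_k,x_{k+1}) < 2\delta(1-2^{-k}) + 2^{-k}\delta < 2\delta(1 - 2^{-(k+1)})$, maintaining the inductive hypothesis, and $f(x_{k+1}) > 2f(x_k) > \cdots > 2^{k+1} f(x_0) = 2^{k+1} f(x)$. Also $\delta_{k+1} f(x_{k+1}) > 2^{-(k+1)}\delta \cdot 2^{k+1} f(x) = \delta f(x)$, which is the bookkeeping that validates the third inequality when we stop.

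Finally I would argue the process must terminate. If it did not, we would obtain an infinite sequence $\{x_k\}$ with $d(x_k, x_{k+1}) < 2^{-k}\delta$, hence Cauchy, and since all $x_k$ lie in the complete set $\ov{B}_{2\delta}(x)$ it converges to some $x_\infty \in \ov{B}_{2\delta}(x)$. But $f(x_k) > 2^k f(x) \to +\infty$, contradicting continuity of $f$ at $x_\infty$ (which forces $f(x_k) \to f(x_\infty) < \infty$). Therefore the induction stops at some finite $k$, and the output $(\xi,\epsilon) = (x_k, \delta_k)$ satisfies all three requirements. I do not expect any serious obstacle here; the only point requiring a little care is the order of the bookkeeping inequalities — making sure that at the moment of stopping, the accumulated doublings of $f$ against the accumulated halvings of $\delta$ give exactly $\epsilon f(\xi) \geq \delta f(x)$ — and the harmless cosmetic discrepancy between the open-ball statement $\sup_{B_\epsilon(\xi)} \le 2f(\xi)$ and the closed-ball completeness hypothesis, which is handled by the strict inequalities $d(x,x_k) < 2\delta$ at every stage.
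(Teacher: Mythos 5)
The paper does not prove this lemma; it is simply quoted from McDuff--Salamon \cite[Lemma 4.6.4]{McDuff_Salamon_2004}. Your dyadic-bisection argument is exactly the standard (McDuff--Salamon / Hofer) proof and is correct: starting from $x_0 = x$, $\delta_0 = \delta$, at each step either the stopping condition $\sup_{B_{\delta_k}(x_k)} f \le 2f(x_k)$ holds (and the accumulated doublings $f(x_k) > 2^k f(x)$ against the halved radius $\delta_k = 2^{-k}\delta$ give $\delta_k f(x_k) \ge \delta f(x)$), or one finds $x_{k+1}$ with $f$ more than doubled, and the geometric distance bound keeps everything inside $\ov{B}_{2\delta}(x)$ where completeness plus continuity of $f$ force termination. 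The only place you are a little loose is the $f(x)=0$ reduction: taking $\xi = x$, $\epsilon = \delta$ does \emph{not} automatically satisfy the sup-condition, so the claim that this case ``holds trivially'' is not right as stated; the clean fix is to observe that the same induction works verbatim when $f(x)=0$ (the third inequality is free, and if the process ran forever then $f(x_1) > 0$ and thereafter $f(x_k) > 2^{k-1} f(x_1) \to \infty$, giving the same contradiction). Also, your inductive hypothesis $d(x,x_k) < 2\delta(1 - 2^{-k})$ should be $\le$ at $k=0$; this is cosmetic. Neither issue affects the substance.
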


\section{Equivariant topology}\label{appenxib}

Suppose $G$ is a compact Lie group, $N$ is a $G$-manifold and $P\to M$ is a principal $G$-bundle over a closed oriented manifold $M$, then any continuous section $s$ of the associated bundle $P\times_K N$ defines a cycle in the Borel construction $N_G$, which represents an equivariant homology class
\begin{align*}
s_*[M] \in H_{{\rm dim}M}^G (N; {\mb Z}).
\end{align*}
In this current paper, we would like to define such an equivariant fundamental class for any solution $(A, u)$ to the perturbed gauged Witten equation by using the section $u$. However, since the monodromy of the $r$-spin structure at the punctures could be nontrivial, the image of the section $u$ is an equivariant cycle in $X$ only in the orbifold sense. So the contribution from the cylindrical ends $U_j$ should be weighted by a rational weight, and the fundamental class of a solution $(A, u)$ should be a class
\begin{align*}
\big[ A, u \big] \in H_2^G \big(  \wt{X}; {\mb Z}[r^{-1} ] \big).
\end{align*}
We will carry this out explicitly in this subsection.

We first recall a general way of defining a rational fundamental class of an orbifold section of an associated bundle over an orbicurve. We assume that the reader is familiar with the notion of orbicurves (orbifold Riemann surfaces) and orbifold bundles over an orbicurve, so we will be sketchy when referring to such structures. 

We assume that we have a compact Riemann surface $\Sigma$ with several distinct punctures $z_1, \ldots, z_k$. An orbifold chart near $z_j$ with local group $\Gamma_j \simeq {\mb Z}_{r_j}$ is a holomorphic map
\begin{align*}
\pi_j: {\mb D} \to \Sigma
\end{align*}
which maps $0$ to $z_j$ and can be expressed as $\zeta \mapsto \zeta^{r_j} $ in local coordinates. A collection of orbifold charts $\{\pi_j\}_{j=1}^k$ define an orbicurve structure. An equivalence relation can be defined among orbifold charts, and an equivalence class is called an orbicurve ${\mc C}$. 

Now suppose for each $j$, we have an injective homomorphism $\chi_j: {\mb Z}_{r_j} \to G$. An orbifold $G$-bundle over ${\mc C}$ is a usual $G$-bundle over $\Sigma^*:= \Sigma \setminus \{z_1, \ldots, z_k\}$, together with a collection of ``bundle charts'' 
\begin{align*}
\left( \wt\pi_j, \pi_j \right) : \left( {\mb D}^*\times G, {\mb D}^* \right) \to \left( P|_{\Sigma^*}, \Sigma^* \right),\ j=1, \ldots, k,
\end{align*}
where $\pi_j: {\mb D}^* \to \Sigma^*$ extends to an orbifold chart near $p_j$ and $\wt\pi_j$ covers $\pi_j$; moreover, $\wt\pi_j$ is invariant under the $\Gamma_j$-action on the left by $\upgamma \cdot( \zeta, k) = \left( \upgamma \zeta, \chi_j (\upgamma) k \right)$. An equivalence class of orbifold bundle charts defines an orbifold $G$-bundle ${\mc P}\to {\mc C}$. As a topological space, ${\mc P}$ is
\begin{align*}
{\mc P}:= P^* \cup \big( \bigcup_{j=1}^k {\mb D} \times G \big) /\sim
\end{align*}
with the equivalence relation generated by $p \sim (\zeta, k)$ if $\wt\pi_j(\zeta, k) = p$. 

Now if $N$ is a $G$-manifold, we can have an ``orbifold associated bundle'' ${\mc Y}:= {\mc P}\times_G N$, which contains the usual associated bundle $Y^*:= P^* \times_G N$ as a proper subset. Each bundle chart $\wt\pi_j$ induces a chart $\wt\pi_j^N: {\mb D}^* \times N \to Y^*$ by
\begin{align*}
\wt\pi_j^N( \zeta, x ) = \left[ \wt\pi_j(\zeta, 1), x \right],
\end{align*}
which is invariant under the $\Gamma_j$-action $\upgamma(\zeta, x) = ( \upgamma \zeta, \upgamma x)$.

Suppose we have a continuous section $u: \Sigma^* \to Y^*$, identified with an equivariant map $U: P^* \to N$. Then the composition
\begin{align*}
U \circ \wt\pi_j: {\mb D}^* \times G \to N
\end{align*}
is again a $G$-equivariant map and invariant under the $\Gamma_j$-action. It can be viewed as a continuous section over the chart ${\mb D}^* \times N$. If it extends continuous to the origin $0\in {\mb D}_j$ for all $j$, then we have an orbifold section of ${\mc Y}\to {\mc C}$. 

Now we can define the rational fundamental class of a continuous orbifold section of ${\mc Y}$. First, we construct a CW complex out of the orbicurve. The complement $\Sigma \setminus U$ is a surface with boundary, hence we can regard it as a CW complex in such a way that $\partial U$ is a subset of the 1-skeleton of $\Sigma \setminus U$. Then we take $k$ copies of 2-cells ${\mb D}_j$ and attach it to $\partial U$ by the $r_j$-to-1 map $\zeta_j \mapsto \zeta_j^{r_j}$. This CW complex is denoted by $|{\mc C}|$. Then, it is easy to see that the singular chain
\begin{align*}
\big[ {\mc C} \big]:= \big[ \Sigma \setminus U \big] + \sum_{j=1}^k {1\over r_j} \big| {\mb D}_j \big|
\end{align*}
defines a rational homology class in $H_2 \big( |{\mc C}|; {\mb Z}[ r^{-1}] \big)$, if $r$ is divisible by all $r_j$. 

Moreover, the orbibundle charts defines a continuous $G$-bundle $|{\mc P}| \to |{\mc C}|$ (in the usual sense); the orbifold section $s$ defines a continuous section $|s|: |{\mc P}| \to N$. Hence we obtained a continuous map (up to homotopy) $|{\mc C}| \to N_G$. The pushforward of the rational class $[{\mc C}]$ is then a class
\begin{align*}
s_* \big[ {\mc C} \big] \in H_2 \big( N_G; {\mb Z}[r^{-1}] \big) = H_2^G \big( N; {\mb Z}[r^{-1}] \big).
\end{align*}

\bibliography{symplectic_ref}
	
\bibliographystyle{amsalpha}

\end{document}